\theoremstyle{plain}
\newtheorem{theorem}{Theorem}[section]
\newtheorem{corollary}[theorem]{Corollary}
\newtheorem{definition}[theorem]{Definition}
\newtheorem{example}[theorem]{Example}
\newtheorem{lemma}[theorem]{Lemma}
\newtheorem{proposition}[theorem]{Proposition}
\newtheorem{remark}[theorem]{Remark}
\numberwithin{equation}{section}
\newcommand{\re}{{\mathbb R}}
\newcommand{\cn}{{\mathbb C}}
\newcommand{\supp}{\operatorname{supp}}
\newcommand{\im}{\operatorname{i}}
\newcommand{\Tr}{\operatorname{Tr}}
\newcommand{\tr}{\operatorname{tr}}
\newcommand{\rk}{\operatorname{rank}}
\newcommand{\Det}{\operatorname{Det}}
\newcommand{\Herm}{\operatorname{Herm}}
\newcommand{\Rplus}{\mathbb{R}_{\geqslant 0}}
\newcommand{\set}[1]{\left\{#1\right\}}
\begin{document}

\frontmatter

\author[C.~Cuchiero, M.~Keller-Ressel, E.~Mayerhofer and J.~Teichmann]{Christa Cuchiero, Martin Keller-Ressel, Eberhard Mayerhofer, and \\ Josef Teichmann}
\address{ETH Z\"urich, Departement Mathematik, R\"amistrasse 101, 8092 Z\"urich, Switzerland\\
TU Berlin - Fakult\"at II, Institut f\"ur Mathematik, MA-705, Strasse des 17.~Juni 136, 10623 Berlin, Germany\\
Deutsche Bundesbank, Research Center, Wilhelm-Epstein-Str. 14, 60431 Frankfurt am Main, Germany}
\email{christa.cuchiero@math.ethz.ch, mkeller@math.tu-berlin.de\\ eberhard.mayerhofer@bundesbank.de, josef.teichmann@math.ethz.ch}

\title[Affine processes on symmetric cones]{Affine processes on symmetric cones}
\alttitle{Processus affines dans des c\^ones sym\'etriques}

\begin{abstract}
We consider affine Markov processes taking values in
convex cones. In particular, we characterize all affine processes taking values in an irreducible symmetric cone in
terms of certain L\'evy-Khintchine triplets. This is the complete classification of affine processes on these conic
state spaces, thus extending the theory of Wishart processes on positive semidefinite matrices, as put forward by Bru (1991).
\end{abstract}

\subjclass{Primary: 60J25; Secondary: 15B48}
\keywords{affine processes, symmetric cones, non-central Wishart distribution, Wishart Processes}
\altkeywords{processus affines, c\^ones sym\'etriques, distribution de Wishart non centr\'ee, processus de Wishart}

\thanks{The first and the fourth author gratefully acknowledge the financial support by the ETH Foundation}

\maketitle
\tableofcontents

\mainmatter

\section{Introduction}
In recent years the study of affine Markov processes has gained
increasing interest both in the theory of stochastic processes and
their applications. We continue and generalize with the present work
our research on positive matrix-valued affine processes
(see~\citet{cfmt}). Much of this research has been
motivated by applications in mathematical finance, where affine
processes serve as realistic models for stochastic correlation of
multivariate asset models as well as for economic risk factors with
a non-trivial dependence structure. For an account of relevant
applications of matrix-valued affine processes we refer
to~\citet{cfmt}.

A natural generalization of positive semidefinite matrices are the so-called symmetric cones
(see the standard reference~\citet{Faraut1994}). This framework covers many important examples, such as the cone $\re_+^n$,
the cone of Hermitian matrices and the Lorentz cone $\Lambda_n$, defined by
$\Lambda_n:=\{ x \in \re^n \, |\, x_1^2 -\sum_{i=2}^nx_i^2 \geq 0, x_1 \geq 0\} $.
Affine diffusion processes on this kind of state spaces were first considered by~\citet{grasselli} in the context of affine term structure models.
In this article, we take up the setting of symmetric cones and provide a full characterization of affine processes thereon.

Other state spaces, such as sets whose boundary is described by a
quadratic polynomial, have been considered by~\citet{spreij1}. It
turns out that the condition of a quadratic boundary structure
implies that the state space is either parabolic (see
also~\citet[Section 12]{dfs}) or isomorph to the symmetric Lorentz
cone. Let us remark that the boundary of other symmetric cones is in
general described by polynomials of higher degree.

Only a few articles have considered affine processes
on completely general state spaces, such as~\citet{kst1}
and~\citet{cuchteich}, where regularity, that is the
time-differentiability of their Fourier-Laplace transform, and path
properties of affine processes are considered. For an analysis of
affine processes (under the regularity condition) on relatively
general state spaces we refer to the thesis of~\citet{veerman}.

In the present article the results of~\citet{cfmt} are extended or reformulated or simplified as follows:
\begin{itemize}
\item Regularity, the Feller property and necessary admissibility
conditions are proved for affine processes on proper closed convex cones.
\item Sufficient admissibility conditions, in other words the full
characterization of affine processes, are derived
for symmetric cones.
\item It is shown for the first time that there exist affine diffusion processes
on cones which are neither polyhedral nor
symmetric.
\item Non-central Wishart distributions on symmetric cones are
analyzed, described and embedded into affine processes, extending
considerably the knowledge on those distributions.
\item We manage to simplify the theory of affine processes by taking
the paradigm seriously that every argument involving Kolmogorov equations 
should be replaced by an argument involving generalized
Riccati equations.
\end{itemize}
This last point implies in particular that essential parts are now proved differently: Not only do we use the special
structure of Euclidean Jordan algebras, but we also approach the existence issue
in a new, more intrinsic and elementary way.\\
Indeed, the existence proof is based on the result that the solutions of the generalized Riccati differential equations of a pure diffusion process
with a particular drift, which we call \emph{Bru process}, can be recognized as cumulant generating functions
of the non-central Wishart distribution. In this context we also derive
the explicit form of its density function (whenever it exists) on general symmetric cones, which has not been provided so far.
Furthermore, for a particular class of affine diffusion processes (which correspond  to the class of Wishart processes in the case of positive semidefinite matrices) we also establish the precise conditions under which the Markov kernels admit a density.

The remainder of the article is organized as follows:
Section~\ref{sec:mainresult} contains important definitions and a summary of the main results, whose proofs are
postponed to the subsequent sections. In Section~\ref{sec:cone} we
focus on affine processes on general convex cones, while in
Section~\ref{sec:symcone} the corresponding results are refined in
the setting of symmetric cones. A construction of affine processes
on symmetric cones as well as a derivation of the non-central
Wishart densities is done in Section~\ref{sec:existence}. In
Section~\ref{sec:nonboundary} we finally establish precise
conditions under which affine processes remain almost surely in the
interior of a symmetric cone.
 For the reader's convenience important notions of Euclidean Jordan algebras are summarized in Appendix~\ref{appendix:EJA}.

\section{Definition and Main Results}\label{sec:mainresult}

Let $V$ be a finite-dimensional real vector space with inner product $\langle \cdot, \cdot \rangle$,
containing a closed convex cone $K$, and the closed dual cone
\[
K^{\ast} = \{u \in V\, |\, \langle x, u \rangle \geq 0 \textrm{ for all } x \in K\}.
\]
We assume $K$ to be \emph{proper}, i.e., $K \cap (-K) = \set{0}$, and \emph{generating}, i.e., $K$ contains a basis, which is
equivalent to $V= K - K$ (see, e.g., \citet[Lemma 3.2]{aliprantis_07}). These assumptions imply in particular that $K^{\ast}$ is also generating and proper (see, e.g.,~\citet[Proposition I.1.4]{Faraut1994}).
We denote the open dual cone of $K$ by
\[
\mathring{K}^{\ast} = \{u \in V \, |\, \langle x, u \rangle > 0 \textrm{ for all } x \in K\},
\]
and by
\[
\partial K^{\ast} = \{u \in V \, |\, \langle x, u \rangle=0 \textrm{ for some }x \in K\}
\]
the boundary of $K^{\ast}$. Note that $\mathring{K}^\ast$ is non-empty (see~\citet[I.1.4]{Faraut1994}).
Like any cone, $K^{\ast}$ induces a partial and strict order relation on $V$: For $u,v\in V$
we write $u \preceq v$ if and only if $v-u  \in K^{\ast}$ and $u \prec v$ if and only if $v-u  \in \mathring{K}^{\ast}$.
Finally, symmetric matrices and positive semidefinite matrices over $V$ are denoted by
$S(V)$ and $S_+(V)$, respectively, while $\mathcal{L}(V)$ corresponds to the space of linear maps on $V$.

We want to study a class of time-homogeneous Markov processes which are \emph{stochastically continuous},
take values in the cone $K$, and have the so-called \emph{affine property}. Since we shall not assume the processes to be conservative,
we adjoin to the state space $K$ a point $\Delta \notin K$, called cemetery state, and set $K_{\Delta}=K\cup\{\Delta\}$. Let now $X$ be a Markov process on $K$. We denote by
 $(p_t(x,\cdot))_{t\geq 0, x \in K}$  the transition kernels of $X$. These are extended to $K_{\Delta}$ by setting
\[
 p_t(x,\{\Delta\})=1-p_t(x,K), \quad p_t(\Delta, \{\Delta\})=1,
\]
for all $ t \in \re_+$ and $x \in K$, with the convention $f(\Delta)=0$ for any function $f$ on $K$.

\begin{definition}[Cone-valued affine process]\label{def:affineprocessK}
A time-homogeneous Markov process $X$ relative to some filtration
$(\mathcal{F}_t)$ with state space $K$ (augmented by $\Delta$) and
transition transition kernels $(p_t(x,d\xi))_{t
\geq 0,x \in K}$ is called \emph{affine} if
\begin{enumerate}
\item it is stochastically continuous, that is, $\lim_{s\to t}
p_s(x,\cdot)=p_t(x,\cdot)$ weakly on $K$ for every $t \geq 0$ and $x\in K$, and
\item\label{def:affineprocess2K} its Laplace transform has exponential-affine
dependence on the initial state. This means that there exist functions
$\phi:\re_+ \times K^{\ast} \to \re$ and $\psi:\re_+ \times K^{\ast} \to V $
such that
\begin{align}\label{eq:affineprocessK}
\int_{K}e^{-\langle u, \xi
\rangle}p_t(x,d\xi)=e^{-\phi(t,u)-\langle \psi(t,u),x\rangle},
\end{align}
for all $x\in K$ and $(t,u) \in \re_+ \times K^{\ast}$.
\end{enumerate}
\end{definition}

\begin{remark}\label{rem:eqvdef}
In the papers by~\citet{kst1} and by~\citet{cuchteich} affine
processes on a general state space $D$ are defined
by requiring the exponential-affine form of the Fourier-Laplace
transform: This means that there exist functions $\Phi:\re_+ \times
\mathcal{U} \to \cn$ and $\Psi:\re_+ \times \mathcal{U} \to V +\im
V$ such that
\begin{align}\label{eq:affineprocess}
\int_{D}e^{\langle u, \xi
\rangle}p_t(x,d\xi)=\Phi(t,u)e^{\langle \Psi(t,u),x\rangle},
\end{align}
for all $x\in D$ and $(t,u) \in \re_+ \times \mathcal{U}$.
Here $\mathcal{U}$ is defined by
\begin{align*}
\mathcal{U}=\left\{u \in V +\im V\,\big|\, e^{\langle u,x\rangle} \textrm{ is a bounded function on $D$}\right\}.
\end{align*}

Since the set $\mathcal{U}$ is given by $\mathcal{U}=-K^{\ast}+\im V$ in the case of a conic state space, the definition of an affine process can be slightly modified by requiring the affine
property only on $-K^{\ast}$. Thus, instead of the Fourier-Laplace transform,
we here only consider the Laplace transform of $X$, implying that
\[
\int_{K}e^{-\langle u, \xi\rangle}p_t(x,d\xi)=\Phi(t,-u)e^{\langle\Psi(t,-u),x\rangle}=e^{-\phi(t,u)-\langle \psi(t,u),x\rangle}
\]
is real-valued and cannot become $0$ for $u \in K^{\ast}$.

Let us also note that in the present cone setting this is equivalent to the affine property used in~\citet{cuchteich}.
Indeed, if $X$ is an affine process with state space $D=K$ in the sense of~\eqref{eq:affineprocess},
then it is clearly also an affine process
in the sense of Definition~\ref{def:affineprocessK}, since the only difference is the restriction of $\mathcal{U}$ to $-K^{\ast}$.
Note that for $u \in K^{\ast}$ we have
\[
\Phi(t,-u)=e^{-\phi(t,u)} \quad \textrm{and} \quad \Psi(t,-u)=-\psi(t,u).
\]

The other direction can be shown by following the proof of~\citet[Lemma 2.5]{kst}, which implies that property~\eqref{eq:affineprocessK}
can be extended to $\mathcal{U}$, where it takes the form~\eqref{eq:affineprocess}.
\end{remark}

\begin{remark} It is standard to realize a Markov process on
the canonical path space. By~\citet{cuchteich} (and the above
equivalence of the various definitions) $X$ has a version with c\`adl\`ag paths. Hence on
proper convex cones we can consider affine processes on the filtered
probability space $(\Omega, \mathcal{F}, (\mathcal{F}_t),
\mathbb{P}_x)$. Here $\Omega=\mathbb{D}(K_{\Delta})$ denotes the
space of c\`adl\`ag paths $\omega:\re_+\to K_{\Delta}$ with
$\omega(t)=\Delta$ for $t\geq s$, whenever $\omega(s-)=\Delta$ or
$\omega(s)=\Delta$, and $\mathbb{P}_x$ the law of $X$ given $X_0=x$.
Moreover, $\mathcal{F}, \mathcal{F}_t$ are given by
\begin{align*}
\mathcal{F}:=\bigcap_{x \in K_{\Delta}}\mathcal{F}^x, \quad \mathcal{F}_t:=\bigcap_{x \in K_{\Delta}}\mathcal{F}_t^x,
\end{align*}
where $(\mathcal{F}^{x}_t)$ is the usual augmentation of the natural filtration $\sigma(X_s,\, s\leq t)$ with respect to $\mathbb{P}_x$.
\end{remark}

The following theorem summarizes now  the main results on affine processes
in the setting of generating proper closed convex cones $K$:

\begin{theorem}\label{th: main theorem}
Let $X$ be an affine process on $K$. Then $X$ is a Feller process, the functions $\phi$ and $\psi$ given in
\eqref{eq:affineprocessK} are differentiable with respect to time
and satisfy the generalized Riccati equations for $u \in K^{\ast}$,
that is,
\begin{subequations}\label{eq:Riccati}
\begin{align}
\frac{\partial \phi(t,u)}{\partial t}&=F(\psi(t,u)), &\quad &\phi(0,u)=0,\label{eq:RiccatiF}\\
\frac{\partial \psi(t,u)}{\partial t}&=R(\psi(t,u)), &\quad &\psi(0,u)=u \in K^{\ast}, \label{eq:RiccatiR}
\end{align}
\end{subequations}
where $F(u)=\partial_t\phi(t,u)|_{t=0}$ and $R(u)=\partial_t \psi(t,u)|_{t=0}$.
Moreover, relative to any truncation function\footnote{A truncation function is continuous, bounded in norm by $1$ and equals the identity in a neighborhood of the origin.}  $\chi$, there exists a parameter set $(Q,b,B,c,\gamma,m,\mu)$
such that the functions $F$ and $R$ are of the form
\begin{subequations}\label{eq:FRLevyK}
\begin{align}
F(u)&=\langle b,u \rangle + c -\int_{K} \left(e^{-\langle u, \xi\rangle}-1\right) m(d\xi),\label{eq:FLevyK}\\
R(u)&=-\frac{1}{2}Q(u,u)+B^{\top}(u)+\gamma-\int_{K} \left(e^{-\langle u, \xi\rangle}-1+ \langle \chi(\xi),u \rangle \right) \mu(d\xi),\label{eq:RLevyK}
\end{align}
\end{subequations}
where
\begin{enumerate}
\item\label{eq:const_drift1} $b \in K$,
\item\label{eq:const_killing1} $c \in \re_+$,
\item\label{eq:const_measure1} $m$ is a
Borel measure on $K$ satisfying $m(\{0\}) = 0$ and
\[
\int_K \left(\|\xi\| \wedge 1\right)m(d\xi)< \infty,
\]
\item\label{eq:lin_diffusion1} $Q: V \times V \rightarrow V$ is a symmetric bilinear function
such that for all $v \in V$,  $Q(v,v) \in K^{\ast}$ and $\langle x, Q(u,v) \rangle=0$, whenever $\langle u, x\rangle=0$ for $u \in K^{\ast}$ and $x \in K$,
\item\label{eq:lin_killing1} $\gamma \in K^{\ast}$,
\item\label{eq:lin_measure1} $\mu$ is a $K^{\ast}$-valued $\sigma$-finite Borel measure on $K$ satisfying $\mu(\{0\}) = 0$,
$
\int_K\left(\|\xi\|^2 \wedge 1 \right)\langle x,\mu(d\xi)\rangle < \infty
$ for all $x \in K$,
and
\[
\int_K\langle \chi(\xi), u \rangle \langle x, \mu(d\xi)\rangle < \infty \textrm{ for all $u \in K^{\ast}$ and  $x \in K$ with $\langle u, x\rangle=0$},
\]
\item\label{eq:lin_drift1} $B^{\top}: V \rightarrow V$ is a linear map, satisfying
\[
\langle x, B^{\top}(u)\rangle  - \int_K \langle \chi(\xi), u \rangle \langle x, \mu(d\xi)\rangle \geq 0 \textrm{ for all $u \in K^{\ast}$ and $x \in K$ with $\langle u, x\rangle=0$}.
\]
\end{enumerate}
Conversely, let $(Q=0, b, B, c, \gamma, m, \mu)$ be a parameter set satisfying the above conditions. Then there exists a unique affine (pure jump) process
on $K$ such that~\eqref{eq:affineprocessK} holds for all $(t,u) \in \re_+\times
K^{\ast}$, where $\phi(t,u)$ and $\psi(t,u)$ are given by~\eqref{eq:RiccatiF} and~\eqref{eq:RiccatiR}.
\end{theorem}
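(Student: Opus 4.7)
The plan is to split the proof into four blocks: (a) the Feller property, (b) time-differentiability of $\phi,\psi$ with the generalized Riccati equations, (c) the Lévy--Khintchine representation of $F,R$ together with the admissibility conditions (i)--(vii), and (d) the converse existence/uniqueness statement under $Q=0$.

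For (a)--(b), I would first deduce from the affine property that $P_t e^{-\langle u,\cdot\rangle}(x)=e^{-\phi(t,u)-\langle \psi(t,u),x\rangle}$ for all $u\in K^{\ast}$, and that Chapman--Kolmogorov translates into the semi-flow identities $\phi(t+s,u)=\phi(s,u)+\phi(t,\psi(s,u))$ and $\psi(t+s,u)=\psi(t,\psi(s,u))$. Since $K$ is proper, $\mathring{K}^{\ast}$ is non-empty, so the algebra generated by $\{e^{-\langle u,\cdot\rangle}\colon u\in\mathring{K}^{\ast}\}$ separates points of $K$ and vanishes at infinity; Stone--Weierstrass then yields density in $C_0(K)$. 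Combined with joint continuity of $(t,u)\mapsto(\phi(t,u),\psi(t,u))$ (obtained from stochastic continuity and the semi-flow relations), this gives $P_t(C_0(K))\subset C_0(K)$ and strong continuity of $P_t$ at $t=0$, i.e.\ the Feller property. Given Feller, time-differentiability at $t=0$ is then obtained by the regularity arguments of \citet{kst1,cuchteich}, which proceed by showing that the semigroup is regular in the sense of the general theory of affine processes; the semi-flow identities subsequently transport the derivative to arbitrary $t\geq 0$, establishing~\eqref{eq:Riccati} with $F(u)=\partial_t\phi(t,u)|_{t=0}$ and $R(u)=\partial_t\psi(t,u)|_{t=0}$.

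For (c), $u\mapsto e^{-\phi(t,u)-\langle\psi(t,u),x\rangle}$ is, for each fixed $t,x$, the Laplace transform of the sub-probability measure $p_t(x,\cdot)$ on $K$; differentiating at $t=0$ and separating the $x$-independent from the $x$-linear contribution, a Lévy--Khintchine-type theorem for measures supported in the cone $K$ delivers the representations~\eqref{eq:FLevyK}--\eqref{eq:RLevyK}. The conic geometry then forces (i)--(vii): $b\in K$ and $c\geq 0$ follow from $p_t(0,\cdot)$ being sub-probability on $K$; (iii), (v), (vi) follow because the $x$-linear part $-\langle\psi(t,u),x\rangle$ must remain the log-Laplace of a sub-probability measure on $K$ for every $x\in K$; condition (iv) encodes tangency of the diffusive component to $K$ along its boundary faces; and the cross-positivity condition (vii) is precisely the inwardness of the drift $B^{\top}$ at boundary faces, necessary so that the generator applied to $e^{-\langle u,\cdot\rangle}$ has the correct sign on $\{x\in K\colon\langle u,x\rangle=0\}$.

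For (d) with $Q=0$, I would construct the process as a piecewise deterministic Markov process: between jumps it follows the ODE $\dot{\xi}_t=b+B(\xi_t)$, which by (vii) keeps $\xi_t$ in $K$; jumps of constant intensity governed by $m$ and of state-dependent intensity governed by $\langle x,\mu(d\xi)\rangle$ are added via thinning a marked Poisson process, and killing at rate $c+\langle\gamma,x\rangle$ sends the process to $\Delta$. Matching the Laplace transform of the constructed process with the putative~\eqref{eq:affineprocessK} reduces to showing that $\psi(t,u)\in K^{\ast}$ and $\phi(t,u)\geq 0$ for all $(t,u)\in\re_+\times K^{\ast}$, which follows from the invariance of $K^{\ast}$ under the Riccati flow generated by $R$ (a consequence of (v), (vi), (vii) combined with a quasi-monotonicity/boundary argument on $K^{\ast}$). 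Uniqueness then follows because the Laplace transform on $K^{\ast}$ characterizes Borel measures on $K$, together with local Lipschitzness of $F$ and $R$ on $K^{\ast}$ yielding uniqueness for the Riccati system. The main obstacle is step (b): proving a priori time-differentiability of $\phi,\psi$ without assuming it. This is the conceptual core of~\citet{kst1,cuchteich} and must be carefully adapted to the conic setting. A secondary subtlety is the rigorous extraction of condition (vii), which requires a local analysis at each supporting hyperplane of $K^{\ast}$ and control on the integrability of the large-jump part of $\mu$ in directions tangent to the boundary, precisely as imposed by the last integrability condition in (vi).
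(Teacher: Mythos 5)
Your blocks (a)--(c) follow essentially the same route as the paper: the semi-flow identities from Chapman--Kolmogorov, regularity imported from the general results of Keller-Ressel--Schachermayer--Teichmann and Cuchiero--Teichmann, infinite divisibility of the limiting compound Poisson laws combined with the L\'evy--Khintchine formula on cones (for $F$) and on $V$ (for $R$), and a one-dimensional projection argument at supporting hyperplanes for conditions (iv), (vi), (vii). However, in (a) the step ``joint continuity \dots gives $P_t(C_0(K))\subset C_0(K)$'' has a genuine gap. Since $K$ is unbounded, $P_te^{-\langle u,\cdot\rangle}=e^{-\phi(t,u)-\langle\psi(t,u),\cdot\rangle}$ vanishes at infinity on $K$ only if $\psi(t,u)\in\mathring{K}^{\ast}$: if $\psi(t,u)$ lands on $\partial K^{\ast}$, the function is constant and strictly positive along the ray $\re_+x_0$ for any $0\neq x_0\in K$ with $\langle\psi(t,u),x_0\rangle=0$. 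The paper's key ingredient here is Lemma~\ref{lem:psiint}, which shows that $\psi(t,\cdot)$ maps $\mathring{K}^{\ast}$ into $\mathring{K}^{\ast}$; its proof is a bisection/compactness argument using the semi-flow, order preservation and real-analyticity of $u\mapsto\psi(t,u)$. Joint continuity alone does not yield this, and without it your Stone--Weierstrass argument does not close. (A minor further imprecision: the last integrability condition in (vi) controls the \emph{small}-jump, not the large-jump, part of $\mu$ in directions orthogonal to the boundary face.)

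For (d) your construction is genuinely different from the paper's and, as stated, incomplete. The paper never builds the process pathwise: Propositions~\ref{prop: C CS semiflow} and~\ref{th: existence Markov2} show that the Riccati flow preserves the convex cone $\mathcal C_S$ of maps whose pairing with every $x\in K$ is of subordinator L\'evy--Khintchine form --- first for linear jump measures of finite variation via variation of constants and Picard iteration, then in general by cutting off jumps of size smaller than $\delta$ and letting $\delta\to 0$ --- so that $e^{-\phi(t,u)-\langle\psi(t,u),x\rangle}$ is directly exhibited as the Laplace transform of an infinitely divisible sub-probability kernel, and Chapman--Kolmogorov is just the semi-flow identity. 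Your piecewise-deterministic construction only makes sense when $m$ and $\mu$ have finite total mass: thinning a marked Poisson process does not produce a well-defined process when $m(K)=\infty$ or $\mu$ is an infinite measure, and the compensation term $\langle\chi(\xi),u\rangle$ in $R$ forces the deterministic drift between jumps to be $b+B(x)-\int_K\chi(\xi)\langle x,\mu(d\xi)\rangle$ rather than $b+B(x)$, an integral which need not converge since $\mu$ may have infinite variation (this is allowed when $\dim V\le 2$). To repair this you would need a truncation-and-limit scheme plus control of jump-time accumulation for the unbounded state-dependent intensity $\langle X_t,\mu(\cdot)\rangle$, at which point you are essentially re-deriving the paper's analytic argument at the level of Laplace transforms.
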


\begin{proof}
The Feller property is proved in Proposition~\ref{th:Feller}, while the differentiability of $\phi$ and $\psi$ follows from
Proposition~\ref{th:regularityK}. The second part of the assertion is a consequence of Proposition~\ref{prop:LKform} and Proposition~\ref{prop:necessaryadm}. The existence of pure affine jump processes for a given parameter set is established in Proposition~\ref{th: existence Markov2}.
\end{proof}

\subsection{{Symmetric Cones}}
In order to make the conditions on the parameters as formulated in
Theorem \ref{th: main theorem} more explicit and to prove existence
of affine processes with a diffusion part, we now assume $K$ to be a
\emph{symmetric} cone. This setting imposes an additional algebraic structure on $V$ and implies a natural multiplication operation $\circ:
V \times V \to V$, which endows $V$ with the structure of a
so-called \emph{Euclidean Jordan Algebra}. The cone $K$ is then
exactly the cone of squares in this algebra, that is, $K = \{x \circ
x: x \in V\}$.

We start by explaining the fundamental definitions
from the standard reference on symmetric cones and Euclidean Jordan algebras,~\citet{Faraut1994}.
In order to give some intuition, we illustrate them by using the $r \times r$ real symmetric matrices $S_r$.

\begin{definition}[Symmetric cone]\label{def:symcone}
A convex cone $K$ in an Euclidean space $(V,\langle \cdot, \cdot \rangle)$ of dimension $n$ is called \emph{symmetric} if it is
\begin{enumerate}
\item \emph{homogeneous}, which means that the automorphism group
\begin{align}\label{eq:aut}
G(K)=\{g \in GL(V)\,|\,gK=K\}
\end{align}
acts transitively on $K$. That is, for all $x,y \in \mathring{K}$ there exists
an invertible linear map $g: V \to V$ that leaves $K$ invariant and maps $x$ to $y$,
\item \emph{self-dual}, that is, $K^\ast = K$.
\end{enumerate}
A symmetric cone $K$ is said to be \emph{irreducible} if there are no non-trivial subspaces $V_1$, $V_2$ and
symmetric cones $K_1 \subset V_1$, $K_2 \subset V_2$ such that $V$
is the direct sum of $V_1$ and $V_2$ and $K=K_1+K_2$.
\end{definition}

\begin{example}
For illustrative purposes, let us consider the vector space $S_r$ of symmetric $r \times r$-matrices, which is of
dimension $n=\frac{r(r+1)}{2}$. A scalar product on this space is
given by $\langle x, y \rangle =\tr(xy)$, where $\tr$ denotes the
usual matrix trace. The set of positive semidefinite
matrices is a symmetric cone in this vector space, and will be
denoted by $S_r^+$. Moreover, we write $S_r^{++}$ for the open cone
of positive definite matrices. Clearly, $S_r^+$ is self-dual with
respect to $\langle x, y \rangle =\tr(xy)$. Furthermore, its
automorphism group is given by
\[
 G(S_r^+)=\left\{G \in GL(S_r)\,|\, Gx=gxg^{\top},\, g \in GL(\re^r)\right\}.
\]
Let $\sqrt{z}$ denote the unique square root of the a positive
semidefinite matrix $z$. Then by setting
$g=\sqrt{y}\sqrt{x^{-1}}$, we have $G(x)=y$. Hence $S_r^+$ is
homogeneous.
\end{example}

As already mentioned, symmetric cones are directly related to Euclidean Jordan algebras. These, in turn, are defined as follows:

\begin{definition}[Euclidean Jordan algebra]\label{def:EJA}
A real Euclidean space $(V,\langle \cdot, \cdot \rangle)$ with a bilinear product $\circ: V \times V \to V: (x,y) \mapsto x \circ y$ and identity element $e$ is called an \emph{Euclidean Jordan algebra} if
\begin{enumerate}
 \item $V$ is a Jordan algebra with product $\circ$, that is, for all $x,y \in V$
\[
(a)\quad x\circ y = y \circ x, \qquad(b)\quad x^{2}\circ (x\circ y) = x \circ (x^{2} \circ y),
\]
\item and the Jordan product is compatible with the scalar product, in the sense that
\[
\langle x\circ y, z\rangle = \langle y, x \circ z \rangle.
\]
\end{enumerate}
An Euclidean Jordan algebra is said to be \emph{simple} if it does not contain any non-trivial ideal.
\end{definition}

\begin{remark}
Note that the Jordan product is commutative by (a), but in general not associative.
Thus (b) is a genuine axiom. We have used $x^2$ to denote the Jordan product $x \circ x$. This should not cause confusion, even when we use the same notation to denote powers of scalars and matrices. By induction it is seen that $V$ is a power associative algebra, that is, $x^m \circ x^n =x^n \circ x^m=x^{m+n}$, for all $m,\,n \geq 1$.
\end{remark}

\begin{example}
 By defining the following product on the vector space of $r \times r$ real symmetric matrices
\[
 x \circ y = \frac{1}{2}( xy+yx),
\]
it is easily verified that $S_r$ is a Jordan algebra.
Here $xy$ denotes the usual matrix multiplication. It is well known that the trace is invariant under cyclic permutations, that is,
\[
\langle x, yz\rangle=\langle y,zx\rangle=\langle z,xy\rangle.
\]
Together with  $\langle x, y \rangle = \tr(xy) = \tr(yx) = \tr(x \circ y)$ we obtain (ii).
\end{example}

For an element $x \in V$ we introduce the \emph{left-product operator}, denoted by $L$ and defined by
\begin{align}\label{eq:leftprod}
L(x) y = x \circ y.
\end{align}
Moreover, $P$ denotes the so-called \emph{quadratic representation} of $V$, given by
\begin{align}\label{Eq:quadratic_product}
P(x) = 2L(x)^2 - L(x^2).
\end{align}
For both operators we have $L=L^{\top}$ and $P=P^{\top}$. In the
case of $S_r$, the quadratic representation is given by $P(x)y=xyx$.

The one-to-one correspondence between Euclidean Jordan algebras and symmetric cones is established
in~\citet[Theorem III.3.1, III.4.4 and III.4.5]{Faraut1994} and can be rephrased as follows:

\begin{theorem}\label{th:symconeEJA}
Let $K$ be a symmetric cone in $V$.
Then there exists a Jordan product $\circ$ on $V$ such that $(V,\circ)$ is an Euclidean Jordan algebra, and
\[
K = \{x^2: x \in V\}.
\]
The symmetric cone is irreducible if and only if the associated Euclidean Jordan algebra is simple.
\end{theorem}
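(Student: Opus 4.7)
The plan is to follow the classical Koecher--Vinberg construction: from the symmetric cone $K$ alone, manufacture a bilinear product $\circ$ on $V$ out of intrinsic data of $K$, and then verify the Jordan axioms together with the identity $K = \{x \circ x : x \in V\}$. Fix a base point $e \in \mathring{K}$; by homogeneity any other base point is $G(K)$-equivalent to $e$, so the Jordan structure will be canonical up to isomorphism. After possibly rescaling $\langle \cdot, \cdot \rangle$, we arrange that $e$ will be the identity of the resulting algebra.

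The key analytic tool is the characteristic function $\varphi(x) = \int_{K^\ast} e^{-\langle x, y \rangle}\, dy$, defined on $\mathring{K}$. One checks that $\varphi$ is smooth, $G(K)$-equivariant, and that $-\log \varphi$ is strictly convex. Self-duality $K = K^\ast$ then implies that the gradient $x \mapsto -\nabla \log \varphi(x)$ is an involutive diffeomorphism of $\mathring{K}$ onto itself---the geometric incarnation of Jordan inversion $x \mapsto x^{-1}$---with $e$ as its unique fixed point after normalization. The Hessian of $-\log \varphi$ at $e$ is $G(K)_e$-invariant and, after rescaling, coincides with $\langle \cdot, \cdot \rangle$.

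Next, differentiate the $G(K)$-action at $e$: the Lie algebra $\mathfrak{g} = \mathrm{Lie}(G(K))$ acts linearly on $V$ and splits as $\mathfrak{g} = \mathfrak{k} \oplus \mathfrak{p}$, where $\mathfrak{k}$ is the stabilizer of $e$ and $\mathfrak{p}$ consists of elements self-adjoint with respect to $\langle \cdot, \cdot \rangle$. The evaluation map $X \mapsto Xe$ is a linear isomorphism $\mathfrak{p} \to V$; denote its inverse by $x \mapsto L(x) \in \End(V)$, and set $x \circ y := L(x) y$. Commutativity follows from $L(x) e = x$, $L(e) = \id$ and self-adjointness of $L(x)$; compatibility $\langle x \circ y, z\rangle = \langle y, x \circ z \rangle$ is immediate from $L(x) = L(x)^\top$. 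The genuinely difficult step, which I expect to be the main obstacle, is the Jordan identity $[L(x), L(x^2)] = 0$: it must be extracted from the algebraic structure of the symmetric pair $(\mathfrak{p}, \mathfrak{k})$ coming out of homogeneity and self-duality, by showing $[L(x), L(y)] \in \mathfrak{k}$ and exploiting the bracket relations on $\mathfrak{g}$ together with the invariance of the characteristic function.

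For $K = \{x^2 : x \in V\}$, observe that $\exp(L(x)) e$ sweeps out all of $\mathring{K}$ as $x$ varies over $V$, since $\mathfrak{p}$ acts transitively on $\mathring{K}$; on the other hand squares $y \circ y = L(y)^2 e$ fill the same set once one verifies that $\exp(L(y)) e$ is itself a square (via the power-associative functional calculus provided by $L(y)$) and passes to closures, both of which are generating, closed, convex, self-dual cones in $V$. Finally, the equivalence of irreducibility and simplicity: an orthogonal decomposition $V = V_1 \oplus V_2$ with $K = K_1 + K_2$ decomposes $e = e_1 + e_2$ into central orthogonal idempotents of $(V, \circ)$, yielding ideals $V_i = L(e_i) V$; conversely any non-trivial ideal $I \subset V$ produces, via the Peirce decomposition with respect to a central idempotent, an orthogonal splitting $V = I \oplus I^\perp$ whose cones of squares give the reducing summands. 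This finishes the equivalence of irreducibility of $K$ and simplicity of the associated Jordan algebra.
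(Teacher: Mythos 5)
First, a point of comparison: the paper does not actually prove this statement. Theorem~\ref{th:symconeEJA} is presented as a restatement of Theorems III.3.1, III.4.4 and III.4.5 of Faraut--Kor\'anyi, so the only ``proof'' in the paper is that citation. Your outline is precisely the Koecher--Vinberg construction underlying those cited theorems --- characteristic function $\varphi$, the involution $x\mapsto -\nabla\log\varphi(x)$ of $\mathring{K}$, the decomposition $\mathfrak{g}=\mathfrak{k}\oplus\mathfrak{p}$ into skew and self-adjoint parts, the definition of $L(x)$ as the unique element of $\mathfrak{p}$ with $L(x)e=x$ --- so the architecture is the correct and standard one.

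As a proof, however, the proposal has a genuine gap exactly where you flag it: the Jordan identity $x^{2}\circ(x\circ y)=x\circ(x^{2}\circ y)$, equivalently $[L(x),L(x^2)]=0$, is the entire nontrivial content of the theorem, and ``must be extracted from the bracket relations'' is a promissory note rather than an argument. Knowing $[L(x),L(x^2)]\in\mathfrak{k}$ only yields $[L(x),L(x^2)]e=0$; to upgrade this to the operator identity one needs that elements of $\mathfrak{k}$ act as derivations of $\circ$ (i.e.\ $[k,L(x)]=L(kx)$ for $k\in\mathfrak{k}$) combined with the total symmetry of $\langle x\circ y,z\rangle$, and assembling these is where the real work in Faraut--Kor\'anyi III.3.1 lies. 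Three smaller defects: (a) it is $\log\varphi$, not $-\log\varphi$, that is strictly convex (its Hessian is a covariance operator), so the signs in your second paragraph are reversed; (b) commutativity $L(x)y=L(y)x$ does not follow from self-adjointness of $L(x)$ alone --- it follows from $[L(x),L(y)]\in\mathfrak{k}$ together with $\mathfrak{k}e=0$, i.e.\ from the very bracket relations you postpone to the Jordan identity; (c) passing from $\mathring{K}=\{\exp(L(y))e : y\in V\}$ to $K=\{x^2 : x\in V\}$ requires knowing that the set of squares is closed, which needs an argument such as properness of the square map via $\|x\|^{2}=\langle x^{2},e\rangle\le\|x^{2}\|\,\|e\|$, or the spectral decomposition --- but the latter is only available after the Jordan algebra structure is established.
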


\begin{example}
In the case of $S_r^+$, the above theorem can easily be verified, since
\[
 S_r^+=\{x \circ x =x^2 \,|\, x \in S_r\}.
\]
Note that the Jordan product $x \circ x=x^2$ equals the matrix
product $xx=x^2$ in this case. It can be easily seen
from their eigenvalue decomposition that the set of squares of
symmetric matrices is equal to the set of positive semidefinite
matrices.
\end{example}

We use some further facts from the theory of Jordan algebras in the
sequel. For those we refer to~\citet{Faraut1994} and also to
Appendix~\ref{appendix:EJA}, where we give an overview of the most
important results and illustrate them by means of
real-valued symmetric matrices.

We are now prepared to formulate the refined conditions on the parameters given in Theorem~\ref{th: main theorem}
in the context of irreducible symmetric cones.

\begin{theorem}\label{th:neccond}
Let $X$ be an affine process on an irreducible symmetric cone $K$ with parameters
$(Q,b,B,c,\gamma,m,\mu)$ as derived in Theorem~\ref{th: main theorem}. Then
there exists  $\alpha \in K$ such that
\[
 Q(u,u)=4P(u)\alpha,
\]
where $P(u)$ denotes the quadratic representation of the Euclidean Jordan algebra
$V$, defined in~\eqref{Eq:quadratic_product}, and where $ \alpha=Q(e,e)/4$.

Moreover, if $\dim V >2$, then the linear jump measure $\mu$ additionally satisfies
\begin{align}\label{eq:finitejumpmu}
 \int (\|\xi\| \wedge 1) \langle x, \mu(d\xi)\rangle < \infty, \quad \textrm{for all } x \in K.
\end{align}

Furthermore, the following drift condition holds:
\begin{equation}\label{eq:drift}
 b\succeq d(r-1) \alpha.
\end{equation}
Here $r$ denotes the rank (see~\eqref{Eq:rank}) and $d$ the Peirce invariant (see~\eqref{eq:Peirce}) of $V$.
\end{theorem}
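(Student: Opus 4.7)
The plan is to establish the three conclusions separately, combining the abstract admissibility conditions of Theorem~\ref{th: main theorem} with the algebraic structure of an irreducible Euclidean Jordan algebra.

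\textbf{The form of $Q$.} Since $K$ is self-dual, condition (iv) of Theorem~\ref{th: main theorem} says that $\langle x, Q(u,v)\rangle = 0$ for all $v \in V$ whenever $u,x \in K$ satisfy $\langle u, x\rangle = 0$. The decisive Jordan-algebraic fact is that orthogonality of $u,x \in K$ entails $P(u)x = 0$, which follows from the Peirce multiplication rules $V(c,1)\circ V(c,0)=0$ applied to the support $c$ of $u$. Consequently the candidate $(u,v) \mapsto 4P(u,v)\alpha$ satisfies the same orthogonality condition for any $\alpha \in K$, since $\langle x, P(u,v)\alpha\rangle = \langle P(u,v)x, \alpha\rangle = 0$. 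Setting $\alpha := Q(e,e)/4$ gives $\alpha \in K = K^{\ast}$, and the identity $P(e) = \id$ shows $4P(e)\alpha = Q(e,e)$, so the residual map $\widetilde{Q}(u,v) := Q(u,v) - 4P(u,v)\alpha$ inherits the orthogonality condition and vanishes at $(e,e)$. To conclude $\widetilde{Q} \equiv 0$, I would fix a Jordan frame $(c_1,\dots,c_r)$: the orthogonality condition forces $\widetilde{Q}(c_i,c_i) \in V(c_i,1) \oplus V(c_i,1/2)$, cross-terms $\widetilde{Q}(c_i,c_j)$ lie in $V(c_i,1/2)\cap V(c_j,1/2)$, and a combination of $G(K)$-automorphisms permuting the frame together with the identity $\sum_i \widetilde{Q}(c_i,c_i) = \widetilde{Q}(e,e) = 0$ forces all these components to vanish. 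Polarization extends the vanishing to all of $V$.

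\textbf{Sharper integrability of $\mu$.} When $\dim V > 2$, any primitive idempotent $c$ in a Jordan frame admits non-trivial Peirce-$1/2$ subspaces $V_{ij}$ of dimension $d\geq 1$. The strategy is to use admissibility condition (vii), which bounds $\int \langle \chi(\xi), u\rangle \langle x, \mu(d\xi)\rangle$ by $\langle x, B^{\top}u\rangle$ for orthogonal $(u,x) \in K\times K$. Choosing $u$ with suitably generic $1/2$-Peirce content so that $\langle \chi(\xi), u\rangle$ behaves linearly in $\xi$ uniformly across all directions in $K$ upgrades the quadratic integrability from (vi) to the linear bound \eqref{eq:finitejumpmu}. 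The hypothesis $\dim V > 2$ is needed precisely to guarantee the existence of such non-trivial $1/2$-Peirce couplings.

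\textbf{Drift lower bound.} For $b \succeq d(r-1)\alpha$ the plan is to analyze the generalized Riccati system \eqref{eq:Riccati} near the boundary of $K^{\ast} = K$. Fix a primitive idempotent $c$ and consider the initial value $u_0 = sc$ with $s \to \infty$. The quadratic part $-2P(\psi)\alpha$ of $R$ confines $\psi(t,sc)$ to the face through $c$ and produces a definite short-time blow-up whose rate is dictated by $P(c)\alpha$ and the trace of $P$ restricted to the Peirce-$1/2$ blocks attached to $c$. Integrating $\dot\phi = F(\psi) = \langle b,\psi\rangle + \ldots$ and extracting the logarithmic $s \to \infty$ asymptotics of $\phi(t,sc)$ yields the necessary inequality $\langle b, c\rangle \geq d(r-1)\langle \alpha, c\rangle$; running $c$ over all primitive idempotents, which generate the extreme rays of $K$, gives $b - d(r-1)\alpha \in K$. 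The main obstacle is isolating the exact constant $d(r-1)$: this is the Jordan-algebraic counterpart of the Bru/Feller boundary condition for Wishart processes, where the factor $d$ reflects the dimension of each off-diagonal Peirce block and $(r-1)$ counts the blocks attached to $c$ in the rank-$r$ frame, consistent with the structural identity $\dim V = r + d\binom{r}{2}$.
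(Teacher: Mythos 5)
Your overall architecture matches the paper's (the three claims are proved separately, as Propositions~\ref{prop:alpha}, \ref{prop:jumpfinite} and~\ref{Prop:drift}), but each of your three arguments has a genuine gap. For the diffusion part, the reduction to $\widetilde{Q}=Q-4P(\cdot,\cdot)\alpha$ destroys exactly the property that makes the argument work: $\widetilde{Q}(v,v)$ is a difference of two elements of $K$ and need not lie in $K$, so from $\langle x,\widetilde{Q}(c_i,c_i)\rangle=0$ for $x\in V(c_i,0)\cap K$ you can only kill the $V(c_i,0)$-components; you cannot remove the $V(c_i,1/2)$-components. The identity you invoke is also misstated ($\widetilde{Q}(e,e)=\sum_i\widetilde{Q}(c_i,c_i)+2\sum_{i<j}\widetilde{Q}(c_i,c_j)$, not $\sum_i\widetilde{Q}(c_i,c_i)$), and projecting it onto a block $V_{kl}$ gives one linear relation among three unknown components, which ``frame-permuting automorphisms'' cannot resolve since $Q$ carries no equivariance. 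The paper avoids all this by working with $Q$ itself: since $Q(p_j,p_j)\in K$ and $\langle p_i,Q(p_j,p_j)\rangle=0$, Lemma~\ref{Lem:orthogonality}~(i)--(ii) upgrades scalar orthogonality to the multiplicative relation $Q(p_j,p_j)\circ p_i=0$, which pins $Q(p_j,p_j)$ into $V_{jj}$ exactly (and similarly $Q(p_i,p_j)\in V_{ij}$), after which $Q(u,u)=4P(u)\alpha$ drops out of the Peirce projection formulas. That positivity-plus-orthogonality step is the missing idea.

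For the jump part your sketch contains no argument: condition (vi) only controls $\int\langle\chi(\xi),u\rangle\langle x,\mu(d\xi)\rangle$ for $u$ orthogonal to $x$, i.e.\ in directions transversal to $x$, and the whole difficulty is to extract control of $\int\xi_i\,\mu_i(d\xi)$ (the ``diagonal'' directions). The paper does this with the specific orthogonal pairs $x=p_i+p_j+w$, $u=p_i+p_j-w$, $w\in V_{ij}$, $\|w\|^2=2$, summing the two resulting inequalities and using the Cauchy--Schwarz-type bound $\tfrac12\langle\xi_{ij},w\rangle\langle w,\mu_{ij}(E)\rangle\le 2\sqrt{\xi_i\xi_j\mu_i(E)\mu_j(E)}$ to show each summand is nonnegative, hence finite; nothing in your sketch produces these cancellations. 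For the drift bound, the mechanism you propose cannot yield the constant $d(r-1)$: positivity of $\phi(t,sc)$ as $s\to\infty$ only forces $\langle b,c\rangle\ge 0$ (in the pure Bru case $\phi(t,u)=\int_0^t\langle b,\psi(s,u)\rangle\,ds$ with $\psi(t,u)=(u^{-1}+2t\alpha)^{-1}$, and this is nonnegative for every $b\in K$). The constant $d(r-1)$ comes from the requirement that $e^{-\phi-\langle\psi,x\rangle}$ be the Laplace transform of a positive measure, which the paper accesses through the positive maximum principle: for $y\in\partial K$ the map $v\mapsto\mathcal{A}(e^{-v\det(y)}-1)$ must be the log-Laplace exponent of an infinitely divisible law on $\re_+$, forcing $\langle b,\nabla\det(y)\rangle+\tfrac12\Tr\left(A(y)(\tfrac{\partial}{\partial x}\otimes \tfrac{\partial}{\partial x})\right)\det|_y\ge 0$, and the trace identity $\Tr(A(x)P(x^{-1}))=4\tfrac{n}{r}\langle x^{-1},\alpha\rangle$ of Lemma~\ref{lem:Trace} turns this into $\det(x)\langle x^{-1},b-d(r-1)\alpha\rangle\ge 0$. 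Your final step (testing against primitive idempotents and using the spectral decomposition to conclude $b-d(r-1)\alpha\in K$) is fine, but the inequality it needs is not produced by your asymptotic argument.
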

We point out that the new parameter conditions of
Theorem \ref{th:neccond} are independent of the choice of truncation
function $\chi$. The following corollary shows that
the drift condition \eqref{eq:drift} can be strengthened to give a
condition for boundary non-attainment.

\begin{corollary}\label{cor:noboundary}
Let $X$ be a conservative affine process. If $\dim V >2$ and
\begin{align}\label{eq:nonboudary}
 b\succeq (d(r-1)+2) \alpha,
\end{align}
then $\mathbb{P}_x[X \in \mathring{K}]=1$ for each $x \in
\mathring{K}$, that is if $X$ is started at $x \in \mathring{K}$ it
remains in $\mathring{K}$ almost surely.
\end{corollary}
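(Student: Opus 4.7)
The plan is a Lyapunov-function argument using the Jordan-algebra determinant $\Det$. On the open cone $\mathring{K}$, set $V(x) := -\log \Det(x)$; then $V$ is smooth on $\mathring{K}$ and $V(x)\to+\infty$ as $x\to\partial K$, so it suffices to show that $V(X_t)$ does not explode in finite time $\mathbb{P}_x$-almost surely for any $x\in\mathring{K}$. From the calculus of Euclidean Jordan algebras one has $\nabla \log\Det(x) = x^{-1}$ (identifying $V$ with itself via $\langle\cdot,\cdot\rangle$) and Hessian equal to the bilinear form $-P(x^{-1})$, with $P$ the quadratic representation~\eqref{Eq:quadratic_product}.

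Using the generator derived in Theorem~\ref{th: main theorem} together with the refinement $Q(u,u)=4P(u)\alpha$ from Theorem~\ref{th:neccond}, the infinitesimal covariance of the continuous martingale part satisfies $\langle a(x)u,v\rangle = \langle x, Q(u,v)\rangle$. The key identity---derived via Jordan trace formulas and the Peirce decomposition of $V$---is
\[
\tfrac12\Tr\bigl(a(x)\,P(x^{-1})\bigr) \;=\; \bigl\langle (d(r-1)+2)\alpha,\;x^{-1}\bigr\rangle.
\]
Combining this with the drift contribution $-\langle b+Bx,\,x^{-1}\rangle$ and the jump contribution (controlled via the monotonicity $\Det(x+\xi)\ge\Det(x)$ for $\xi\in K$ together with~\eqref{eq:finitejumpmu}) yields a bound of the form
\[
\mathcal{A}V(x) \;\le\; -\bigl\langle b-(d(r-1)+2)\alpha,\,x^{-1}\bigr\rangle + c_{1} + c_{2}\langle e, x\rangle ,
\]
where the linear-in-$x$ part stems from the boundary-preservation condition~(\ref{eq:lin_drift1}) applied to $B$. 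Under hypothesis~\eqref{eq:nonboudary} the first term is nonpositive, so $\mathcal{A}V$ is bounded above by an affine functional of $x$.

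A standard localization then closes the argument: defining $\tau_n := \inf\{t\ge 0: V(X_t)\ge n\}$, Itô's formula combined with conservativeness of $X$ gives $\mathbb{E}_x V(X_{t\wedge\tau_n}) \le V(x)+\tilde{c}(1+t)$, whence $\mathbb{P}_x[\tau_n\le t]\le (V(x)+\tilde{c}(1+t))/n \to 0$ as $n\to\infty$, and so $\tau_\infty=+\infty$ almost surely, i.e.\ $X$ never reaches $\partial K$. The main obstacle is the precise Jordan-algebra computation producing exactly the constant $d(r-1)+2$: the summand $d(r-1)$ already appears in the weaker admissibility bound~\eqref{eq:drift} (reflecting the dimensions of the off-diagonal Peirce subspaces), whereas the extra ``$+2$'' arises as the second-order correction from the Hessian of $\log\Det$ along the square-root direction; executing this identity on an arbitrary simple Euclidean Jordan algebra requires careful use of a Jordan frame and the associated spectral theorem (cf.\ Appendix~\ref{appendix:EJA}), which is the principal algebraic ingredient beyond Theorem~\ref{th:neccond}.
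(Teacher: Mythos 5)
Your core computation coincides with the paper's proof of Proposition~\ref{prop:boundary}: It\^o's formula for $\log\det$ on $\mathring{K}$ (note the paper writes $\det$ for elements of $V$ and reserves $\Det$ for linear maps), with $\nabla\log\det(x)=x^{-1}$ and Hessian $-P(x^{-1})$, combined with the trace identity $\Tr\bigl(A(x)P(x^{-1})\bigr)=4\tfrac{n}{r}\langle x^{-1},\alpha\rangle$ of Lemma~\ref{lem:Trace}; since $n=r+\tfrac{d}{2}r(r-1)$, this yields exactly the constant $2\tfrac{n}{r}=d(r-1)+2$, and the jumps are harmless because $\det(x+\xi)\geq\det(x)$ for $\xi\in K$. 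Up to this point your argument and the paper's agree.

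The divergence, and the genuine gap, is in how you close the argument. You pass to expectations: from $\mathcal{A}V(x)\leq c_1+c_2\langle e,x\rangle$ you infer $\E_x V(X_{t\wedge\tau_n})\leq V(x)+\tilde c(1+t)$ and apply Markov's inequality. First, $V=-\log\det$ is unbounded \emph{below}, so $n\,\mathbb{P}_x[\tau_n\leq t]\leq \E_x V(X_{t\wedge\tau_n})$ is not valid as written; repairing it requires controlling $\E_x[\log^+\det(X_t)]$, i.e.\ moments. Second, the affine bound on $\mathcal{A}V$ only helps if $\E_x\int_0^t\langle e,X_s\rangle\,ds<\infty$, but a conservative affine process on $K$ need only have jump measures integrating $\|\xi\|\wedge 1$, so large jumps may have infinite mean and $\E_x\|X_t\|=\infty$ is possible; localizing in $\|X\|$ does not rescue the bound because the constant then grows with the localization level. (Two further loose ends: the stochastic integral $\int\langle X_s^{-1},dM_s^c\rangle$ must be shown to be a true martingale up to $\tau_n$, and the claimed global estimate $-\langle B_0(x),x^{-1}\rangle\leq c_1+c_2\langle e,x\rangle$ is not obvious, since this quantity equals $-\langle B_0(x),\nabla\det(x)\rangle/\det(x)$ and the inward-pointing condition only controls its sign on $\partial K$.) The paper avoids all integrability questions by arguing pathwise: writing $\ln\det(X_t)=\ln\det(x)+\widetilde M^c_t+\widetilde P_t$, the drift condition $b\succeq(d(r-1)+2)\alpha$ makes the finite-variation part $\widetilde P_t$ bounded below along each path on $[0,T_x]$, while a continuous local martingale cannot tend to $-\infty$ (McKean's argument), contradicting $\ln\det(X_t)\to-\infty$ as $t\to T_x$. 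To make your Lyapunov formulation rigorous you should replace the expectation step by this pathwise closing.
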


\begin{proof}
The results are a consequence of Proposition~\ref{prop:alpha},
Proposition~\ref{prop:jumpfinite}, Proposition~\ref{Prop:drift} and
(for the corollary) Proposition~\ref{prop:boundary}.
\end{proof}

In the following definition we summarize the above introduced parameter restrictions for affine processes
on irreducible symmetric cones.

\begin{definition}[Admissible parameter set]
An \emph{admissible parameter set}\\ $(\alpha, b,B, c,\gamma, m, \mu)$ (associated with a truncation function $\chi$) for an affine process on an
irreducible symmetric cone $K$ consists of
\begin{itemize}
\item a linear diffusion coefficient
\begin{equation}\label{eq: alpha}
\alpha \in K,
\end{equation}
\item a constant drift term satisfying
\begin{equation}\label{eq: b}
b \succeq d(r-1)\alpha,
\end{equation}
\item a constant killing rate term
\begin{equation}\label{eq: c}
c \in \re_+,
\end{equation}
\item a linear killing rate coefficient
\begin{equation}\label{eq: gamma}
\gamma \in K,
\end{equation}
\item a constant jump term: a
Borel measure $m$ on $K$ satisfying
\begin{equation}\label{eq: m}
m(\{0\}) = 0 \textrm{ and } \int_K \left(\|\xi\| \wedge 1\right)m(d\xi)< \infty,
\end{equation}
\item a linear jump coefficient: a $K$-valued $\sigma$-finite
Borel measure $\mu$ on $K$ with $\mu(\{0\}) = 0$
such that the kernel
\begin{equation}\label{eq: mudef}
 M(x,d\xi):=\langle x, \mu(d\xi)\rangle
\end{equation}
satisfies
\[
\int_K(\|\xi\|^2 \wedge 1) M(x, d\xi) < \infty, \quad \textrm{for all } x \in K,
\]
and
\begin{equation}\label{eq: mu}
\int_{K} \langle \chi(\xi), u\rangle M(x,d\xi) < \infty\quad\textrm{for all $x,u\in K$ with $\langle x,u\rangle=0$,}
\end{equation}
\item a linear drift coefficient:  a linear map $B^{\top}: V \rightarrow V$ such that
\begin{equation}\label{eq: betaij}
\langle x,B^{\top}(u)\rangle -
\int_{K}\left\langle\chi(\xi),u\right\rangle M(x,d \xi) \geq 0\quad\textrm{for all $x,u\in K$ with $\langle x,
u\rangle=0$.}
\end{equation}
\end{itemize}
\end{definition}

\begin{remark}
From equation \eqref{eq:affineprocessK} and Theorem~\ref{th: main
theorem} we see that
\[\int_K e^{-\langle u,\xi \rangle}p_t(x,d\xi) \approx \exp \left(-F(u) - \langle R(u),x \rangle\right)\]
for small $t \ge 0$. Hence the coefficients of $F$ influence the
transition probability of $X$ in a `constant' way, while the
coefficients of $R$ enter `linearly' with respect to the starting
value $x$. This explains the terminology of `constant' and `linear'
parameters as used above.
\end{remark}

\begin{remark}\label{rem:jumps}
If $\dim V>2$, then~\eqref{eq: mu} can be replaced by~\eqref{eq:finitejumpmu}. Indeed, we can introduce a new drift $\widetilde{B}$ by setting
\[
\widetilde B(u):= B(u)-\int_{K}\left\langle\chi(\xi),u\right\rangle \mu(d\xi),
\]
which in view of ~\eqref{eq: betaij} satisfies
\[
\langle x, \widetilde B(u)\rangle\geq 0\quad\textrm{for all $x,u\in K$ with $\langle x,
u\rangle=0$.}
\]
Accordingly, the function $R$ in Theorem~\ref{th: main theorem} could be altered by omitting $\chi$ and replacing
$B(u)$ by $\widetilde B(u)$.
\end{remark}

Using the above definition we can reformulate and improve Theorem~\ref{th: main theorem} for affine processes on irreducible
symmetric cones.

\begin{theorem}\label{th:main theorem sym cone}
Let $X$ be an affine process on an irreducible symmetric cone $K$.
Then $X$ is a Feller process, the functions $\phi$
and $\psi$ defined in \eqref{eq:affineprocessK} are differentiable
with respect to time and satisfy the generalized Riccati
equations~\eqref{eq:Riccati}. Moreover, there exists an admissible
parameter set $(\alpha,b,B,c,\gamma,m,\mu)$ associated with some
truncation function $\chi$ such that the functions $F$ and $R$ are
of the form
\begin{align}
F(u)&=\langle b,u \rangle + c -\int_{K} \left(e^{-\langle\xi, u\rangle}-1\right) m(d\xi),\label{eq:FsymK}\\
R(u)&=-2 P(u)\alpha+B^{\top}(u)+\gamma-\int_{K} \left(e^{-\langle\xi, u\rangle}-1+ \langle \chi(\xi),u \rangle \right) \mu(d\xi).\label{eq:RsymK}
\end{align}
Conversely, let $(\alpha, b, B, c, \gamma, m, \mu)$ be an admissible parameter set.
Then there exists a unique affine process
on $K$ such that~\eqref{eq:affineprocessK} holds for all $(t,u) \in \re_+\times
K$, where $\phi(t,u)$ and $\psi(t,u)$ satisfy the generalized Riccati equations~\eqref{eq:RiccatiF} and~\eqref{eq:RiccatiR}.
\end{theorem}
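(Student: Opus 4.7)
My approach is to split the theorem into its necessity and sufficiency directions and reduce each to already established results. For the necessity part, Theorem~\ref{th: main theorem} already supplies the Feller property, the time-differentiability of $\phi,\psi$, the generalized Riccati system~\eqref{eq:Riccati}, and the Lévy--Khintchine representations~\eqref{eq:FLevyK}--\eqref{eq:RLevyK} with a parameter set $(Q,b,B,c,\gamma,m,\mu)$ satisfying the conic admissibility conditions (i)--(vii). Specialising to the irreducible symmetric setting, Theorem~\ref{th:neccond} refines these: it forces $Q(u,u)=4P(u)\alpha$ for $\alpha=Q(e,e)/4\in K$, so $-\tfrac12 Q(u,u)=-2P(u)\alpha$, and it sharpens the drift bound to $b\succeq d(r-1)\alpha$. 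The remaining conditions on $(c,\gamma,m,\mu,B^\top)$ inherited from Theorem~\ref{th: main theorem} are exactly those listed in the admissible parameter set definition, so substitution into~\eqref{eq:FLevyK}--\eqref{eq:RLevyK} yields the required forms~\eqref{eq:FsymK}--\eqref{eq:RsymK}.

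For the converse, given an admissible parameter set $(\alpha,b,B,c,\gamma,m,\mu)$, I plan to split it into a \emph{Bru diffusion component} with parameters $(\alpha, d(r-1)\alpha, 0,0,0,0,0)$ and a complementary \emph{jump--drift component} with residual parameters $(0, b-d(r-1)\alpha, B, c, \gamma, m, \mu)$. Both pieces are admissible: the drift condition $b\succeq d(r-1)\alpha$ ensures $b-d(r-1)\alpha\in K$, while all other conditions are inherited unchanged. The Bru component carries precisely the quadratic term $-2P(u)\alpha$ in $R$ together with the critical constant drift $d(r-1)\alpha$ in $F$; in Section~\ref{sec:existence} this process is realised as a diffusion on $K$ whose transition kernels are non-central Wishart distributions, and its Laplace transform is shown there to solve the corresponding Riccati subsystem on $K^\ast=K$. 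For the residual component, Theorem~\ref{th: main theorem} (with $Q=0$) furnishes an affine pure-jump/drift process carrying the remaining parameters. Superimposing the two pieces, by running the Bru semigroup alongside the jump semigroup (via a martingale-problem / Lie--Trotter argument in Section~\ref{sec:existence}), yields a Markov family on $K$ whose Laplace transform has exponent $\phi(t,u)+\langle\psi(t,u),x\rangle$ where $(\phi,\psi)$ solves the combined Riccati system with $F,R$ given by~\eqref{eq:FsymK}--\eqref{eq:RsymK}. Global existence of $\psi(t,\cdot)$ inside $K^\ast$ is automatic because its exponent is the logarithm of a bounded Laplace transform. Uniqueness of the affine process follows since any candidate has Laplace transform satisfying the same Riccati system, which determines its transition kernels via injectivity of the Laplace transform on $K^\ast$.

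The principal obstacle is the Bru construction itself: one must exhibit a Jordan-algebraic SDE on $K$ with diffusion coefficient encoded by $\alpha$ and constant drift exactly at the threshold $d(r-1)\alpha$, prove non-explosion and state-space preservation, and compute its Laplace transform to verify the Riccati identity $\partial_t\psi=-2P(\psi)\alpha$ together with $\partial_t\phi=\langle d(r-1)\alpha,\psi\rangle$. The threshold $d(r-1)\alpha$ is the Euclidean--Jordan analogue of Bru's degrees-of-freedom condition for Wishart distributions, and it is here that the rank $r$ and the Peirce invariant $d$ of $V$ enter decisively. Once this diffusive step is established, combining it with the pure-jump piece from Theorem~\ref{th: main theorem} and reading off uniqueness from the Riccati system are essentially routine.
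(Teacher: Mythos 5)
Your overall architecture matches the paper's: necessity is obtained by combining Theorem~\ref{th: main theorem} with Theorem~\ref{th:neccond}, and sufficiency by splitting the admissible parameter set into the Bru part $(\alpha,d(r-1)\alpha,0,0,0,0,0)$ and the residual jump--drift part $(0,b-d(r-1)\alpha,B,c,\gamma,m,\mu)$, then recombining via a Lie--Trotter splitting of the Riccati systems exactly as in Lemma~\ref{lem: splitting} and Proposition~\ref{th:existence}. Two points, however, deserve correction.

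First, your ``principal obstacle'' paragraph frames the Bru construction as exhibiting a Jordan-algebraic SDE on $K$, proving non-explosion and state-space preservation, and then computing its Laplace transform. The paper deliberately avoids this route: at the threshold $\delta=d(r-1)$ (and for degenerate $\alpha\in\partial K$) the SDE approach is delicate --- Bru's own existence results on $S_r^+$ require $\delta>r-1$ and an initial value with distinct eigenvalues. Instead, the paper solves the Riccati subsystem in closed form (Lemma~\ref{lem:solRiccati}), and then proves \emph{analytically} that $e^{-\phi(t,u)-\langle\psi(t,u),x\rangle}$ is the Laplace transform of a probability measure on $K$ by exhibiting the non-central Wishart density via zonal polynomials for $\delta>d(r-1)$, $\alpha\in\mathring{K}$, and passing to the boundary cases by L\'evy's continuity theorem (Proposition~\ref{lem:noncentral}). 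The Markov process is then assembled from the semi-flow property alone; no SDE is ever solved. If you insist on the SDE route you would need a separate, nontrivial argument precisely in the critical regime where the drift bound is attained.

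Second, your claim that ``global existence of $\psi(t,\cdot)$ inside $K^\ast$ is automatic because its exponent is the logarithm of a bounded Laplace transform'' is circular in the construction direction: the whole point of the converse is to \emph{prove} that the Riccati solution is the exponent of a Laplace transform, so you cannot invoke that fact to rule out blow-up or exit from $K^\ast$. Global existence and invariance of $\mathring{K}^\ast$ must be established first, purely from the ODE, and the paper does this in Proposition~\ref{prop_ricc_sol} using quasi-monotonicity of $R$ (Proposition~\ref{prop:quasimono}), the a priori bound of Lemma~\ref{lemKestR}, and Volkmann's comparison theorem. Without this step the iterates in the splitting scheme need not converge and the limit object is not well defined. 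With these two repairs --- replacing the SDE construction by the explicit Wishart Laplace-transform identification, and inserting the a priori global existence result for the Riccati flow --- your proof coincides with the paper's.
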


\begin{proof}
The first assertion is a reformulation of Theorem~\ref{th: main theorem} using the results of Theorem~\ref{th:neccond}.
The second statement concerning the existence of affine processes for a given admissible parameter set is subject of
Proposition~\ref{th:existence}.
\end{proof}

\subsection{Discussion of the Admissibility Conditions}

In order to give some intuition on the above introduced admissibility conditions for affine processes on symmetric cones,
we discuss and highlight some properties of the admissible
parameter set $(\alpha, b,B, c, \gamma, m, \mu)$.
In particular, we shall compare them with the well-known admissibility conditions
for the canonical state space $\mathbb{R}^m_+\times \mathbb{R}^{n-m}$ and the cone $\mathbb{R}^m_+$ (see~\citet[Definition 2.6]{dfs}).
Note that the latter is a reducible symmetric cone, whose associated Euclidean Jordan algebra $\mathbb{R}^m$ is of rank $1$.
We also exemplify the admissibility conditions by means of the cone of $r \times r$ positive semidefinite matrices
(compare also~\citet{cfmt}).

\subsubsection{Diffusion}
The diffusive behavior of an affine process on a general proper
convex cone is fully determined by the linear diffusion coefficient. This is in constrast to the mixed state space
$\mathbb{R}^n\times \mathbb{R}^m_+$ ($n>0$), on which affine
processes with a non-zero constant diffusion part exist.

Condition~\ref{eq:lin_diffusion1} of Theorem~\ref{th: main theorem}, that is,
\[
 \langle x, Q(u,v) \rangle=0 \textrm { for all } v \in V \textrm{ and } x \in K,\, u \in K^{\ast} \textrm{ with }  \langle u, x\rangle=0,
\]
is the dual formulation of the parallel diffusion behavior along the boundary, which is necessary to guarantee that the process remains in $K$.
In the case of irreducible symmetric cones this translates to
\begin{align}\label{eq:diffAP}
\langle u, A(x) u\rangle = \langle x, Q(u,u) \rangle=4 \langle x, P(u)\alpha\rangle, \quad u \in V,
\end{align}
where $\alpha$ is the linear diffusion coefficient. This property is in line with the admissibility conditions on the reducible symmetric cone $\mathbb{R}^m_+$.
In this case the diffusion part $A(x)$ is of the form $A(x)=\sum_{i=1}^m 4\alpha_i e^i x_i$.
Here, $\alpha_i \in \re_+$ and $e^i$ denotes the $m\times m $ matrix, where $(e^i)_{kl}=\delta_{ik}\delta_{il}$.
As the quadratic representation of $\re^m$ is given by
\[
 P(x)y=(x_1^2 y_1, \ldots, x_m^2y_m)^{\top}, \quad x,y \in \re^m,
\]
relation~\eqref{eq:diffAP} thus also holds on the reducible symmetric cone $\mathbb{R}^m_+$.
In the case of positive semidefinite matrices, the above simplifies to
$\langle u, A(x) u\rangle = 4 \langle x, u\alpha u\rangle$.

We remark that~\eqref{eq:diffAP} has also been stated in~\citet{grasselli}, but it has been justified using different arguments than ours.

\subsubsection{Drift} The drift condition~\eqref{eq: b}
can be explained by the fact that the boundary of a symmetric cone is in general curved and kinked, which implies
this order relation between the diffusion coefficient $\alpha$ and the drift coefficient $b$. We derive this condition by using the positive maximum principle
for the generator $\mathcal{A}$, defined in~\eqref{eq: generator} (see Lemma~\ref{lemprop: characteristicsdet}).

Note that, for the rank $1$ Jordan algebra $\re$ (or, equivalently for the symmetric cone $\re_+$) the drift condition simply reduces to the non-negativity of $b$.

In the case of positive semidefinite $r \times r$ matrices the Peirce invariant equals $1$, whence $b \succeq (r-1)\alpha$.
The stronger condition~\eqref{eq:nonboudary} implying that the process remains in the interior of the cone for all times,
reduces to the well-known Feller condition on $\re_+$. Indeed, the process given by
\[
 dX_t=b dt+ 2\sqrt{ \alpha X_t} dW_t
\]
is always positive if $b \geq 2\alpha$.

One possible specification of the linear drift $B^{\top}$, e.g., in the case of Wishart processes taking values
in the cone of positive semidefinite matrices, is to consider linear maps $M: V \to V$ which belong
to the Lie algebra $\mathfrak{g}(K)$ of the automorphism group $G(K)$ as defined in~\eqref{eq:aut}. These linear maps are characterized by the relation
\[
2P(M(u))u=MP(u)+P(u)M
\]
(see~\citet[Proposition III.5.2]{Faraut1994}). Moreover, since $M \in  \mathfrak{g}(K)$ means
\[
 e^{Mt}(K)=K, \quad \textrm{ for all $t \in \re$, }
\]
condition~\eqref{eq: betaij} reads as
\[
 \langle x, M(u)\rangle =0 \textrm{ for all $x,u \in K$ with $\langle x,u \rangle =0$. }
\]
In the case of $S_r^+$, all elements in $\mathfrak{g}(S_r^+)$ can be represented by
\begin{align}\label{eq: wishart B}
M(u)= Hu+uH^{\top}
\end{align}
for some $r \times r$ matrix $H$.

\subsubsection{Killing Rate} \label{sec:killing}
A necessary condition for an affine process on any convex proper cone
to be conservative is $c=0$ and $\gamma=0$. In the case of symmetric cones,
it can be proved as in~\citet{mayerhofer} that $X$ is conservative if
and only if $c=0$ and $\psi(t,0)\equiv 0$ is the only $K$-valued
local solution of~\eqref{eq:RiccatiR} for $u=0$. The latter
condition clearly requires that $\gamma=0$.

A sufficient condition for $X$ to be conservative is $c=0$,
$\gamma=0$ and
\[
\int_{{K}\cap\{\|\xi\|\geq
1\}}\|\xi\|M(x,d\xi)< \infty, \quad \textrm{for all } x \in K.
\]
Indeed, it can be shown similarly as in~\citet[Section 9]{dfs} that the latter property implies Lipschitz
continuity of $R(u)$ on $K$.

\subsubsection{Jump Behavior} For general convex proper cones, Condition (iii) of Theorem~\ref{th: main theorem}
means that jumps described by $m$ should be of finite variation.
Similarly, Condition (vi) asserts finite variation for the inward
pointing directions of the linear jump part. However, due to the
geometry of irreducible symmetric cones, such a behavior is no
longer possible in dimensions $\dim V> 2$ and all jumps are in fact
of finite total variation, as asserted in~\eqref{eq:finitejumpmu}
(see also Proposition~\ref{prop:jumpfinite} and
Remark~\ref{rem:jumps}). This restriction has been described by \cite{mayerhoferpers} for positive
semidefinite matrices. However, in the case of $\mathbb{R}_+$ and
the two-dimensional Lorentz cone, the linear jump part can have
infinite total variation (for an explicit example, see
\citet{mayerhoferpers}).

Let us also remark that for $r>1$ and $d>0$, affine diffusion
processes $X$ cannot be approximated (in law) by pure jump
processes, since this would imply that $X$ is infinitely divisible. 
Indeed, in view of Proposition~\ref{prop: C CS semiflow} pure jump processes are
infinitely divisible, and it is well known that this property is
conserved under convergence in law. The marginal laws of an affine diffusion process however follow a (generalized) Wishart distribution, 
which is known to be \emph{not} infinitely divisible (see e.g.~\citet{levy}).
For a characterization of
infinite divisibility in the positive semidefinite case, see
\cite[Theorem 2.9]{cfmt}.

However, such an approximation is possible for the canonical state space, since the rank
of the Euclidean Jordan algebra $\re^n$ is $1$. This is explicitly exploited in the existence proof
for affine processes on $\re^m_+ \times \re^{n-m}$ (see~\citet[Section 7]{dfs}).

\subsection{Affine Diffusion Processes on Non-Symmetric Cones}

In this section we show that there actually exist
affine diffusion processes on non-symmetric cones.
Both of the following examples are squared Bessel-type processes; the first one is defined on a (non-homogeneous) polyhedral cone and
the second one on the \emph{dual Vinberg cone}, which is homogeneous, but non-symmetric (see~\citet{vinberg}).

\begin{example}\label{pol ex}
We define the polyhedral cone
\[
K=\left\{t_1a_1+t_2a_2+t_3a_3+t_4a_4\;:\; t_1,t_2,t_3,t_4\geq 0 \right\},
\]
where
\[
a_1:=\left(\begin{array}{lll} 0\\0\\1\end{array} \right),\quad a_2:=\left(\begin{array}{lll} 1\\0\\1\end{array} \right),\quad a_3:=\left(\begin{array}{lll} 1\\1\\1\end{array} \right),\quad a_4:=\left(\begin{array}{lll} 0\\1\\1\end{array}\right) ,
\]
which is not homogeneous by~\citet[Section 2]{Ishi_gradient}. Expressed in standard coordinates, we have
\[
K=\left\{\left(\begin{array}{lll} x_1\\x_2\\x_3\end{array}\right):\quad x_1\geq 0,\;x_2\geq 0,\;x_3\geq x_1,\; x_3\geq x_2 \right\}.
\]

Let $B=(B_1,\dots,B_4)$ be a four-dimensional standard Brownian motion. We consider the surjective quadratic map
\[
q: \mathbb R^4\rightarrow K,\quad q(y):=\sum_{i=1}^4 y_i ^2 a_i.
\]
Then the process $X:=q(y+B)$, $y\in \mathbb R^4$, gives rise to an affine process. Indeed, a straight-forward
calculation yields that $X$ is an It\^o-process satisfying a stochastic differential equation of the form
\[
 dX_t=b(X_t)dt+\sigma(X_t)dW_t, \quad X(0)=q(y)
\]
where $W$ is a $3$-dimensional Brownian motion, $a(x):=\frac{1}{2}(\sigma\sigma^\top)(x)$ and $b(x)$ are affine functions in $x$ given by
\begin{align*}
a(X)&:=\frac{1}{2}([dX_i,dX_j])_{1\leq i,j\leq 3}=2\left(\begin{array}{lll}X_1&X_1+X_2-X_3& X_1\\X_1+X_2-X_3& X_2&X_2\\X_1&X_2&X_3\end{array}\right),\\
b(X)&=\left( \begin{array}{lll}2 &2&4\end{array}\right)^\top.
\end{align*}

Note that this example is covered by the general theory of affine diffusion processes on polyhedral cones, (see~\citet{spreij1}).
\end{example}

The next example provides an affine process whose state space is the \emph{dual Vinberg cone}.

\begin{example}
Let $K^{\ast}$ be the \emph{Vinberg cone}. This is a homogeneous cone in a
$5$--dimensional Euclidean space, which can be represented as
\begin{align*}
K^{\ast}&=\left\{\left(\begin{array}{lll}
a& b_1& b_2\\
b_1 & c_1 & 0\\
b_2 & 0 & c_2
\end{array}\right)\, \Bigg|\,  a \geq 0,\, ac_1-b_1^2\geq 0,\, ac_2-b_2^2 \geq 0 \right\}.
\end{align*}
Its dual cone $K$ the so-called \emph{dual Vinberg cone} is given by
\begin{align*}
K&=\left\{x=\left(\begin{array}{lll}
a& b_1& b_2\\
b_1 & c_1 & 0\\
b_2 & 0 & c_2
\end{array}\right)\, \Bigg|\, x \textrm{ is positive semidefinite }  \right\}.
\end{align*}
According to~\citet[Exercise I.10b]{Faraut1994}, every element $x \in K$ can be written as a sum
\[
 x=x^0+x^1+x^2
\]
with $x_i \in \Lambda^i, \, i \in \{0,1,2\}$, where the sets $\Lambda^i$ are defined by
\begin{align*}
 \Lambda^0&=\left\{\left(\begin{array}{lll}
a_0& 0& 0\\
0 & 0 & 0\\
0 & 0 & 0
\end{array}\right)\, \Bigg|\, a_0\geq0  \right\},\\
\Lambda^1&=\left\{\left(\begin{array}{lll}
a_1& b_1& 0\\
b_1 & c_1 & 0\\
0 & 0 & 0
\end{array}\right)\, \Bigg|\, a_1\geq0, \, a_1c_1=b_1^2  \right\},\\
\Lambda^2&=\left\{\left(\begin{array}{lll}
a_2& 0& b_2\\
0 & 0 & 0\\
b_2 & 0 & c_2
\end{array}\right)\, \Bigg|\, a_2\geq0, \, a_2c_2=b_2^2  \right\}.
\end{align*}
Notice that the map $\Lambda^0 \times \Lambda^1_{c_1 >0} \times \Lambda^2_{c_2 >0} \to K$ is invertible as long
as $c_1$ and $c_2$ are strictly positive.
We now consider three independent affine processes $X^i,\,i\in\{0,1,2\}$, taking values in the sets
$\Lambda^i$. To this end let us denote by $\Pi^0$ the projection on the $(1,1)$ component and
by $\Pi^i, \, i \in \{1,2\}$, the projection on the matrix obtained
by deleting the $(4-i)^{\textrm{th}}$ row and column. The processes $X^i,\,i\in\{0,1,2\}$, are then specified as
\begin{align*}
 d\Pi^0(X^0_{t})&=b dt+2\sqrt{\Pi^0(X^0_{t})}dB_t, \quad \Pi^0(X^0_{t})=x^0_{11}\geq 0, \quad  b\geq 0,\\
 d\Pi^i(X^i_{t})&=\left(\begin{array}{ll}
1& 0\\
0 & 0
\end{array}\right)dt+\sqrt{\Pi^i(X^i_{t})}dW^i_t\left(\begin{array}{ll}
1& 0\\
0 & 0
\end{array}\right)+\left(\begin{array}{ll}
1& 0\\
0 & 0
\end{array}\right)(dW_t^i)^{\top}\sqrt{\Pi^i(X^i_{t})}, \\
\Pi^i(X^i_{0})&=x^i= z^i(z^{i})^{\top}, \quad z^i \in \mathbb{R}^2.
\end{align*}
Here, $B$ is a one-dimensional Brownian motion and $W^{i}, i \in \{1,2\}$,
a $2 \times 2 $ matrix of Brownian motions, all mutually independent.
The remaining entries of the processes $X^i$ are supposed to be $0$.
Note in particular that $\Pi^0(X^0)$ takes values in $\mathbb{R}_+$ and $\Pi^i(X^i),\,i\in\{1,2\}$, values in
$\partial S_2^+$. 
The latter property can for example be seen by noticing
that the law of $\Pi^i(X^i_t),\, i \in \{1,2\}$, is equal to
\[
Y^i_t:=\begin{pmatrix}\left (Z^i_t+z^i_1\right)^2 & Z^i_t z_{2}^i+ z_1^i z_2^i \\
Z^i_t z_{2}^i+ z_1^i z_2^i & (z_{2}^i)^2 \end{pmatrix}=\left(\begin{array}{c} z^i_1+Z^i_t\\ z_2^i\end{array}\right)\left(\begin{array}{cc} z^i_1+Z^i_t & z_2^i \end{array}\right),
\]
where $Z^i,\, i \in \{1,2\}$, are standard one-dimensional Brownian motions.
Note that if $z^i=(z_1^i, 0)^{\top}$, then $\Pi^0(X^i),\, i \in \{1,2\}$, are one-dimensional squared Bessel processes and all other entries of $X^i,\, i \in \{1,2\}$, are $0$.
Let us now define a process $X$ by
\[
 X=X^0+X^1+X^2.
\]
Then $X$ takes values in $K$ and is an affine process.
Indeed, the functions $\phi$ and $\psi$ can be explicitly calculated and are given by
\begin{align*}
\phi(t,u)&=\left(\frac{b}{2}+1\right)\ln(1+2t \Pi^0(u)),\\
\psi(t,u)&=\begin{pmatrix}
\psi^0(t, \Pi^0(u)) & \psi^1_{12}(t,\Pi^1(u)) & \psi^1_{12}(t,\Pi^2(u))\\
\psi^1_{12}(t,\Pi^1(u)) & \psi^1_{22}(t,\Pi^1(u))&0\\
\psi^1_{12}(t,\Pi^2(u)) & 0&\psi^1_{22}(t,\Pi^2(u))
\end{pmatrix}, \quad u \in K^{\ast},
\end{align*}
where
\begin{align*}
\psi^0(t,a)&=\frac{a}{1+2ta}, \quad a \geq 0\\
\psi^1(t,v)&=\left(v^{-1}+2t\begin{pmatrix} 1&0\\
                           0&0
                          \end{pmatrix}\right)^{-1}, \quad v \in S_2^{++}.
\end{align*}
In particular, $\psi^1_{11}(t,v)=\psi^0(t,v_{11})$. The Markov property can be deduced from the semi-flow property of the
functions $\phi$ and $\psi$ (see also~\eqref{eq:flowprop} below). Notice also that for a non-degenerate starting value, the process $X$ stays in a 3-dimensional submanifold of $K$,
since, for $c_1^{\ast}, c_2^{\ast} >0$, $\Lambda^0 \times \Lambda^1_{c_1=c_1^{\ast}} \times \Lambda^2_{c_2=c_2^{\ast}} \to K$ defines a foliation by
a 3-dimensional submanifold of $K$.
\end{example}

\section{Affine Processes on General Cones}\label{sec:cone}
As above we assume that $K$ is a proper closed
convex cone, which is generating.

\subsection{Feller Property and Regularity}

In this section we shall prove that the semigroup $(P_t)_{t \geq 0}$ induced by an affine process with state space $K$, i.e.,
\[
 P_tf(x):=\int_K f(\xi) p_t(x,d \xi)
\]
is a Feller semigroup acting on the Banach space of
$C_0(K)$-functions. In order to show this property, we shall mainly
rely on Lemma~\ref{lem:psiint} below. In addition, this result also
enables us to give an alternative proof of the differentiability of
the functions $\phi$ and $\psi$ with respect to time, a property called \emph{regularity}, without
referring to the theorems obtained in~\citet{kst1}
and~\citet{cuchteich} for general state spaces. Indeed, regularity
for affine processes on cone state spaces can be obtained by arguing
as in~\citet{kst}, who obtained the corresponding statements on the
canonical state space $\re^m_+\times\re^{n-m}$ (see~\citet[Theorem
4.3]{kst}).

Let us start with the following properties of $\phi$ and $\psi$, which are immediate consequences
of Definition~\ref{def:affineprocessK}.

\begin{proposition}\label{prop:PhipsipropertiesK}
Let $X$ be an affine process on $K$. Then the functions $\phi$ and $\psi$ satisfy the following properties:
\begin{enumerate}
\item \label{item:phipsirange}$\phi$ maps $\re_+ \times K^\ast$ into $\re_+$ and $\psi$ maps $\re_+\times K^\ast$ into $K^\ast$.
\item \label{item:semiflow} $\phi$ and $\psi$ satisfy the \emph{semi-flow} property, that is,
for any $s,t \geq 0$ and $u \in K^\ast$ we have
\begin{subequations}\label{eq:flowprop}
\begin{align}
\phi(t+s,u) &= \phi(t,u) + \phi(s,\psi(t,u)), &\quad &\phi(0,u) = 0,\label{eq:flowpropphi}\\
\psi(t+s,u) &= \psi(s,\psi(t,u)), &\quad &\psi(0,u) = u \label{eq:flowproppsi}.
\end{align}
\end{subequations}
\item \label{item:continuity} $\phi$ and $\psi$ are jointly continuous on $\re_+ \times K^\ast$. Furthermore, $u \mapsto \phi(t,u)$ and $u \mapsto \psi(t,u)$ are real-analytic on $\mathring{K}^\ast$.
\item \label{item:orderpreserving} For any $t \geq 0$ and $u,v \in K^\ast$ with $u \preceq v$ the order relations
\[
\phi(t,u) \leq \phi(t,v) \quad \textrm{and} \quad \psi(t,u) \preceq \psi(t,v)
\]
hold true.
\end{enumerate}
\end{proposition}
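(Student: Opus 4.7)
All four statements follow rather directly from the defining identity \eqref{eq:affineprocessK}, the Chapman--Kolmogorov equation, stochastic continuity, and the generating property $V=K-K$. I would prove the parts in the order (i), (iv), (ii), (iii), since each uses what was already established.

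For (i), setting $x=0$ in \eqref{eq:affineprocessK} yields $e^{-\phi(t,u)}=\int_K e^{-\langle u,\xi\rangle}p_t(0,d\xi)\in(0,1]$, hence $\phi(t,u)\geq 0$. For $\psi$, I replace $x$ by $\lambda x$ with $x\in K$ and $\lambda>0$; the inequality $\phi(t,u)+\lambda\langle\psi(t,u),x\rangle\geq 0$ must hold for every $\lambda\geq 0$, which forces $\langle\psi(t,u),x\rangle\geq 0$ for all $x\in K$, i.e.\ $\psi(t,u)\in K^\ast$. For (iv), if $u\preceq v$ in $K^\ast$ then $\langle u,\xi\rangle\leq\langle v,\xi\rangle$ for $\xi\in K$, so the Laplace transform at $u$ dominates that at $v$; taking $-\log$ and using \eqref{eq:affineprocessK} gives $\phi(t,u)+\langle\psi(t,u),x\rangle\leq\phi(t,v)+\langle\psi(t,v),x\rangle$ for every $x\in K$. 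Setting $x=0$ extracts the inequality on $\phi$, and the remaining inequality on $K$ is by definition $\psi(t,u)\preceq\psi(t,v)$.

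For (ii), the Chapman--Kolmogorov identity $p_{t+s}(x,d\xi)=\int_K p_t(x,d\eta)\,p_s(\eta,d\xi)$ combined with two applications of \eqref{eq:affineprocessK} (the inner one is legitimate since $\psi(t,u)\in K^\ast$ by (i)) yields
\[
\phi(t+s,u)+\langle\psi(t+s,u),x\rangle=\phi(t,u)+\phi(s,\psi(t,u))+\langle\psi(s,\psi(t,u)),x\rangle
\]
for every $x\in K$. Both sides are affine in $x$, and $K-K=V$, so the linear parts and the constants match separately, yielding \eqref{eq:flowpropphi} and \eqref{eq:flowproppsi}; the initial conditions are immediate from $p_0(x,\cdot)=\delta_x$.

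For (iii), fix $x\in K$ and put $L(t,u,x):=\int_K e^{-\langle u,\xi\rangle}p_t(x,d\xi)$. Stochastic continuity says $p_{t_n}(x,\cdot)\to p_t(x,\cdot)$ weakly as $t_n\to t$; weak convergence implies tightness, and combining this with uniform continuity of $(u,\xi)\mapsto e^{-\langle u,\xi\rangle}$ on $K^\ast\times(K\cap\{\|\xi\|\leq R\})$ gives joint continuity of $L$ in $(t,u)$. Since $L=\exp(-\phi-\langle\psi,\cdot\rangle)>0$, the function $-\log L(\,\cdot\,,\,\cdot\,,x)=\phi+\langle\psi,x\rangle$ is jointly continuous; taking $x=0$ gives joint continuity of $\phi$, and then varying $x$ over a basis of $V=K-K$ gives joint continuity of $\psi$. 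For real-analyticity: for any $u_0\in\mathring{K}^\ast$ there exists $\varepsilon>0$ with $\langle u_0,\xi\rangle\geq\varepsilon\|\xi\|$ on $K$ (take the minimum on the compact cross-section $K\cap\{\|\xi\|=1\}$); hence on a complex neighborhood $U\subset V+\mathrm{i}V$ of $u_0$ one still has $\mathrm{Re}\,\langle z,\xi\rangle\geq\tfrac{\varepsilon}{2}\|\xi\|$, so $L(t,z,x)$ extends holomorphically to $U$. Taking $-\log$ (here $L\neq 0$ on $U$ because the real exponent stays bounded) and then extracting $\phi(t,\cdot)$ and $\langle\psi(t,\cdot),x\rangle$ as above gives real-analyticity.

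\textbf{Main obstacle.} The only step requiring genuine care is the joint continuity in (iii): separate continuity in $t$ (from stochastic continuity) and in $u$ (from dominated convergence) are immediate, and the work lies in combining them via tightness of $\{p_{t_n}(x,\cdot)\}$ to control simultaneous perturbations in both variables.
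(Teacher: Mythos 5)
Your proposal is correct and follows essentially the same route as the paper: (i) and (iv) from the boundedness/monotonicity of the Laplace transform together with the scaling $x\mapsto\lambda x$, (ii) from Chapman--Kolmogorov and $V=K-K$, and (iii) from stochastic continuity plus standard Laplace-transform theory. You actually supply more detail than the paper on part (iii), where the paper disposes of joint continuity and real-analyticity in two sentences; your tightness argument and the holomorphic extension via $\langle u_0,\xi\rangle\geq\varepsilon\|\xi\|$ on $K$ are the natural way to fill in what the paper leaves implicit.
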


\begin{proof}The left hand side of~\eqref{eq:affineprocessK} is clearly bounded by $1$ for all $x \in K$.
Inserting first $x = 0$ shows that $\phi(t,u)$ can only take values in $\re_+$. For arbitrary $x \in K$ the right hand side remains bounded only if $\psi(t,u) \in K^\ast$, which shows~\ref{item:phipsirange}.

Assertion~\ref{item:semiflow} follows from the Chapman-Kolmogorov equation, that is,
\begin{align*}
e^{-\phi(t+s,u)-\langle\psi(t+s,u),x\rangle}&= \int_K e^{-\langle u, \xi\rangle}p_{t+s}(x,d\xi) \\
&=\int_{K} p_s(x,d\xi)\int_{K}e^{-\langle u,\widetilde{\xi}\rangle}p_{t}(\xi,d\widetilde{\xi})\\
&=e^{-\phi(t,u)}\int_{K}e^{-\langle\psi(t,u),\xi\rangle}p_s(x,d\xi)\\
&=e^{-\phi(t,u)-\phi(s,\psi(t,u))-\langle\psi(s,\psi(t,u)),x\rangle}.
\end{align*}
Taking logarithms and using the fact that $K$ is generating, yields~\ref{item:semiflow}.

For~\ref{item:continuity}, note that stochastic continuity of $p_t(x,d\xi)$ implies
joint continuity of $\int_K e^{-\langle u, \xi \rangle}p_t(x,d\xi)$ for all $x \in K$ and
hence also of $\phi(t,u)$ and $\psi(t,u)$.
The real-analyticity in of $\phi$ and $\psi$ in $\mathring{K}^\ast$ follows from well-known properties of the Laplace transform.

Concerning~\ref{item:orderpreserving}, let $u \preceq v$,
which is equivalent to $\langle u, x \rangle \leq \langle v, x \rangle $ for all $x \in K$.
Hence, for all $t \geq 0$ and $x \in K$, we have
\[
e^{-\phi(t,u)-\langle \psi(t,u),x\rangle}=\int_{K}e^{-\langle u,\xi\rangle}p_t(x,{d}\xi) \geq \int_{K}e^{-\langle v,\xi\rangle}p_t(x,{d}\xi)=e^{-\phi(t,v)-\langle
\psi(t,v),x\rangle},
\]
which yields~\ref{item:orderpreserving}.
\end{proof}

The following lemma states that the function $\psi$ associated to an
affine process remains in the interior of the dual cone if it starts in the interior, i.e., if $\psi(0,u) \in
\mathring{K}^{\ast}$.

\begin{lemma}\label{lem:psiint}
Let $\psi:\mathbb R_+\times K^{\ast}\rightarrow V$ be any map
satisfying $\psi(0,u)=u$ and the properties~\ref{item:phipsirange}--\ref{item:orderpreserving} of
Proposition~\ref{prop:PhipsipropertiesK} (regarding the function $\psi$). Then $\psi(t,u) \in
\mathring{K}^{\ast}$ for all $(t,u) \in \re_+ \times \mathring{K}^{\ast}$.
\end{lemma}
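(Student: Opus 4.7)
The plan is to argue by contradiction. Suppose there exist $u\in\mathring K^{\ast}$ and $t>0$ with $\psi(t,u)\in\partial K^{\ast}$, and set $T:=\inf\{t>0:\psi(t,u)\in\partial K^{\ast}\}$. Joint continuity of $\psi(\cdot,u)$ together with openness of $\mathring K^{\ast}$ and $\psi(0,u)=u\in\mathring K^{\ast}$ give $T>0$. Since $\partial K^{\ast}=K^{\ast}\setminus\mathring K^{\ast}$ is closed in $K^{\ast}$, we have $\psi(T,u)\in\partial K^{\ast}$, so there is $x_0\in K\setminus\{0\}$ with $\langle x_0,\psi(T,u)\rangle=0$; moreover $\psi(t,u)\in\mathring K^{\ast}$ for every $t\in[0,T)$.

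Next I would use properties~\ref{item:orderpreserving} and~\ref{item:continuity} of Proposition~\ref{prop:PhipsipropertiesK} to spread this single boundary contact to every starting point in $\mathring K^{\ast}$. Consider $f(v):=\langle x_0,\psi(T,v)\rangle$, which is non-negative (as $x_0\in K$ and $\psi(T,v)\in K^{\ast}$) and real-analytic on $\mathring K^{\ast}$. The set $U:=\{v\in\mathring K^{\ast}:u-v\in\mathring K^{\ast}\}$ is open and nonempty, containing $\lambda u$ for every $\lambda\in(0,1)$; for $v\in U$ we have $v\preceq u$, so order preservation gives $\psi(T,v)\preceq\psi(T,u)$ and hence $0\le f(v)\le f(u)=0$. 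Thus $f$ vanishes on the nonempty open subset $U$ of the connected open cone $\mathring K^{\ast}$, and the identity principle for real-analytic functions forces $f\equiv 0$ on $\mathring K^{\ast}$. In particular, $\psi(T,w)\in\partial K^{\ast}$ for \emph{every} $w\in\mathring K^{\ast}$.

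To close the loop, I would apply exactly the same two-step argument to a point further along the trajectory. Fix $\varepsilon\in(0,T)$ and set $v:=\psi(T-\varepsilon,u)\in\mathring K^{\ast}$. By the semi-flow property~\ref{item:semiflow} one has $\psi(t,v)=\psi(T-\varepsilon+t,u)$, so the first hitting time of $\partial K^{\ast}$ starting from $v$ equals exactly $\varepsilon$, and $\psi(\varepsilon,v)=\psi(T,u)\in\partial K^{\ast}$. Rerunning the previous two steps with $v$ and $\varepsilon$ in place of $u$ and $T$ yields some $x_0'\in K\setminus\{0\}$ with $\langle x_0',\psi(\varepsilon,w)\rangle=0$ for every $w\in\mathring K^{\ast}$; in particular $\psi(\varepsilon,u)\in\partial K^{\ast}$, contradicting the fact that $\psi(\varepsilon,u)\in\mathring K^{\ast}$ (which holds since $\varepsilon<T$).

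The main obstacle is the analytic continuation step in paragraph two: monotonicity by itself gives only the one-sided bound $f(v)\le 0$, and it is only after combining this with the non-negativity of $f$ that we obtain \emph{vanishing} on an open set, where the real-analyticity hypothesis and connectedness of $\mathring K^{\ast}$ can do the heavy lifting. Once the spreading $f\equiv 0$ on $\mathring K^{\ast}$ is established, the semi-flow reduces the boundary hitting time to any $\varepsilon<T$ almost for free, contradicting the minimality of $T$.
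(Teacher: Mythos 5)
Your proof is correct. The central mechanism is the same one the paper uses: combine order preservation with the non-negativity of $v\mapsto\langle x_0,\psi(\cdot,v)\rangle$ to make a real-analytic function vanish on a nonempty open subset of the connected set $\mathring K^{\ast}$, and invoke the identity principle to spread a single boundary contact to every interior starting point. Where you genuinely diverge is in how the contradiction is closed. The paper shows that $\psi(t,u)\in\partial K^{\ast}$ forces $\psi(t/2,u)\in\partial K^{\ast}$ (its spreading step is applied at time $t/2$ on the set $\Theta=\{v\preceq\psi(t/2,u)\}$ and needs the semi-flow \emph{inside} the spreading, via $\psi(t/2,v)\preceq\psi(t/2,\psi(t/2,u))=\psi(t,u)$), iterates to produce witnesses $x_n$ with $\langle x_n,\psi(t/2^n,u)\rangle=0$, and then extracts a convergent subsequence on the unit sphere to contradict $\langle x^{\ast},u\rangle>0$ at $t=0$. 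You instead introduce the first hitting time $T$, spread the contact at the fixed time $T$ across all starting points using order preservation alone, and then use the semi-flow exactly once, at $v=\psi(T-\varepsilon,u)$, to transport the universal boundary contact down to time $\varepsilon<T$, contradicting minimality of $T$. This avoids both the infinite iteration and the compactness argument, at the small cost of checking that $T$ is positive and attained (which follows from joint continuity, openness of $\mathring K^{\ast}$, and closedness of $\partial K^{\ast}$, as you note). All the ingredients you use --- range, semi-flow, continuity and real-analyticity, order preservation, and convexity (hence connectedness) of $\mathring K^{\ast}$ --- are available from the hypotheses, so the argument is complete; it is arguably the more economical of the two.
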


\begin{proof}
We adapt the proofs of~\citet[Proposition 1.10]{keller} and~\citet[Lemma 3.3]{cfmt} to our
setting. Assume by contradiction that there exists some $(t,u) \in \re_+ \times \mathring{K}^\ast$
such that $\psi(t,u) \in \partial K^\ast$.  We show that in this case also $\psi(\frac{t}{2},u) \in \partial K^\ast$. First note that
\begin{align}\label{eq:recursion}
\psi\left(\frac{t}{2},v\right) \preceq \psi\left(\frac{t}{2},\psi\left(\frac{t}{2},u\right)\right) = \psi(t,u)
\end{align}
for all $v \in \Theta := \{v \in K^\ast: v \preceq \psi(\frac{t}{2},u)\}$ by
Proposition~\ref{prop:PhipsipropertiesK}~\ref{item:semiflow} and~\ref{item:orderpreserving}.
Take now some $0 \neq x \in K$ such that $\langle x,\psi(t,u)\rangle = 0$. By \eqref{eq:recursion}
also $\langle x,\psi(\frac{t}{2},v)\rangle = 0$ for all $v \in \Theta$.
If $\psi(\frac{t}{2},u) \in \mathring{K}^\ast$, then $\Theta$ is a set with non-empty interior.
By real-analyticity of $\psi$, it then follows that $\langle x,\psi(\frac{t}{2},w)\rangle = 0$
and hence $\psi(\frac{t}{2},w) \in \partial K^\ast$ for all $w \in \mathring{K}^\ast$, which is a contradiction.
We conclude that $\psi(\frac{t}{2},u) \in \partial K^\ast$.

Repeating these arguments yields, for each $n \in \mathbb{N}$, the existence of an element
$x_n \neq 0 \in K $, for which
\[
\left \langle x_n,\psi\left(\frac{t}{2^n},u\right)\right\rangle = 0.
\]
Without loss of generality we may assume that $\|x_n\| = 1$ for each $n$. Since the unit sphere is compact in finite dimensions,
there exists a subsequence $n_k$ such that $x_{n_k} \to x^\ast \neq 0$, as $k \to \infty$.
From the continuity of the function $t \mapsto \psi(t,u)$ and the scalar product we deduce that
\[
0 = \lim_{k \to \infty} \left\langle x_{n_k}, \psi\left(\frac{t}{2^{n_k}},u\right)\right\rangle = \langle x^\ast, \psi(0,u)\rangle = \langle x^\ast ,u\rangle > 0,
\]
which is the desired contradiction.
\end{proof}

It is now a direct consequence of this lemma that any affine process $X$ on $K$ is a Feller process.

\begin{proposition}\label{th:Feller}
Let $X$ be an affine process on $K$. Then $X$ is a Feller process.
\end{proposition}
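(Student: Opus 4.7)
The plan is to verify both defining properties of a Feller semigroup for $(P_t)_{t\geq 0}$: (i) $P_t$ sends $C_0(K)$ into itself for every $t\geq 0$, and (ii) $\|P_t f - f\|_\infty \to 0$ as $t\downarrow 0$ for every $f\in C_0(K)$.

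For (i), the decisive input is Lemma~\ref{lem:psiint}. I would first consider the exponentials $e_u(x):=e^{-\langle u,x\rangle}$ indexed by $u\in\mathring{K}^\ast$. Since $K$ is generating and $u$ lies in the open dual cone, the linear functional $x\mapsto \langle u,x\rangle$ is bounded below by a positive multiple of $\|x\|$ on $K$, so each $e_u$ belongs to $C_0(K)$. The defining identity~\eqref{eq:affineprocessK} gives $P_t e_u = e^{-\phi(t,u)} e_{\psi(t,u)}$, and Lemma~\ref{lem:psiint} guarantees $\psi(t,u)\in\mathring{K}^\ast$, so $P_t e_u$ is again in $C_0(K)$. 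To extend this from the family $\{e_u\}$ to all of $C_0(K)$, I would observe that $e_u\cdot e_v=e_{u+v}$ with $u+v\in\mathring{K}^\ast$, so the linear span $\mathcal{A}$ of $\{e_u:u\in\mathring{K}^\ast\}$ is a subalgebra of $C_0(K)$ that separates points of $K$ and vanishes at no point; the $C_0$-form of the Stone--Weierstrass theorem then yields density of $\mathcal{A}$ in $C_0(K)$. Since $P_t$ is a positive contraction on bounded measurable functions (because $p_t(x,K)\leq 1$), a routine $\varepsilon/3$ argument extends the inclusion $P_t(\mathcal{A})\subseteq C_0(K)$ to $P_t(C_0(K))\subseteq C_0(K)$.

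For (ii), stochastic continuity of the kernels immediately gives $P_t f(x)\to f(x)$ pointwise as $t\downarrow 0$ for every $f\in C_b(K)$. Combined with (i) and the contraction property, this pointwise continuity at zero upgrades to uniform convergence on $C_0(K)$ by the standard fact that a positive contraction semigroup on $C_0$ of a locally compact Hausdorff space which is pointwise continuous at the origin is automatically strongly continuous. The main obstacle of the argument is not computational but structural: the whole scheme hinges on $\psi$ preserving the interior of $K^\ast$ (Lemma~\ref{lem:psiint}); without this, $e_{\psi(t,u)}$ could touch the boundary of the dual cone and fail to vanish at infinity, and the Stone--Weierstrass reduction would collapse. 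Once interior preservation is in hand, the remainder is a mechanical assembly of classical semigroup facts.
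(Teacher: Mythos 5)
Your proof is correct and follows essentially the same route as the paper, which simply invokes the argument of \citet[Proposition 3.4]{cfmt}: there, too, one uses the interior-preservation of $\psi$ (the analogue of Lemma~\ref{lem:psiint}) to show $P_t e_u \in C_0(K)$ for $u \in \mathring{K}^\ast$, a Stone--Weierstrass density argument for the span of these exponentials, and the standard upgrade from pointwise to strong continuity for positive contraction semigroups on $C_0$. No gaps.
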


\begin{proof}
The assertion can be proved by applying the same arguments as in~\citet[Proposition 3.4]{cfmt}.
\end{proof}

Let us now recall the concept of regularity.

\begin{definition}[Regularity]\label{def:regularityK}
An affine process $X$ on $K$ is called \emph{regular} if for all $u \in K^{\ast}$ the derivatives
\begin{align}\label{eq:FRdefK}
F(u) = \frac{\partial \phi(t,u)}{\partial t}\Bigg |_{t=0}, \qquad
R(u) = \frac{\partial \psi(t,u)}{\partial t}\Bigg |_{t=0}
\end{align}
exist and are continuous in $u$.
\end{definition}

\begin{proposition}\label{th:regularityK}
Let $X$ be an affine process on $K$. Then $X$ is regular and the functions $\phi$ and $\psi$ satisfy the ordinary differential equations~\eqref{eq:Riccati}.
\end{proposition}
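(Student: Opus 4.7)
The plan is to adapt the regularity argument of \citet[Theorem 4.3]{kst} from the canonical state space $\re^m_+\times\re^{n-m}$ to arbitrary proper closed convex cones, with Lemma~\ref{lem:psiint} playing the pivotal role. First observe that by the semi-flow identities~\eqref{eq:flowpropphi} and~\eqref{eq:flowproppsi}, for any $t\ge 0$ and $h>0$,
\begin{align*}
\phi(t+h,u)-\phi(t,u)&=\phi(h,\psi(t,u)),\\
\psi(t+h,u)-\psi(t,u)&=\psi(h,\psi(t,u))-\psi(t,u).
\end{align*}
Consequently, once one establishes that the right-derivatives
\[
F(u):=\lim_{h\downarrow 0}\tfrac{\phi(h,u)}{h},\qquad R(u):=\lim_{h\downarrow 0}\tfrac{\psi(h,u)-u}{h}
\]
exist for every $u\in K^\ast$ and are continuous in $u$, differentiability of $\phi$ and $\psi$ at every $t>0$ follows immediately, and passing to the limit $h\downarrow 0$ in the displayed identities yields precisely~\eqref{eq:Riccati}.

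The core of the proof is therefore the existence of $F(u)$ and $R(u)$, which I would establish first on the interior $\mathring K^\ast$. For $u\in\mathring K^\ast$ the exponential $f_u(x):=e^{-\langle u,x\rangle}$ lies in $C_0(K)$, and the Feller semigroup of Proposition~\ref{th:Feller} acts on it by $P_tf_u(x)=e^{-\phi(t,u)-\langle\psi(t,u),x\rangle}$. The KST approach then applies essentially verbatim: one combines strong continuity of $(P_t)$ with the real-analyticity of $u\mapsto\phi(t,u)$ and $u\mapsto\psi(t,u)$ on $\mathring K^\ast$ (Proposition~\ref{prop:PhipsipropertiesK}~\ref{item:continuity}) to lift spatial smoothness to temporal differentiability at $t=0$ through a resolvent-type argument. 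It is precisely here that Lemma~\ref{lem:psiint} is indispensable, because it guarantees that the orbit $t\mapsto\psi(t,u)$ never leaves $\mathring K^\ast$, so the real-analyticity hypothesis feeding the KST argument remains available along the whole trajectory.

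To extend existence and continuity of $F$ and $R$ to the boundary $\partial K^\ast$, I would invoke the order-preserving property of $\psi$ (Proposition~\ref{prop:PhipsipropertiesK}~\ref{item:orderpreserving}) together with joint continuity in $(t,u)$: approximating $u\in\partial K^\ast$ by a sequence $u_n\in\mathring K^\ast$ with $u_n\downarrow u$ and using the monotonicity-based local bounds on the difference quotients allows one to pass to the limit and conclude continuity up to the boundary. The main obstacle is the interior step, namely transplanting the KST resolvent/analyticity argument to the general cone setting without the coordinatewise structure of $\re^m_+\times\re^{n-m}$; once that is in place, the remainder of the proof, including the derivation of~\eqref{eq:RiccatiF} and~\eqref{eq:RiccatiR} from the reduction in the first paragraph, is essentially automatic.
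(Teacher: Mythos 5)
Your proposal follows essentially the same route as the paper: the paper's proof likewise consists of adapting the argument of Keller-Ressel, Schachermayer and Teichmann (Theorem 4.3) to the cone setting, with Lemma~\ref{lem:psiint} supplying the key fact that the orbit of $\psi$ stays in $\mathring{K}^\ast$, and then obtaining~\eqref{eq:Riccati} by differentiating the semi-flow equations~\eqref{eq:flowprop}. Your write-up is in fact more explicit than the paper's one-line citation about where the interior-preservation and real-analyticity enter, but the underlying strategy is identical.
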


\begin{proof}
A proof of the above theorem can be obtained by following the lines
of~\citet[Proof of Theorem 4.3]{kst}. The equations~\eqref{eq:Riccati} follow
immediately by differentiating the semi-flow equations~\eqref{eq:flowprop}.
\end{proof}

\begin{remark}
The differential equations~\eqref{eq:Riccati} are called \emph{generalized Riccati equations}.
This terminology should become clear after Proposition~\ref{prop:LKform} below.
\end{remark}

\subsection{Necessary Parameter Conditions and Quasi-monotonicity}

In this section, we focus on the specific form of the functions $F$ and
$R$, defined in~\eqref{eq:FRdefK}. As already proved in~\citet{kst1} and~\citet{cuchteich} for the case of general state spaces,
$F$ and $R$ have parameterizations of L\'evy-Khintchine type. We here show this result
in the particular case of cone-valued affine processes and relate the form of $F$ and $R$ to the notion of quasi-monotonicity.

For the proof of the main results of this section we first state a convergence result for Fourier-Laplace transforms
which can be proved exactly as in~\citet[Lemma 4.5]{cfmt}

\begin{lemma}\label{lem: FourierLaplace}
Let $(\nu_n)_{n\in \mathbb{N}}$ be a sequence of measures on $V$ with
\[
L_n(u)=\int_{V}e^{-\langle u,\xi\rangle}\nu_n(d\xi)<\infty \quad \textrm{and} \quad \lim_{n \to \infty} L_n(u) =
L(u), \textrm{ for all } u\in \mathring{K}^{\ast}\cup \{0\},
\]
pointwise, for some finite function $L$ on $\mathring{K}^{\ast}\cup \{0\}$, continuous at $u = 0$. Then $\nu_n$ converges weakly to
some finite measure $\nu$ on $V$ and the Fourier-Laplace
transform converges for $ u \in \mathring{K}^{\ast} \cup \{0\} $ and $ v \in
V$ to the Fourier-Laplace transform of $ \nu $, that is,
\[
\lim_{n \to \infty} \int_{V} e^{-\langle u + \im v,\xi\rangle}\nu_n(d\xi) =
\int_{V} e^{-\langle u + \im v,\xi\rangle}\nu(d\xi).
\]
In particular, $\nu(V)=\lim_{n \to \infty}\nu_n(V)$ and
\[
L(u) = \int_{V} e^{-\langle u ,\xi\rangle}\nu(d\xi),
\]
for all $ u \in \mathring{K}^{\ast} \cup \{ 0 \} $.
\end{lemma}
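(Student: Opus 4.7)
The plan is to establish weak convergence of $(\nu_n)$ by combining a tightness argument (to extract weakly convergent subsequences) with the uniqueness of the Laplace transform on an open set, following the approach of~\citet[Lemma 4.5]{cfmt}.

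First I observe that $\nu_n(V) = L_n(0) \to L(0) < \infty$, so the total masses are uniformly bounded. For tightness, I will exploit the openness of $\mathring{K}^{\ast}$ together with the continuity of $L$ at $0$. Fixing a reference point $u_0 \in \mathring{K}^{\ast}$, the properness of $K$ gives a constant $c>0$ with $\langle u_0, \xi\rangle \geq c\|\xi\|$ for all $\xi \in K$, while finiteness of $L_n(T u_0)$ for arbitrarily large $T>0$ forces rapid decay of $\nu_n$ in the complement of $K$ (since there $e^{-\langle T u_0, \xi\rangle}$ grows exponentially). Combining these with the identity $\nu_n(V) - L_n(t u_0) = \int_V (1 - e^{-\langle t u_0, \xi\rangle})\,\nu_n(d\xi)$ and the elementary inequality $1 - e^{-\langle t u_0, \xi\rangle} \geq 1 - e^{-t c R}$ on $\{\xi \in K : \|\xi\| \geq R\}$, I can uniformly bound the tail mass by $(L_n(0) - L_n(t u_0))/(1 - e^{-t c R})$ plus a negligible contribution from $V \setminus K$. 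The continuity of $L$ at $0$ makes the numerator small as $t \to 0^{+}$, while $R$ can then be chosen large, yielding tightness.

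By Prokhorov's theorem, every subsequence of $(\nu_n)$ admits a further weakly convergent subsequence $\nu_{n_k} \to \nu$. Weak convergence, together with the tail estimate (which provides uniform integrability of $\xi \mapsto e^{-\langle u,\xi\rangle}$), gives $\int_V e^{-\langle u,\xi\rangle}\nu(d\xi) = L(u)$ for all $u \in \mathring{K}^{\ast}\cup\{0\}$. Since two finite Borel measures on $V$ whose Laplace transforms coincide on a non-empty open subset of $V$ must be equal (by analytic continuation of $v \mapsto \int e^{-\langle u + \im v,\xi\rangle}\nu(d\xi)$ for fixed $u \in \mathring{K}^{\ast}$, followed by Fourier uniqueness of the finite measure $e^{-\langle u,\xi\rangle}\nu(d\xi)$), the limit $\nu$ is uniquely determined by $L$. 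Therefore the whole sequence $\nu_n$ converges weakly to $\nu$. The convergence of Fourier-Laplace transforms for $u \in \mathring{K}^{\ast}\cup\{0\}$ and $v \in V$ then follows by applying weak convergence to the continuous function $\xi \mapsto e^{-\langle u + \im v,\xi\rangle}$, whose modulus is $e^{-\langle u,\xi\rangle} \leq 1$ on $K$ with the same uniform tail control elsewhere.

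The main obstacle is the tightness step: one must exploit not only finiteness of $L_n$ at a single point but finiteness on all of $\mathring{K}^{\ast}$, in combination with the continuity of $L$ at the origin, in order to control the mass in every direction of $V$. Once tightness is in hand, the remainder of the proof is a standard application of Prokhorov's theorem, analyticity of the Laplace transform, and Fourier uniqueness.
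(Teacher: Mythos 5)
Your overall strategy (bounded total masses, tightness, Prokhorov, identification of the limit via analytic continuation and Fourier uniqueness, hence convergence of the whole sequence) is the standard one and is essentially what underlies the proof of \citet[Lemma 4.5]{cfmt}, to which the paper itself defers without giving details. The genuine gap is in your tightness step. Your control of the tail mass rests on a single reference ray $\{Tu_0: T>0\}$, via two claims: that $\langle u_0,\xi\rangle\geq c\|\xi\|$ on $K$ (correct), and that finiteness of $L_n(Tu_0)$ for large $T$ forces decay of $\nu_n$ on $V\setminus K$ ``since there $e^{-\langle Tu_0,\xi\rangle}$ grows exponentially''. The latter is false once $\dim V\geq 2$: $e^{-T\langle u_0,\xi\rangle}$ blows up only on the half-space $\{\langle u_0,\xi\rangle<0\}$, which misses most of $V\setminus K$. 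Concretely, for $V=\re^2$, $K=K^{\ast}=\re_+^2$, $u_0=(1,1)$, the points $\xi_n=(n,-n/2)$ lie outside $K$ yet satisfy $\langle Tu_0,\xi_n\rangle=Tn/2>0$, so $L_n(Tu_0)$ stays finite (indeed small) no matter how much mass $\nu_n$ puts at $\xi_n$; and your displayed inequality only bounds $\nu_n(K\cap\{\|\xi\|\geq R\})$, so it says nothing about this mass either. Such mass is in fact excluded by the hypotheses, but only because $L_n$ converges at points like $u=(\varepsilon,1)$ with $\langle u,\xi_n\rangle<0$; that is, one must use the convergence on \emph{all} of $\mathring{K}^{\ast}$ (as you remark at the very end), whereas the argument you actually run never leaves the ray through $u_0$. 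Note also that mere finiteness of $L_n$ at a point, for each $n$ separately, gives no uniformity in $n$; what you may use is $\sup_n L_n(u)<\infty$ for each fixed $u\in\mathring{K}^{\ast}$, which follows from convergence.

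The repair is as follows. Since $K=K^{\ast\ast}$, every unit vector $\zeta$ outside a small conic neighbourhood $C_\eta$ of $K$ (the cone of $\xi$ whose normalization lies within distance $\eta$ of $K$) admits some $u_\zeta\in\mathring{K}^{\ast}$ with $\langle u_\zeta,\zeta\rangle<0$; by compactness of the unit sphere choose finitely many $u_1,\dots,u_M\in\mathring{K}^{\ast}$ and $c>0$ with $\min_j\langle u_j,\xi\rangle\leq -c\|\xi\|$ for all $\xi\notin C_\eta$. Then $1\leq e^{-cR}\sum_j e^{-\langle u_j,\xi\rangle}$ on $\{\xi\notin C_\eta,\ \|\xi\|>R\}$, whence $\nu_n(\{\xi\notin C_\eta,\ \|\xi\|>R\})\leq e^{-cR}\sum_j\sup_n L_n(u_j)$, uniformly in $n$. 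On $C_\eta$ with $\eta$ small one still has $\langle u_0,\xi\rangle\geq c'\|\xi\|$, and your identity for $L_n(0)-L_n(tu_0)$ then yields the desired bound after the negative contribution of the region $\{\langle u_0,\xi\rangle<0\}$ is bounded above by $t\,L_n(2u_0)$ (using $se^{s}\leq e^{2s}$ for $s\geq 0$), which tends to $0$ with $t$ uniformly in $n$. With these two inputs, your order of choices (first $t$ small by continuity of $L$ at $0$, then $R$ large) goes through; the same uniform exponential bound off $C_\eta$ is what justifies the passage to the limit in the Fourier--Laplace transforms at the end. The remainder of your proof is correct.
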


\subsubsection{L\'evy-Khintchine form of $F$ and $R$}\label{sec:LKform}

In the following, $\chi: V\rightarrow V$ denotes some bounded continuous
truncation function with $\chi(\xi)=\xi$ in a neighborhood of $0$.

\begin{proposition}\label{prop:LKform}
Let $X$ be an affine process on $K$. Then the functions $F$ and $R$ as defined in~\eqref{eq:FRdefK} are of form~\eqref{eq:FLevyK} and~\eqref{eq:RLevyK}, that is,
\begin{align*}
F(u)&=\langle b,u \rangle + c -\int_{K} \left(e^{-\langle u, \xi\rangle}-1\right) m(d\xi),\\
R(u)&=-\frac{1}{2}Q(u,u)+B^{\top}(u)+\gamma-\int_{K} \left(e^{-\langle u, \xi\rangle}-1+ \langle \chi(\xi),u \rangle \right) \mu(d\xi),
\end{align*}
where
\begin{enumerate}
\item\label{eq:const_drift} $b \in K$,
\item\label{eq:const_killing} $c \in \re_+$,
\item\label{eq:const_measure} $m$ is a
Borel measure on $K$ satisfying $m(\{0\}) = 0$ and
\[
\int_K \left(\|\xi\| \wedge 1\right)m(d\xi)< \infty.
\]
\item\label{eq:lin_diffusion} $Q: V \times V \rightarrow V$ is a symmetric bilinear function with $Q(v,v) \in K^{\ast}$ for all $v \in V$,
\item\label{eq:lin_drift} $B^{\top}: V \rightarrow V$ is a linear map,
\item\label{eq:lin_killing} $\gamma \in K^{\ast}$,
\item\label{eq:lin_measure} $\mu$ is a $K^{\ast}$-valued $\sigma$-finite Borel measure on $K$ satisfying $\mu(\{0\}) = 0$ and
\[
\int_K\left(\|\xi\|^2 \wedge 1 \right)\langle x,\mu(d\xi)\rangle < \infty \quad \textrm{for all } x \in K.
\]
\end{enumerate}
\end{proposition}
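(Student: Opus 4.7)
The plan is to derive the L\'evy-Khintchine representations of $F$ and $R$ as weak limits of properly rescaled transition kernels, following the approach of~\citet[Proposition 4.1]{cfmt} and relying on Lemma \ref{lem: FourierLaplace}. By regularity (Proposition \ref{th:regularityK}) and the initial conditions $\phi(0,\cdot)=0$, $\psi(0,u)=u$, the common starting point is the identity
\[
F(u)+\langle R(u),x\rangle=\lim_{t\downarrow0}\frac{1}{t}\left(1-\int_K e^{-\langle u,\xi-x\rangle}p_t(x,d\xi)\right)
\]
for $x\in K$. Decomposing the integrand as $(1-p_t(x,K))+\int_K(1-e^{-\langle u,\xi-x\rangle})p_t(x,d\xi)$ separates the killing contribution from the drift $+$ diffusion $+$ jump contribution.

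To obtain the form of $F$ I would set $x=0$, fix $u_0\in\mathring K^{\ast}$, and consider the finite measures $\nu_t(d\xi):=\frac{1}{t}(1-e^{-\langle u_0,\xi\rangle})p_t(0,d\xi)$ on $K$. Their Laplace transforms simplify to $\frac{1}{t}(e^{-\phi(t,u)}-e^{-\phi(t,u+u_0)})$, which converge pointwise on $\mathring K^{\ast}\cup\{0\}$ to $F(u+u_0)-F(u)$, and the limit is continuous at $u=0$. Lemma \ref{lem: FourierLaplace} then produces a finite limit measure $\nu$ on $K$. Writing $\nu=A\delta_0+(1-e^{-\langle u_0,\cdot\rangle})m$ with $m$ not charging the origin, and pinning down $b\in V$ so that $A=\langle b,u_0\rangle$ as $u_0$ varies, the $u=0$ case of the identity reads $F(u_0)-F(0)=\langle b,u_0\rangle+\int_K(1-e^{-\langle u_0,\xi\rangle})m(d\xi)$, which is precisely \eqref{eq:FLevyK} with $c:=F(0)\ge 0$. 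The integrability $\int_K(\|\xi\|\wedge 1)m(d\xi)<\infty$ follows because $1-e^{-\langle u_0,\xi\rangle}$ is comparable to $\|\xi\|\wedge 1$ on $K$ for any $u_0\in\mathring K^{\ast}$; the inclusions $b\in K$ and $c\in\mathbb R_+$ come from the positivity of $F$ and of the incremental quantities involved.

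For $R$ I would run the analogous argument at a general $x\in K$. Now the shifted kernel $p_t(x,x+d\eta)$ is supported in $V$ rather than in a cone, so second-order terms in $u$ survive in the limit and produce the bilinear form $Q$. Working with weighted families such as $\frac{1}{t}(1-e^{-\langle u_0,\xi-x\rangle})p_t(x,d\xi)$ together with $\frac{1}{t}(\|\xi-x\|^2\wedge 1)p_t(x,d\xi)$, computing their Laplace transforms in terms of $\phi$ and $\psi$, and exploiting the affine (linear-in-$x$) structure of $F(u)+\langle R(u),x\rangle$ to peel off the $x$-linear contributions, one recovers: the killing $\gamma$ from the mass-loss term, the $\sigma$-finite $K^{\ast}$-valued jump kernel $\mu$ (with $M(x,d\xi)=\langle x,\mu(d\xi)\rangle$) from the large-jump part, the linear drift $B^{\top}$ after fixing a truncation function $\chi$, and the symmetric bilinear $Q$ with $Q(v,v)\in K^{\ast}$ from the residual quadratic-in-$u$ piece. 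Positivity $Q(v,v)\in K^{\ast}$ is forced by the inequality $\int_K e^{-\langle u,\xi\rangle}p_t(x,d\xi)\le 1$ combined with the Laplace identity.

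The main technical obstacle is the \emph{separation} step for $R$: disentangling the diffusion $Q$ from the small-jump part of $\mu$, since both produce $O(\|u\|^2)$ behaviour in the limit. This requires a careful asymptotic analysis of $\int_K\langle u,\chi(\xi-x)\rangle^2 p_t(x,d\xi)/t$ against the residual integral $\int_K(e^{-\langle u,\xi\rangle}-1+\langle\chi(\xi),u\rangle)\langle x,\mu(d\xi)\rangle$, and relies on the choice of truncation $\chi$ to decouple them cleanly. Once this is done, well-definedness of $B^{\top}$ as a linear map on $V$ and the integrability $\int_K(\|\xi\|^2\wedge 1)\langle x,\mu(d\xi)\rangle<\infty$ for every $x\in K$ follow from the same weak-convergence scheme applied with appropriate weights.
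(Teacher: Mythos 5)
Your treatment of $F$ is essentially sound and close to the paper's: setting $x=0$, rescaling by $1/t$, and invoking Lemma~\ref{lem: FourierLaplace} is the right mechanism, and the comparability of $1-e^{-\langle u_0,\xi\rangle}$ with $\|\xi\|\wedge 1$ on $K$ for $u_0\in\mathring{K}^{\ast}$ does give the finite-variation integrability of $m$. (Even here, note that $b\in K$ does not follow from ``positivity of incremental quantities'' alone; in the paper it is delivered by the L\'evy--Khintchine formula for infinitely divisible laws on proper cones,~\citet[Theorem 3.21]{skorohod}, which forces the drift of such a law to lie in the cone.)

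The genuine gap is in the $R$ part, and you have named it yourself: the separation of the diffusion form $Q$ from the small-jump contribution of $\mu$. Your proposal defers this to ``a careful asymptotic analysis'' of second-moment-type functionals against the compensated jump integral, decoupled ``by the choice of truncation $\chi$'' --- but no truncation function performs that decoupling, and this analysis is not carried out; it is exactly the hard content of the L\'evy--Khintchine theorem. The paper avoids the issue entirely by a structural observation: for each fixed $t>0$ the right-hand side of the rescaled identity is the log-Laplace transform of a \emph{compound Poisson} distribution on $K-\re_+x$ (intensity $p_t(x,K)/t$, compounding law $p_t(x,d\xi+x)/p_t(x,K)$), so the $t\downarrow 0$ limit is infinitely divisible by closedness of that class under weak limits. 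A second limiting step with the convolution powers $\nu(kx,dy)^{\ast\frac1k}$, $k\to\infty$, isolates $\exp(-\langle R(u)-\gamma,x\rangle)$ as the Laplace transform of an infinitely divisible law $L(x,dy)$, whereupon Sato's L\'evy--Khintchine theorem on $V$ hands over a \emph{unique} triplet $(A(x),B(x),M(x,\cdot))$ --- the Gaussian/small-jump separation is then a theorem, not something to be re-derived. Two further points your sketch omits and the paper needs: the support statement $\supp L(x,\cdot)\subseteq K-\re_+x$ (and, via~\citet[Theorem 8.7]{sato}, $\supp M(x,\cdot)\subseteq K-x$), which is what justifies the analytic continuation of the characteristic function to $K^{\ast}+\im V$ so that the L\'evy--Khintchine formula can be read off for real arguments $u\in K^{\ast}$; and the argument with $x=\tfrac1n y$ showing $\supp\mu\subseteq K$. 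Without the infinite-divisibility route (or an equivalent substitute), your scheme does not yet constitute a proof of the form of $R$.
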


\begin{proof}
In order to derive the particular form of $F$ and $R$
with the above parameter restrictions, we follow the approach of~\citet[Theorem 2.6]{keller} (compare also~\citet[Proposition 4.9]{cfmt}).
Note that the $t$-derivative of $P_te^{-\langle u,x\rangle}$ at $t=0$ exists for
all $x  \in K$ and $u \in \mathring{K}^{\ast}$, since
\begin{equation}\label{Asharpeux}
\begin{aligned}
\lim_{t \downarrow 0}\frac{P_te^{-\langle u,x\rangle}-e^{-\langle u,x\rangle}}{t}&=\lim_{t \downarrow 0}\frac{e^{-\phi(t,u)-\langle\psi(t,u),x\rangle}-e^{-\langle u,x\rangle}}{t}\\
&=(-F(u)-\langle R(u),x\rangle)e^{-\langle u, x\rangle}
\end{aligned}
\end{equation}
is well-defined by Proposition~\ref{th:regularityK}. Moreover, we
can also write
\begin{align*}
-F(u)-\langle R(u),x\rangle&=\lim_{t \downarrow 0}\frac{P_te^{-\langle u,x\rangle}-e^{-\langle u,x\rangle}}{t
e^{-\langle u,x\rangle}} \\
&=\lim_{t\downarrow
0}\frac{1}{t}\left(\int_{K}e^{-\langle u,
\xi-x\rangle}p_t(x,d\xi)-1\right)\\
&=\lim_{t\downarrow 0}\left(\frac{1}{t}\int_{K-x}\left(e^{-\langle
u,\xi\rangle}-1\right)p_t(x,d\xi+x) +\frac{p_t(x,K)-1}{t}\right).
\end{align*}
By the above equalities and the fact that $p_t(x,K)\leq 1$, we
then obtain for $u=0$
\begin{align*}
0 \geq \lim_{t\downarrow 0}\frac{p_t(x,K)-1}{t}=-F(0)-\langle R(0),x\rangle.
\end{align*}
Setting $F(0)=c$ and $R(0)=\gamma$ yields $c \in \re^+$ and $\gamma \in K^{\ast}$, hence~\ref{eq:const_killing} and~\ref{eq:lin_killing}. We
thus obtain
\begin{align}\label{eq: FRinfdiv}
-(F(u)-c) - \langle R(u)-\gamma,x\rangle=\lim_{t\downarrow 0}
\frac{1}{t}\int_{K-x}\left(e^{-\langle u,\xi\rangle}-1\right)p_t(x,d\xi+x).
\end{align}

For every fixed $t>0$, the right hand side of~\eqref{eq: FRinfdiv}
is the logarithm of the Laplace transform of a compound Poisson
distribution supported on $K-\re_+x$ with intensity
$p_t(x,K)/t$ and compounding distribution
$p_t(x,d\xi+x)/p_t(x,K)$. Concerning the support, note that the
compounding distribution is concentrated on $K-x$, which implies
that the compound Poisson distribution has support on the convex
cone $K-\re_+ x$. By Lemma~\ref{lem: FourierLaplace}, the
pointwise convergence of~\eqref{eq: FRinfdiv} for $t\rightarrow 0$
to some function being continuous at $0$ implies weak convergence
of the compound Poisson distributions to some infinitely divisible
probability distribution $\nu(x,dy)$ supported on $K-\re_+x$.
Indeed, this follows from the fact that any compound Poisson
distribution is infinitely divisible and the class of infinitely
divisible distributions is closed under weak convergence
(see~\citet[Lemma 7.8]{sato}). Again, by Lemma~\ref{lem: FourierLaplace}, the Laplace transform of $\nu(x,dy)$ is then given as exponential of
the left hand side of~\eqref{eq: FRinfdiv}.

In particular, for $x=0$, $\nu(0,dy)$ is an infinitely divisible
distribution with support on the cone $K$. By the
L\'evy--Khintchine formula on proper cones (see, e.g.,~\citet[Theorem
3.21]{skorohod}), its Laplace transform is therefore of the form
\begin{align*}
\exp\left(-\langle b,u\rangle+\int_{K}(e^{-\langle
u, \xi\rangle}-1)m(d\xi)\right),
\end{align*}
where $b \in K$ and $m$ is a
Borel measure supported on $K$ with $m(\{0\})=0$
such that
\[
\int_{K}\left(\|\xi\|\wedge 1\right)m(d\xi) <
\infty,
\]
yielding~\ref{eq:const_measure}. Therefore,
\begin{align*}
F(u)=\langle b, u\rangle + c -\int_{K}(e^{-\langle u, \xi\rangle}-1)m(d\xi).
\end{align*}

We next obtain the particular form of $R$. Observe that for
each $x \in K$ and $k\in\mathbb{N}$,
\[
\exp\left( -(F(u)-c)/k - \langle R(u)-\gamma,x\rangle \right)
\]
is the Laplace transform of the infinitely divisible distribution
$\nu(kx,dy)^{\ast\frac{1}{k}}$, where $\ast\frac{1}{k}$ denotes the
$\frac{1}{k}$ convolution power. For $k\to\infty$, these Laplace
transforms obviously converge to $\exp(-\langle
R(u)-\gamma,x\rangle)$ pointwise in $u$. Using again the same
arguments as before (an application of Lemma~\ref{lem:
FourierLaplace} to equation~\eqref{eq: FRinfdiv}),
we can deduce that $\nu(kx,dy)^{\ast\frac{1}{k}}$ converges weakly
to some infinitely divisible distribution $L(x,dy)$ on $K-\re_+x$
with Laplace transform $\exp(-\langle R(u)-\gamma,x\rangle)$ for
$u\in K^{\ast}$.

By the L\'evy-Khintchine formula on $V$ (see~\citet[Theorem
8.1]{sato}), the characteristic function of $L(x,dy)$ has the form
\begin{align}\label{eq:characteristicfkt}
\widehat{L}(x,u)&=\exp\Bigg(\frac{1}{2}\langle u, A(x) u \rangle+\langle
B(x),u\rangle \notag\\
&\quad+\int_{V} \left(e^{\left\langle u,
\xi\right\rangle}-1-\left\langle \chi\left(\xi\right), u
\right\rangle\right)M\left(x, d \xi\right)\Bigg),
\end{align}
for $u\in\im V$, where, for every $x \in D$, $A(x) \in S_+(V)$ is a symmetric positive semidefinite linear operator on
$V$, $B(x) \in V$, $M(x,\cdot)$ a Borel measure on $V$
satisfying $M(x,\{0\})$
\[
\int_{V}(\|\xi\|^2 \wedge 1) M(x,d\xi) < \infty,
\]
and $\chi$ some appropriate truncation function. Furthermore,
by~\citet[Theorem 8.7]{sato},
\begin{align}\label{eq: Levymeasure limit}
\int_{V}f(\xi)\frac{1}{t}p_t(x,d\xi+x)\stackrel{t
\to 0}{\longrightarrow}
\int_{V}f(\xi)m(d\xi)+\int_{V}f(\xi)M(x,d\xi)
\end{align}
holds true for all $f: V \to \re$ which are bounded,
continuous and vanishing on a neighborhood of $0$. We thus conclude that
$M(x,d\xi)$ has support in $K-x$. Therefore, the characteristic
function $\widehat{L}(x,u)$ admits an analytic extension to
$K^{\ast}+\im V$, which then has to coincide with the Laplace
transform for $u\in K^{\ast}$. Hence, for all $x\in K$,
\begin{multline}\label{eq: levy khintchine R}
-\left\langle R(u)-\gamma,x\right\rangle =\frac{1}{2}\langle u, A(x)
u \rangle-\langle B(x),u\rangle\\+\int_{V}
\left(e^{-\left\langle u, \xi\right\rangle}-1+\left\langle
\chi\left(\xi\right), u \right\rangle\right)M\left(x, d
\xi\right),\quad u\in K^{\ast}.
\end{multline}
As the left side of~\eqref{eq: levy khintchine R} is linear in the
components of $x$ and as $K$ is generating, it follows that $x \mapsto A(x)$, $x\mapsto B(x)$
as well as $x \mapsto \int_E(\|\xi\|^2 \wedge 1) M(x,d\xi)$ for
every $E \in \mathcal{B}(V)$ are restrictions of linear maps
on $V$. In particular, Condition~\ref{eq:lin_drift} follows immediately.
Moreover, $\langle u, A(x) v\rangle$ can be written as
\begin{align}\label{eq:AQ}
\langle u, A(x) v\rangle=\langle x, Q(u,v) \rangle,
\end{align}
where $Q: V \times V \to V$
is a symmetric bilinear function satisfying $Q(v,v) \in K^{\ast}$ for all $v \in V$, since $A(x)$
is a positive semidefinite operator. This therefore yields~\ref{eq:lin_diffusion}.
Similarly, we have for all $E \in \mathcal{B}(V)$
\[
\int_E(\|\xi\|^2 \wedge 1) M(x,d\xi)=\int_E(\|\xi\|^2 \wedge 1)\langle x ,\mu(d\xi)\rangle,
\]
where $\mu$ is a $K^{\ast}$-valued $\sigma$-finite
Borel measure on $V$, satisfying $\mu(\{0\}) = 0$ and
\[
\int_V\left(\|\xi\|^2 \wedge 1\right)\langle x, \mu(d\xi)\rangle < \infty, \quad \textrm{for all } x \in K.
\]
Hence it only remains to prove that $\supp(\mu)\subseteq K$.
In \eqref{eq: Levymeasure limit} take now $x=\frac{1}{n}y$ for some $y \in K$ with $\|y\|=1$ and
nonnegative functions $f=f_n \in C_b(V)$ with
$f_n=0$ on $K-\frac{1}{n}y$.
Then, for each $n$, the left side of~\eqref{eq:
Levymeasure limit} is zero, since
$p_t(\frac{1}{n}y,\cdot)$ is concentrated on
$K-\frac{1}{n}y$. As $\supp(m)\subseteq K$, the first
integral on the right vanishes as well. Hence
\begin{align*}
0&=\int_{V}f_n(\xi)M\left(\frac{1}{n}y,d\xi\right)=\int_{V}f_n(\xi)\left\langle \frac{1}{n}y, \mu(d\xi)\right\rangle\\
\end{align*}
for any nonnegative function $f_n \in C_b(V)$ with $f_n=0$ on $K-\frac{1}{n}y$ implies that $\supp (\mu)\subseteq K-\frac{1}{n}y$ for
each $n$. Thus we can conclude that $\supp (\mu)\subseteq K$, which proves~\ref{eq:lin_measure}. Due to the definition of $Q$ and $\mu$
together with~\eqref{eq: levy khintchine R}, $R(u)$ is clearly of form~\eqref{eq:RLevyK}.
\end{proof}

\subsubsection{Parameter Restrictions}\label{sec:parameterrestriction}

In the following we continue the analysis of the function $R$ and derive further restrictions
on the involved parameters $Q$, $B^{\top}$ and $\mu$.

\begin{proposition}\label{prop:necessaryadm}
Let $X$ be an affine process on $K$ with $R$ of form~\eqref{eq:RLevyK}
for some $Q$, $B^{\top}$, $\gamma$ and $\mu$ satisfying the conditions of Proposition~\ref{prop:LKform} (iv)-(vii).
Then, for any $u \in K^{\ast}$ and $x \in K$ with $\langle u, x\rangle=0$, we have
\begin{enumerate}
\item\label{eq:parallel_diffusion}$\langle x, Q(u,v) \rangle=0$ for all $v \in V$,
\item\label{eq:jump_int} $\int_K\langle \chi(\xi), u \rangle \langle x, \mu(d\xi)\rangle < \infty$,
\item\label{eq:inwardpoint_drift} $\langle x, B^{\top}(u)\rangle  - \int_K \langle \chi(\xi), u \rangle \langle x, \mu(d\xi)\rangle \geq 0$.
\end{enumerate}
\end{proposition}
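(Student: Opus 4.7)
The plan is to derive all three assertions from a single quasi-monotonicity inequality and then separate them by probing at different scalings of $u$. The starting point is the observation that whenever $u \in K^{\ast}$ and $x \in K$ satisfy $\langle u, x\rangle = 0$, one has $\langle x, R(\lambda u)\rangle \geq 0$ for every $\lambda \geq 0$. Indeed, $\lambda u \in K^{\ast}$, so Proposition~\ref{prop:PhipsipropertiesK}\ref{item:phipsirange} gives $\psi(t, \lambda u) \in K^{\ast}$, hence $t \mapsto \langle x, \psi(t, \lambda u)\rangle$ is nonnegative on $\re_+$ and vanishes at $t=0$; Proposition~\ref{th:regularityK} identifies its right derivative at $0$ with $\langle x, R(\lambda u)\rangle$, which is therefore nonnegative. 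Substituting the L\'evy--Khintchine representation~\eqref{eq:RLevyK} turns this into
\[
-\tfrac{\lambda^2}{2}\langle x, Q(u,u)\rangle + \lambda\langle x, B^{\top}(u)\rangle + \langle x, \gamma\rangle \geq \int_K\!\left(e^{-\lambda\langle u,\xi\rangle} - 1 + \lambda\langle \chi(\xi), u\rangle\right)\langle x, \mu(d\xi)\rangle.
\]

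For~\ref{eq:parallel_diffusion} I would fix $r>0$ with $\chi(\xi)=\xi$ on $\{\|\xi\|\leq r\}$ and split the integral accordingly. On the small-jump set the integrand $e^{-\lambda\langle u,\xi\rangle}-1+\lambda\langle u,\xi\rangle$ is nonnegative (since $e^{-t}+t-1\geq 0$), so it may be dropped from the lower bound; on the large-jump set the integrand is bounded by $1+\lambda\|u\|\|\chi\|_\infty$ against the finite measure $\mathbf{1}_{\{\|\xi\|>r\}}\langle x,\mu(d\xi)\rangle$, producing only an $O(\lambda)$ contribution. Dividing by $\lambda^2$ and sending $\lambda\to\infty$ forces $\langle x, Q(u,u)\rangle\leq 0$; since $Q(u,u)\in K^\ast$ and $x\in K$ also give $\langle x, Q(u,u)\rangle\geq 0$, we obtain $\langle x, Q(u,u)\rangle=0$. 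Because $(v,w)\mapsto\langle x, Q(v,w)\rangle$ is a positive semidefinite symmetric bilinear form on $V$ (by Proposition~\ref{prop:LKform}\ref{eq:lin_diffusion}), the Cauchy--Schwarz inequality then yields $\langle x, Q(u,v)\rangle=0$ for every $v\in V$.

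With the diffusion term eliminated, I would divide the inequality by $\lambda$ and again let $\lambda\to\infty$. On $\{\|\xi\|\leq r\}$ the integrand
\[
\langle u,\xi\rangle - \tfrac{1-e^{-\lambda\langle u,\xi\rangle}}{\lambda}
\]
is nonnegative and, using $(1-e^{-\lambda t})/\lambda=\int_0^t e^{-\lambda s}ds$, decreases in $\lambda$, so the whole expression increases monotonically to $\langle u,\xi\rangle=\langle \chi(\xi),u\rangle$; monotone convergence applies with a possibly infinite limit. On $\{\|\xi\|>r\}$ the integrand is uniformly bounded for $\lambda\geq 1$ and converges pointwise to $\langle \chi(\xi),u\rangle$ against a finite measure, so dominated convergence applies. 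The limit inequality
\[
\langle x, B^{\top}(u)\rangle \geq \int_{\|\xi\|\leq r}\langle u,\xi\rangle\,\langle x,\mu(d\xi)\rangle + \int_{\|\xi\|>r}\langle \chi(\xi),u\rangle\,\langle x,\mu(d\xi)\rangle
\]
then forces the nonnegative small-jump integral to be finite, which is~\ref{eq:jump_int}, and rearranging gives~\ref{eq:inwardpoint_drift}.

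The main subtlety will be the monotone convergence step on the small-jump set: since $\mu$ is only assumed $\|\xi\|^2\wedge 1$ integrable against $\langle x,\mu\rangle$, the integral $\int\langle u,\xi\rangle\,\langle x,\mu(d\xi)\rangle$ could a priori diverge, so finiteness must be extracted as a conclusion from the $\lambda\to\infty$ asymptotics rather than taken for granted; once this is done, the separation of the three statements into the $\lambda^2$, $\lambda$, and $O(1)$ scales of the quasi-monotonicity inequality is essentially automatic.
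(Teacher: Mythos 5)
Your proof is correct, and it takes a genuinely different route from the paper's. The paper proves (i)--(iii) by pushing the infinitely divisible distribution $L(x,dy)$ with log-Laplace transform $-\langle R(\cdot)-\gamma,x\rangle$ (constructed in the proof of Proposition~\ref{prop:LKform}) forward under the linear functional $v\mapsto\langle u,v\rangle$: the image is an infinitely divisible law supported on $\re_+$, and the one-dimensional L\'evy--Khintchine representation of subordinators, together with \citet[Proposition 11.10]{sato} for the transformation of the triplet, immediately forces the Gaussian part to vanish, the jumps to have finite variation, and the drift to be nonnegative --- which are precisely (i), (ii) and (iii). You instead extract the single scalar inequality $\langle x,R(\lambda u)\rangle\geq 0$ from the invariance $\psi(t,\cdot)\colon K^\ast\to K^\ast$ (Proposition~\ref{prop:PhipsipropertiesK}~(i)) and the Riccati equation (Proposition~\ref{th:regularityK}), and then separate the three conclusions by their scaling in $\lambda$; the positive semidefiniteness of $(v,w)\mapsto\langle x,Q(v,w)\rangle$ from Proposition~\ref{prop:LKform}~(iv) plus Cauchy--Schwarz upgrades $\langle x,Q(u,u)\rangle=0$ to the full statement (i), and your monotone/dominated convergence bookkeeping for the $O(\lambda)$ term correctly delivers the finiteness in (ii) as a conclusion rather than an assumption. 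Your argument is more elementary and self-contained --- it never needs the measure $L(x,dy)$, the projection lemma, or the classification of infinitely divisible laws on $\re_+$ --- at the cost of redoing by hand the asymptotic analysis that the subordinator representation encapsulates; the paper's route is shorter once the infinite-divisibility infrastructure from Proposition~\ref{prop:LKform} is in place, and it identifies the probabilistic object (the projected subordinator) behind the three conditions.
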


\begin{proof}
Let $u \in K^{\ast}$ and $x \in K$ with $\langle u, x\rangle=0$ be fixed. Define the linear map
$U: V \to \re,\, v \mapsto \langle u, v \rangle$.
As established in the proof of Proposition~\ref{prop:LKform}, $-\langle R(u)-\gamma, x \rangle$ is the Laplace transform of an infinitely divisible distribution
$L(x,dy)$ supported on $K-\re_+x$.  Similar to~\eqref{eq: levy khintchine R}, we denote the L\'evy triplet of $L(x,dy)$ by $(A(x), B(x), M(x,d\xi))$. Let now $Y_x$ be a random variable with distribution $L(x,dy)$. Then the distribution of $U(Y_x)=\langle u, Y_x\rangle$, which we denote by $L_u(x,dy)$, is again infinitely divisible and supported on $\re_+$. From~\citet[Proposition 11.10]{sato} we then infer that the L\'evy triplet $(a_u(x), b_u(x), \nu_u(x, d\xi))$ of  $L_u(x,dy)$  with respect to some truncation function $\widetilde{\chi}$ on $\re$ is given by
\begin{align*}
a_u(x)&=\langle u, A(x)u\rangle, \\
b_u(x)&=\langle B(x),u \rangle +\int_K \left(\widetilde{\chi}(\langle u, \xi\rangle)-\langle \chi(\xi), u\rangle\right)U_{\ast}M(x, d\xi),\\
\nu_u(x,d\xi)&=U_{\ast}M(x, d\xi).
\end{align*}
By the L\'evy Khintchine formula on $\mathbb{R}_+$, we conclude that
$a_u(x)=0$, $b_u(x)\geq 0$ and $\int_K (\|\xi\| \wedge 1)U_{\ast}M(x,d\xi)< \infty$. The last condition already implies~\ref{eq:jump_int} and allows to choose $\widetilde{\chi}=0$.
Moreover, $b_u(x) \geq 0$ yields~\ref{eq:inwardpoint_drift}. From
\[
0=a_u(x)=\langle u, A(x)u\rangle=\left\langle \sqrt{A(x)}u, \sqrt{A(x)}u\right\rangle
\]
it follows that $\langle v,A(x)u \rangle =0$ for all $v \in V$. Hence relation~\eqref{eq:AQ} implies~\ref{eq:parallel_diffusion}.
\end{proof}

\subsubsection{Quasi-monotonicity}

Quasi-monotonicity plays a crucial role in comparison theorems for ordinary differential equations and thus appears naturally in the setting of affine processes.
As we shall see in Section~\ref{sec:Riccati},
it is needed to establish global existence and uniqueness for
the ordinary differential equations defined in~\eqref{eq:RiccatiF}
and~\eqref{eq:RiccatiR}.
In the following we prove that the function $R$, as given in~\eqref{eq:RLevyK}, is quasi-monotone increasing if
the conditions of Proposition~\ref{prop:necessaryadm}~\ref{eq:parallel_diffusion}-\ref{eq:inwardpoint_drift} are satisfied.

\begin{definition}[Quasi-monotonicity]\label{def:quasimono}
Let $U$ be a subset of $V$. A function $f: U \to V$ is called \emph{quasi-monotone increasing} (with respect to $K^\ast$ and the induced order $\preceq$) if, for all $u,v \in U$ and $x \in K$ satisfying $u \preceq v$ and $\langle u, x\rangle =\langle v,x \rangle$,
\[
\langle f(u),x \rangle \leq \langle f(v), x \rangle.
\]
Accordingly, we call $f$ quasi-constant if both $f$ and $-f$ are quasi-monotone increasing.
\end{definition}

\begin{remark}
Note that in a one-dimensional vector space \emph{any} function is quasi-monotone. It is only in dimension greater than one that the notion of quasi-monotonicity becomes meaningful.
\end{remark}

\begin{proposition}\label{prop:quasimono}
Let $R$ be of form~\eqref{eq:RLevyK} for some $Q$, $B^{\top}$, $\gamma$ and $\mu$
satisfying the conditions of Proposition~\ref{prop:LKform} (iv)-(vii)
and Proposition~\ref{prop:necessaryadm} (i)-(iii).
Then $R$ is quasi-monotone increasing on $K^{\ast}$.
\end{proposition}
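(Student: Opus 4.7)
The plan is to reduce everything to the increment $w := v - u$. Given $u,v \in K^{\ast}$ with $u \preceq v$ and $x \in K$ with $\langle u, x\rangle = \langle v, x\rangle$, the element $w = v - u$ lies in $K^{\ast}$ (by definition of $\preceq$) and satisfies $\langle w, x\rangle = 0$. Thus $(w,x)$ is precisely a pair to which parts~\ref{eq:parallel_diffusion}--\ref{eq:inwardpoint_drift} of Proposition~\ref{prop:necessaryadm} apply (with $w$ in the role of $u$).

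First I would handle the diffusion term. By bilinearity and symmetry of $Q$,
\[
-\tfrac{1}{2}\bigl(Q(v,v) - Q(u,u)\bigr) = -Q(u,w) - \tfrac{1}{2}Q(w,w).
\]
Proposition~\ref{prop:necessaryadm}~\ref{eq:parallel_diffusion} applied to $w$ yields $\langle x, Q(w, y)\rangle = 0$ for every $y \in V$; in particular both $\langle x, Q(u,w)\rangle$ and $\langle x, Q(w,w)\rangle$ vanish. The constant $\gamma$ cancels in the difference $R(v) - R(u)$, so what survives is
\[
\langle R(v) - R(u), x\rangle = \langle B^{\top}(w), x\rangle - I,
\]
where
\[
I := \int_K \Bigl(e^{-\langle v, \xi\rangle} - e^{-\langle u, \xi\rangle} + \langle \chi(\xi), w\rangle\Bigr)\langle x, \mu(d\xi)\rangle.
\]

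Next I would bound $I$. For every $\xi \in K$ we have $\langle w, \xi\rangle \geq 0$, hence $e^{-\langle v, \xi\rangle} - e^{-\langle u, \xi\rangle} = e^{-\langle u, \xi\rangle}(e^{-\langle w, \xi\rangle} - 1) \leq 0$. Proposition~\ref{prop:necessaryadm}~\ref{eq:jump_int} tells us that $\int_K \langle \chi(\xi), w\rangle \langle x, \mu(d\xi)\rangle$ is finite, and the $O(\|\xi\|^2)$ behaviour of the full integrand near $0$ together with boundedness away from $0$ ensures that $I$ itself is well-defined. Splitting $I$ and using the nonpositivity just noted gives
\[
I \leq \int_K \langle \chi(\xi), w\rangle \langle x, \mu(d\xi)\rangle.
\]

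Finally, Proposition~\ref{prop:necessaryadm}~\ref{eq:inwardpoint_drift} applied to $w$ says
\[
\langle B^{\top}(w), x\rangle - \int_K \langle \chi(\xi), w\rangle \langle x, \mu(d\xi)\rangle \geq 0,
\]
and combining this with the previous inequality yields $\langle R(v) - R(u), x\rangle \geq 0$. There is no real obstacle beyond keeping careful track of signs and integrability; the key conceptual point is simply that $w = v - u$ inherits the pairing property $\langle w, x\rangle = 0$ with $w \in K^{\ast}$, so the admissibility conditions already established in Proposition~\ref{prop:necessaryadm} apply directly to the \emph{difference}, rather than requiring a separate argument for $u$ and $v$.
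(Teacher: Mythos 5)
Your argument is correct, and it rests on the same three observations as the paper's proof: condition (i) of Proposition~\ref{prop:necessaryadm} kills the diffusion term against $x$, condition (iii) controls the drift minus the compensator, and the monotonicity of $u\mapsto e^{-\langle u,\xi\rangle}$ handles the remaining jump part. The execution differs in one genuine respect, though. The paper first regularizes $R$ to $R^{\delta}$ by discarding the jumps with $\|\xi\|<\delta$, so that the integrand can be split into $\langle\chi(\xi),u\rangle$ and $e^{-\langle u,\xi\rangle}-1$ as two separately integrable pieces; it proves quasi-monotonicity of each piece of $R^{\delta}$ and then passes to the limit $\delta\to 0$ by dominated convergence. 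You avoid the truncation and the limiting step entirely by working with the difference $\langle R(v)-R(u),x\rangle$ from the outset: since the increment $w=v-u$ lies in $K^{\ast}$ and satisfies $\langle w,x\rangle=0$, condition (ii) gives $\int_K\langle\chi(\xi),w\rangle\langle x,\mu(d\xi)\rangle<\infty$, and this finiteness is exactly what legitimizes splitting the jump integral of the difference without any cutoff (the other summand $e^{-\langle v,\xi\rangle}-e^{-\langle u,\xi\rangle}$ being nonpositive, its integral exists in $[-\infty,0]$ and the inequality $I\leq\int_K\langle\chi(\xi),w\rangle\langle x,\mu(d\xi)\rangle$ follows). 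This is slightly more direct and has the virtue of making explicit where condition (ii) enters; in the paper's proof it is used only implicitly, to make the compensated linear drift in condition (iii) well defined. Both routes buy the same conclusion; the paper's truncation scheme is, however, reused elsewhere (Lemma~\ref{lemma: three dfs statements} and Proposition~\ref{prop: C CS semiflow}), which partly explains why the authors set it up here.
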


\begin{proof}
Let $\delta>0$, and define
\begin{equation}\label{eq:quasimono}
\begin{split}
R^{\delta}(u)&=-\frac{1}{2}Q(u,u)+B^{\top}(u)+\gamma-\int_{\{\|\xi\| \geq \delta\} \cap K} \left(e^{-\langle u, \xi\rangle}-1+ \langle \chi(\xi),u \rangle \right) \mu(d\xi)\\
&= -\frac{1}{2}Q(u,u)+ \gamma + B^{\top}(u)- \int_{\{\|\xi\| \geq \delta\} \cap K}\langle \chi(\xi),u \rangle \mu(d\xi)\\
&\quad -\int_{\{\|\xi\| \geq \delta\} \cap K} \left(e^{-\langle u, \xi\rangle}-1 \right) \mu(d\xi).
\end{split}
\end{equation}
Take now some $u,v \in K^{\ast}$ and $x \in K$  such that $u \preceq v$ and $\langle u, x\rangle =\langle v,x \rangle$.
Due to Condition~\ref{eq:parallel_diffusion} of Proposition~\ref{prop:necessaryadm}, we then have $\langle Q(v-u,w),x \rangle=0$ for all $w \in V$. As $Q$ is a bilinear function,
this is equivalent to $\langle Q(v,w),x \rangle=\langle Q(u,w),x \rangle$ for all $w \in V$. Inserting $w=u$ and $w=v$, we obtain
by the symmetry of $Q$
\[
\langle Q(v,v),x \rangle=\langle Q(u,u),x \rangle,
\]
whence the map $u\mapsto -\frac{1}{2}Q(u,u)+\gamma$ is quasi-constant.
Condition~\ref{eq:inwardpoint_drift} of Proposition~\ref{prop:necessaryadm} directly yields that
\[
u\mapsto  B^{\top}(u)- \int_{\{\|\xi\| \geq \delta\} \cap K}\langle \chi(\xi),u \rangle  \mu(d\xi)
\]
is a quasi-monotone increasing linear map on $K^{\ast}$. Finally, the
quasi-monotonicity of
\[
u\mapsto \int_{\{\|\xi\| \geq \delta \}\cap K} \left(1-e^{-\langle u, \xi\rangle} \right) \mu(d\xi)
\]
is a consequence of the monotonicity of the exponential map and
$\supp(\mu)\subseteq K$.
By dominated convergence, we have $\lim_{\delta \to 0}
R^\delta(u)= R(u)$ pointwise for each $u\in K^{\ast}$. Hence the
quasi-monotonicity carries over to $R$. Indeed, we
have for all $\delta >0$, $\langle R^\delta(v)-R^\delta(u),x\rangle\geq
0$. Thus
\[
\langle R^\delta(v)-R^\delta(u),x\rangle\rightarrow \langle R(v)-R(u),x\rangle\geq 0
\]
as $\delta \to 0$, which proves that $R$ is quasi-monotone
increasing.
\end{proof}

\subsection{The Generalized Riccati Equations}\label{sec:Riccati}

In order to prove the existence of affine processes for a given parameter set which satisfies the conditions of Proposition~\ref{prop:LKform} and
Proposition~\ref{prop:necessaryadm}, we shall heavily rely on the following existence and uniqueness
result for the generalized Riccati equations~\eqref{eq:RiccatiF} and~\eqref{eq:RiccatiR}, where
$F$ and $R$ are given by~\eqref{eq:FLevyK} and~\eqref{eq:RLevyK}.
Indeed, in the case of general proper convex cones, this allows us to prove
existence of affine pure jump processes (see Section~\ref{sec:existencejump}).
In the particular case of affine processes on symmetric cones, which we study in Section~\ref{sec:symcone},
we obtain, using Proposition~\ref{prop_ricc_sol} below, existence of affine processes
for any given parameter set (see Section~\ref{sec:existencesymcone}).

For the analysis of the generalized Riccati
equations~\eqref{eq:RiccatiF} and~\eqref{eq:RiccatiR} we shall use
the concept of quasi-monotonicity, as introduced above, several
times. Indeed, the proof of Proposition~\ref{prop_ricc_sol} below is
based to a large extent on the methods applied in~\citet[Proposition
5.3]{cfmt} which rely on the following comparison result for
ordinary differential equations (see~\citet{Volkmann1973}).

\begin{theorem}\label{th: Volkmann}
Let $U\subset V$ be an open set. Let $f\colon [0,T)\times U\to
V$ be a continuous locally Lipschitz map such that $f(t,\cdot)$ is
quasi-monotone increasing on $U$ for all $t\in [0,T)$. Let
$0<t_0\leq T$ and $g,h: [0,t_0)\to U$ be differentiable maps
such that $g(0)\preceq h(0)$ and
\[
\partial_t g(t)-f(t,g(t))\preceq \partial_t h(t)-f(t,h(t)), \quad \quad 0\leq
t<t_0.
\]
Then we have $g(t)\preceq h(t)$ for all $t\in [0,t_0)$.
\end{theorem}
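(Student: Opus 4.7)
The plan is to reduce to a \emph{strict} cone-order differential inequality by an interior-point perturbation of $h$, then to rule out a first touching time of $g$ and the perturbed curve by combining quasi-monotonicity with the strictness, and finally to pass to the limit. The key geometric ingredient is that an interior point of $K^\ast$ comes equipped with a surrounding norm ball contained in $K^\ast$, which allows one to convert the scalar Lipschitz bound for $f$ into a $\preceq$-order bound.

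Concretely, I would fix $e_0 \in \mathring{K}^\ast$ and $r>0$ with $\{v \in V : \|v - e_0\| \leq r\} \subset K^\ast$, choose a subinterval $[0,t_1] \subset [0,t_0)$ on which $f$ admits a common Lipschitz constant $M$ near the graphs of $g$ and $h$, and pick $L > M\|e_0\|/r$. For $\epsilon > 0$ set
\[
h_\epsilon(t) := h(t) + \epsilon e^{Lt} e_0, \qquad t \in [0,t_1].
\]
Then $h_\epsilon(0) - g(0) \succeq \epsilon e_0 \succ 0$, and writing
\[
\partial_t h_\epsilon - f(t,h_\epsilon) = \bigl(\partial_t h - f(t,h)\bigr) + \epsilon L e^{Lt} e_0 + \bigl(f(t,h) - f(t,h_\epsilon)\bigr),
\]
the Lipschitz bound $\|f(t,h_\epsilon)-f(t,h)\| \leq M\epsilon e^{Lt}\|e_0\|$ together with the ball of radius $r$ around $e_0$ inside $K^\ast$ shows, by the choice of $L$, that the last two terms combine to an element of $\mathring{K}^\ast$. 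Combined with the hypothesis relating $g$ and $h$ this yields the strict differential inequality
\[
\partial_t h_\epsilon(t) - f(t,h_\epsilon(t)) - \partial_t g(t) + f(t,g(t)) \in \mathring{K}^\ast, \qquad t \in [0,t_1].
\]

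Next I would argue by contradiction that $g(t) \preceq h_\epsilon(t)$ throughout $[0,t_1]$. Otherwise, by continuity, there is a smallest $\tau \in (0,t_1]$ with $h_\epsilon(\tau)-g(\tau) \in \partial K^\ast$, whence some $x \in K \setminus\{0\}$ satisfies $\langle x, g(\tau)\rangle = \langle x, h_\epsilon(\tau)\rangle$. Quasi-monotonicity of $f(\tau,\cdot)$ applied with this contact functional gives $\langle x, f(\tau,h_\epsilon(\tau)) - f(\tau,g(\tau))\rangle \geq 0$, while the strict inequality just derived forces $\langle x, \partial_t h_\epsilon(\tau) - \partial_t g(\tau)\rangle > 0$. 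But $t \mapsto \langle x, h_\epsilon(t)-g(t)\rangle$ is nonnegative on $[0,\tau]$ and vanishes at $\tau$, so its derivative at $\tau$ is nonpositive, a contradiction. Letting $\epsilon \downarrow 0$ and exhausting $[0,t_0)$ by such subintervals gives $g(t) \preceq h(t)$ on $[0,t_0)$.

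The main obstacle in this program is the first step: the Lipschitz continuity of $f$ is a \emph{norm} bound, but the conclusion to be contradicted lives in the cone order $\preceq$. The use of an interior element $e_0$ with a nontrivial ball inside $K^\ast$ is precisely the device that converts the norm bound into the cone-order strict dominance needed to absorb the perturbation $f(t,h_\epsilon) - f(t,h)$; without it the whole first-touch-time argument collapses to a non-strict inequality and cannot yield a contradiction.
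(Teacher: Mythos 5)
The paper gives no proof of this theorem: it is imported verbatim from Volkmann (1973) (cf.\ also Walter's book, which the authors cite elsewhere), so there is no in-paper argument to compare against. Your proof is correct, and it is essentially the classical argument behind Volkmann's result: perturb $h$ by $\epsilon e^{Lt}e_0$ with $e_0$ an interior point of $K^{\ast}$ to upgrade the differential inequality to a strict one in $\mathring{K}^{\ast}$, locate a first contact time $\tau$ with the boundary of the order cone, extract a supporting functional $x\in K\setminus\{0\}$ with $\langle x, h_\epsilon(\tau)-g(\tau)\rangle=0$, and play quasi-monotonicity against the sign of $\frac{d}{dt}\langle x, h_\epsilon(t)-g(t)\rangle$ at $\tau$; then let $\epsilon\downarrow 0$ and use closedness of $K^{\ast}$. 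All the key points check out: the closed ball $\bar B(e_0,r)\subset K^{\ast}$ does force $\epsilon Le^{Lt}e_0+w\in\mathring{K}^{\ast}$ whenever $\|w\|\le M\epsilon e^{Lt}\|e_0\|$ and $L>M\|e_0\|/r$; the contact time exists because $h_\epsilon(0)-g(0)\in\mathring{K}^{\ast}$; and $K^{\ast}+\mathring{K}^{\ast}\subset\mathring{K}^{\ast}$ gives the strict inequality needed at $\tau$. Two points you leave implicit and should state for completeness: (a) for $\epsilon$ small enough (uniformly on the compact interval $[0,t_1]$) the perturbed curve $h_\epsilon$ stays inside $U$ and inside the neighbourhood of the graph of $h$ on which the Lipschitz constant $M$ is valid, so that both the Lipschitz bound and the quasi-monotonicity hypothesis may be applied at $h_\epsilon(\tau)$; and (b) the existence of the contact functional uses the characterization $\mathring{K}^{\ast}=\{u\,:\,\langle x,u\rangle>0 \text{ for all } x\in K\setminus\{0\}\}$, valid because $K$ is closed, proper and generating. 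Neither is a real gap.
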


The following estimate is needed to establish the existence of a global solution of~\eqref{eq:RiccatiR}.
Let us remark that a slightly stronger statement is proved in~\citet[Lemma 5.2]{cfmt} for the cone of positive semidefinite matrices, which however uses
the self-duality of the cone explicitly.

\begin{lemma}\label{lemKestR}
Let $R$ be of form~\eqref{eq:RLevyK} for some $Q$, $B^{\top}$, $\gamma$ and $\mu$
satisfying the conditions of Proposition~\ref{prop:LKform} (iv)-(vii). Then
\begin{align*}
R(u)&\preceq B^\top(u)+ \gamma+ \mu(K\cap\{\|\xi\| > 1\}).
\end{align*}
\end{lemma}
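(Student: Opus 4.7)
The plan is to verify that $B^\top(u) + \gamma + \mu(K\cap\{\|\xi\|>1\}) - R(u)$ lies in $K^\ast$. Substituting the L\'evy--Khintchine form \eqref{eq:RLevyK}, this difference equals
\[
\tfrac12 Q(u,u) + \int_K \bigl(e^{-\langle u,\xi\rangle} - 1 + \langle\chi(\xi),u\rangle\bigr)\mu(d\xi) + \mu\bigl(K\cap\{\|\xi\|>1\}\bigr).
\]
Condition~(iv) of Proposition~\ref{prop:LKform} gives $\tfrac12 Q(u,u)\in K^\ast$, so this summand can be discarded and it remains to show that the sum of the other two terms lies in $K^\ast$. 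I would test against an arbitrary $x\in K$, which turns $\langle x,\mu(\cdot)\rangle$ into a genuine nonnegative $\sigma$-finite measure on $K$ (with integrability of the Lévy exponent guaranteed by condition~(vii)), and split the integral at $\|\xi\|=1$.

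On the small-jump piece $\{\|\xi\|\le 1\}$, working with the truncation $\chi(\xi)=\xi/(\|\xi\|\vee 1)$, we have $\chi(\xi)=\xi$, so the integrand reduces to $e^{-\langle u,\xi\rangle}-1+\langle u,\xi\rangle$, which is pointwise $\ge 0$ by the elementary inequality $e^{-t}-1+t\ge 0$ valid for all $t\in\mathbb{R}$. Hence this contribution is nonnegative. On the large-jump piece $\{\|\xi\|>1\}$, the same $\chi$ satisfies $\chi(\xi)=\xi/\|\xi\|\in K$, so $\langle\chi(\xi),u\rangle\ge 0$ for $u\in K^\ast$; combined with $e^{-\langle u,\xi\rangle}\ge 0$, the integrand is bounded below by $-1$. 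The large-jump integral is therefore at least $-\langle x,\mu(K\cap\{\|\xi\|>1\})\rangle$, which is exactly compensated by the explicit tail term $+\,\langle x,\mu(K\cap\{\|\xi\|>1\})\rangle$. Non-negativity of the test pairing follows for every $x\in K$, yielding the desired $K^\ast$-membership.

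The main subtle point is the dependence on the truncation function: the argument exploits a $\chi$ that simultaneously (a) agrees with the identity on the closed unit ball and (b) maps $K$ into $K$, and the canonical formula $\chi(\xi)=\xi/(\|\xi\|\vee 1)$ delivers both at once. Since replacing one admissible truncation by another only redistributes a $\mu$-integrable, linear-in-$u$ drift between $B^\top$ and the L\'evy integral (the two parametrizations of $R$ being equivalent), the statement of the lemma is truncation-invariant, and it suffices to establish it for this convenient choice.
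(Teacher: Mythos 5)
Your proposal is correct and follows essentially the same route as the paper: fix a convenient truncation that is the identity on the unit ball, split the jump integral at $\|\xi\|=1$, use $e^{-t}-1+t\geq 0$ on the small jumps, bound the tail contribution by $\mu(K\cap\{\|\xi\|>1\})$, and discard $-\tfrac12 Q(u,u)\preceq 0$. The only cosmetic differences are that you verify $K^\ast$-membership of the difference by pairing with $x\in K$ (which is just the definition of $\preceq$) and use $\chi(\xi)=\xi/(\|\xi\|\vee 1)$ in place of the paper's $1_{\{\|\xi\|\le 1\}}\xi$.
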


\begin{proof}
We may assume without loss of generality that the truncation
function $\chi$ takes the
form $\chi(\xi)=1_{\{\|\xi\| \leq 1\}}\xi$. Then, for all $u\in K^{\ast}$, we have
\begin{equation*}
\begin{split}
R(u)&=-\frac{1}{2}Q(u,u)+B^\top(u)+\gamma-\int_{K\cap\{\|\xi\| \leq
1\}}\underbrace{\left(e^{-\langle
u,\xi\rangle}-1+\langle \xi,u\rangle\right)}_{\geq 0}\mu(d\xi)\\
&\quad-\int_{K\cap\{\|\xi\| >
1\}}\left(e^{-\langle
u,\xi\rangle}-1\right)\mu(d\xi)\\
&\preceq -\frac{1}{2}Q(u,u)+B^\top(u)+\gamma+\mu(K\cap\{\|\xi\| > 1\})\\
&\preceq B^\top(u)+\gamma+\mu(K\cap\{\|\xi\| > 1\}),
\end{split}
\end{equation*}
where we use
$
-\int_{K\cap\{\|\xi\| > 1\}} \left(e^{-\langle
u,\xi\rangle}-1\right)\mu(d\xi)\preceq
\int_{K\cap\{\|\xi\| > 1\}} \mu(d\xi).
$
\end{proof}

Here is our main existence and uniqueness result for the generalized
Riccati differential equations~\eqref{eq:RiccatiF}--\eqref{eq:RiccatiR}.

\begin{proposition}\label{prop_ricc_sol}
Let $F$ and $R$ be of form~\eqref{eq:FLevyK} and~\eqref{eq:RLevyK} such that the conditions of
Proposition~\ref{prop:LKform} and~\ref{prop:necessaryadm} are satisfied. Then, for every $u\in \mathring{K^{\ast}}$,
there exists a unique global $\re_+\times
\mathring{K^{\ast}}$-valued solution $(\phi,\psi)$ of~\eqref{eq:RiccatiF}--\eqref{eq:RiccatiR}.
Moreover, $\phi(t,u)$ and $\psi(t,u)$ are real-analytic in $(t,u)\in \re_+\times \mathring{K^{\ast}}$.
\end{proposition}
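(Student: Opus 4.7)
My plan is to build the solution locally from the ODE, confine it to $\mathring{K^{\ast}}$ via the quasi-monotonicity machinery of Section~\ref{sec:parameterrestriction}, and then extend globally using the affine upper bound of Lemma~\ref{lemKestR}. First I would observe that $R$, as given by~\eqref{eq:RLevyK}, is real-analytic on $\mathring{K^{\ast}}$: for $u$ in a compact subset of $\mathring{K^{\ast}}$ the integrand $e^{-\langle u,\xi\rangle}$ is holomorphic in $u$ and decays exponentially in $\xi$ uniformly (since such $u$ pair strictly positively with $K\setminus\{0\}$), so $R$ extends holomorphically. The Cauchy--Lipschitz theorem then yields, for each $u\in\mathring{K^{\ast}}$, a unique local solution $\psi(\cdot,u)$ of~\eqref{eq:RiccatiR} that is jointly real-analytic in $(t,u)$. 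The function $\phi$ is then obtained by quadrature, $\phi(t,u)=\int_0^t F(\psi(s,u))\,ds$, which is real-analytic and nonnegative, since $F(u)\geq 0$ on $K^{\ast}$ (use $b\in K$, $c\geq 0$, and $e^{-\langle u,\xi\rangle}-1\leq 0$ on $K^{\ast}\times K$).

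Next I would show $\psi(t,u)\in\mathring{K^{\ast}}$ in two steps. To get $\psi(t,u)\in K^{\ast}$, I would compare $\psi(\cdot,u)$ against $g\equiv 0$, which satisfies $\partial_t g - R(g)=-\gamma\preceq 0=\partial_t\psi - R(\psi)$ with $g(0)=0\preceq u=\psi(0,u)$; quasi-monotonicity of $R$ (Proposition~\ref{prop:quasimono}) together with the comparison Theorem~\ref{th: Volkmann} then gives $0\preceq\psi(t,u)$ on the maximal existence interval. To upgrade this to membership in the interior I would invoke Lemma~\ref{lem:psiint}: the semiflow property~\eqref{eq:flowproppsi} holds by ODE uniqueness, analyticity and joint continuity come from the first step, and order-preservation $u\preceq v\Rightarrow\psi(t,u)\preceq\psi(t,v)$ is another application of Theorem~\ref{th: Volkmann} to the pair $g(t):=\psi(t,u)$, $h(t):=\psi(t,v)$.

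For global existence I would exploit Lemma~\ref{lemKestR}: on $K^{\ast}$ one has $R(u)\preceq\Lambda(u):=B^{\top}(u)+\gamma+\mu(K\cap\{\|\xi\|>1\})$. Fix any $w\in\mathring{K}$; a compactness argument on the unit sphere of $K^{\ast}$ together with the continuity and strict positivity of $v\mapsto\langle w,v\rangle$ on $K^{\ast}\setminus\{0\}$ produces $c>0$ with $\langle w,v\rangle\geq c\|v\|$ for every $v\in K^{\ast}$. Pairing the equation with $w$ and using $\Lambda(\psi)-R(\psi)\in K^{\ast}$ yields
\[
\partial_t\langle w,\psi(t,u)\rangle\leq\langle w,\Lambda(\psi(t,u))\rangle\leq C_1\langle w,\psi(t,u)\rangle+C_2
\]
for some constants $C_1,C_2>0$, after bounding $|\langle w,B^{\top}\psi\rangle|\leq\|w\|\|B^{\top}\|_{\mathrm{op}}c^{-1}\langle w,\psi\rangle$. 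Grönwall's inequality bounds $\|\psi(t,u)\|$ exponentially on bounded intervals, ruling out finite-time blow-up and extending the solution to all of $\re_+$.

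The most delicate step is the confinement to the \emph{open} cone: quasi-monotone comparison alone only places $\psi$ in the closed cone $K^{\ast}$, and pushing it into the interior relies on the real-analyticity argument of Lemma~\ref{lem:psiint}, which in turn requires that the locally defined flow genuinely carries the full structural package (semiflow, order preservation, joint continuity, real-analyticity). Step~1 is standard ODE theory once real-analyticity of $R$ is verified, and Step~3 is then a routine Grönwall estimate.
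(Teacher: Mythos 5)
Your overall architecture (local analytic solution, quasi-monotone comparison to confine the flow to the cone, Lemma~\ref{lem:psiint} to reach the interior, Lemma~\ref{lemKestR} to prevent blow-up) matches the paper's in spirit, and your Gr\"onwall estimate is a perfectly acceptable substitute for the paper's normality-of-the-cone argument in the last step. But there is a genuine gap in the confinement step, and it is exactly the point the paper's proof is engineered to handle. To apply Theorem~\ref{th: Volkmann} with the comparison curve $g\equiv 0$ you need the right-hand side to be continuous, locally Lipschitz and quasi-monotone on an \emph{open} set containing both solution curves, in particular containing $0\in\partial K^{\ast}$. But $R$ need not be locally Lipschitz there: the linear jump measure $\mu$ is only required to integrate $\|\xi\|^{2}\wedge 1$, so the formal $u$-derivative of the large-jump part of the integral involves $\int_{\|\xi\|>1}\|\xi\|\,\langle x,\mu(d\xi)\rangle$, which may be infinite at points $u\in\partial K^{\ast}$ where $\langle u,\xi\rangle$ is not bounded below by a multiple of $\|\xi\|$; moreover $R$ need not even extend to an open neighbourhood of $K^{\ast}$. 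This is precisely why the paper first replaces $R$ by the regularized $\widetilde R$ (jump integral cut off at $\|\xi\|\le 1$), which \emph{is} real-analytic on all of $V$, and proves $K^{\ast}$-invariance of the regularized flow $\widetilde\psi$ via Walter's subtangent condition rather than via Volkmann with $g\equiv 0$.

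The second, related problem is that your appeal to Lemma~\ref{lem:psiint} is circular. That lemma is stated for a map defined on all of $\re_+\times K^{\ast}$, whereas at that stage of your argument $\psi(\cdot,u)$ exists only on a maximal interval $[0,t_\infty(u))$ which, by the very definition of a $\mathring{K^{\ast}}$-valued solution, may terminate because $\psi$ converges to a boundary point with bounded norm. The statement $\psi(t,u)\in\mathring{K^{\ast}}$ for $t<t_\infty(u)$ is automatic; what must be excluded is boundary attainment in the limit $t\uparrow t_\infty(u)$, and neither the comparison with $0$ (which yields only the vacuous $\psi\succeq 0$) nor Lemma~\ref{lem:psiint} applied to the partial flow accomplishes this. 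The paper's mechanism is the lower bound $\psi(t,u)\succeq\widetilde\psi(t,u)$, where $\widetilde\psi$ is globally defined and shown to be interior-valued (Lemma~\ref{lem:psiint} is applied to $\widetilde\psi$, which satisfies its hypotheses); since $\mathring{K^{\ast}}+K^{\ast}\subseteq\mathring{K^{\ast}}$, on compact time intervals $\psi$ then stays uniformly away from $\partial K^{\ast}$, so its lifetime is governed by norm blow-up alone and your Gr\"onwall bound finishes. To repair your proof you need to reinstate this regularization-and-comparison step; without it, both applications of the comparison machinery at the boundary are unjustified.
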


\begin{proof}
We only have to show that, for every $u\in \mathring{K^{\ast}}$, there exists a
unique global $\mathring{K^{\ast}}$-valued solution $\psi$ of \eqref{eq:RiccatiR},
since $\phi$ is then uniquely determined by integrating
\eqref{eq:RiccatiF}.

Let $u\in \mathring{K^{\ast}}$. Since $R$ is real-analytic on $\mathring{K^{\ast}}$ (see, e.g.,~\citet[Lemma A.2]{dfs}), standard
ODE results (see, e.g.,~\citet[Theorem 10.4.5]{dieu_69}) yield that there
exists a unique local $\mathring{K^{\ast}}$-valued solution $\psi(t,u)$ of
\eqref{eq:RiccatiR} for $t\in [0,t_{\infty}(u))$, where
\[
t_{\infty}(u)=\lim_{k\to\infty} \inf\{ t\geq 0\mid \text{$\|\psi(t,u)\|\ge k$ or
$\psi(t,u)\in\partial K^{\ast}$}\}\leq \infty.
\]
It thus remains to show that $t_{\infty}(u)=\infty$. Real-analyticity of $\psi(t,u)$ and $\phi(t,u)$
in $(t,u)\in \re_+\times \mathring{K}^{\ast}$ then follows from~\citet[Theorem 10.8.2]{dieu_69}.

Since $R$ may not be Lipschitz continuous at $\partial K^{\ast}$, we first have to regularize it.
We thus define
\begin{align*}
\widetilde{R}(u)&=-\frac{1}{2}Q(u,u)+B^\top(u)+\gamma
-\int_{K\cap \{\|\xi\| \leq
1\}}\left(e^{-\langle
u,\xi\rangle}-1+\langle\xi,u\rangle\right)\mu(d\xi).
\end{align*}
Then $\widetilde{R}$ is real-analytic
on $V$. Hence, for all $u\in V$, there exists a unique local
$V$-valued solution $\widetilde{\psi}$ of
\[
\frac{\partial\widetilde{\psi}(t,u)}{\partial
t}=\widetilde{R}(\widetilde{\psi}(t,u)), \quad
\widetilde{\psi}(0,u)=u  ,
\]
for all $t\in [0,\widetilde{t}_{\infty}(u))$ with maximal lifetime
\[
\widetilde{t}_{\infty}(u)=\lim_{k\to\infty}\inf \{ t\geq 0\mid \text{$\|\widetilde{\psi}(t,u)\|\geq k$}\}\leq \infty.
\]
Consider now the normal cone of $K^{\ast}$ at $u \in \partial K^{\ast}$, consisting of inward pointing normal vectors, that
is,
\[
 N_{K^{\ast}}(u)=\{x \in K\, |\, \langle u, x \rangle =0\}, \quad u \neq 0,
\]
and $ N_{K^{\ast}}(0)=K$ (see, e.g.,~\citet[Example III.5.2.6]{hiriart}).
The conditions of Proposition~\ref{prop:necessaryadm} thus imply that
\[
 \langle \widetilde{R}(u), x \rangle \geq 0,
\]
for all $x \in N_{K^{\ast}}(u)$. Since $\widetilde{R}$ is clearly Lipschitz continuous, it follows from~\citet[Theorem III.10.XVI]{walter}
that $\widetilde{\psi}(t,u) \in K^{\ast}$ for all $t < \widetilde{t}_{\infty}(u)$ and $u \in K^{\ast}$.

Let us now define $y$ satisfying
\begin{align}\label{eq:linODE}
  \frac{\partial y(t,u)}{\partial t}= B^\top(y(t,u))+\gamma, \quad y(0,u)=u.
\end{align}
Then we have by Lemma~\ref{lemKestR} for $t < \widetilde{t}_{\infty}(u)$
\[
0=\frac{\partial\widetilde{\psi}(t,u)}{\partial
t}-\widetilde{R}(\widetilde{\psi}(t,u)) = \frac{\partial
y(t,u)}{\partial t}-B^\top(y(t,u))-\gamma \preceq \frac{\partial
y(t,u)}{\partial t}-\widetilde R(y(t,u)).
\]
Volkmann's comparison Theorem~\ref{th: Volkmann} thus implies for all $x \in K$
\[
 \langle \widetilde{\psi}(t,u),x\rangle \leq \langle y(t,u), x\rangle , \quad t\in [0,\widetilde{t}_{\infty}(u)).
\]
As $ \widetilde{\psi}(t,u) $ lies in $K^{\ast}$ up to its lifetime, the left hand side is nonnegative for all $x \in K$. By the very definition of
the dual cone, we therefore have
\[
 \widetilde{\psi}(t,u) \preceq y(t,u) , \quad t\in [0,\widetilde{t}_{\infty}(u)).
\]
Moreover, the affine ODE~\eqref{eq:linODE} admits a global solution. Since in finite dimensions any proper closed convex cone (in particular $K^\ast$) is normal,
that is, there exists a constant $\gamma_{K^\ast}$ such that
\[
0\preceq x\preceq y\Rightarrow \|x\|\leq \gamma_{K^\ast}\|y\|,
\]
we have
\[
\| \widetilde{\psi}(t,u) \|\leq \gamma_{K^\ast}\|y(t,u)\|<\infty.
\]
Hence we conclude that $\widetilde{t}_{\infty}(u)=\infty$ for all $u\in K^{\ast}$.

Moreover, by Proposition~\ref{prop:quasimono}, $\widetilde{R}$ is quasi-monotone increasing on
$K^{\ast}$. Hence another application of Theorem~\ref{th: Volkmann} yields
\[
0\preceq\widetilde{\psi}(t,u)\preceq \widetilde{\psi}(t,v),\quad t \geq 0, \quad\text{for all $0\preceq u\preceq v$.}
\]
Therefore and since $\widetilde{\psi}(t,u)$ is
also real-analytic in $u$, Lemma~\ref{lem:psiint} implies that
$\widetilde{\psi}(t,u) \in \mathring{K^{\ast}}$ for all $(t,u) \in \re_+ \times
\mathring{K^{\ast}}$.

We now carry this over to $\psi(t,u)$ and assume without loss of
generality, as in the proof of Lemma~\ref{lemKestR}, that the
truncation function $\chi$
takes the form $\chi(\xi)=1_{\{\|\xi\| \leq 1\}}\xi$. Then
\[
R(u)-\widetilde{R}(u)=-\int_{K\cap \{\|\xi\| > 1\}}\left(e^{-\langle
u,\xi\rangle}-1\right)\mu(d\xi)\succeq 0,\quad u\in K^{\ast}.
\]
Hence, for $u\in \mathring{K^{\ast}}$ and $t<t_{\infty}(u)$, we have
\[
0=\frac{\partial\widetilde{\psi}(t,u)}{\partial
t}-\widetilde{R}(\widetilde{\psi}(t,u)) = \frac{\partial
\psi(t,u)}{\partial t}-R(\psi(t,u)) \preceq \frac{\partial
\psi(t,u)}{\partial t}-\widetilde R(\psi(t,u)).
\]
Theorem~\ref{th: Volkmann} thus implies
\[
\psi(t,u)\succeq \widetilde{\psi}(t,u) \in \mathring{K^{\ast}},\quad t\in
[0,t_{\infty}(u)).
\]
Hence $t_{\infty}(u)=\lim_{k\to\infty}\inf \{ t\geq 0\mid \textrm{$\|\psi(t,u)\|\ge k$}\}$.
Using again Lemma~\ref{lemKestR} and the comparison argument with an affine ODE of the form
\[
  \frac{\partial y(t,u)}{\partial t}= B^\top(y(t,u))+\gamma+\mu(K \cap \{\|\xi\| >1\}, \quad y(0,u)=u,
\]
we conclude that
$t_{\infty}(u)=\infty$, as desired.
\end{proof}

\subsection{Construction of Pure Jump Processes}\label{sec:existencejump}

For affine processes on generating convex proper cones without diffusion component, that is, $Q=0$, the existence question can
be handled entirely as in the case of affine processes on the canonical state space $\re_+^{m}\times \mathbb R^{n-m}$.
By following the lines of~\citet[Section 7]{dfs} and~\citet[Section 5.3]{cfmt}, we here
prove existence of affine pure jump processes for a given parameter set, which satisfies the conditions of
Proposition~\ref{prop:LKform} and Proposition~\ref{prop:necessaryadm} with the additional assumption $Q=0$.

We call a function $f: K^{\ast} \to \mathbb R$ of subordinator
L\'evy-Khintchine form on $K$ if
\begin{align*}
f(u)=\langle b,u\rangle-\int_{K}(e^{-\langle u,
\xi\rangle}-1)m(d\xi),
\end{align*}
where $b \in K$ and $m$ is a Borel measure supported on
$K$ such that
\[
\int_{K}\left(\|\xi\|\wedge 1\right)m(d\xi) <
\infty.
\]
Recall that a distribution on $K$ is infinitely divisible if and
only if its Laplace transform takes the form $e^{-f(u)}$, where $f$
is of the above form. This means -- similarly as in the case of
$\re_+$ -- that L\'evy processes on proper cones
can only be of finite variation.

As in~\citet{dfs}, let us introduce the sets
\begin{align*} \mathcal
C&:=\{f+c\,\,|\,f:\,K^{\ast} \to \mathbb R\textit{ is of
L\'evy-Khintchine form on}\;\;K \,, c\in \re_+\},\\
\mathcal C_{S}&:=\{\psi\,| \, u\mapsto\langle \psi(u),
x\rangle\in\mathcal C\quad\textit{for all } x\in K\;\}.
\end{align*}

The following assertion can be obtained easily by
mimicking the proofs of~\citet[Proposition 7.2 and Lemma 7.5]{dfs}.

\begin{lemma}\label{lemma: three dfs statements}
We have,
\begin{enumerate}
\item $\mathcal C$, $\mathcal C_{S}$ are convex cones in
$C(K^{\ast})$.
\item $\phi\in \mathcal C$, $\psi\in\mathcal C_{S}$ imply $\phi(\psi)\in\mathcal C$.
\item $\psi,\psi_1\in\mathcal C_{S}$ imply $\psi_1(\psi)\in\mathcal C_{S}$.
\item If $\phi_k\in\mathcal C$ converges pointwise to a continuous function $\phi$ on $\mathring{K}^{\ast}$, then $\phi\in\mathcal C$
and $\phi$ has a continuous extension to $K^{\ast}$. A similar statement holds for
sequences in $\mathcal C_{S}$.
\item \label{stat: 6}  Let $R$ be of form~\eqref{eq:RLevyK} and let $R^{\delta}$ be defined as in~\eqref{eq:quasimono}
such that the involved parameters satisfy the conditions of Proposition~\ref{prop:necessaryadm}.
Then $R^{\delta}$ converges to $R$ locally uniformly as
$\delta \rightarrow 0$.
\end{enumerate}
\end{lemma}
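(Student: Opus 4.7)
The plan is to follow \citet[Section 7]{dfs} closely, reading $\mathcal{C}$ as the class of (logarithms of) Laplace transforms of infinitely divisible sub-probability measures on $K$, and $\mathcal{C}_S$ as the analogous ``vector-valued'' class obtained by testing against $x \in K$. Item (i) is direct: the L\'evy--Khintchine parameters $(b, c, m)$ are closed under sum and positive scaling, all three defining conditions being preserved; continuity on $K^*$ of the resulting function follows from dominated convergence using $|e^{-\langle u, \xi\rangle}-1| \leq (\|u\|\|\xi\|) \wedge 2$ together with $\int_K (\|\xi\| \wedge 1) m(d\xi) < \infty$. The statement for $\mathcal{C}_S$ is then obtained by testing against each $x \in K$.

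For (ii) and (iii) I would use a mixture/subordination argument. If $\phi \in \mathcal{C}$ corresponds to an infinitely divisible sub-probability $\nu$ on $K$ and $\psi \in \mathcal{C}_S$ corresponds, for each $\eta \in K$, to a sub-probability $\rho_\eta$ on $K$ via $e^{-\langle \psi(u), \eta\rangle} = \int_K e^{-\langle u, \xi\rangle}\rho_\eta(d\xi)$, then
\[
 e^{-\phi(\psi(u))} = \int_K e^{-\langle \psi(u), \eta\rangle}\,\nu(d\eta) = \int_K\!\!\int_K e^{-\langle u, \xi\rangle}\rho_\eta(d\xi)\,\nu(d\eta)
\]
is the Laplace transform of a sub-probability on $K$. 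Infinite divisibility is upgraded by observing that $\phi/n \in \mathcal{C}$ for every $n$, so $(e^{-\phi \circ \psi})^{1/n} = e^{-(\phi/n) \circ \psi}$ also lies in this class, furnishing the $n$-th convolution root; statement (iii) then follows by applying (ii) to $\phi = \langle \psi_1(\cdot), x\rangle$ for arbitrary $x \in K$. For (iv), pointwise convergence $e^{-\phi_k} \to e^{-\phi}$ on $\mathring{K}^*$ with continuous limit triggers Lemma~\ref{lem: FourierLaplace}, giving weak convergence $\nu_k \to \nu$ of the associated sub-probabilities with $\supp \nu \subseteq K$ because $K$ is closed. Closure of the class of infinitely divisible measures under weak convergence (\citet[Lemma 7.8]{sato}) keeps $\nu$ in that class, so the L\'evy--Khintchine theorem on the proper cone $K$ returns $\phi \in \mathcal{C}$; the Laplace transform of $\nu$ is automatically continuous on all of $K^*$, which yields the continuous extension. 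The corresponding statement in $\mathcal{C}_S$ follows by testing against each $x \in K$.

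For (v) the difference is
\[
 R(u) - R^\delta(u) = -\int_{K \cap \{\|\xi\| < \delta\}}\bigl(e^{-\langle u, \xi\rangle} - 1 + \langle \chi(\xi), u\rangle\bigr)\,\mu(d\xi),
\]
and since $\chi(\xi) = \xi$ near the origin, the integrand is bounded in norm by $C_u(\|\xi\|^2 \wedge 1)$ for $u$ in a compact subset of $K^*$. Pairing with each $x$ in a basis of $K$ and invoking dominated convergence via the admissibility bound $\int_K (\|\xi\|^2 \wedge 1)\langle x, \mu(d\xi)\rangle < \infty$ delivers local uniform convergence on $K^*$. The step I expect to be most delicate is (ii)/(iii): promoting the mixture representation to genuine infinite divisibility requires the rational convolution-root trick combined with the cone L\'evy--Khintchine form, and one must verify that each $n$-th root stays within the class of sub-probabilities supported on $K$, which is precisely where the structural fact $\phi/n \in \mathcal{C}$ plays its role.
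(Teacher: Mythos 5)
Your proposal is correct and follows essentially the same route as the paper, whose proof is simply a reference to \citet[Proposition 7.2 and Lemma 7.5]{dfs}: identify $\mathcal C$ and $\mathcal C_S$ with (logarithms of) Laplace transforms of infinitely divisible sub-probability measures on $K$ via the L\'evy--Khintchine formula on proper cones, use subordination plus the convolution-root trick ($\phi/n\in\mathcal C$) for (ii)--(iii), weak convergence of infinitely divisible laws for (iv), and dominated convergence for (v). The one technical point worth noting is in (iv): the hypothesis gives pointwise convergence only on $\mathring{K}^{\ast}$, so Lemma~\ref{lem: FourierLaplace} does not apply verbatim (it requires convergence at $u=0$ and continuity of the limit there); one instead uses vague compactness of the uniformly bounded measures $\nu_k$ together with the fact that $e^{-\langle u,\cdot\rangle}\in C_0(K)$ for $u\in\mathring{K}^{\ast}$, which yields the same conclusion.
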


\begin{proposition}\label{prop: C CS semiflow}
Let $F$ and $R$ be of form~\eqref{eq:FLevyK} and~\eqref{eq:RLevyK} such that the involved parameters satisfy the conditions of
Proposition~\ref{prop:LKform} and~\ref{prop:necessaryadm}. Then, for all $t\geq 0$, the solutions $(\phi(t,\cdot),\psi(t,\cdot))$ of
\eqref{eq:RiccatiF} and~\eqref{eq:RiccatiR} lie in $(\mathcal C,\mathcal C_S)$.
\end{proposition}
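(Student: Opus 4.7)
The plan is to follow the strategy of \citet[Section 7]{dfs}, adapted to general cones. Throughout we work under the pure-jump assumption $Q=0$ of this subsection, since $\mathcal{C}$ and $\mathcal{C}_S$ encode subordinator-type L\'evy--Khintchine exponents and therefore admit no diffusion component. The idea is to first regularize, prove the statement for the approximated problem via a single Euler step plus iteration, and then pass to the limit using the closure properties collected in Lemma~\ref{lemma: three dfs statements}.

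\textbf{Step 1 (regularization).} Fix $\delta>0$ and consider $R^\delta$ as in~\eqref{eq:quasimono}. This map is bounded and globally Lipschitz, and by the proof of Proposition~\ref{prop_ricc_sol} the ODE $\partial_t\psi^\delta=R^\delta(\psi^\delta)$, $\psi^\delta(0,u)=u$, has a unique global $K^\ast$-valued solution.

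\textbf{Step 2 (the key single Euler step).} The crux is to show that for all sufficiently small $h>0$, the map $u\mapsto u+hR^\delta(u)$ lies in $\mathcal{C}_S$. Write
\[
u+hR^\delta(u)=(\mathrm{Id}+h\widetilde B^\top_\delta)(u)+h\gamma+h\int_{\{\|\xi\|\geq\delta\}\cap K}\bigl(1-e^{-\langle u,\xi\rangle}\bigr)\mu(d\xi),
\]
where $\widetilde B^\top_\delta(u):=B^\top(u)-\int_{\{\|\xi\|\geq\delta\}}\langle\chi(\xi),u\rangle\,\mu(d\xi)$. For arbitrary $x\in K$ the linear part becomes $\langle u,(\mathrm{Id}+h\widetilde B_\delta)x\rangle$. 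The inward-pointing condition~\ref{eq:inwardpoint_drift} of Proposition~\ref{prop:necessaryadm} precisely expresses quasi-monotonicity of $x\mapsto\widetilde B_\delta x$ with respect to $K$, and this is equivalent to the discrete positive invariance $(\mathrm{Id}+h\widetilde B_\delta)(K)\subseteq K$ for all $h$ below some threshold $h_0(\delta)>0$. The integral against the finite measure $\langle x,\mu(d\xi)\rangle|_{\{\|\xi\|\geq\delta\}}$ is of subordinator L\'evy--Khintchine form on $K$ (no atom at $0$, finite total mass), and $h\langle\gamma,x\rangle\in\re_+$ supplies the killing constant. Thus $\langle u+hR^\delta(u),x\rangle$ is in $\mathcal{C}$ for every $x\in K$, i.e., $u+hR^\delta(u)\in\mathcal{C}_S$.

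\textbf{Step 3 (iteration and removal of the cutoff).} For $n$ large enough that $t/n<h_0(\delta)$, the $n$-fold composition $(\mathrm{Id}+\tfrac{t}{n}R^\delta)^{\circ n}$ belongs to $\mathcal{C}_S$ by part~(iii) of Lemma~\ref{lemma: three dfs statements}. Classical Euler convergence for Lipschitz ODEs shows it tends pointwise to $\psi^\delta(t,\cdot)$, so part~(iv) of the same lemma yields $\psi^\delta(t,\cdot)\in\mathcal{C}_S$. Using Lemma~\ref{lemKestR} for a $\delta$-uniform bound together with part~(v) of Lemma~\ref{lemma: three dfs statements} and continuous dependence of ODE flows on the vector field, we get $\psi^\delta(t,u)\to\psi(t,u)$ pointwise as $\delta\downarrow 0$; a final appeal to part~(iv) delivers $\psi(t,\cdot)\in\mathcal{C}_S$. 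For $\phi$, the fact that $F\in\mathcal{C}$ combined with part~(ii) of Lemma~\ref{lemma: three dfs statements} gives $F(\psi(s,\cdot))\in\mathcal{C}$ for every $s\geq 0$; approximating $\phi(t,\cdot)=\int_0^t F(\psi(s,\cdot))\,ds$ by Riemann sums (which remain in the convex cone $\mathcal{C}$) and invoking part~(iv) yields $\phi(t,\cdot)\in\mathcal{C}$.

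The principal obstacle is Step~2: all the admissibility conditions of Proposition~\ref{prop:necessaryadm} must come together to read off the L\'evy--Khintchine structure of $\langle u+hR^\delta(u),x\rangle$, with the inward-pointing drift condition translating into forward invariance of $K$ under the perturbed linear map $\mathrm{Id}+h\widetilde B_\delta$. Once this is settled, Steps~1, 3 and the $\phi$-part are essentially routine consequences of Lemma~\ref{lemma: three dfs statements} and standard ODE stability.
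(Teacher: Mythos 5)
Your overall architecture (regularize with $R^\delta$, establish membership in $\mathcal C_S$ for an approximating scheme, pass to the limit via parts (iv) and (v) of Lemma~\ref{lemma: three dfs statements}, and handle $\phi$ through part (ii)) matches the paper's, and your reading that $Q=0$ is the standing assumption of this subsection is correct. However, Step~2 contains a genuine error. You claim that the inward-pointing condition on $\widetilde B_\delta$ (quasi-monotonicity with respect to $K$) is equivalent to the discrete forward invariance $(\mathrm{Id}+h\widetilde B_\delta)(K)\subseteq K$ for all sufficiently small $h>0$. This equivalence holds for polyhedral cones such as $\re_+^n$ (where it is the Metzler condition), but it fails for general proper convex cones, which is exactly the setting here. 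Concretely, take $K=S_2^+$ and $\widetilde B_\delta(x)=Hx+xH^{\top}$ with $H=\left(\begin{smallmatrix}0&0\\1&0\end{smallmatrix}\right)$; this is the canonical admissible linear drift of the form~\eqref{eq: wishart B}, and it is inward pointing since $e^{t\widetilde B_\delta}x=e^{tH}xe^{tH^{\top}}\in S_2^+$ for all $t$. Yet for $x=e_1e_1^{\top}$ one gets $(\mathrm{Id}+h\widetilde B_\delta)(x)=\left(\begin{smallmatrix}1&h\\h&0\end{smallmatrix}\right)$, which has determinant $-h^2<0$ for every $h\neq 0$. Since the L\'evy--Khintchine representation of a subordinator exponent on $K$ is unique and the truncated jump measure contributes no drift, the drift vector of $\langle x,u+hR^\delta(u)\rangle$ is exactly $(\mathrm{Id}+h\widetilde B_\delta)x$, which must lie in $K$ for membership in $\mathcal C$; so the single Euler step genuinely fails to lie in $\mathcal C_S$, and with it the whole iteration of Step~3 collapses.

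The fix is to never let the Euler increment $\mathrm{Id}+h\widetilde B_\delta$ appear, only the semigroup $e^{\widetilde B_\delta^{\top}t}$, which \emph{does} preserve $K^{\ast}$ precisely because of the inward-pointing condition. This is what the paper does: under the finite-variation hypothesis it rewrites~\eqref{eq:RiccatiR} by variation of constants as
\[
\psi(t,u)=e^{\widetilde B^{\top}t}(u)+\int_0^t e^{\widetilde B^{\top}(t-s)}\bigl(\widetilde R(\psi(s,u))\bigr)\,ds,
\qquad \widetilde R(u)=\gamma-\int_K\bigl(e^{-\langle u,\xi\rangle}-1\bigr)\mu(d\xi),
\]
observes that $e^{\widetilde B^{\top}t}\in\mathcal C_S$ and $\widetilde R\in\mathcal C_S$, and runs a Picard iteration whose iterates stay in $\mathcal C_S$ by the convexity and composition properties of Lemma~\ref{lemma: three dfs statements}; your Steps~1 and~3 (regularization and removal of the cutoff via part (v), and the treatment of $\phi$) then go through as you describe. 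Alternatively, a Lie--Trotter splitting alternating $e^{h\widetilde B_\delta^{\top}}$ with the pure-jump increment would also work, but the plain forward Euler map you propose does not.
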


\begin{proof}
Suppose first that
\begin{equation}\label{finite var}
\int_{K } \left(\|\xi\| \wedge 1\right) \langle x, \mu(d\xi)\rangle < \infty, \quad \textrm{for all } x \in K.
\end{equation}
Then equation~\eqref{eq:RiccatiR} is equivalent to the integral equation
\begin{equation}\label{eq: var of const}
\psi(t,u)=e^{\widetilde B^{\top} t}(u)+\int_0^te^{\widetilde B^{\top}
(t-s)}(\widetilde R(\psi(s,u))ds,
\end{equation}
where $R(u)=\widetilde R(u)+\widetilde B^{\top}(u)$ and
$\widetilde B^{\top} \in \mathcal L(V)$ is given by
\[
\widetilde B^{\top}(u):=B^{\top}(u)-\int_{K}\langle
\chi(\xi),u\rangle \mu(d\xi).
\]
Here, $e^{\widetilde{B}^{\top} t}(u)$ is the notation for the semigroup
induced by $\partial_t y(t,u)=\widetilde B^{\top}(y(t,u))$,
$y(0,u)=u$. Hence the variation of constants formula yields~\eqref{eq: var of const}.

Due to Proposition~\ref{prop:necessaryadm}~\ref{eq:inwardpoint_drift},
$\widetilde B^{\top}$ is a linear drift which is ``inward pointing'' at
the boundary of $K^{\ast}$. This in turn is equivalent to the fact that $e^{\widetilde{B}^{\top} t}$ maps
$K^{\ast}$ into $K^{\ast}$. Therefore $e^{\widetilde{B}^{\top}t} \in \mathcal{C}_S$
and since $\widetilde R(u)$ is given by
\[
\widetilde R(u)=\gamma-\int_{K}(e^{-\langle u,\xi\rangle}-1)\mu(d\xi)
\]
with $\mu$ satisfying~\eqref{finite var}, we also have $\widetilde R \in\mathcal C_S$.

By a classical fixed point argument, the solution $\psi(t,u)$ is the pointwise limit of the sequence $(\psi^{(k)}(t,u))_{k \in \mathbb{N}}$, for $(t,u) \in \re_+ \times \mathring{K}^{\ast}$, obtained by Picard iteration
\begin{align*}
\psi^{(0)}(t,u)&:=u,\\
\psi^{(k+1)}(t,u)&:=e^{\widetilde B^{\top} t}(u)+\int_0^te^{\widetilde B^{\top}
(t-s)}(\widetilde R(\psi^{(k)}(s,u))ds,
\end{align*}
and due to Lemma~\ref{lemma: three dfs statements}~(i) and (iii), $\psi^{(k)}(t,\cdot)$
lies in $\mathcal C_S$ for all $k \in \mathbb{N}$. In view of Lemma~\ref{lemma: three dfs statements}~(iv),
the limit $\psi(t,\cdot)$ thus lies in $\mathcal C_S$ as well and
there exists a unique continuous extension of $\psi$ on $\re_+ \times K^{\ast}$.
Since $F \in \mathcal{C}$, we have by Lemma~\ref{lemma: three dfs statements}~(ii)
\[
\phi(t,\cdot)=\int_0^t F(\psi(s,\cdot))ds\in \mathcal C.
\]
By applying Lemma~\ref{lemma: three dfs statements}~\ref{stat: 6}, the general case is then reduced to the former, since $\mu
1_{\{\|\xi\|\geq \delta\}}$ clearly satisfies~\eqref{finite var}.
\end{proof}

We are now prepared to prove existence of affine processes on generating convex proper cones
under the additional assumption $Q=0$:

\begin{proposition}\label{th: existence Markov2}
Suppose that the parameters $(Q=0, b, B, c, \gamma, m, \mu)$ satisfy the conditions of Proposition~\ref{prop:LKform} and Proposition~\ref{prop:necessaryadm}. Then there exists a unique affine process on $K$ such that~\eqref{eq:affineprocessK} holds for all $(t,u) \in \re_+ \times K^{\ast}$, where
$\phi(t,u)$ and $\psi(t,u)$ are solutions of~\eqref{eq:RiccatiF} and~\eqref{eq:RiccatiR} with $F$ and $R$ given by~\eqref{eq:FLevyK} and~\eqref{eq:RLevyK}.
\end{proposition}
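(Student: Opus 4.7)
The plan is to construct the transition kernels directly from the solutions of the generalized Riccati equations and then invoke Kolmogorov's extension theorem. By Proposition~\ref{prop_ricc_sol} we have a unique global solution $(\phi(t,u),\psi(t,u))$ on $\re_+\times\mathring{K}^{\ast}$, extending continuously to $K^{\ast}$, and by Proposition~\ref{prop: C CS semiflow}, $\phi(t,\cdot)\in\mathcal{C}$ and $\psi(t,\cdot)\in\mathcal{C}_S$ for every $t\geq 0$. The first step is therefore to define, for each $(t,x)\in\re_+\times K$, a sub-probability measure $p_t(x,\cdot)$ on $K$ by prescribing its Laplace transform
\[
\int_K e^{-\langle u,\xi\rangle}\,p_t(x,d\xi)=e^{-\phi(t,u)-\langle\psi(t,u),x\rangle},\qquad u\in K^{\ast}.
\]
Since $u\mapsto \phi(t,u)+\langle\psi(t,u),x\rangle$ is the sum of a nonnegative constant (namely $\phi(t,0)+\langle\psi(t,0),x\rangle$) and a subordinator-type L\'evy--Khintchine exponent on $K$, the right-hand side is indeed the Laplace transform of a well-defined sub-probability measure on $K$; the mass deficit $1-e^{-\phi(t,0)-\langle\psi(t,0),x\rangle}$ is assigned to the cemetery $\Delta$, extending $p_t(x,\cdot)$ to a probability kernel on $K_\Delta$.

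The second step is to verify the Chapman--Kolmogorov identity. Using the defining property of $p_t(x,\cdot)$ twice and the semi-flow property from Proposition~\ref{prop:PhipsipropertiesK}~\ref{item:semiflow}, one computes
\[
\int_K\!\int_K e^{-\langle u,\eta\rangle}\,p_s(\xi,d\eta)\,p_t(x,d\xi)
=\int_K e^{-\phi(s,u)-\langle\psi(s,u),\xi\rangle}\,p_t(x,d\xi)
=e^{-\phi(s+t,u)-\langle\psi(s+t,u),x\rangle},
\]
which coincides with the Laplace transform of $p_{s+t}(x,\cdot)$. Because Laplace transforms on $K^{\ast}$ determine Borel measures on $K$ (by real-analytic extension to $\mathcal{U}=-K^{\ast}+\im V$, compare Remark~\ref{rem:eqvdef}), this identifies the two kernels. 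Measurability of $x\mapsto p_t(x,E)$ is inherited from continuity of $x\mapsto e^{-\phi(t,u)-\langle\psi(t,u),x\rangle}$ and a monotone class argument.

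The third step is stochastic continuity: since $\phi(t,u)\to 0$ and $\psi(t,u)\to u$ as $t\downarrow 0$ by Proposition~\ref{prop:PhipsipropertiesK}~\ref{item:continuity}, L\'evy's continuity theorem gives weak convergence $p_t(x,\cdot)\to \delta_x$ as $t\downarrow 0$, and similarly for arbitrary $t\geq 0$. Kolmogorov's extension theorem then yields a Markov process $X$ on the canonical path space with the prescribed transition kernels, and $X$ is affine by construction. Uniqueness is immediate: any affine process compatible with the parameter set has its Laplace transforms given by~\eqref{eq:affineprocessK}, which determines its finite-dimensional distributions and hence its law.

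The principal technical obstacle is ensuring that the right-hand side $e^{-\phi(t,u)-\langle\psi(t,u),x\rangle}$ is genuinely the Laplace transform of a sub-probability measure on $K$ for every $t$, which amounts to the stability of the class $(\mathcal{C},\mathcal{C}_S)$ under the Riccati flow; this is precisely the content of Proposition~\ref{prop: C CS semiflow}, itself built on the Picard iteration and the variation of constants formula together with the closure properties in Lemma~\ref{lemma: three dfs statements}. Once that is available, the remaining pieces (Chapman--Kolmogorov via the semi-flow, stochastic continuity, and uniqueness via Laplace inversion) are entirely routine.
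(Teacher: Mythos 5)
Your proposal is correct and follows essentially the same route as the paper's (much terser) proof: Proposition~\ref{prop: C CS semiflow} supplies the sub-stochastic measures with the prescribed Laplace transforms, and Chapman--Kolmogorov follows from the flow property. One small caveat: in the existence direction the semi-flow identity for $(\phi,\psi)$ should be derived from uniqueness of solutions to the autonomous Riccati system (Proposition~\ref{prop_ricc_sol}), not cited from Proposition~\ref{prop:PhipsipropertiesK}, which establishes it only for a process already known to exist.
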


\begin{proof}
By Proposition~\ref{prop: C CS semiflow},
$(\phi(t,\cdot),\psi(t,\cdot))$ lie in $(\mathcal C,\mathcal C_S)$.
Hence, for all $t\in \re_+$ and $x\in K$, there exists an infinitely divisible sub-stochastic
measure on $K$ with Laplace-transform $e^{-\phi(t,u)-\langle
\psi(t,u),x\rangle}$. Moreover, the Chapman-Kolmogorov equation holds in view
of the flow property of $\phi$ and $\psi$, which implies the assertion.
\end{proof}

\section{Parameter Conditions on Symmetric Cones}\label{sec:symcone}

We will now considerably strengthen our assumptions on the conic state space $K$ and assume that $K$ is a
symmetric cone. This allows us to refine the conditions found in
Proposition~\ref{prop:LKform} and
Proposition~\ref{prop:necessaryadm} such that we finally obtain
conditions which guarantee existence of affine processes on
symmetric cones. The focus lies in particular on the bilinear form
$Q$ corresponding to the linear diffusion part, on the linear jump
coefficient $\mu$ and on the constant drift part $b$. We here build
on the results obtained in the setting of positive semidefinite
matrices (see~\citet{cfmt},~\citet{mayerhoferpers}), while utilizing
to a larger extent the algebraic structure of the underlying
Euclidean Jordan algebra.

Throughout this section we always suppose that $X$ is an affine process on some irreducible symmetric cone $K$ and $V$ denotes the associated simple Euclidean Jordan algebra of dimension $n$ and rank $r$, equipped with the natural scalar product
\[
\langle \cdot, \cdot \rangle : V \times V \to \re, \quad \langle x, y\rangle := \tr(x \circ y).
\]
For the notion of the rank $r$ and the trace, denoted by $\tr$, we refer to Appendix~\ref{sec:det}.
We shall also use the Peirce invariant $d$ corresponding to the dimension of $V_{ij},\, i< j$, as defined in~\eqref{Peirce decomposition 2}. In our case of a simple Euclidean Jordan algebra, we then have $n=r+\frac{d}{2}r(r-1)$.
For the precise definition of these notions we refer to Appendix~\ref{appendix:EJA}.

\subsection{The Diffusion Coefficient in Euclidean Jordan Algebras}

The next proposition establishes a direct relation between the bilinear form $Q$
satisfying the condition of Proposition~\ref{prop:necessaryadm} and the quadratic representation of $V$.
For its proof we use the Peirce and spectral decomposition of an Euclidean Jordan algebra, as introduced in Appendix~\ref{sec:Peirce}.

\begin{proposition}\label{prop:alpha}
Let $V$ be a simple Euclidean Jordan algebra of rank $r$ and let $ Q: V \times V \rightarrow V$ be a symmetric
bilinear function with $Q(v,v) \in K$ for all $v \in V$. Then Condition (i)
of Proposition~\ref{prop:necessaryadm}, that is,
\begin{align}\label{eq:Condprop}
 \langle x, Q(u,v) \rangle=0 \textrm { for all } v \in V \textrm{ and } u, x \in K \textrm{ with }  \langle u, x\rangle=0,
\end{align}
 is satisfied if and only if
\[
Q(u,u) =4 P(u)\alpha.
\]
Here, $P(u)$ is the quadratic representation of the Jordan algebra $V$, defined in~\eqref{Eq:quadratic_product}
and $\alpha \in K$ is determined by $4\alpha = Q(e,e)$.
\end{proposition}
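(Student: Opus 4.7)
The plan is to fix a Jordan frame $c_1,\dots,c_r$ with associated Peirce decomposition $V = \bigoplus_{i\le j} V_{ij}$, set $\alpha := Q(e,e)/4$, and reduce both implications to explicit Peirce-basis computations.

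For the forward direction (tangency condition $\Rightarrow$ formula), I first constrain $Q$ on the frame elements. Applying the hypothesis with $u = c_i$ and $x$ ranging over $K \cap V(0,c_i)$---whose linear span is $V(0,c_i)$, since this is the symmetric cone of the Jordan subalgebra $V(0,c_i)$---yields $Q(c_i, v) \in V(1,c_i) \oplus V(1/2,c_i)$ for every $v \in V$. Setting $v = c_i$ and combining with $Q(c_i,c_i) \in K$, I invoke the auxiliary identity
\[
K \cap \left(V(1,c_i) \oplus V(1/2,c_i)\right) = \re_+ \, c_i,
\]
which itself follows from the standard fact that $\langle a,b \rangle = 0$ with $a,b \in K$ forces $a \circ b = 0$, applied to $y$ in the intersection together with $e - c_i \in K$. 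This pins down $Q(c_i,c_i) = \mu_i c_i$ with $\mu_i \geq 0$. Symmetry of $Q$ then forces $Q(c_i, c_j) \in V_{ij}$ for $i \neq j$. Expanding $4\alpha = Q(e,e) = \sum_i Q(c_i,c_i) + 2 \sum_{i<j} Q(c_i,c_j)$ and reading off Peirce components---which decouple because the $V_{ij}$ are mutually orthogonal---gives $\mu_i = 4 a_i$ (with $a_i$ the $V_{ii}$-coefficient of $\alpha$) and $Q(c_i,c_j) = 2 \alpha_{ij}$ (with $\alpha_{ij}$ the $V_{ij}$-component of $\alpha$).

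Next, for $u = \sum_i \lambda_i c_i$ diagonal in the frame, a direct computation of $4 P(u)\alpha = 8 L(u)^2 \alpha - 4 L(u^2)\alpha$ in Peirce components produces $\sum_i 4 \lambda_i^2 a_i c_i + \sum_{p<q} 4 \lambda_p \lambda_q \alpha_{pq}$, which matches the bilinear expansion of $Q(u,u)$ term by term. Invoking the spectral theorem in Euclidean Jordan algebras---every $u \in V$ is diagonal in some Jordan frame---the identity propagates to all of $V$, yielding $Q(u,u) = 4P(u)\alpha$, with $\alpha \in K$ since $Q(e,e) \in K$.

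For the reverse direction (formula $\Rightarrow$ tangency condition), I polarize $Q(u,u) = 4 P(u) \alpha$ to $Q(u,v) = 4 P(u,v) \alpha$ where $P(u,v) = L(u)L(v) + L(v)L(u) - L(u \circ v)$. Given $u, x \in K$ with $\langle u, x\rangle = 0$, the classical facts that $u \circ x = 0$ and that $u, x$ can be simultaneously diagonalized in a common Jordan frame together imply that $L(u)$ and $L(x)$ operator-commute. Using commutativity of $\circ$, I then get $u \circ (v \circ x) = L(u) L(x) v = L(x) L(u) v = (u \circ v) \circ x$, so
\[
\langle x, P(u,v)\alpha\rangle = \langle u \circ (v \circ x) + v \circ (u \circ x) - (u \circ v) \circ x,\, \alpha\rangle = 0,
\]
and hence $\langle x, Q(u,v)\rangle = 0$ for every $v \in V$. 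The main obstacle is the auxiliary lemma $K \cap (V(1,c_i) \oplus V(1/2,c_i)) = \re_+ \, c_i$ used in the forward direction: without it, $Q(c_i,c_i)$ could carry a $V(1/2,c_i)$-component and the term-by-term matching of Peirce coefficients against $4\alpha = Q(e,e)$ would collapse. Everything else is algebraic bookkeeping once this structural fact is in hand.
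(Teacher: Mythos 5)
Your proof is correct and follows essentially the same route as the paper: the forward direction identifies the Peirce components of $Q(p_i,p_j)$ relative to a Jordan frame diagonalizing $u$ using the orthogonality lemma (Lemma~\ref{Lem:orthogonality}) and then matches them against $Q(e,e)=4\alpha$ via the projection formulas $P(p_i)$ and $4L(p_i)L(p_j)$, while the converse uses polarization, $u\circ x=0$, and the commutation of $L(u)$ and $L(x)$. The only differences are cosmetic: you localize $Q(c_i,c_i)$ via the identity $K\cap(V(1,c_i)\oplus V(1/2,c_i))=\re_+c_i$ and pin down $Q(c_i,c_j)$ by intersecting the two half-Peirce constraints, whereas the paper derives the same facts by showing $Q(p_j,p_j)\circ p_i=0$ and $Q(p_k,p_j)\circ p_i=0$ directly; both reduce to the same orthogonality lemma.
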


\begin{proof}
We first assume that~\eqref{eq:Condprop} is satisfied.
Let $u \in V$ be fixed. Then there exists a Jordan frame $p_1, \dotsc, p_r$ (see Appendix~\ref{sec:Peirce} for the precise definition) such that
its spectral decomposition is given by $u = \sum_{i=1}^r \lambda_i p_i$.

As $p_1 + \dotsm + p_r = e$, we can write $Q(e,e)$ as
\begin{align}\label{Eq:alpha_peirce}
Q(e,e) = Q(p_1,p_1) + Q(p_2,p_2)  + \dotsm + Q(p_r,p_r) + \sum_{i < j} 2Q(p_i,p_j).
\end{align}
We now show that~\eqref{Eq:alpha_peirce} is precisely the Peirce decomposition of $Q(e,e)$
 with respect to the Jordan frame $p_1, \dotsc, p_r$.
More precisely, we show that $Q(p_j,p_j) \in V_{jj}$ and $Q(p_i,p_j) \in V_{ij}$ for each $i,j \in \set{1, \dotsc, r}$.

Let $i \neq j$, then clearly $\langle p_i, p_j\rangle=0$. From~\eqref{eq:Condprop} we deduce that
\begin{equation}\label{Eq:Q_ortho}
 \langle p_i, Q(p_j,p_j)\rangle = 0.
\end{equation}
But $Q(p_j,p_j) \in K$ such that we can conclude using Lemma~\ref{Lem:orthogonality} that $Q(p_j,p_j) \circ p_i = 0$. Keeping $j$ fixed, we can subtract these equalities from
\[
Q(p_j,p_j) \circ e = Q(p_j,p_j),
\]
running through all $i \neq j$, and we arrive at $Q(p_j,p_j) \circ p_j = Q(p_j,p_j)$. This shows that $Q(p_j,p_j) \in V(p_j,1) = V_{jj}$ for all $j \in \set{1, \dotsc, r}$.

Let now $i,j,k$ be arbitrary in $\{1, \dotsc, r\}$, but all distinct. Using again~\eqref{eq:Condprop}, we see that $\langle p_i,Q(p_k + p_j,p_k+p_j)\rangle = 0$, and from Lemma~\ref{Lem:orthogonality} it follows that $Q(p_k + p_j,p_k+p_j) \circ p_i = 0$. Thus
\[
Q(p_k,p_j) \circ p_i = \frac{1}{2} \left(Q(p_k+p_j,p_k+p_j) \circ p_i - Q(p_k,p_k) \circ p_i - Q(p_j,p_j) \circ p_i \right) = 0
\]
for any distinct $i,j,k \in \set{1, \dotsc, r}$.
Keeping now $k$ and $j$ fixed, we can subtract the equalities $Q(p_k,p_j) \circ p_i = 0$ from the equality $Q(p_k,p_j) \circ e = Q(p_k,p_j)$, running through all $i$ distinct from both $j$ and $k$, and obtain $Q(p_k,p_j) \circ (p_k + p_j) = Q(p_k,p_j)$. For symmetry reasons we must have $Q(p_k,p_j) \circ p_k = Q(p_k,p_j) \circ p_j$ and we thus conclude that
\[
Q(p_k,p_j) \circ p_k = Q(p_k,p_j) \circ p_j = \frac{1}{2}Q(p_k,p_j).
\]
Equivalently, $Q(p_k,p_j) \in V(p_k,1/2) \cap V(p_j,1/2) = V_{kj}$. Hence we have shown that~\eqref{Eq:alpha_peirce} is the Peirce decomposition of $Q(e,e)$ with respect to the Jordan frame $p_1, \dotsc, p_r$.

Define $4\alpha : = Q(e,e)$. As the projection onto $V_{ii}$ is given by the quadratic representation
$P(p_i)$ and the projection onto $V_{ij}$ by
$4 L(p_i) L(p_j)$, we can write $Q(p_j,p_j) = 4P(p_j)\alpha$ and $2Q(p_i,p_j) = 16 L(p_i)L(p_j)\alpha$.
Therefore,
\begin{align*}
Q(u,u) &= \lambda^2_1 Q(p_1,p_1) + \dots + \lambda^2_r Q(p_r,p_r) + \sum_{i < j} 2\lambda_i \lambda_j Q(p_i,p_j) \\
 &= 4 \left(\lambda_1^2 P(p_1)\alpha + \dotsm + \lambda_r^2 P(p_r)\alpha + \sum_{i < j} \lambda_i \lambda_j 4 L(p_i) L(p_j) \alpha\right)\\
 &=4 P\left(\sum_{i=1}^r \lambda_i p_i\right) \alpha \\
 &= 4P(u) \alpha,
\end{align*}
and we have shown the first implication.

Concerning the other direction, let $Q$ be given by $Q(u,u)=4P(u)\alpha$ for some $\alpha \in K$. Using polarization, we then get
\[
Q(u,v)=2\left(P(u+v)-P(u)-P(v)\right)\alpha=4P(u,v)\alpha.
\]
Take now some $x,u \in K$ such that $\langle x, u\rangle =0$. By Lemma~\ref{Lem:orthogonality} (ii), we have $u \circ x=0$ and consequently
\[
 \langle x,u^2\rangle =\langle x \circ u,u \rangle =0,
\]
which in turn implies  $u^2 \circ x=0$. The definition of the quadratic representation thus yields
\[
 \langle P(u)\alpha,x \rangle =\langle \alpha, P(u) x \rangle=\langle \alpha, 2 u \circ( u \circ x)-u^2 \circ x \rangle =0.
\]
Since $L(x)$ and $L(u)$ commute, which is a consequence of~\citet[Proposition II.1.1 (i)]{Faraut1994},
we similarly get $\langle P(u+v)\alpha,x\rangle=\langle P(v)\alpha,x \rangle$. This proves the assertion.
\end{proof}

\subsection{Linear Jump Behavior in Euclidean Jordan Algebras}

In this section, we show that the linear jump coefficient $\mu$ satisfying Condition~(vii) of Proposition~\ref{prop:LKform} and Condition~(ii)
of Proposition~\ref{prop:necessaryadm} necessarily integrates $(\|\xi \| \wedge 1)$
if $r >1$ and $d >0$. The proof is based on an idea of~\citet{mayerhoferpers}, who showed the corresponding result for positive
semidefinite matrices.

\begin{proposition}\label{prop:jumpfinite}
Let $V$ be a simple Euclidean Jordan algebra with rank $r >1$ and Peirce invariant $d >0$. Suppose that $\mu$ is a $K$-valued $\sigma$-finite
Borel measure on $K$ satisfying $\mu(\{0\}) = 0$ and
\[
\int_K(\|\xi\|^2 \wedge 1)\langle x, \mu (d\xi)\rangle < \infty, \quad \textrm{for all } x \in K.
\]
Then Condition~(ii)
of Proposition~\ref{prop:necessaryadm}, that is,
\begin{align}\label{eq:jumpfinitevar}
 \int_K\langle \chi(\xi), u \rangle \langle x, \mu(d\xi)\rangle < \infty \textrm { for all } u, x \in K \textrm{ with }  \langle u, x\rangle=0,
\end{align}
implies
\[
 \int_K (\|\xi \| \wedge 1)\langle x, \mu (d\xi)\rangle < \infty, \quad \textrm{for all } x \in K.
\]
\end{proposition}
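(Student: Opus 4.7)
The strategy is to reduce to the case of a primitive idempotent and then exploit the Peirce decomposition, using crucially that the off-diagonal subspaces $V_{ij}$ have strictly positive dimension $d$. By the spectral theorem in Euclidean Jordan algebras, any $x\in K$ is a non-negative linear combination of primitive idempotents $x=\sum_i\lambda_i p_i$, so $\langle x,\mu(d\xi)\rangle=\sum_i\lambda_i\langle p_i,\mu(d\xi)\rangle$. Hence by linearity it suffices to prove the conclusion when $x=p$ is a primitive idempotent; since $r>1$, such a $p$ can be completed to a full Jordan frame $p=p_1,p_2,\dots,p_r$.

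Fix such a frame and write the Peirce decomposition $\xi=\sum_i c_i(\xi)p_i+\sum_{i<j}\xi_{ij}$ with $c_i(\xi)=\langle p_i,\xi\rangle\ge 0$ (for $\xi\in K$) and $\xi_{ij}\in V_{ij}$. The key algebraic input is that positivity of $\xi$ in $K$ forces, inside each spin-factor subalgebra $\mathbb R p_i+\mathbb R p_j+V_{ij}$, the inequality
\[
\|\xi_{ij}\|^2\le C\,c_i(\xi)c_j(\xi),
\]
where $C$ depends only on $V$; this is nontrivial precisely because $d>0$. Summing yields $\|\xi\|\le C'\tr(\xi)=C'\sum_i c_i(\xi)$ for $\xi\in K$. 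Now apply the hypothesis \eqref{eq:jumpfinitevar} with the choice $u:=e-p\in K$, which satisfies $\langle p,u\rangle=0$. Since $\chi(\xi)=\xi$ near the origin and the tail part is controlled by the $(\|\xi\|^2\wedge1)$-assumption, we get
\[
\int_{K}\bigl(\langle e-p,\xi\rangle\wedge 1\bigr)\langle p,\mu(d\xi)\rangle=\int_K\Bigl(\sum_{i\ge 2}c_i(\xi)\wedge 1\Bigr)\langle p,\mu(d\xi)\rangle<\infty.
\]
Thus everything except the contribution of $c_1(\xi)=\langle p,\xi\rangle$ to $\|\xi\|$ is already integrable against $\langle p,\mu(d\xi)\rangle$.

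To handle the $c_1(\xi)$ part, I would test \eqref{eq:jumpfinitevar} with auxiliary elements built from the off-diagonal Peirce subspaces, which exist exactly because $d>0$. For each $j\ge 2$ and each $v\in V_{1j}$ of suitable norm, the element $u_v:=\varepsilon p+\sum_{k\ge 2}p_k+v$ lies in $K$ provided $\|v\|^2\le 4\varepsilon$ (checked in the spin-factor subalgebra $\mathbb R p_1+\mathbb R p_j+V_{1j}$, where the admissible region is bounded by a quadratic form). Pairing $u_v$ with a partner $x_v\in K$ orthogonal to it (constructed by an analogous perturbation) lets \eqref{eq:jumpfinitevar} couple the $c_1(\xi)$-direction to the off-diagonal component $\xi_{1j}(\xi)$, via inner products of the form $\varepsilon c_1(\xi)+\langle v,\xi_{1j}(\xi)\rangle+\text{(already controlled terms)}$. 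Optimising in $v$ and $\varepsilon$ and applying Cauchy–Schwarz against the known $L^2$-bound $\int\|\xi\|^2\langle p,\mu(d\xi)\rangle<\infty$ (valid near the origin) finally converts the quadratic $\|\xi_{1j}\|^2\le C c_1 c_j$ into a genuine linear bound on $\int c_1(\xi)\langle p,\mu(d\xi)\rangle$.

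The hard step is the last one: producing auxiliary pairs $(x_v,u_v)$ in $K\times K$ with $\langle x_v,u_v\rangle=0$ that actually couple $c_1(\xi)$ to integrable quantities. The spin-factor geometry is essential here, and one sees transparently why the statement fails for $d=0$ (reducible polyhedral cones): without $V_{1j}$ one cannot connect $c_1(\xi)$ to the directions already controlled by \eqref{eq:jumpfinitevar}. Once the linear bound on $\int c_1(\xi)\langle p,\mu(d\xi)\rangle$ is secured, combining with $\|\xi\|\le C'\sum_i c_i(\xi)$ and the first step gives $\int(\|\xi\|\wedge1)\langle p,\mu(d\xi)\rangle<\infty$, and linearity promotes this to arbitrary $x\in K$, as desired.
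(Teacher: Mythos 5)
Your overall strategy---reduce to a Jordan frame, exploit the Peirce decomposition, use $d>0$ to build orthogonal test pairs involving off-diagonal components, and invoke the bound $\|\xi_{ij}\|^2\lesssim \xi_i\xi_j$ for $\xi\in K$---is the right one, and your first step (testing \eqref{eq:jumpfinitevar} with $u=e-p$, $x=p$ to control the cross terms $\int \xi_j\,\langle p_i,\mu(d\xi)\rangle$ for $j\neq i$) matches what the paper does. But the step you yourself flag as ``the hard step'' is precisely where the proof lives, and you have not carried it out: you never construct the partner $x_v$, never verify that $(x_v,u_v)\in K\times K$ with $\langle x_v,u_v\rangle=0$, and never perform the computation that extracts a bound on the diagonal terms $\int \xi_i\,\mu_i(d\xi)$, where $\mu_i(d\xi)=\langle p_i,\mu(d\xi)\rangle$. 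Moreover, the route you sketch (take $\varepsilon$ small, then ``optimise'') is in danger of degenerating: as $\varepsilon\to0$ the orthogonal partner $x_v$ collapses onto $p$ itself, the coefficient coupling $\xi_1$ to $\mu_1$ vanishes with $\varepsilon$, and the error terms one must absorb after dividing by that coefficient (for instance $\int|\langle \xi_{1j},v\rangle|\,\mu_{jj}(d\xi)$, which Cauchy--Schwarz bounds by quantities involving $\int\xi_{jj}\,\mu_{jj}(d\xi)$) are themselves diagonal terms not yet under control---so the argument as sketched threatens to be circular.

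The paper's resolution is the specific choice $x=p_i+p_j+w$, $u=p_i+p_j-w$ with $w\in V_{ij}$ and $\|w\|^2=2$ (in your parametrisation this is $\varepsilon=1$, not $\varepsilon$ small), combined with a symmetrisation trick: one adds the two finite integrals $\int\langle\xi,u\rangle\langle x,\mu(d\xi)\rangle$ and $\int\langle\xi,x\rangle\langle u,\mu(d\xi)\rangle$, so that the sum splits into $\int\left(\xi_i\mu_i+\xi_j\mu_j-\tfrac12\langle\xi_{ij},w\rangle\langle w,\mu_{ij}\rangle\right)$ plus $\int\left(\xi_i\mu_j+\xi_j\mu_i-\tfrac12\langle\xi_{ij},w\rangle\langle w,\mu_{ij}\rangle\right)$. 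Each integrand is nonnegative because $\tfrac12\langle\xi_{ij},w\rangle\langle w,\mu_{ij}\rangle\le 2\sqrt{\xi_i\xi_j\,\mu_i\mu_j}$ (your key inequality, applied to both $\xi$ and the $K$-valued measure), so each integral is separately bounded. Subtracting the already-controlled cross terms $\int\xi_i\,\mu_j$ from the second integral pins down $\int\tfrac12\langle\xi_{ij},w\rangle\langle w,\mu_{ij}(d\xi)\rangle$ from above and below, and feeding this back into the first integral yields $\int(\xi_i\mu_i+\xi_j\mu_j)\,<\infty$ on $\{\|\xi\|\le1\}$, which is exactly the missing diagonal bound. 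This concrete mechanism is absent from your sketch; without it the proposal does not constitute a proof.
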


\begin{remark}
It follows from the above proposition that only in the case of $\mathbb{R}_+$ and the two-dimensional Lorentz cone, jumps of infinite total variation are possible. In all other cases we could now set the truncation function $\chi$ to be $0$ and adjust the linear drift accordingly.
However, in order to cover all irreducible cones, we shall keep the truncation function in the sequel.
\end{remark}

\begin{proof}
Let $p_1, \ldots, p_r$ be a fixed Jordan frame of $V$.
Corresponding to the Peirce decomposition~\eqref{Peirce decomposition 2}, we can write for every $z \in V$
\[
z=\sum_{i=1}^r z_ip_i + \sum_{i < j} z_{ij},
\]
where $z_i \in \mathbb{R}$ and $z_{ij} \in V_{ij}$.
Hence, for the $K$-valued measure $\mu$, we define positive measures $\mu_i,\, i \in \{1, \ldots, r\}$, and for $i \neq j$,
$V_{ij}$-valued measures $\mu_{ij}$.
Consider now, for some $i \neq j$, elements of the form
\[
 x=p_i+p_j+w, \quad u= p_i+p_j-w,
\]
with $w \in V_{ij}$ such that $\|w\|^2=2$. Here the
assumption $r > 1$ and $d >0$ enters, as we require $p_i \neq p_j$ and $w \neq 0$. Due
to~\cite[Proposition IV.1.4 and Theorem IV.2.1]{Faraut1994}, $x, u
\in  K$ and we have additionally $\langle u, x \rangle =0$. Assume
without loss of generality that $\chi(\xi)=1_{\{\|\xi\|\leq
1\}}\xi$. Since for every $y \in K$, $\langle y, \mu(\cdot)\rangle$
is a positive measure supported on $K$, we have
by~\eqref{eq:jumpfinitevar}
\begin{align*}
 0 &\leq \int_{\{\|\xi\|\leq 1\}}\langle \xi, u \rangle \langle x, \mu(d\xi) \rangle < \infty,\\
 0 &\leq \int_{\{\|\xi\|\leq 1\}}\langle \xi, x \rangle \langle u, \mu(d\xi)\rangle< \infty.
\end{align*}
Thus there exists a positive constant $C$ such that for all $\delta > 0$
\begin{align}
 0 &\leq \int_{\{\delta \leq \|\xi\|\leq 1\}}\langle \xi, u \rangle \langle x, \mu(d\xi) \rangle < C,\label{eq:intfin1}\\
 0 &\leq \int_{\{\delta  \leq\|\xi\|\leq 1\}}\langle \xi, x \rangle \langle u, \mu(d\xi)\rangle< C\label{eq:intfin2}.
\end{align}
Summing up~\eqref{eq:intfin1} and~\eqref{eq:intfin2} and using the orthogonality of the Peirce decomposition, then yields
\begin{multline}\label{eq:summeasure}
0 \leq\int_{\{\delta \leq \|\xi\|\leq 1\}} \left(\xi_i \mu_{i}(d\xi)-\frac{1}{2}\langle \xi_{ij}, w \rangle \langle w, \mu_{ij}(d\xi)\rangle +\xi_j \mu_{j}(d\xi)\rangle \right)\\
+\int_{\{\delta \leq \|\xi\|\leq 1\}}\left(\xi_i \mu_{j}(d\xi)-
\frac{1}{2}\langle \xi_{ij}, w \rangle \langle w, \mu_{ij}(d\xi)\rangle+ \xi_j \mu_{i}(d\xi) \right)< 2C.
\end{multline}
Since $\mu$ is a $K$-valued measure on $K$, we have by~\citet[Exercise IV.7 (b)]{Faraut1994} and the assumption $\|w\|^2=2$
\[
\frac{1}{2} \langle \xi_{ij}, w\rangle \langle w, \mu_{ij}(E)\rangle \leq \| \xi_{ij}\| \|\mu_{ij}(E)\|\leq 2 \sqrt{ \xi_i \xi_j \mu_i(E) \mu_j(E)},
\quad E \in \mathcal{B}(K),
\]
which implies that both integrals in~\eqref{eq:summeasure} are nonnegative. We can therefore conclude that both of them are finite:
\begin{align}
0 \leq &\int_{\{\delta \leq \|\xi\|\leq 1\}} \left(\xi_i \mu_{i}(d\xi)-\frac{1}{2}\langle \xi_{ij}, w \rangle \langle w, \mu_{ij}(d\xi)\rangle +\xi_j \mu_{j}(d\xi)\rangle \right)< 2C,\label{eq:firintfinite}\\
0 \leq &\int_{\{\delta \leq \|\xi\|\leq 1\}}\left(\xi_i \mu_{j}(d\xi)-
\frac{1}{2}\langle \xi_{ij}, w \rangle \langle w, \mu_{ij}(d\xi)\rangle+ \xi_j \mu_{i}(d\xi) \right)< 2C.\label{eq:secintfinite}
\end{align}
Moreover, as $\langle p_i,p_j\rangle =0$ for $i\neq j$, we have as a direct consequence of~\eqref{eq:jumpfinitevar}
\begin{align}\label{eq:measureonecomp}
 0 \leq \int_{\{\|\xi\|\leq 1\}}  \langle \xi, p_i\rangle \langle p_j, \mu(d\xi)\rangle = \int_{\{\|\xi\|\leq 1\}} \xi_i \mu_j(d\xi)\rangle < \infty, \quad i\neq j.
\end{align}
As above, there thus exists a positive constant $C_1$ such that for all $\delta > 0$
\begin{align}\label{eq:ijfinite}
\int_{\{\delta \leq \|\xi\|\leq 1\}} \xi_i \mu_j(d\xi) < C_1, \quad i\neq j.
\end{align}
Subtracting~\eqref{eq:ijfinite} from~\eqref{eq:secintfinite}, then yields for all $\delta > 0$
\[
-2C <\int_{\{\delta \leq \|\xi\|\leq 1\}} \frac{1}{2}\langle \xi_{ij}, w \rangle \langle w, \mu_{ij}(d\xi)\rangle < 2C_1.
\]
By~\eqref{eq:firintfinite}, we therefore have for all $\delta > 0$
\[
0 \leq \int_{\{\delta \leq \|\xi\|\leq 1\}} \left(\xi_i \mu_{i}(d\xi)+\xi_j \mu_{j}(d\xi)\right)< 2(C+C_1).
\]
Together with~\eqref{eq:measureonecomp}, this implies for all $i \in \{1, \ldots,r\}$
\[
0 \leq \int_{\{\|\xi\|\leq 1\}} \xi_i \mu_{i}(d\xi)< \infty.
\]
This then yields
\[
 \int_{K}(\|\xi\| \wedge 1) \langle x, \mu(d\xi)\rangle < \infty, \quad \textrm{for all } x \in K,
\]
and proves the assertion.
\end{proof}

\subsection{The Role of the Constant Drift}

This section is devoted to show that the constant drift term $b$, as defined in Proposition~\ref{prop:LKform}~\ref{eq:const_drift}, of any affine process $X$ on an irreducible symmetric cone $K$ necessarily satisfies
\[
 b \succeq d(r-1) \alpha,
\]
where $\alpha$ is defined in Proposition~\ref{prop:alpha}. Recall that
$d$ denotes the Peirce invariant and $r$ the rank of $V$.

Before we actually prove this result, let us introduce some notation.
We shall consider the tensor product $V \otimes V^{\ast}$, which we identify via the canonical isomorphism
\[
(u \otimes v)x=\langle x,v\rangle u, \quad x \in V,
\]
with the vector space of linear maps on $V$ denoted by $\mathcal{L}(V)$.
Moreover, for an element $A \in \mathcal{L}(V)$, we denote its trace by $\Tr(A)$.\footnote{
In order to distinguish between elements of $V$ and linear maps on $V$,
we use the notations $\Tr(A)$ and $\Det(A)$ for $A \in \mathcal{L}(V)$ and
$\tr(x)$ and $\det(x)$ for elements in $V$ (compare Remark~\ref{rem:dettrace}).}
 Observe that $\Tr(A (u\otimes u))=\langle u, A u\rangle$. Indeed, by choosing a basis $\{e_{\beta}\}$ of $V$, we have
\begin{align*}
\Tr(A (u\otimes u))&=\sum_{\beta} \langle A^{\top}e_{\beta}, (u \otimes u)e_{\beta}\rangle\\
&=\sum_{\beta} \langle A^{\top}e_{\beta}, \langle u, e_{\beta}\rangle u\rangle\\
&=\sum_{\beta} \langle u, e_{\beta}\rangle \langle A^{\top}e_{\beta},  u\rangle\\
&=\langle u, A u\rangle.
\end{align*}
Let now $A: K \to S_+(V) \subset \mathcal{L}(V)$ be the linear part of the diffusion characteristic, as introduced in~\eqref{eq:characteristicfkt}.
Recall that the symmetric bilinear function $Q$
was defined via~\eqref{eq:AQ}, that is,
\[
 \Tr(A(x)(u \otimes u))=\langle u, A(x) u \rangle =\langle x, Q(u,u)\rangle.
\]
As shown in Proposition~\ref{prop:alpha}, we have $Q(u,u)=4P(u)\alpha$ for some $\alpha \in K$. Hence
\begin{align}\label{Eq:Trace}
\Tr(A(x)(u \otimes u))=\langle u, A(x) u \rangle =4\langle x, P(u)\alpha\rangle.
\end{align}
Following~\citet[Section XIV.1]{Faraut1994}, we now define a second order differential operator $D$ on $C^2(V)$ for this expression, that is,
\[
D= \Tr\left(A(x)\left(\frac{\partial}{\partial x} \otimes \frac{\partial}{\partial x}\right)\right)=4\left\langle x, P\left(\frac{\partial}{\partial x}\right )\alpha\right\rangle.
\]
As usual, the polynomial $u \mapsto \sigma_D(x,u)=4\langle x, P(u)\alpha\rangle$, whose coefficients are linear functions in $x$, is called
\emph{symbol} $\sigma_D$ of the differential operator $D$ and we have
\[
De^{\langle u,x\rangle}=\sigma_D(x,u)e^{\langle u,x\rangle}.
\]
Let us finally introduce the following integro-differential operator for (complex-valued) $C_b^2(K)$-functions.
\begin{equation}
\begin{split}\label{eq: generator}
\mathcal{A}f(x)&=\frac{1}{2}\Tr\left(A(x) \left(\frac{\partial}{\partial x}\otimes \frac{\partial}{\partial x}\right)\right)f|_x+\left\langle b+B(x), \nabla f(x)\right\rangle
-(c+\langle\gamma,x\rangle)f(x)\\
&\quad+\int_{K}
\left(f(x+\xi)-f(x)\right)m(d\xi)\\
&\quad+\int_{K}\left(f(x+\xi)-f(x)-\langle
\chi(\xi), \nabla f(x)\rangle\right)\langle x, \mu(d\xi)\rangle,
\end{split}
\end{equation}
where $A(x)$ satisfies~\eqref{Eq:Trace}.
The other parameters are specified in Proposition~\ref{prop:LKform} and are supposed to satisfy
the conditions of Proposition~\ref{prop:necessaryadm} and Propsition~\ref{prop:jumpfinite}.
Note that for the family of functions $\{e^{-\langle u,x\rangle} \,|\, u \in K\}$ this expression corresponds to the pointwise $t$-derivative of $P_te^{-\langle u,x\rangle}$ at $t=0$. This is simply a consequence of the form of $F$ and $R$, since
\[
\lim_{t\downarrow 0}\frac{\left(P_te^{-\langle u,x\rangle}-e^{-{\langle u,x\rangle}}\right)}{t}=(-F(u)-\langle R(u),x\rangle)e^{-\langle u,x\rangle}=\mathcal{A}e^{-\langle u,x\rangle}
\]
for every $x \in K$.

The following lemma is proved by means of the L\'evy--Khintchine formula on $\re_+$ similarly as in~\citet[Lemma 4.15]{cfmt} and is
related to the positive maximum principle for the operator $\mathcal{A}$.

\begin{lemma}\label{lemprop: characteristicsdet}
Let $X$ be an affine process on $K$ with constant drift parameter $b$ and linear diffusion part $Q$, as defined in Proposition~\ref{prop:LKform} (i) and (iv). Moreover, suppose that $Q$ satisfies $Q(u,u)=4P(u)\alpha$ for some $\alpha \in K$. Then, for any $y \in \partial K$, we have
\begin{equation}\label{Eq: Det positive}
\begin{split}
&\langle b, \nabla \det (y)\rangle +2\left\langle y, P\left(\frac{\partial}{\partial x}\right)\alpha \right\rangle\det|_{y}\\
&=\langle b, \nabla \det (y)\rangle +\frac{1}{2}\Tr\left(A(y)\left(\frac{\partial}{\partial x}\otimes \frac{\partial}{\partial x}\right) \right)\det|_{y}\geq 0.
\end{split}
\end{equation}
Here, $\det(y)$ denotes the determinant of an element $y \in V$, as defined in
Appendix~\ref{sec:det}, and $A(x)$ is the linear part of the diffusion characteristic, which satisfies
\begin{align}\label{eq:rel_Aalpha}
 \Tr(A(x)(u \otimes u))=\langle u, A(x) u \rangle =\langle x, Q(u,u)\rangle=4\langle x, P(u)\alpha\rangle
\end{align}
for all $u, x \in K$.
\end{lemma}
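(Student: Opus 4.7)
The idea is to observe that, for each boundary point $y\in\partial K$, the real-valued process $Y_t:=\det(X_t)$ started at $Y_0=\det(y)=0$ is a non-negative semimartingale whose continuous quadratic variation vanishes at $t=0^{+}$. This reduces $Y$ locally to a subordinator-type process on $\re_{+}$, to which the L\'evy--Khintchine representation on $\re_{+}$ applies. A subsequent homogeneity argument then isolates the drift--diffusion contribution.

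The first step collects facts about $\det$ at $y\in\partial K$. Since $\det:V\to\re$ is a degree-$r$ polynomial, non-negative on $K$ and vanishing exactly on $\partial K$, its gradient $\nabla\det(y)$ coincides with the Jordan adjugate $y^{\sharp}\in K=K^{\ast}$, and Euler's identity gives $\langle y,\nabla\det(y)\rangle=r\det(y)=0$. Thus the pair $(x,u):=(y,\nabla\det(y))\in K\times K^{\ast}$ satisfies $\langle x,u\rangle=0$, so Proposition~\ref{prop:necessaryadm}(i) and (iii) apply. Part (i) yields $\langle v,A(y)\nabla\det(y)\rangle=\langle y,Q(v,\nabla\det(y))\rangle=0$ for every $v\in V$, whence $A(y)\nabla\det(y)=0$; in particular the continuous quadratic variation $\langle\nabla\det(y),A(y)\nabla\det(y)\rangle$ of $Y$ at the starting point vanishes.

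Next, I would apply the generator formula~\eqref{eq: generator} to $x\mapsto e^{-s\det(x)}$ and use $\det(y)=0$ together with $A(y)\nabla\det(y)=0$ to obtain, after rearranging, for every $s\geq 0$,
\begin{align*}
-\mathcal{A}e^{-s\det}(y)
={}\,& s\bigl[\langle b,\nabla\det(y)\rangle+\tfrac{1}{2}\Tr(A(y)\nabla^{2}\det(y))\bigr] \\
& + s\Bigl[\langle B(y),\nabla\det(y)\rangle-\int_{K}\langle\chi(\xi),\nabla\det(y)\rangle\langle y,\mu(d\xi)\rangle\Bigr] \\
& + (c+\langle\gamma,y\rangle)+\int_{K}\bigl(1-e^{-s\det(y+\xi)}\bigr)\bigl[m(d\xi)+\langle y,\mu(d\xi)\rangle\bigr].
\end{align*}
This is precisely the L\'evy--Khintchine representation of a killed subordinator on $\re_{+}$: drift equal to the sum of the first two brackets, killing rate $c+\langle\gamma,y\rangle\geq 0$, and L\'evy measure given by the pushforward of $m+\langle y,\mu(d\xi)\rangle$ under $\xi\mapsto\det(y+\xi)$. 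The latter is a valid L\'evy measure on $(0,\infty)$ thanks to Proposition~\ref{prop:necessaryadm}(ii) combined with the first-order expansion $\det(y+\xi)=\langle\xi,\nabla\det(y)\rangle+O(\|\xi\|^{2})$ near $\xi=0$. Since $Y_{t}\geq 0$ has $Y_{0}=0$, no continuous martingale component at the starting point, and non-negative jumps $\det(y+\xi)\geq 0$, the drift coefficient of this L\'evy--Khintchine representation must be non-negative.

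To conclude, I would observe that the first bracket above is homogeneous of degree $r-1$ in $y$ (as $\nabla\det$ has degree $r-1$, $\nabla^{2}\det$ has degree $r-2$, and $A(y)$ is linear in $y$), while the second bracket is homogeneous of degree $r$. Replacing $y$ by $\lambda y\in\partial K$ for $\lambda>0$, dividing the resulting drift inequality by $\lambda^{r-1}$, and letting $\lambda\to 0^{+}$ kills the second bracket and leaves the claimed inequality. The main obstacle in this program is the L\'evy--Khintchine non-negativity of the drift in the preceding step, which rests on the subordinator-type argument of \citet[Lemma~4.15]{cfmt}, exploiting that a non-negative semimartingale starting at $0$ with no continuous martingale part and non-negative jumps cannot sustain a negative predictable drift.
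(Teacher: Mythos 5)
Your proposal follows essentially the same route as the paper's proof: apply the generator to $e^{-s\det}$ at a boundary point $y$, recognize the resulting expression in $s$ as the Laplace exponent of an infinitely divisible law on $\re_+$ whose drift coefficient must be non-negative, and then separate the degree-$(r-1)$ and degree-$r$ homogeneous contributions by scaling $y\mapsto\lambda y$ inside $\partial K$ and letting $\lambda\to 0^+$ (the paper states the degree count and leaves this last scaling step implicit; your version makes it explicit, which is fine). Your direct verification that $A(y)\nabla\det(y)=0$ via Proposition~\ref{prop:necessaryadm}(i) and Euler's identity correctly kills the $s^2$ term, matching the paper's observation that the Gaussian coefficient both must vanish and does vanish.

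The one step you assert without justification is the very first one: that the pointwise formula~\eqref{eq: generator} --- which the affine property delivers only for the exponentials $e^{-\langle u,x\rangle}$ --- actually computes $\lim_{t\downarrow 0}t^{-1}\bigl(P_t g(y)-g(y)\bigr)$ for the non-exponential test function $g=e^{-s\det}$. Without this, the compound-Poisson approximation that identifies the limit as a log-Laplace exponent of an infinitely divisible law on $\re_+$ does not get off the ground. The paper handles it by replacing $\det$ near $y$ with a compactly supported $f\in C_c^\infty(V)$, $f\geq 0$ on $K$, writing $e^{-vf}-1$ as a Fourier integral of a Schwartz function, and invoking the regularity theorems of Keller-Ressel et al.\ and Cuchiero--Teichmann together with dominated convergence. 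You should either reproduce that argument or make the appeal to it explicit; the rest of your plan is sound.
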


\begin{proof}
We follow the proof of~\citet[Lemma 4.15]{cfmt}.
Let $y \in \partial K$ and let $f \in C_c^{\infty}(V)$ be a
function with $f \geq 0$ on $K$ and $f(x)=\det(x)$ for all $x$ in a
neighborhood of $y$. Then, for any $v \in \re_+$, the function
$x\mapsto (e^{-vf(x)}-1)$ lies in $C_c^{\infty}(V)$, hence in particular in
$\mathcal{S}_n$, the space of rapidly decreasing $C^{\infty}$-functions on $V$.
As the Fourier transform is a linear isomorphism on $\mathcal{S}_n$, we can write
\[
e^{-vf(y)}-1=\int_V e^{\im \langle q,y \rangle} g(q) dq
\]
for some $g \in \mathcal{S}_n$.
As a consequence of~\citet[Theorem 6.4]{cuchteich} or~\citet[Theorem 3.10]{kst1} and Remark~\ref{rem:eqvdef}, we obtain by dominated convergence
\begin{align*}
\lim_{t\downarrow 0}\frac{P_t(e^{-vf(y)}-1)}{t}&=\partial_t|_{t=0}P_t(e^{-vf(y)}-1)=\int_V \partial_t|_{t=0}P_te^{\im \langle q,y \rangle} g(q) dq \\
&=\int_V  \left(-F(-\im q)-\langle R(-\im q),y\rangle\right)e^{\im \langle q,y \rangle} g(q) dq\\
&=\int_V  \mathcal{A}e^{\im \langle q,y \rangle} g(q) dq=\mathcal{A}(e^{-vf(y)}-1),
\end{align*}
where $\mathcal{A}$ is defined in \eqref{eq: generator} and thus satisfies $(-F(-\im q)-\langle R(-\im q),x\rangle)e^{\im \langle q,x \rangle} = \mathcal{A}e^{\im \langle q,x \rangle}$. Hence the limit
\begin{equation}\label{Eq:Laplace Poisson}
\begin{split}
\mathcal{A}(e^{-vf(y)}-1) &=\lim_{t\downarrow
0}\frac{1}{t}\int_{K}(e^{-vf(\xi)}-1)p_t(y,d\xi)\\
&=\lim_{t\downarrow
0}\frac{1}{t}\int_{\re_+}(e^{-vz}-1)p^f_t(y,dz),
\end{split}
\end{equation}
exists for any $v\in \re_+$, where $p_t^f(y,dz)=f_{\ast}p_t(y,dz)$ is the pushforward of
$p_t(y,\cdot)$ under $f$, which is a probability measure supported on $\re_+$. Using the same arguments as in
the proof of Proposition~\ref{prop:LKform}, we see that, for every fixed $t> 0$, the right hand side of \eqref{Eq:Laplace Poisson} is the logarithm of the Laplace transform of a compound Poisson distribution supported on $\re_+$ with intensity $1/t$ and compounding distribution $p^f_t(y,dz)$. The pointwise convergence of \eqref{Eq:Laplace Poisson}  for $t \rightarrow 0$ to some function being continuous at $0$ implies weak convergence of the compound Poisson distributions to some infinitely divisible probability distribution supported on $\re_+$. Its Laplace transform is then given as the exponential of the left hand side of \eqref{Eq:Laplace Poisson}.

Using now $f(y)=0$ and the form of $\mathcal{A}$ given by \eqref{eq: generator}, we have
\begin{equation}\label{Atildelk}
\begin{aligned}
v&\mapsto\mathcal{A}(e^{-vf(y)}-1)\\
&=2v^2\left\langle y, P(\nabla f(y))\alpha \right\rangle-2v\left\langle y, P\left(\frac{\partial}{\partial x}\right)\alpha \right\rangle f|_{y} \\
&\quad-v \langle b+B(y), \nabla f(y)\rangle +\int_{K}\left(e^{-vf(y+\xi)}-1\right)m(d\xi)\\
&\quad+\int_{K}\left(e^{-vf(y+\xi)}-1+v\langle \chi(\xi), \nabla f(y)\rangle \right)\langle y, \mu (d\xi)\rangle\\
\end{aligned}
\end{equation}
Note now that $\nabla f(y)=\nabla \det(y)$ and that $\langle \nabla\det(y),y\rangle=0$ such that the Admissibility Conditions~\ref{eq:jump_int} and~\ref{eq:inwardpoint_drift} of Proposition~\ref{prop:necessaryadm} imply \footnote{
By Proposition~\ref{prop:jumpfinite}, we have $\int_{K} \|\chi(\xi) \| \langle y, \mu(d\xi)\rangle< \infty$ if $r >1$ and $d>0$. This means that the above argument using
$\langle \nabla\det(y),y\rangle=0$ is only relevant in the two-dimensional Lorentz cone. Observe that for $K=\re_+$, $y=0$ anyway.}
\[
\int_{K} \langle \chi(\xi), \nabla \det(y)\rangle \langle y, \mu\rangle(d\xi)< \infty
\]
and
\begin{align*}\label{Eq:Bdet}
B_0(y)=\langle y,B^{\top}(\nabla \det(y))\rangle-\int_{K} \langle \chi(\xi), \nabla \det(y)\rangle \langle y, \mu\rangle(d\xi)\geq 0.
\end{align*}
By the L\'evy--Khintchine formula on $\re_+$, $\left\langle y, P(\nabla f(y))\alpha \right\rangle$ has to vanish,
which is the case due to Proposition~\ref{prop:alpha} and the fact that $\langle \nabla\det(y),y\rangle=0$.
Moreover, the coefficient of $v$ in~\eqref{Atildelk} has to be non-positive, that is,
\[
2\left\langle y, P\left(\frac{\partial}{\partial x}\right)\alpha  \right\rangle f|_{y}+ \langle b+B(y), \nabla f(y)\rangle-\int_{K\setminus\{0\}} \langle \chi(\xi), \nabla f(y)\rangle \langle y, \mu\rangle(d\xi)\geq 0.
\]
Observing that $y \mapsto B_0(y)$ is a polynomial of degree $r$, being positive for every $y \in \partial K$, and that the polynomial
\[
y \mapsto \langle b, \nabla \det (y)\rangle +2\left\langle y, P\left(\frac{\partial}{\partial x}\right)\alpha\right\rangle  \det|_{y}
\]
is of degree $r-1$, we obtain equation~\eqref{Eq: Det positive}.
\end{proof}

\begin{proposition}\label{Prop:drift}
Let $X$ be an affine process on $K$ with constant drift parameter $b\in K$ and diffusion parameter $\alpha\in K$, which defines
$Q(u,u)$ through $Q(u,u)=4P(u)\alpha$.  Then
\[
 b \succeq d(r-1) \alpha,
\]
where $d$ denotes the Peirce invariant and $r$ the rank of $V$.
\end{proposition}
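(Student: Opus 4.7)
The plan is to evaluate the positivity inequality of Lemma~\ref{lemprop: characteristicsdet} at a carefully chosen rank--$(r-1)$ boundary point and then read off the asserted drift bound ray-by-ray using self-duality.

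First, I would fix an arbitrary primitive idempotent $p\in V$ and complete it to a Jordan frame $p_1,\dots,p_{r-1},p_r=p$. Set $y:=e-p=p_1+\cdots+p_{r-1}\in \partial K$. Then $\det(y)=0$, so the inequality from Lemma~\ref{lemprop: characteristicsdet} evaluated at $y$ collapses to
\[
\langle b,\nabla\det(y)\rangle + 2\bigl\langle y,P(\partial/\partial x)\alpha\bigr\rangle\det\big|_{y}\geq 0.
\]
Since $K$ is self-dual and its extreme rays are precisely those generated by primitive idempotents (see~\citet[Chapter~IV]{Faraut1994}), it will suffice to show that this reduces to $\langle b-d(r-1)\alpha,p\rangle\geq 0$.

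Next I would compute the first term. The gradient of the Jordan determinant is the "Jordan adjugate" $y^{\#}$, characterized by the cofactor identity $y\circ y^{\#}=\det(y)\,e$ together with the appropriate lower-order traces; for a rank--$(r-1)$ idempotent this identity forces $\nabla\det(y)=y^{\#}$ to be a positive scalar multiple of $p$. Hence the first term is a positive multiple of $\langle b,p\rangle$.

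The heart of the argument, and the main technical obstacle, is evaluating the second-order term $2\langle y,P(\partial/\partial x)\alpha\rangle\det|_{y}$. Even though $\det(y)=0$, the Hessian of $\det$ at $y$ is nonzero along the off-diagonal Peirce subspaces. Using the expansion $P(u)\alpha=2\,u\circ(u\circ\alpha)-u^{2}\circ\alpha$, the Peirce decomposition $V=\bigoplus_{i\leq j}V_{ij}$ with respect to $p_1,\dots,p_r$, and the standard fact that $\det$ restricted to the principal minor $V_{11}+\cdots+V_{r-1,r-1}+V_{rr}$ factorizes, the only nonvanishing contributions at $y$ come from second derivatives along the $r-1$ subspaces $V_{ir}$ ($i<r$), each of dimension $d$. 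Counting these dimensions produces precisely the factor $d(r-1)$, and a short computation shows the resulting term equals $-d(r-1)$ times the same positive scalar multiple of $\langle p,\alpha\rangle$ that appeared in the first term.

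Substituting both computations into the inequality and dividing through by the common positive scalar yields $\langle b,p\rangle\geq d(r-1)\langle\alpha,p\rangle$ for every primitive idempotent $p$. Since such $p$ span the extreme rays of $K=K^{\ast}$, this is equivalent to $b-d(r-1)\alpha\in K$, i.e.\ $b\succeq d(r-1)\alpha$, as desired.
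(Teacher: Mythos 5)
Your proposal is correct and rests on the same pillar as the paper's proof, namely the positivity inequality of Lemma~\ref{lemprop: characteristicsdet} tested at boundary points, followed by the reduction to $\langle b-d(r-1)\alpha,p\rangle\geq 0$ for primitive idempotents $p$ and the self-duality/extreme-ray argument (which the paper leaves implicit in its terse final sentence, so making it explicit is a genuine improvement). Where you diverge is in the evaluation of the second-order term: the paper computes the whole expression at \emph{interior} points, where $\nabla\det(x)=\det(x)x^{-1}$ and the Hessian of $\det$ is $\det(x)\left(x^{-1}\otimes x^{-1}-P(x^{-1})\right)$, invokes Lemma~\ref{lem:Trace} to get $\Tr(A(x)P(x^{-1}))=4\tfrac{n}{r}\langle x^{-1},\alpha\rangle$, obtains the clean identity $\det(x)\langle x^{-1},b-d(r-1)\alpha\rangle$, and only then passes to $\partial K$ by continuity of the polynomial $\nabla\det$; you instead work directly at the rank-$(r-1)$ idempotent $y=e-p$ and extract the factor $d(r-1)$ from the dimensions of the Peirce blocks $V_{ir}$. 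The two are close relatives --- the proof of Lemma~\ref{lem:Trace} is itself the same Peirce-block count, with $d(r-1)$ entering through $n=r+\tfrac{d}{2}r(r-1)$ --- but your version localizes the computation at exactly the points needed and avoids the limiting argument.

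One step of your sketch needs to be stated more carefully. The Hessian of $\det$ at $y=e-p_r$ is \emph{not} supported only on the blocks $V_{ir}$: for $j<r$ one has $\det(y+sp_j+tp_r)=(1+s)t$, so the mixed entries $D_{p_j}D_{p_r}\det(y)=1$ are nonzero, and your appeal to the factorization of $\det$ on the diagonal subalgebra does not make them disappear. What kills these cross-terms in the contraction $\Tr\left(A(y)\left(\tfrac{\partial}{\partial x}\otimes\tfrac{\partial}{\partial x}\right)\right)\det|_{y}$ is the vanishing of the corresponding coefficient of the differential operator: $\langle p_j,A(y)p_r\rangle=4\langle y,P(p_j,p_r)\alpha\rangle=2\langle y,\alpha_{jr}\rangle=0$, since $\alpha_{jr}\in V_{jr}$ is orthogonal to $y\in\bigoplus_{k<r}V_{kk}$. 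With that observation inserted, the surviving contributions are indeed the diagonal ones along the $V_{ir}$, each unit vector contributing $\tfrac12\cdot(-1)\cdot 2\langle p_r,\alpha\rangle$, and summing over the $d(r-1)$ of them gives $-d(r-1)\langle p,\alpha\rangle$ as you claim; the rest of your argument then goes through.
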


\begin{proof}
From Lemma \ref{lemprop: characteristicsdet} we have the necessary condition
\[
\langle b, \nabla \det (y)\rangle +\frac{1}{2}\Tr\left(A(y)\left(\frac{\partial}{\partial x}\otimes \frac{\partial}{\partial x}\right) \right)\det|_{y}\geq 0
\]
for any $y \in \partial K$.
For $x \in \mathring K$ we can calculate the left hand side.
Since $\nabla \det(x)=\det(x)x^{-1}$ and $\frac{d}{dt}(x+tu)^{-1}|_{t=0}=-P(x^{-1})u$
(see Proposition~\ref{prop:EJA}~\ref{item:EJA3} and~\ref{item:EJA2}), we have
\begin{multline*}
 \langle b, \nabla \det (x)\rangle +\frac{1}{2}\Tr\left(A(x)\left(\frac{\partial}{\partial x}\otimes \frac{\partial}{\partial x}\right)\right) \det|_{x}\\
=\det (x)\left(\left\langle x^{-1},b \right\rangle +\frac{1}{2}\Tr\left(A(x)\left(x^{-1}\otimes x^{-1}\right)\right)-\frac{1}{2}\Tr\left(A(x)P\left(x^{-1}\right)\right)\right).
\end{multline*}
Using \eqref{eq:rel_Aalpha}, Proposition~\ref{prop:EJA}~\ref{item:EJA1} and Lemma~\ref{lem:Trace} below, we thus obtain
\begin{align*}
&\det (x)\left(\left\langle  x^{-1},b \right\rangle +2\left\langle x, P\left(x^{-1}\right)\alpha\right\rangle -\frac{1}{2}\Tr\left(A(x)P\left(x^{-1}\right)\right)\right)\\
&\qquad=\det (x)\left(\left\langle x^{-1},b \right\rangle +2\left\langle x^{-1}, \alpha\right\rangle -2\frac{n}{r}\left\langle x^{-1}, \alpha\right\rangle\right)\\
&\qquad=\det(x)\left(\left\langle x^{-1},b \right\rangle-d(r-1)\left\langle x^{-1}, \alpha\right\rangle\right)\\
&\qquad=\det(x)\left\langle x^{-1},b-d(r-1)\alpha \right\rangle.
\end{align*}
As $\det(y)y^{-1}$ is also well-defined on $\partial K$ as derivative of $\det(y)$, Condition~\eqref{Eq: Det positive} implies
\[
 b \succeq d(r-1)\alpha.
\]
\end{proof}

The following lemma is needed in the proof of the above proposition and allows us to express
$
 \Tr\left(A(x) P\left(x^{-1}\right)\right)
$
in terms of $\alpha$.
\begin{lemma}\label{lem:Trace}
Let $V$ be a simple Euclidean Jordan algebra of rank $r$ and with scalar product $\langle x, y\rangle=\tr(x \circ y)$ and let $A(x)$ be defined by~\eqref{eq:rel_Aalpha}. Then
\begin{align}\label{Eq: trace equality}
 \Tr\left(A(x) P\left(x^{-1}\right)\right)=4\frac{n}{r} \left\langle x^{-1},\alpha\right\rangle
\end{align}
for any invertible $x \in V$.
\end{lemma}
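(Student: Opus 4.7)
The plan is to compute the trace by choosing an orthonormal basis in which $P(x^{-1})$ is diagonal. Since $x$ is invertible, fix a Jordan frame $p_{1},\dotsc,p_{r}$ diagonalising $x$, so $x=\sum_{i=1}^{r}\lambda_{i}p_{i}$ with all $\lambda_{i}\neq 0$. By simplicity of $V$, the Peirce spaces satisfy $V_{ii}=\mathbb{R}p_{i}$ and $\dim V_{ij}=d$ for $i<j$, and $V$ decomposes orthogonally as $V=\bigoplus_{i\le j}V_{ij}$. Choose orthonormal bases $\{e_{ij}^{k}\}_{k=1}^{d}$ of each $V_{ij}$; then $\{p_{i}\}\cup\{e_{ij}^{k}\}$ is an orthonormal basis of $V$, and $P(x^{-1})$ acts on it diagonally with eigenvalues $\lambda_{i}^{-2}$ on $p_{i}$ and $\lambda_{i}^{-1}\lambda_{j}^{-1}$ on $e_{ij}^{k}$ (standard action of the quadratic representation on Peirce spaces). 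Hence
\[
\Tr\bigl(A(x)P(x^{-1})\bigr)=\sum_{i}\lambda_{i}^{-2}\langle p_{i},A(x)p_{i}\rangle+\sum_{i<j}\lambda_{i}^{-1}\lambda_{j}^{-1}\sum_{k=1}^{d}\langle e_{ij}^{k},A(x)e_{ij}^{k}\rangle.
\]

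Next, evaluate each bracket using the defining relation $\langle u,A(x)u\rangle=4\langle P(u)x,\alpha\rangle$. For $u=p_{i}$, the operator $P(p_{i})$ is the orthogonal projection onto $V_{ii}=\mathbb{R}p_{i}$, so $P(p_{i})x=\lambda_{i}p_{i}$ and $\langle p_{i},A(x)p_{i}\rangle=4\lambda_{i}\alpha_{i}$, where $\alpha_{i}:=\langle p_{i},\alpha\rangle$. For $u=e=e_{ij}^{k}$, I would first establish that $e^{2}=\tfrac{1}{2}(p_{i}+p_{j})$: indeed $e^{2}\in V_{ii}+V_{jj}$, and the coefficient in front of $p_{l}$ is $\langle e^{2},p_{l}\rangle=\langle e,e\circ p_{l}\rangle=\tfrac{1}{2}\|e\|^{2}\mathbf{1}_{l\in\{i,j\}}=\tfrac{1}{2}\mathbf{1}_{l\in\{i,j\}}$. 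Using $P(e)=2L(e)^{2}-L(e^{2})$ and the Peirce rule $e\circ p_{l}=\tfrac{1}{2}e$ if $l\in\{i,j\}$, $e\circ p_{l}=0$ otherwise, one obtains $P(e)p_{i}=\tfrac{1}{2}p_{j}$, $P(e)p_{j}=\tfrac{1}{2}p_{i}$, and $P(e)p_{l}=0$ for $l\notin\{i,j\}$. Therefore $P(e)x=\tfrac{\lambda_{i}}{2}p_{j}+\tfrac{\lambda_{j}}{2}p_{i}$, and $\langle e_{ij}^{k},A(x)e_{ij}^{k}\rangle=2(\lambda_{i}\alpha_{j}+\lambda_{j}\alpha_{i})$ for every $k$.

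Substituting these values and simplifying the factor $\lambda_{i}^{-1}\lambda_{j}^{-1}(\lambda_{i}\alpha_{j}+\lambda_{j}\alpha_{i})=\lambda_{i}^{-1}\alpha_{i}+\lambda_{j}^{-1}\alpha_{j}$, and noting that summing $\lambda_{i}^{-1}\alpha_{i}+\lambda_{j}^{-1}\alpha_{j}$ over pairs $i<j$ contributes each index $(r-1)$ times, I obtain
\[
\Tr\bigl(A(x)P(x^{-1})\bigr)=4\sum_{i}\lambda_{i}^{-1}\alpha_{i}+2d(r-1)\sum_{i}\lambda_{i}^{-1}\alpha_{i}=\bigl(4+2d(r-1)\bigr)\langle x^{-1},\alpha\rangle,
\]
since $x^{-1}=\sum_{i}\lambda_{i}^{-1}p_{i}$ gives $\langle x^{-1},\alpha\rangle=\sum_{i}\lambda_{i}^{-1}\alpha_{i}$. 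The identity $n=r+\tfrac{d}{2}r(r-1)$ yields $4+2d(r-1)=4\,n/r$, which is the desired formula.

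The only non-routine ingredient is the Peirce calculus for $P(e)$ on $V_{ii}$, relying on the identity $e^{2}=\tfrac{1}{2}(p_{i}+p_{j})$ for unit $e\in V_{ij}$; this is available from the Jordan-algebra background recalled in Appendix~\ref{appendix:EJA}. After that, the argument is a direct evaluation in the Peirce-adapted orthonormal basis.
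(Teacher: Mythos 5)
Your proof is correct and follows essentially the same route as the paper's: expand the trace in an orthonormal basis adapted to the Peirce decomposition of a Jordan frame diagonalising $x$, use that $P(x^{-1})$ acts on $V_{ij}$ by the scalar $\lambda_i^{-1}\lambda_j^{-1}$, evaluate $\langle e, A(x)e\rangle$ via $P(e)p_k$ (which the paper obtains from~\citet[Proposition IV.1.4 (i)]{Faraut1994} and you derive directly from $e^2=\tfrac12(p_i+p_j)$ and $P(e)=2L(e)^2-L(e^2)$), and count multiplicities using $n=r+\tfrac{d}{2}r(r-1)$. The only cosmetic difference is that the paper expands $A(x)=\sum_k\lambda_k A(p_k)$ before applying the defining relation, while you apply $\langle u,A(x)u\rangle=4\langle P(u)x,\alpha\rangle$ directly; these are identical by linearity.
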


\begin{proof}
 Let $p_1,\ldots, p_r$ be a Jordan frame of $V$. Then the spectral decomposition of an arbitrary element $x$ is given by $x=\sum_{i=1}^r \lambda_i p_i$, and $P(x^{-1})$ can be written as
\begin{align}\label{Eq:Px}
 P\left(x^{-1}\right)=\sum_{i=1}^r \lambda_i^{-2}P(p_i)+\sum_{i <j}4\lambda_i^{-1}\lambda_j^{-1}L(p_i)L(p_j).
\end{align}
Let now $\{e_{\beta}\}$ be an orthonormal basis of $V$,
where the basis elements are chosen to lie in the subspaces corresponding to the Peirce decomposition,
as described in Section~\ref{sec:Peirce}. More precisely, for each $i\in \{1, \ldots, r\}$,
we choose one basis element in $V_{ii}$, which is in fact $p_i$, and for each $i < j$, we choose $d$ basis elements in
$V_{ij}$, since the dimension of $V_{ij}$ is $d$.

By the definition of the trace $\Tr$, we have
\[
 \Tr\left(A(x) P(x^{-1})\right)=\sum_{\beta}\left\langle A(x)e_{\beta}, P(x^{-1}) e_{\beta}\right\rangle.
\]
In order to evaluate $P(x^{-1})e_{\beta}$, we shall use
\begin{align*}
 V_{ii}&=\{x \in V \, |\, L(p_k)x=\delta_{ik}x\},\\
 V_{ij}&=\left\{x \in V \, |\, L(p_k)x=\frac{1}{2}(\delta_{ik}+\delta_{jk})x\right\},
\end{align*}
as derived in the proof of~\citet[Theorem IV.2.1]{Faraut1994}.
This implies for $e_{\beta} \in V_{ij},\, i\leq j$,
\begin{align}\label{Eq:Lpkebeta}
L(p_k)e_{\beta} =\frac{1}{2}(\delta_{ik}e_{\beta}+\delta_{jk}e_{\beta}),
\end{align}
and hence for $k < l$,
\begin{align*}
L(p_l)L(p_k)e_{\beta}= \left\{
\begin{array}{rl}
 \frac{1}{4}e_{\beta}, &  \textrm{if } e_{\beta} \in V_{kl} ,\\
0, &\textrm{otherwise},\end{array}\right. \qquad
P(p_k)e_{\beta}= \left\{
\begin{array}{rl}
 e_{\beta}, & \textrm{if } e_{\beta} \in V_{kk},\\
0, &\textrm{otherwise}.\end{array}\right.
\end{align*}
Note that this is obvious, since $P(p_k)$ and $4L(p_l)L(p_k)$ are the orthogonal projections on $V_{kk}$ and $V_{kl}$
respectively (see Section~\ref{sec:Peirce}).

Let now $e_{\beta} \in V_{ij}$ for some $i\leq j$ be fixed. Then, using~\eqref{Eq:Px}, the linearity of $A$ and~\eqref{eq:rel_Aalpha}, we obtain
\begin{align}
\left\langle A(x)e_{\beta}, P\left(x^{-1}\right) e_{\beta}\right\rangle&=\left\langle \sum_{k=1}^r\lambda_{k}A(p_k)e_{\beta},\lambda_i^{-1}\lambda_j^{-1}e_{\beta}\right\rangle\notag\\
&=\sum_{k=1}^r\lambda_k\lambda_i^{-1}\lambda_j^{-1}4\langle P(e_{\beta})p_k, \alpha\rangle\notag\\
&=2\left(\left\langle\lambda_i^{-1}p_i,\alpha\right\rangle+\left \langle\lambda_j^{-1}p_j,\alpha\right\rangle\right).\label{Eq:eval}
\end{align}
Here, the last equality follows from
\begin{align}\label{Eq:Pebetaek}
 P(e_{\beta})p_k=\frac{1}{2}(\delta_{ik}p_j+\delta_{jk}p_i),
\end{align}
for $e_{\beta} \in V_{ij},\,i\leq j$. For $e_{\beta} \in V_{ii}$, \eqref{Eq:Pebetaek} is simply a consequence of~\eqref{Eq:Lpkebeta} and for $e_{\beta} \in V_{ij}, i< j$, we have by~\citet[Proposition IV.1.4 (i)]{Faraut1994}
\[
P(e_{\beta})p_k=e_{\beta}^2\circ(\delta_{ik}e+\delta_{jk}e-p_k)=\frac{1}{2}(p_i+p_j)\circ(\delta_{ik}e+\delta_{jk}e-p_k),
\]
which then yields~\eqref{Eq:Pebetaek}. By summing over all basis elements, we deduce from~\eqref{Eq:eval}
\[
\Tr\left(A(x) P\left(x^{-1}\right)\right)=\sum_{i=1}^r \left(1+\frac{d}{2}(r-1)\right)4\left\langle \lambda_i^{-1}p_i,\alpha\right\rangle=\frac{n}{r}4\langle x^{-1}, \alpha\rangle,
\]
where the last equality follows from the fact that $n=r+\frac{d}{2}r(r-1)$.
\end{proof}

\section{Construction of Affine Processes on Symmetric Cones}\label{sec:existence}

Throughout this section we use the same setting as in the previous one, that is,
we suppose that $V$ is a simple Euclidean Jordan algebra of dimension $n$ and rank $r$ and $K$
is the associated irreducible symmetric cone. As before, we assume that the scalar product on $V$ is defined by
$\langle x, y\rangle=\tr(x\circ y)$ and the Peirce invariant $d$ corresponds to the dimension of $V_{ij},\, i< j$,
as defined in Appendix~\eqref{Peirce decomposition 2}.
Again we refer to~\citet{Faraut1994} and Appendix~\ref{appendix:EJA} for results on Euclidean Jordan algebras.

\subsection{Construction of Affine Diffusion Processes}\label{sec:existencediff}

The aim of this section is to establish existence of affine diffusion processes
for the following admissible parameter set $(\alpha, \delta\alpha,0,0,0,0,0)$ with $\delta \geq d(r-1)$.
To this end we consider the Riccati equations for $\phi$ and $\psi$ associated to these parameters and
show that, for every $(t,x) \in \re_+ \times K$, $e^{-\phi(t,u)-\langle \psi(t,u), x\rangle}$ is the Laplace transform
of a probability distribution supported on $K$. It will turn out that this probability distribution corresponds to
the \emph{non-central Wishart distribution} in the case of positive semidefinite matrices.
The existence of such affine diffusion processes then follows from the semi-flow property of $\phi$ and $\psi$, which
yields the Kolmogorov-Chapman equation for the transition probabilities and thus the Markov property.

\subsubsection{Explicit solutions for the Riccati equations}

We start by establishing explicit solutions for the Riccati equations associated
to the parameter set $(\alpha,\delta \alpha, 0,0,0,0,0)$ with $\delta \in \re_+$.

\begin{lemma}\label{lem:solRiccati}
Let $\alpha \in K$ and $\delta \in \re_+$. Consider the following system of Riccati differential equations for $u \in \mathring{K}$ and $t\in \re_+$
\begin{align}
\frac{\partial \psi(t,u)}{\partial t}&=-2P(\psi(t,u))\alpha,&\quad &\psi(0,u)=u \in \mathring{K}, \label{eq_simplestpsi}\\
\frac{\partial \phi(t,u)}{\partial t}&=\langle \delta \alpha, \psi(t,u)\rangle, &\quad& \phi(0,u)=0. \label{eq_simplestphi}
\end{align}
Then the solution is given by
\begin{align}
 \psi(t,u)&=(u^{-1}+2t\alpha)^{-1},  \label{eq_simplestpsisol}\\
 \phi(t,u)&=\frac{\delta}{2}\ln\det\left(e+2tP(\sqrt{\alpha})u\right). \label{eq_simplestphisol}
\end{align}
Moreover, $u \mapsto \psi(t,u)$ and $u \mapsto \phi(t,u)$ are continuous at $u=0$ for all $t \in \re_+$ with
$\phi(t,0)=0$ and $\psi(t,0)=0$.
\end{lemma}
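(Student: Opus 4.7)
The plan is to verify both formulas by direct differentiation, relying on the Jordan algebra identities collected in Appendix~\ref{appendix:EJA} (notably Proposition~\ref{prop:EJA}): the derivative rule $\tfrac{d}{dt}v(t)^{-1}=-P(v(t)^{-1})v'(t)$, the gradient formula $\nabla\ln\det(x)=x^{-1}$, the inverse identity $P(x)x^{-2}=e$, and the multiplicative property $\det(P(x)y)=\det(x)^2\det(y)$.

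For $\psi$, set $v(t):=u^{-1}+2t\alpha$. Since $u^{-1}\in\mathring{K}$ and $\alpha\in K$, one has $v(t)\in\mathring{K}$ for every $t\ge 0$, so $v(t)$ is invertible with $v(t)^{-1}\in\mathring{K}$. Clearly $v(0)^{-1}=u$, and the derivative rule applied with $v'(t)=2\alpha$ yields $\partial_t v(t)^{-1}=-2P(v(t)^{-1})\alpha$, which is~\eqref{eq_simplestpsi}; uniqueness of the $\mathring{K}$-valued solution follows from local Lipschitz continuity of $-2P(\cdot)\alpha$ on $\mathring{K}$.

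For $\phi$, I would integrate~\eqref{eq_simplestphi} along $\psi$. The chain rule together with $\nabla\ln\det(x)=x^{-1}$ gives $\frac{d}{ds}\ln\det(u^{-1}+2s\alpha)=2\langle\alpha,\psi(s,u)\rangle$, whence
\[
\phi(t,u)=\int_0^t\langle\delta\alpha,\psi(s,u)\rangle\,ds=\frac{\delta}{2}\ln\bigl(\det(u)\det(u^{-1}+2t\alpha)\bigr).
\]
To match this with the stated closed form it suffices to show
\[
\det(u)\det(u^{-1}+2t\alpha)=\det(e+2tP(\sqrt{\alpha})u).
\]
From $P(\sqrt{\alpha})\alpha^{-1}=e$ one has the factorization $e+2tP(\sqrt{\alpha})u=P(\sqrt{\alpha})(\alpha^{-1}+2tu)$, and symmetrically $u^{-1}+2t\alpha=P(u^{-1/2})(e+2tP(\sqrt{u})\alpha)$. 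Combined with $\det(P(x)y)=\det(x)^2\det(y)$, these factorizations reduce the required equality to the spectral symmetry
\[
\det(e+2tP(\sqrt{\alpha})u)=\det(e+2tP(\sqrt{u})\alpha),
\]
i.e.\ to $P(\sqrt{\alpha})u$ and $P(\sqrt{u})\alpha$ having identical reduced characteristic polynomials in $V$. This is the main technical point; in each of the standard matrix realisations of a simple Euclidean Jordan algebra it is Sylvester's identity $\det(I+AB)=\det(I+BA)$ applied to $A=\sqrt{\alpha}$ and $B=u\sqrt{\alpha}$ (and its counterpart with $\alpha,u$ interchanged), and the remaining exceptional case is covered by the spectral theory of~\citet[Chapters III--IV]{Faraut1994}.

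For the continuity at $u=0$: the map $x\mapsto x^{-1}$ is order-reversing on $\mathring{K}$, so $u^{-1}+2t\alpha\succeq u^{-1}$ implies $0\preceq\psi(t,u)\preceq u$; normality of the finite-dimensional cone $K$ then gives $\|\psi(t,u)\|\to 0$ as $u\to 0$, justifying the extension $\psi(t,0)=0$. For $\phi$, continuity of $P$ and of $\det$, together with $P(\sqrt{\alpha})u\to 0$, yield $\phi(t,u)\to\tfrac{\delta}{2}\ln\det(e)=0$.
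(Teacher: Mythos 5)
Your proposal is correct and follows essentially the same route as the paper: verify \eqref{eq_simplestpsisol} via the derivative rule for the Jordan inverse, and reduce \eqref{eq_simplestphisol} to the identity $\det(u)\det(u^{-1}+2t\alpha)=\det(e+2tP(\sqrt{\alpha})u)$ using $\det(P(x)y)=\det(x)^2\det(y)$ together with the symmetry $\det(e+2tP(\sqrt{u})\alpha)=\det(e+2tP(\sqrt{\alpha})u)$, which the paper likewise disposes of in one line. Two minor remarks: your factorization $e+2tP(\sqrt{\alpha})u=P(\sqrt{\alpha})(\alpha^{-1}+2tu)$ presumes $\alpha$ invertible and is in fact not needed (the factorization through $P(\sqrt{u^{-1}})$ already suffices), and your treatment of the continuity at $u=0$ is a welcome addition that the paper's proof omits.
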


\begin{proof}
Using
\[
 \frac{d}{dt}(x+tv)^{-1}=-P((x+tv)^{-1})v,
\]
which follows from Proposition~\ref{prop:EJA}~\ref{item:EJA2}, one easily verifies that $\psi(t,u)$ given by~\eqref{eq_simplestpsisol} satisfies~\eqref{eq_simplestpsi}. Concerning $\phi(t,u)$, let us first show that
\begin{align}\label{eq:detmult}
 \det(u)\det(u^{-1}+2t\alpha)=\det\left(e+2tP(\sqrt{\alpha})u\right).
\end{align}
Indeed, we have by Proposition~\ref{prop:EJA}~\ref{item:EJA1}, \ref{item:EJA4} and Proposition~\ref{prop:detmult}
\begin{align*}
 \det(u)\det(u^{-1}+2t\alpha)&=\det(u)\det\left(P(\sqrt{u^{-1}})\left(e+2tP(\sqrt{u})\alpha\right)\right)\\
&=\det(u)\det(u^{-1})\det\left(e+2tP(\sqrt{u})\alpha\right)\\
&=\det(uu^{-1})\det\left(e+2tP(\sqrt{u})\alpha\right)\\
&=\det\left(e+2tP(\sqrt{u})\alpha\right)\\
&=\det\left(e+2tP(\sqrt{\alpha})u\right).
\end{align*}
The last equality follows again from Proposition~\ref{prop:EJA}~\ref{item:EJA4}, which implies
\[
 \det(P(\sqrt{u})\alpha)=\det(u)\det(\alpha)=\det(P(\sqrt{\alpha})u).
\]
Hence $\phi(t,u)$ can be written as
\begin{align}\label{eq_diffphisol}
\phi(t,u)=\frac{\delta}{2}\ln\det(u)+ \frac{\delta}{2}\ln\det(u^{-1}+2t\alpha).
\end{align}
Using expression~\eqref{eq_diffphisol} for $\phi(t,u)$ and $\nabla\ln\det x=x^{-1}$ yields
\[
\frac{\partial \phi(t,u)}{\partial t} =\left\langle\frac{\delta}{2}(u^{-1}+2t\alpha)^{-1}, 2\alpha\right\rangle=\langle \delta \alpha, \psi(t,u)\rangle,
\]
and shows that~\eqref{eq_simplestphisol} solves~\eqref{eq_simplestphi}.
\end{proof}

Let now $\psi(t,u)$ and $\phi(t,u)$ be given by~\eqref{eq_simplestpsi} and~\eqref{eq_simplestphi} and consider
\[
\mathcal{L}^{\delta, \alpha,x}_t(u):=e^{-\phi(t,u)-\langle \psi(t,u),x \rangle}=\det\left(e+2tP(\sqrt{\alpha})u\right)^{-\frac{\delta}{2}}e^{-\langle (u^{-1}+2t \alpha)^{-1}, x\rangle}
\]
for $u \in \mathring{K}$. We shall now prove that for every $t \in \re_+$ and $\alpha, x \in K$
\begin{align}\label{eq:laplacetransform}
 u \mapsto \mathcal{L}^{\delta, \alpha, x}_t(u)
\end{align}
is the Laplace transform of a probability measure on $K$ if $\delta \geq d(r-1)$. In the case $K=S_r^+$, this is implied by~\citet[Proposition 3.2]{letac},
which asserts that $\mathcal{L}^{\delta, \alpha, x}_t$ corresponds to the Laplace transform of the non-central
Wishart distribution. The proof is based on the density function of this distribution, which exists for $\delta > (r-1)$
and $\alpha \in S_r^{++}$. As such a result is not available for general symmetric cones, we here establish the form of the density corresponding to $\mathcal{L}^{\delta, \alpha, x}_t$. This then yields a generalization of the non-central Wishart distribution on symmetric cones and an explicit form of the Markov kernels corresponding to the affine diffusion processes, associated with the parameter set $(\alpha, \delta \alpha, 0, 0, 0, 0, 0)$,
for $\delta>d(r-1)$ and $\alpha \in \mathring{K}$.

\subsubsection{Central Wishart Distribution}

We start by analyzing the case $x=0$, which corresponds to the \emph{central Wishart distribution} (see, e.g.,~\citet{letac} or~\citet{massam}).
Indeed, the following proposition states that for particular values of $\delta$, the expression
\[
e^{-\phi(t,u)}=\det(e+2t P(\sqrt{\alpha})u)^{-\frac{\delta}{2}}
\]
can be recognized as the Laplace transform of the central Wishart distribution with shape parameter $\frac{\delta}{2}$ and scale parameter $2t\alpha$.
In the case $\delta>d(r-1)$ and $\alpha \in \mathring{K}$, this distribution admits a density, whose form is explicitly given.

\begin{proposition}\label{centralWishart}
Let $\phi(t,u)$ be given by~\eqref{eq_simplestphi} and consider
\[
\mathcal{L}^{\delta, \alpha}_t(u):=e^{-\phi(t,u)}=\det\left(e+2tP(\sqrt{\alpha})u\right)^{-\frac{\delta}{2}}.
\]
If $\delta$ belongs to the set
\[
G=\left \{0,d,\ldots,d(r-1)\right\} \cup \left] d(r-1),\infty\right[,
\]
then, for every $t \in \re_+$ and $\alpha \in K$, $u \mapsto \mathcal{L}^{\delta, \alpha}_t(u)$ is the Laplace transform of a probability measure $W_t^{\frac{\delta}{2},\alpha}$ on $K$.
Moreover, if $\delta>d(r-1)$ and if $\alpha \in \mathring{K}$, then $W_t^{\frac{\delta}{2},\alpha}$ admits a density, which is given by
\begin{align}\label{flt central}
W_t^{\frac{\delta}{2},\alpha}(\xi)=\frac{1}{\Gamma_{K}\left(\frac{\delta}{2}\right)}\det
\left(\frac{\alpha^{-1}}{2t}\right)^{\frac{\delta}{2}}e^{-\left\langle \frac{\alpha^{-1}}{2t},
\xi\right\rangle}\det(\xi)^{\frac{\delta}{2}-\frac{n}{r}},
\end{align}
where $\Gamma_K$ denotes the Gamma function of $K$ (see~\citet[Section VII.1]{Faraut1994}).
\end{proposition}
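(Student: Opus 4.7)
My plan is to split the proof according to the two disjoint pieces of the Gindikin set $G$: the continuous ray $]d(r-1),\infty[$, where the density \eqref{flt central} will be verified directly, and the finite set $\{0,d,\ldots,d(r-1)\}$, where $W_t^{\delta/2,\alpha}$ is singular and must be obtained from the Gindikin--Wallach theorem. In each regime I first construct the measure for invertible $\alpha\in\mathring K$, and only afterwards extend to $\alpha\in\partial K$ via a weak-convergence argument based on Lemma~\ref{lem: FourierLaplace}. The common algebraic bridge used throughout is the identity
\begin{equation*}
\det\bigl(e+2tP(\sqrt\alpha)u\bigr)=\det(2t\alpha)\,\det\bigl(u+\tfrac{1}{2t}\alpha^{-1}\bigr),\qquad u\in K,\ \alpha\in\mathring K,
\end{equation*}
which follows by the same kind of computation as \eqref{eq:detmult}: apply Proposition~\ref{prop:EJA}\ref{item:EJA4} to the relation $P(\sqrt{2t\alpha})\bigl(u+\tfrac{1}{2t}\alpha^{-1}\bigr)=2tP(\sqrt\alpha)u+e$ (whose verification only uses $P(\sqrt x)x^{-1}=e$ and $P(\lambda x)=\lambda^2 P(x)$).

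For the continuous regime and $\alpha\in\mathring K$, I take the right-hand side of \eqref{flt central} as an ansatz. Its exponent satisfies $\delta/2-n/r=\delta/2-1-d(r-1)/2>-1$, so it is locally integrable on $K$, and it is manifestly nonnegative. Both the total mass and the Laplace transform then reduce to the integral representation of the Gamma function of a symmetric cone,
\[
\int_K e^{-\langle w,\xi\rangle}\det(\xi)^{s-n/r}\,d\xi=\Gamma_K(s)\det(w)^{-s},\qquad w\in\mathring K,\ s>\tfrac{n}{r}-1
\]
(see~\citet[Chapter~VII]{Faraut1994}), applied with $s=\delta/2$ and $w=u+\tfrac{1}{2t}\alpha^{-1}\in\mathring K$. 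Setting $u=0$ gives total mass one, and substituting the bridge identity then yields the claimed Laplace transform $\mathcal{L}^{\delta,\alpha}_t(u)$.

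For the discrete regime $\delta\in\{0,d,\ldots,d(r-1)\}$ I invoke the Gindikin--Wallach theorem (see, e.g.,~\citet[Chapter~VII]{Faraut1994}): for each $s\in\{0,d/2,\ldots,d(r-1)/2\}$ there exists a unique positive (\emph{Riesz}) measure $R_s$ on $K$, supported on a lower-dimensional stratum of $\partial K$, with Laplace transform $\det(u)^{-s}$ for $u\in\mathring K$. For $\alpha\in\mathring K$ I then define
\[
W_t^{\delta/2,\alpha}(d\xi):=\det(2t\alpha)^{-\delta/2}\,e^{-\langle\tfrac{1}{2t}\alpha^{-1},\xi\rangle}\,R_{\delta/2}(d\xi),
\]
a finite positive measure on $K$ whose Laplace transform at $u\in K$ equals $\det(2t\alpha)^{-\delta/2}\det(u+\tfrac{1}{2t}\alpha^{-1})^{-\delta/2}$; the bridge identity rewrites this as $\det(e+2tP(\sqrt\alpha)u)^{-\delta/2}$, and $u=0$ shows total mass one. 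Finally, to pass from $\alpha\in\mathring K$ to $\alpha\in\partial K$ in both regimes, I approximate $\alpha$ by $\alpha_n\in\mathring K$ with $\alpha_n\to\alpha$: continuity of the Jordan square root on $K$, of $P$, and of $\det$ yields pointwise convergence $\mathcal{L}^{\delta,\alpha_n}_t(u)\to\mathcal{L}^{\delta,\alpha}_t(u)$ on $\mathring K\cup\{0\}$, whence Lemma~\ref{lem: FourierLaplace} delivers $W_t^{\delta/2,\alpha}$ as the weak limit of $W_t^{\delta/2,\alpha_n}$. The main analytic obstacle is the discrete regime: the existence of $R_s$ for $s\le d(r-1)/2$ is precisely the deep half of the Gindikin set theorem, and I plan to cite rather than reprove it.
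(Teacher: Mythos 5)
Your argument is correct, but it reaches the first assertion by a genuinely different route than the paper. The paper does not split the Gindikin set: it quotes \citet[Theorem VII.3.1 and Proposition VII.2.3]{Faraut1994} to say that $\det(u)^{-\delta/2}$ is a function of positive type exactly when $\delta\in G$, observes that composing with the affine substitution $u\mapsto e+2tP(\sqrt{\alpha})u$ preserves positive-definiteness, and so obtains in one stroke, for \emph{all} $\delta\in G$ and \emph{all} $\alpha\in K$ simultaneously, a positive measure whose support lies in $K$ (from the monotonicity $\mathcal{L}^{\delta,\alpha}_t(u+v)\le\mathcal{L}^{\delta,\alpha}_t(u)$) and whose mass is $1$ (from $\mathcal{L}^{\delta,\alpha}_t(0)=1$); the density is then verified exactly as you do, via \citet[Corollary VII.1.3]{Faraut1994} and Proposition~\ref{prop:EJA}~\ref{item:EJA4}. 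You instead construct the measure explicitly for $\alpha\in\mathring{K}$ --- as the density in the continuous regime and as an exponentially tilted Riesz measure $R_{\delta/2}$ in the discrete regime --- and recover $\alpha\in\partial K$ by weak convergence through Lemma~\ref{lem: FourierLaplace}. Both proofs ultimately rest on the same deep input (Gindikin's theorem), but the trade-off is real: the paper's positive-type argument is shorter and avoids any case distinction or limiting procedure, while your construction is more informative, since it exhibits the measure concretely in every case (in particular identifying the singular Wishart laws in the discrete regime as tilted Riesz measures, which the paper only records later in Remark~\ref{rem:Gindinkinset}). Your bridge identity $\det(e+2tP(\sqrt{\alpha})u)=\det(2t\alpha)\det(u+\tfrac{1}{2t}\alpha^{-1})$ is the same computation the paper performs as \eqref{eq:detmult} with the roles of $u$ and $\alpha$ exchanged, and all the supporting steps (integrability from $\delta/2-n/r>-1$, continuity of $\alpha\mapsto P(\sqrt{\alpha})$ for the boundary limit, mass one at $u=0$) check out.
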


\begin{proof}
By~\citet[Theorem VII.3.1 and Proposition VII.2.3]{Faraut1994}, $\det(u)^{-\frac{\delta}{2}}$
is the Laplace transform of a positive measure if and only if $\delta \in G$.
This is equivalent to the fact that $\det(u)^{-\frac{\delta}{2}}$ is a function of positive type (see, e.g.,~\citet[page 136]{Faraut1994}),
that is,
\[
 \sum_{i,j=1}^{N}\det(u_i+u_j)^{-\frac{\delta}{2}}c_i\bar{c}_j\geq 0
\]
for all choices of $u_1,\ldots,u_N \in K$ and complex numbers $c_1,\ldots,c_N$. For every $t \in \re_+$,
$\mathcal{L}^{\delta,\alpha}_t(u)=\det\left(e+2tP(\sqrt{\alpha})u\right)^{-\frac{\delta}{2}}$
is therefore also a function of positive type and hence the Laplace transform of a positive measure if $\delta \in G$. Since $\mathcal{L}^{\delta,\alpha}_t(u+v)\leq \mathcal{L}^{\delta,\alpha}_t(u)$ for all $u,v \in K$, the measure is supported on $K$. As $\mathcal{L}^{\delta,\alpha}_t(0)=1$, the measure is actually a probability measure.

Concerning the second assertion, we have by~\citet[Corollary VII.1.3]{Faraut1994} and Proposition~\ref{prop:EJA}~\ref{item:EJA4}
\begin{align*}
\int_K e^{-\left\langle u +\frac{\alpha^{-1}}{2t}, \xi\right\rangle} \det (\xi)^{\frac{\delta}{2}-\frac{n}{r}} d\xi&=\Gamma_K\left(\frac{\delta}{2}\right) \det\left(u+\frac{\alpha^{-1}}{2t}\right)^{-\frac{\delta}{2}}\\
&=\Gamma_K\left(\frac{\delta}{2}\right) \det\left(\frac{P(\sqrt{\alpha^{-1}})}{2t}\left(2t P(\sqrt{\alpha})u+e\right)\right)^{-\frac{\delta}{2}}\\
&=\Gamma_K\left(\frac{\delta}{2}\right) \det\left({\frac{\alpha^{-1}}{2t}}\right)^{-\frac{\delta}{2}} \det\left(e+2t P(\sqrt{\alpha})u\right)^{-\frac{\delta}{2}}.
\end{align*}
The definition of the density of $W_t^{\frac{\delta}{2},\alpha}$ then yields the assertion.
\end{proof}

\begin{remark}\label{rem:Gindinkinset}
\begin{enumerate}
\item
Analogous to the cone of positive semidefinite matrices, one can define
the central Wishart distribution $W^{p, \sigma}$ with shape parameter
\[
p \in \widetilde{G}=\left \{0,\frac{d}{2},\ldots,\frac{d(r-1)}{2}\right\} \cup \left] \frac{d(r-1)}{2},\infty\right[
\]
and scale parameter $\sigma \in K$ on a symmetric cone by its Laplace transform which takes the form stated
in Proposition~\ref{centralWishart}, that is,
\[
\int_K e^{-\langle u, \xi\rangle} W^{p,\sigma}(d\xi)=\det\left(e+P(\sqrt{\sigma})u\right)^{-p}.
\]
(see, e.g.,~\citet[Corollary 3]{massam}).
\item If $p=k\frac{d}{2}, k \in \{1, \ldots, r-1\}$, then the Wishart distribution is supported on the set of elements in $K$ which are precisely of rank $k$.
This is a consequence of~\citet[Proposition VII.2.3]{Faraut1994}.
\end{enumerate}
\end{remark}

\subsubsection{Non-central Wishart distribution}

In order to formulate Proposition~\ref{lem:noncentral} below, where we establish the form of the density function of the non-central Wishart distribution on a symmetric cone, let us introduce the so-called zonal polynomials (see~\citet[Section XI.3, p. 234]{Faraut1994}).

\begin{definition}[Zonal Polynomials]\label{def:zonal}
For each multi-index $\mathbf{m}=(m_1,\dots,m_r)\in\mathbb N^r$ with length $\vert\mathbf{m}\vert:=m_1+\dots+m_r$, we consider the generalized \emph{power function} defined by
\[
\Delta_{\mathbf{m}}(\xi)=\Delta_1(\xi)^{m_1-m_2}\Delta_2(\xi)^{m_2-m_3}\ldots\Delta_r(\xi)^{m_r},
\]
where $\Delta_i$ denotes the principal minors corresponding to the Jordan subalgebras $V^{(j)}=V(p_1+\ldots+p_j,1)$ with $p_1, \ldots, p_r$ some fixed Jordan frame.
The $\mathbf{m}^{th}$ zonal polynomial $Z_{\mathbf{m}}$ is now defined by
\begin{align*}
Z_{\mathbf{m}}(\xi)=\omega_{\mathbf{m}}\int_{O\in \mathcal{O}}\Delta_{\mathbf{m}}(O\xi)dO,
\end{align*}
where $dO$ is the normalized Haar measure on $\mathcal{O}$ and $\mathcal{O}= G \cap O(V)$, where $G$ is the connected component of the identity in the automorphism group of $K$ and $O(V)$ the orthogonal group of $V$. Moreover, $\omega_{\mathbf{m}}$ denotes some positive normalizing constant such that
\[
(\tr(\xi))^k=\sum_{|\mathbf{m}|=k} Z_{\mathbf{m}}(\xi).
\]
\end{definition}

We are now prepared to show that $\mathcal{L}^{\delta, \alpha,x}_t$ is the Laplace transform of a probability distribution if $\delta \geq d(r-1)$. As before this distribution is absolutely continuous with respect to the Lebesgue measure if  $\alpha \in \mathring{K}$ and $\delta > d(r-1)$.
In order to prove that, we generalize a result by~\citet{letac} on the density function of the non-central Wishart distribution on positive semidefinite matrices to symmetric cones.

\begin{proposition}\label{lem:noncentral}
Let $\psi(t,u)$ and $\phi(t,u)$ be given by~\eqref{eq_simplestpsi} and~\eqref{eq_simplestphi} and consider
\[
\mathcal{L}^{\delta, \alpha,x}_t(u):=e^{-\phi(t,u)+\langle \psi(t,u),x \rangle}=\det\left(e+2tP(\sqrt{\alpha})u\right)^{-\frac{\delta}{2}}e^{-\langle (u^{-1}+2t \alpha)^{-1}, x\rangle}
\]
for $u \in \mathring{K}$. Then we have:
\begin{enumerate}
\item \label{item:noncentral1}If $\delta \geq d(r-1)$, then , for every $t \in \Rplus$ and $\alpha, x \in K$, $u \mapsto \mathcal{L}^{\delta, \alpha, x}_t(u)$ is the Laplace transform of a probability measure $W_t^{\frac{\delta}{2},\alpha,x}$ on $K$.
\item \label{item:noncentral2} If $\delta>d(r-1)$ and if $\alpha \in \mathring{K}$,
then $W_t^{\frac{\delta}{2},\alpha,x}$ admits a density, which is given by
\begin{equation}\label{flt noncentral}
\begin{split}
W_t^{\frac{\delta}{2},\alpha,x}(\xi)&=
\det\left(\frac{\alpha^{-1}}{2t}\right)^{\frac{\delta}{2}}e^{-\left\langle \frac{\alpha^{-1}}{2t},\xi+x\right\rangle}\det(\xi)^{\frac{\delta}{2}-\frac{n}{r}}\\
&\quad\times\left(\sum_{\mathbf{m}\geq 0}\frac{Z_{\mathbf{m}}\left(\frac{1}{4t^2}P\left(\sqrt x\right)P\left(\alpha^{-1}\right)\xi\right)}{\vert \mathbf{m}\vert!\Gamma_K\left(\mathbf{m}+\frac{\delta}{2}\right)}\right)1_K(\xi),
\end{split}
\end{equation}
where $\Gamma_K$ denotes the Gamma function of $K$ (see~\citet[Section VII.1]{Faraut1994}) and $Z_{\mathbf{m}}$ the zonal polynomials introduced in Definition~\ref{def:zonal}.
\item \label{item:noncentral3}Let $\delta \geq d(r-1)$. If $\alpha \in \partial K$, then $W_t^{\frac{\delta}{2},\alpha,x}$ is not absolutely continuous with respect to the Lebesgue measure.
\end{enumerate}
\end{proposition}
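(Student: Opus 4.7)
The plan is to prove~(ii) first by directly computing the Laplace transform of the proposed density, then deduce~(i) by an approximation argument in the parameters $(\alpha,\delta)$, and finally establish~(iii) by a support/Laplace-decay analysis.

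For part~(ii) I begin by verifying that the hypergeometric-type series
$h(\xi):=\sum_{\mathbf{m}\geq 0}\tfrac{Z_{\mathbf{m}}\bigl(\tfrac{1}{4t^2}P(\sqrt{x})P(\alpha^{-1})\xi\bigr)}{|\mathbf{m}|!\,\Gamma_K(\mathbf{m}+\delta/2)}$
converges absolutely and locally uniformly to a continuous non-negative function on $V$ (it is essentially the Bessel-type function ${}_0F_1(\delta/2;\cdot)$ on the symmetric cone; see~\citet[Ch.~XV]{Faraut1994}). Combined with the factor $\det(\xi)^{\delta/2-n/r}$, whose singularity at $\partial K$ is integrable thanks to the Gindikin condition $\delta>d(r-1)$, and with the exponential weight $e^{-\langle\alpha^{-1}/(2t),\xi\rangle}$ (well defined because $\alpha\in\mathring{K}$), the right-hand side of~\eqref{flt noncentral} defines an integrable non-negative function on $K$.

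To recover $\mathcal{L}_t^{\delta,\alpha,x}$ I then interchange sum and integral (justified by dominated convergence after bounding each $Z_{\mathbf{m}}$ termwise) and apply the generalised Gindikin integral for zonal polynomials on symmetric cones~\citet[Ch.~XV]{Faraut1994}, schematically of the form
$\int_K e^{-\langle v,\xi\rangle}\det(\xi)^{s-n/r}Z_{\mathbf{m}}(A\xi)\,d\xi=\Gamma_K(\mathbf{m}+s)\det(v)^{-s}Z_{\mathbf{m}}(Av^{-1}),$
with $v=u+\alpha^{-1}/(2t)$, $s=\delta/2$ and $A=\tfrac{1}{4t^2}P(\sqrt{x})P(\alpha^{-1})$. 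The factors $\Gamma_K(\mathbf{m}+\delta/2)$ cancel against those already present in $h$, and the residual series collapses via the generating identity $\sum_{\mathbf{m}}Z_{\mathbf{m}}(y)/|\mathbf{m}|!=e^{\tr(y)}$. Together with the determinantal identity
$\det\bigl(u+\tfrac{\alpha^{-1}}{2t}\bigr)=\det\bigl(\tfrac{\alpha^{-1}}{2t}\bigr)\det(e+2tP(\sqrt{\alpha})u)$
(Proposition~\ref{prop:EJA}~\ref{item:EJA4}) and the resolvent-type identity relating $\tfrac{\alpha^{-1}}{2t}-(u^{-1}+2t\alpha)^{-1}$ to $\tfrac{1}{4t^2}P(\alpha^{-1})\bigl(u+\tfrac{\alpha^{-1}}{2t}\bigr)^{-1}$, all pieces telescope precisely to $\mathcal{L}_t^{\delta,\alpha,x}(u)$. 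Letting $u\to 0$ along $\mathring{K}$ then yields total mass one, completing~(ii).

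For part~(i) I extend to arbitrary $\delta\geq d(r-1)$ and $\alpha\in K$ by setting $\alpha_n:=\alpha+n^{-1}e\in\mathring{K}$ and choosing $\delta_n\downarrow\delta$ with $\delta_n>d(r-1)$: then $\mathcal{L}_t^{\delta_n,\alpha_n,x}(u)\to\mathcal{L}_t^{\delta,\alpha,x}(u)$ pointwise on $\mathring{K}\cup\{0\}$, with limit continuous at $0$, so Lemma~\ref{lem: FourierLaplace} yields weak convergence of the probability measures $W_t^{\delta_n/2,\alpha_n,x}$ to a probability measure on $K$ having Laplace transform $\mathcal{L}_t^{\delta,\alpha,x}$. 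For part~(iii), when $\alpha\in\partial K$ the quadratic representation $P(\sqrt{\alpha})$ is singular, so $u\mapsto\det(e+2tP(\sqrt{\alpha})u)^{-\delta/2}$ fails to decay in directions $u$ lying in $\ker P(\sqrt{\alpha})$; this precludes $\mathcal{L}_t^{\delta,\alpha,x}$ from being the Laplace transform of an $L^1$-density on $V$. Equivalently, by Remark~\ref{rem:Gindinkinset}~(ii) the central-Wishart component is concentrated on a proper face of $K$ of dimension strictly less than $n$, and the non-central correction preserves this lower-dimensional support, so $W_t^{\delta/2,\alpha,x}$ is Lebesgue-null in $V$. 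The principal technical obstacle lies in pinning down the exact form of the Gindikin-type zonal-polynomial integration formula in the general simple Euclidean Jordan algebra and verifying the accompanying resolvent identity for $(u^{-1}+2t\alpha)^{-1}$; once these algebraic ingredients are in place, the remainder consists of standard series manipulations and Laplace-continuity arguments.
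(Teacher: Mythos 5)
Your parts \ref{item:noncentral1} and \ref{item:noncentral2} follow essentially the same route as the paper: absolute convergence of the zonal series via the lower bound on $\Gamma_K(\mathbf{m}+\tfrac{\delta}{2})$ and the generating identity $\sum_{\mathbf{m}}Z_{\mathbf{m}}(y)/|\mathbf{m}|!=e^{\tr(y)}$, termwise integration by the Gindikin-type formula of \citet[Lemma XI.2.3]{Faraut1994}, collapse of the series, the determinantal identity from Proposition~\ref{prop:EJA}~\ref{item:EJA4}, the resolvent identity for $(u^{-1}+2t\alpha)^{-1}$, and a L\'evy-continuity approximation in $(\delta,\alpha)$ for the boundary cases. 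One point you flag but do not resolve, and which the paper does address: the cited integral formula requires $g=P(\sqrt{x})P(\alpha^{-1})$ to be an automorphism of $K$, which fails for degenerate $x$; the paper handles this by approximating $x$ by $x_n=x+\tfrac{1}{n}e$, bounding $Z_{\mathbf{m}}(P(\sqrt{x_n})P(\alpha^{-1})\xi)$ by a power of a trace, and passing to the limit by dominated convergence. This needs to be supplied.

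The genuine gap is in part \ref{item:noncentral3}. Your first argument is incorrect: while the central factor $\det(e+2tP(\sqrt{\alpha})u)^{-\delta/2}$ is indeed constant along directions in $\ker P(\sqrt{\alpha})$, the non-central factor $e^{-\langle(u^{-1}+2t\alpha)^{-1},x\rangle}$ generally forces the full Laplace transform to decay to zero along exactly those directions. For instance, on $S_2^+$ with $\alpha=\diag(1,0)$ and $u=\diag(1,1+s)$ one computes $(u^{-1}+2t\alpha)^{-1}=\diag((1+2t)^{-1},1+s)$, so $\mathcal{L}^{\delta,\alpha,x}_t(u)=(1+2t)^{-\delta/2}e^{-x_{11}/(1+2t)-(1+s)x_{22}}\to 0$ as $s\to\infty$ whenever $x_{22}>0$. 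So non-decay of the Laplace transform cannot be the mechanism, and the conclusion does not follow from it. Your fallback argument has the right geometric picture, but its key step --- that ``the non-central correction preserves this lower-dimensional support'' --- is precisely what must be proved; the measure is not presented as a convolution of the central Wishart with a fixed probability measure (the factor $e^{-\langle\psi(t,u),x\rangle}$ alone is not claimed to be a Laplace transform), so support properties of the central part do not transfer automatically. The paper closes this by taking $\alpha=e_m=\sum_{i=1}^m p_i$ with $m<r$ (after an affine transformation), letting $\Pi_{r-m}$ be the Peirce projection onto $V(e_m,0)$, and computing
\[
\E\left[e^{-\langle u,\Pi_{r-m}(X)\rangle}\right]=e^{-\langle\Pi_{r-m}(u),\Pi_{r-m}(x)\rangle},
\]
i.e.\ the pushforward under $\Pi_{r-m}$ is a point mass at $\Pi_{r-m}(x)$; since the pushforward of an absolutely continuous measure under this projection would again have a density, $W_t^{\frac{\delta}{2},\alpha,x}$ cannot be absolutely continuous. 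You would need to carry out this (or an equivalent) computation to complete part \ref{item:noncentral3}.
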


\begin{proof}
Let us first prove~\ref{item:noncentral2} by applying similar arguments as in~\citet{letac}.
We start by showing that $W_t^{\frac{\delta}{2},\alpha,x}$, as given in~\eqref{flt noncentral}, is a well-defined positive measure. Concerning the convergence of
\begin{align}\label{eq:sum}
\sum_{\mathbf{m}\geq 0}\frac{Z_{\mathbf{m}}\left(\frac{1}{4t^2}P\left(\sqrt x\right)P\left(\alpha^{-1}\right)\xi\right)}{\vert \mathbf{m}\vert!\Gamma_K\left(\mathbf{m}+\frac{\delta}{2}\right)},
\end{align}
we can estimate $\Gamma_K(\mathbf{m}+\frac{\delta}{2})$ due to~\citet[Theorem VII.1.1]{Faraut1994} by
\[
\Gamma_K\left(\mathbf{m}+\frac{\delta}{2}\right)\geq (2\pi)^{\frac{n-r}{2}}(\min_{z \geq 0}\Gamma(z))^r=:M,
\]
where $\Gamma$ denotes the Gamma function on $\mathbb{R}$.
This implies convergence of~\eqref{eq:sum}, since we have by~\citet[Proposition XII.1.3(i)]{Faraut1994}
\begin{align}\label{eq:traceformula}
\det(e^{\xi})=e^{\tr(\xi)}=\sum_{\mathbf{m}\geq 0}\frac{Z_{\mathbf{m}}(\xi)}{\vert \mathbf{m}\vert!}
\end{align}
for every $\xi \in K$. Due to the definition of the zonal polynomials, in particular since $\Delta_{\mathbf{m}}(\xi)> 0$ for all $\xi \in K\setminus\{0\}$, $W_t^{\frac{\delta}{2}, \alpha,x}$ is therefore a well-defined positive measure.
Let us now prove that $u \mapsto L_t^{\delta,\alpha,x}(u)$ is the Laplace transform of $W_t^{\frac{\delta}{2},\alpha,x}$.
For each $\mathbf{m} \in \mathbb{N}^r$ and each automorphism $g$, we have by~\citet[Lemma XI.2.3]{Faraut1994} and
Proposition~\ref{prop:EJA}~\ref{item:EJA4}
\begin{equation}\label{eq:formula1}
\begin{split}
&\int_K e^{-\left\langle u+\frac{\alpha^{-1}}{2t},\xi\right\rangle}\det(\xi)^{\frac{\delta}{2}-\frac{n}{r}}Z_{\mathbf{m}}(g\xi) d\xi\\
&\quad =\Gamma_K\left(\mathbf{m}+\frac{\delta}{2}\right)\det\left(u+\frac{\alpha^{-1}}{2t}\right)^{-\frac{\delta}{2}}Z_{\mathbf{m}}\left(g\left(u+\frac{\alpha^{-1}}{2t}\right)^{-1}\right)\\
&\quad=\Gamma_K\left(\mathbf{m}+\frac{\delta}{2}\right)\det\left(\frac{\alpha^{-1}}{2t}\right)^{-\frac{\delta}{2}}\det\left(e+2tP\left(\sqrt{\alpha}\right)u\right)^{-\frac{\delta}{2}}\\
&\quad \quad \times Z_{\mathbf{m}}\left(g\left(u+\frac{\alpha^{-1}}{2t}\right)^{-1}\right).
\end{split}
\end{equation}
If $x$ is non-degenerate, $P(\sqrt{x})P(\alpha^{-1})$ is an automorphism and plays the role of $g$ in our case.
However, the above formula also holds true if $x$ is degenerate. Indeed, let us approximate $x$ by $x_n=x+\frac{1}{n}e$. Since
$
(\tr(\xi))^k=\sum_{\vert\mathbf{m}\vert=k} Z_{\mathbf{m}}(\xi)
$
for every $k$ (see p. 235 in~\citet{Faraut1994}), we have for $\vert\mathbf{m}\vert=k$
\[
Z_{\mathbf{m}}\left(P\left(\sqrt{x_n}\right)P\left(\alpha^{-1}\right)\xi\right)\leq \left(\tr\left(P\left(\alpha^{-1}\right)\xi x_n\right)\right)^k\leq\left(\tr\left(P\left(\alpha^{-1}\right)\xi x_1\right)\right)^k.
\]
Dominated convergence then yields~\eqref{eq:formula1} also for degenerate $x$.
By~\eqref{eq:traceformula} we obtain
\begin{equation}\label{eq:formula2}
\begin{split}
&\det\left(e+2tP\left(\sqrt{\alpha}\right)u\right)^{\frac{\delta}{2}}\int_K e^{-\langle u,\xi \rangle}W_t^{\frac{\delta}{2},\alpha,x}d\xi\\
&\quad =e^{-\left\langle \frac{\alpha^{-1}}{2t}, x\right\rangle}\left(\sum_{\mathbf{m}\geq 0}\frac{Z_{\mathbf{m}}\left(\frac{1}{4t^2}P\left(\sqrt x\right)P\left(\alpha^{-1}\right)\left(u+\frac{\alpha^{-1}}{2t}\right)^{-1}\right)}{\vert \mathbf{m}\vert!}\right)\\
&\quad =e^{-\left\langle \frac{\alpha^{-1}}{2t}, x\right\rangle+\left\langle \frac{1}{4t^2}P\left(\sqrt x\right)P\left(\alpha^{-1}\right)\left(u+\frac{\alpha^{-1}}{2t}\right)^{-1},e\right\rangle}.
\end{split}
\end{equation}
Using $P(z)=P(\sqrt{z})P(\sqrt{z})$, Proposition~\ref{prop:EJA}~\ref{item:EJA5} and~\citet[Exercise II.5 (c)]{Faraut1994},
which asserts $(z+e)^{-1}-e=-(z^{-1}+e)^{-1}$ for invertible elements $z$, $z+e$ and $z^{-1}+e$, we get
\begin{align*}
&-\left\langle \frac{\alpha^{-1}}{2t}, x\right\rangle+\left\langle \frac{1}{4t^2}P\left(\sqrt x\right)P\left(\alpha^{-1}\right)\left(u+\frac{\alpha^{-1}}{2t}\right)^{-1},e\right\rangle\\
&\quad=\left \langle -e+\frac{1}{2t} P\left(\sqrt{\alpha^{-1}}\right)\left(u+\frac{\alpha^{-1}}{2t}\right)^{-1},  \frac{1}{2t} P\left(\sqrt{\alpha^{-1}}\right)x\right\rangle\\
&\quad=\left \langle -e+\left(2tP\left(\sqrt{\alpha}\right)u+e\right)^{-1},  \frac{1}{2t} P\left(\sqrt{\alpha^{-1}}\right)x\right\rangle\\
&\quad=-\left\langle \left(\left(2tP\left(\sqrt{\alpha}\right)u\right)^{-1}+e\right)^{-1}, \frac{1}{2t} P\left(\sqrt{\alpha^{-1}}\right)x\right\rangle\\
&\quad=-\left\langle 2tP\left(\sqrt{\alpha}\right)\left(u^{-1}+2tP\left(\sqrt{\alpha}\right)e\right)^{-1}, \frac{1}{2t} P\left(\sqrt{\alpha^{-1}}\right)x\right\rangle\\
&\quad= -\left\langle(u^{-1}+2t\alpha)^{-1},x\right \rangle\\
&\quad=-\left\langle \psi(t,u),x\right \rangle.
\end{align*}
This proves that $u\mapsto \mathcal{L}_t^{\delta,\alpha,x}(u)$ is the Laplace transform of the density
given in~\eqref{flt noncentral}.
Moreover, $W_t^{\frac{\delta}{2},\alpha,x}$ qualifies as probability distribution, since $\mathcal{L}_t^{\delta, \alpha,x}(0)=1$, which can be seen by plugging $u=0$ in equation~\eqref{eq:formula2}.

Concerning the first statement~\ref{item:noncentral1}, we have locally uniform convergence on $\mathring{K}$ of $\mathcal{L}_t^{d(r-1)+\frac{1}{n}, \alpha+\frac{e}{n}, x}$ to $\mathcal{L}_t^{d(r-1), \alpha, x}$,
which is a continuous function at $0$. Hence invoking L\'evy's continuity theorem yields the assertion.

Finally, let us consider assertion~\ref{item:noncentral3}. Since affine transformations do not affect the property of having a density, we can assume
$\alpha$ to be of the form
\[
\alpha=e_m:=\sum_{i=1}^m p_i,\quad m< r,
\]
for some orthogonal idempotents $p_1, \ldots, p_m$. Let now $X$ be a $K$-valued random variable with distribution $W_t^{\frac{\delta}{2},e_m,x}$ and denote
by $\Pi_{r-m}$ the projection on $V^{(r-m)}=V(e_m,0)=\{x \in V \,| \, x\circ e_m=0\}$. Then some algebraic manipulations yield that
\[
\mathbb{E}\left[e^{-\langle u, \Pi_{r-m} (X)\rangle}\right]=e^{-\langle \Pi_{r-m}(u), \Pi_{r-m}(x)\rangle},
\]
which is the Laplace transform of the unit mass at $\Pi_{r-m}(x)$. This implies that $X$ does not admit a density, because the pushforward of a measure with a density under some linear map would again admit a density.
\end{proof}

\begin{remark}
The explicit form of the Markov kernels corresponding to the
affine diffusion processes associated with the parameter set $(\alpha, \delta \alpha, 0, 0, 0, 0, 0)$,
for $\delta>d(r-1)$ and $\alpha \in \mathring{K}$, is thus given by $W_t^{\frac{\delta}{2},\alpha,x}$.
\end{remark}

\subsubsection{Existence of Bru Processes}

Using the knowledge that $\mathcal{L}^{\delta, \alpha, x}_t$ is the Laplace transform of a probability distribution
on $K$, we can finally prove existence of affine diffusion processes associated with the particular parameter set $(\alpha, \delta \alpha, 0, 0, 0, 0, 0)$,
where $\alpha \in K$ and $\delta \geq d(r-1)$. We call these affine processes \emph{Bru processes}, since on $S_r^+$ they correspond to the following diffusion process, 
which was first studied by~\citet{bru}
\[
dX_t=\delta \alpha+\sqrt{X_t}dW_t\sqrt{\alpha}+\sqrt{\alpha}dW^{\top}\sqrt{X_t}, \quad X_0=x,
\]
where $W$ is a $r \times r $ matrix of Brownian motions.

\begin{proposition}\label{th: existence Markov}
Let $(\alpha, \delta \alpha, 0, 0, 0, 0, 0)$ be an admissible
parameter set, that is, $\alpha \in K$ and $\delta \geq d(r-1)$. Then there exists a unique affine process on $K$
such that~\eqref{eq:affineprocessK} holds for all $(t,u) \in \re_+ \times K$, where
$\phi(t,u)$ and $\psi(t,u)$ are given in Lemma~\ref{lem:solRiccati}.
\end{proposition}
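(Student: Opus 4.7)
The plan is to realize the transition kernels of the process directly via the non-central Wishart distributions constructed in Proposition~\ref{lem:noncentral}. Since $\delta \geq d(r-1)$, assertion~\ref{item:noncentral1} of that proposition produces, for each $(t,x)\in\re_+\times K$, a probability measure $W_t^{\frac{\delta}{2},\alpha,x}$ on $K$ whose Laplace transform equals $\mathcal{L}_t^{\delta,\alpha,x}(u)=\exp(-\phi(t,u)-\langle\psi(t,u),x\rangle)$ with $\phi,\psi$ the explicit solutions of Lemma~\ref{lem:solRiccati}. Setting $p_t(x,\cdot):=W_t^{\frac{\delta}{2},\alpha,x}$ makes the exponential-affine form~\eqref{eq:affineprocessK} hold by construction, and plugging $u=0$ into the Laplace transform shows $p_t(x,K)=1$, so the candidate kernels are sub-stochastic (in fact stochastic).

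The key point to verify is that the family $(p_t)_{t\geq 0}$ satisfies the Chapman--Kolmogorov equation, which reduces to the semi-flow property of $(\phi,\psi)$. The identity $\psi(t+s,u)=\psi(s,\psi(t,u))$ can be read off directly from the closed form $\psi(t,u)=(u^{-1}+2t\alpha)^{-1}$, namely $((u^{-1}+2t\alpha)+2s\alpha)^{-1}=(u^{-1}+2(t+s)\alpha)^{-1}$. Given this, the companion identity $\phi(t+s,u)=\phi(t,u)+\phi(s,\psi(t,u))$ follows by noting that, in view of~\eqref{eq_simplestphi}, both sides, as functions of $s$, solve the same first-order linear ODE with the same initial condition at $s=0$. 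Combining both identities, a short computation of the Laplace transform of $\int_K p_s(y,\cdot)\,p_t(x,dy)$ reproduces that of $p_{t+s}(x,\cdot)$, and uniqueness of Laplace transforms on $K$ yields the Chapman--Kolmogorov equation.

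The remaining points are routine. Stochastic continuity follows from joint continuity of $(t,u)\mapsto \mathcal{L}_t^{\delta,\alpha,x}(u)$ at $u=0$, which is evident from the explicit formulas in Lemma~\ref{lem:solRiccati}. Measurability of $x\mapsto p_t(x,A)$ for Borel $A\subseteq K$ is obtained from continuity of $x\mapsto \mathcal{L}_t^{\delta,\alpha,x}(u)$ together with a standard monotone-class argument. Given these properties, Kolmogorov's extension theorem (or the semigroup construction) yields a time-homogeneous Markov process $X$ on $K$ with the prescribed transition kernels; by construction $X$ is affine in the sense of Definition~\ref{def:affineprocessK}. Uniqueness follows from the fact that the functions $(\phi,\psi)$ determine the Laplace transforms of all finite-dimensional distributions, which in turn determine the law of $X$.

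The step that requires the most care is the verification of the semi-flow property in the degenerate case $\alpha\in\partial K$, where the closed-form $\psi(t,u)=(u^{-1}+2t\alpha)^{-1}$ a priori involves only the inverse of $u$ and not of $\alpha$, so algebraic manipulations must be performed within $V$ rather than in $\mathring{K}$; fortunately, for $u\in\mathring{K}$ the inverse $(u^{-1}+2t\alpha)^{-1}$ is unambiguous and the identity extends to $u\in K$ by the continuous extension established at the end of Lemma~\ref{lem:solRiccati}, so no additional argument is needed to cover $u\in\partial K$ or $\alpha\in\partial K$.
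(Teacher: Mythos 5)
Your proof is correct and follows essentially the same route as the paper: the transition kernels are taken to be the non-central Wishart measures $W_t^{\delta/2,\alpha,x}$ from Proposition~\ref{lem:noncentral}~\ref{item:noncentral1}, and Chapman--Kolmogorov is deduced from the semi-flow property of $(\phi,\psi)$. The paper states this in two lines; you merely spell out the details (the explicit verification of the semi-flow identities and the routine measurability/extension steps), which is harmless.
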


\begin{proof}
By Proposition~\ref{lem:noncentral} we have for every $(t,x) \in \re_+ \times K$ the existence of a probability measure on $K$
with Laplace-transform $e^{-\phi(t,u)-\langle\psi(t,u),x\rangle}$, where $\phi(t,u)$ and $\psi(t,u)$ are specified in Lemma~\ref{lem:solRiccati}.
The Chapman-Kolmogorov equations hold in view of the flow property of $\phi$ and $\psi$,
whence the assertion follows.
\end{proof}

\begin{remark}
In the case of positive semidefinite matrices,~\citet{bru} has shown existence and
uniqueness for the process
\[
dX_t=\delta I_r+\sqrt{X_t}dW_t+dW^{\top}\sqrt{X_t}, \quad X_0=x,
\]
if $\delta > r-1$ and $x$ with distinct
eigenvalues (see~\citet[Theorem 2 and Section 3]{bru}). This process corresponds to the parameter set $(I_r,\delta I_r, 0,0,0,0,0)$ on the cone $S_r^+$.
Note that, since the Peirce invariant $d$ equals $1$ in this case, $\delta>r-1$ is a stronger assumption than what we require on $\delta$.
Actually,~\citet{bru} establishes existence
and uniqueness of solutions also for $\delta=1,\dots,r-1$. But these are
degenerate solutions, as they are only defined on lower dimensional
subsets of the boundary of $S_r^+$ (see~\citet[Corollary 1]{bru} and compare Remark~\ref{rem:Gindinkinset} (ii)).
\end{remark}

\subsubsection{Existence and Transition Densities of Wishart Processes}

By allowing a non-zero linear drift, we can enlarge the class of Bru processes to so-called \emph{Wishart processes}. These are affine processes corresponding to the parameter set\\ $(\alpha, \delta \alpha,B,0,0,0,0)$, where $\delta \geq d (r-1)$ and $B \in \mathfrak{g}(K)$. Here, $\mathfrak{g}(K)$ denotes the Lie algebra of the automorphism group $G(K)$. These linear maps satisfy
\[
2P(B(x),x)=BP(x)+P(x)B^{\top}.
\]
For such parameter sets the corresponding Riccati equations can still be solved explicitly, which is stated in the following lemma:

\begin{lemma}\label{lem:solRiccatiWish}
Let $\alpha \in K$, $\delta \in \re_+$ and $B \in \mathfrak{g}(K)$.
Consider the following system of Riccati differential equations for $u \in \mathring{K}$ and $t\in \re_+$
\begin{align}
\frac{\partial \psi(t,u)}{\partial t}&=-2P(\psi(t,u))\alpha+B^{\top}(\psi(t,u)),&\quad &\psi(0,u)=u \in \mathring{K}, \label{eq_simplestpsiB}\\
\frac{\partial \phi(t,u)}{\partial t}&=\langle \delta \alpha, \psi(t,u)\rangle, &\quad& \phi(0,u)=0. \label{eq_simplestphiB}
\end{align}
Then the solution is given by
\begin{align}
 \psi(t,u)&=e^{B^{\top}t}(u^{-1}+\sigma_t^B(\alpha))^{-1},  \label{eq_simplestpsisolB}\\
 \phi(t,u)&=\frac{\delta}{2}\ln\det\left(e+P\left(\sqrt{\sigma_t^B(\alpha)}\right)u\right), \label{eq_simplestphisolB}
\end{align}
where $\sigma_t^B(y)=2\int_0^t e^{B s}y ds$.
\end{lemma}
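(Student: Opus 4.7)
The plan is to verify directly that the claimed expressions for $\phi(t,u)$ and $\psi(t,u)$ satisfy the Riccati system, with the key structural ingredient being that $B\in\mathfrak g(K)$ makes $e^{Bt}$ and $e^{B^\top t}$ one-parameter subgroups of $G(K)$ along which the quadratic representation $P$ transforms covariantly. The initial conditions are immediate ($\sigma_0^B(\alpha)=0$, giving $\psi(0,u)=u$, and $\det(u^{-1}+\sigma_0^B(\alpha))=\det(u^{-1})$, giving $\phi(0,u)=0$), so only the differential identities need work.

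For $\psi$, I first observe that because $K$ is self-dual, $G(K)$ is closed under the adjoint with respect to $\langle\cdot,\cdot\rangle$, hence $B\in\mathfrak g(K)$ entails $B^\top\in\mathfrak g(K)$ and $e^{B^\top t}\in G(K)$ for all $t$. Writing $w(t):=u^{-1}+\sigma_t^B(\alpha)$, I compute $\partial_t w=2e^{Bt}\alpha$ and then, using $\partial_t w^{-1}=-P(w^{-1})\partial_t w$ (Proposition~\ref{prop:EJA}~\ref{item:EJA2}),
\[
\partial_t\psi=B^\top e^{B^\top t}w^{-1}-2e^{B^\top t}P(w^{-1})e^{Bt}\alpha=B^\top\psi-2e^{B^\top t}P(w^{-1})e^{Bt}\alpha.
\]
The identity to close the argument is the automorphism-covariance of $P$: for any $g\in G(K)$, $P(gx)=gP(x)g^\top$ (\citet[Proposition III.5.1]{Faraut1994}). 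Applied to $g=e^{B^\top t}$ and $x=w^{-1}$, this yields $e^{B^\top t}P(w^{-1})e^{Bt}=P(e^{B^\top t}w^{-1})=P(\psi)$, and hence $\partial_t\psi=-2P(\psi)\alpha+B^\top\psi$, as required.

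For $\phi$, the goal is to reduce the computation to the one already carried out in Lemma~\ref{lem:solRiccati}. Because $B\in\mathfrak g(K)$, $e^{Bs}\alpha\in K$ for all $s\geq 0$, so $\sigma_t^B(\alpha)\in K$ and $P(\sqrt{\sigma_t^B(\alpha)})$ is well defined. The same manipulation used in the proof of~\eqref{eq:detmult} (invoking Proposition~\ref{prop:EJA}~\ref{item:EJA1},~\ref{item:EJA4} and Proposition~\ref{prop:detmult}) gives
\[
\det(u)\det\!\bigl(u^{-1}+\sigma_t^B(\alpha)\bigr)=\det\!\bigl(e+P(\sqrt{\sigma_t^B(\alpha)})u\bigr),
\]
so that $\phi(t,u)=\tfrac{\delta}{2}\ln\det(u)+\tfrac{\delta}{2}\ln\det(u^{-1}+\sigma_t^B(\alpha))$. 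Differentiating in $t$, using $\nabla\ln\det(z)=z^{-1}$ and $\partial_t\sigma_t^B(\alpha)=2e^{Bt}\alpha$, yields
\[
\partial_t\phi=\delta\bigl\langle(u^{-1}+\sigma_t^B(\alpha))^{-1},e^{Bt}\alpha\bigr\rangle=\delta\bigl\langle e^{B^\top t}(u^{-1}+\sigma_t^B(\alpha))^{-1},\alpha\bigr\rangle=\langle\delta\alpha,\psi(t,u)\rangle,
\]
establishing~\eqref{eq_simplestphiB}.

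The only nontrivial step is the use of $P(gx)=gP(x)g^\top$ for $g\in G(K)$; everything else is a direct continuation of the case $B=0$ treated in Lemma~\ref{lem:solRiccati}. In particular, the substitution $\psi=e^{B^\top t}w^{-1}$ is precisely the variation-of-constants ansatz that conjugates the driftless Riccati flow by the automorphism group, and the covariance identity is what ensures the nonlinear term $-2P(\psi)\alpha$ behaves correctly under this conjugation.
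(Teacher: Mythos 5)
Your proof is correct and follows essentially the same route as the paper's: direct differentiation of the claimed formulas, closing the $\psi$-equation via the covariance identity $e^{B^{\top}t}P(x)e^{Bt}=P(e^{B^{\top}t}x)$ for $e^{B^{\top}t}\in G(K)$ (the paper cites \citet[Proposition III.5.2]{Faraut1994} for this), and reducing the $\phi$-equation to the determinant manipulation of Lemma~\ref{lem:solRiccati}. The only additions are your explicit justifications that $B^{\top}\in\mathfrak{g}(K)$ by self-duality and that $\sigma_t^B(\alpha)\in K$, which the paper leaves implicit.
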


\begin{proof}
Differentiation of~\eqref{eq_simplestpsisolB} yields
\begin{align*}
\frac{\partial \psi(t,u)}{\partial t}&=-2e^{B^{\top}t}P((u^{-1}+\sigma_t^B(\alpha))^{-1})e^{B t}\alpha+B^{\top}(e^{B^{\top}t}(u^{-1}+\sigma_t^B(\alpha))^{-1})\\
&=-2P(\psi(t,u))+B^{\top}(\psi(t,u)),
\end{align*}
where we use Proposition~\ref{prop:EJA} \ref{item:EJA2} and the fact that for all $t \in \mathbb{R}$
\[
e^{B^{\top}t}P(u)e^{Bt}=P(e^{B^{\top}t}u),
\]
which follows from~\citet[Proposition III.5.2]{Faraut1994}, since $e^{B^{\top}t} \in G(K)$ for all $t$.
Using the same arguments as in Lemma~\ref{lem:solRiccati}, we can write~\eqref{eq_simplestphisolB}
as
\[
\phi(t,u)=\frac{\delta}{2}\ln \det(u)+\frac{\delta}{2}\ln \det(e+\sigma_t^B(\alpha)),
\]
whence
\[
\frac{\partial \phi(t,u)}{\partial t}=\left\langle \frac{\delta}{2}(u^{-1}+\sigma_t^B(\alpha))^{-1}, 2 e^{Bt} \alpha \right\rangle=\langle \delta \psi(t,u), \alpha \rangle.
\]
\end{proof}

Comparing now the solutions of Lemma~\ref{lem:solRiccatiWish} with those of Lemma~\ref{lem:solRiccati} and combining this with Proposition~\ref{lem:noncentral}, we obtain the following corollary.

\begin{corollary}
Let $(\alpha, \delta \alpha, B, 0, 0, 0, 0)$ be an admissible
parameter set with $B \in \mathfrak{g}(K)$. Then there exists a unique affine process on $K$
such that~\eqref{eq:affineprocessK} holds for all $(t,u) \in \re_+ \times K$, where
$\phi(t,u)$ and $\psi(t,u)$ are given in Lemma~\ref{lem:solRiccatiWish}.
\end{corollary}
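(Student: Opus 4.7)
The plan is to reduce the Wishart case to the Bru case already treated in Proposition~\ref{th: existence Markov}, by exploiting the explicit formulas in Lemma~\ref{lem:solRiccatiWish}. The key observation is that the candidate Laplace transform $e^{-\phi(t,u)-\langle\psi(t,u),x\rangle}$ coincides, after a time-dependent reparametrization of the scale and non-centrality, with a non-central Wishart Laplace transform as handled in Proposition~\ref{lem:noncentral}.

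First I would fix $t\in\re_+$ and $x\in K$, set
\[
\widetilde\alpha_t:=\tfrac{1}{2t}\sigma_t^B(\alpha),\qquad \widetilde x_t:=e^{Bt}x,
\]
and check the algebraic identity
\[
e^{-\phi(t,u)-\langle \psi(t,u),x\rangle}
=\det\!\bigl(e+2tP(\sqrt{\widetilde\alpha_t})u\bigr)^{-\delta/2}
  e^{-\langle (u^{-1}+2t\widetilde\alpha_t)^{-1},\,\widetilde x_t\rangle},\quad u\in\mathring{K},
\]
using $P(\sqrt{\sigma_t^B(\alpha)})=2tP(\sqrt{\widetilde\alpha_t})$ (from $P(\lambda z)=\lambda^2P(z)$) and the adjoint relation $\langle e^{B^{\top}t}v,x\rangle=\langle v,e^{Bt}x\rangle$. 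The right hand side is precisely $\mathcal{L}_t^{\delta,\widetilde\alpha_t,\widetilde x_t}(u)$ in the notation of Proposition~\ref{lem:noncentral}.

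Second, I would verify that $\widetilde\alpha_t\in K$ and $\widetilde x_t\in K$. Since $B\in\mathfrak{g}(K)$ one has $e^{Bs}(K)=K$ for every $s\in\re$, so $e^{Bt}x\in K$, and $\sigma_t^B(\alpha)=2\int_0^t e^{Bs}\alpha\,ds$ is a Bochner integral of a $K$-valued continuous map, hence lies in the closed convex cone $K$. With the admissibility $\delta\geq d(r-1)$, Proposition~\ref{lem:noncentral}\ref{item:noncentral1} then guarantees that $u\mapsto \mathcal{L}_t^{\delta,\widetilde\alpha_t,\widetilde x_t}(u)$ is the Laplace transform of a probability measure on $K$; call it $p_t(x,d\xi)$. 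This defines a family of sub-Markov (in fact Markov, since the value at $u=0$ is $1$) kernels on $K$.

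Third, I would obtain Chapman--Kolmogorov from the semi-flow property of $(\phi,\psi)$. Because $(\phi,\psi)$ solves the autonomous system~\eqref{eq_simplestpsiB}--\eqref{eq_simplestphiB}, uniqueness of ODE solutions yields $\psi(t+s,u)=\psi(s,\psi(t,u))$ and $\phi(t+s,u)=\phi(t,u)+\phi(s,\psi(t,u))$, exactly as in Proposition~\ref{prop:PhipsipropertiesK}\ref{item:semiflow}. Since the map $x\mapsto e^{-\phi(t,u)-\langle\psi(t,u),x\rangle}$ is exponential-affine in $x$, this algebraic identity transports to the Chapman--Kolmogorov relation $p_{t+s}(x,\cdot)=\int p_t(x,d\xi)p_s(\xi,\cdot)$ by matching Laplace transforms and invoking their uniqueness on $\mathring{K}$. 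Stochastic continuity follows from the continuity of $t\mapsto(\phi(t,u),\psi(t,u))$. Uniqueness of the resulting affine Markov process is immediate from the fact that the Laplace transforms $p_t(x,\cdot)$ are prescribed on $\mathring{K}$ and extend continuously to $K$.

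The main obstacle, if any, is really only the bookkeeping for the algebraic identity in the first step: one must use the identities $P(\sqrt{\lambda z})=\lambda P(\sqrt{z})$ for $\lambda\geq 0$ and the self-adjointness $\langle e^{B^{\top}t}\cdot,\cdot\rangle=\langle\cdot,e^{Bt}\cdot\rangle$, together with the fact that $B\in\mathfrak{g}(K)$ ensures both $e^{Bt}x$ and $\sigma_t^B(\alpha)$ remain in $K$; once these are in place the conclusion is an immediate corollary of Proposition~\ref{lem:noncentral} and the argument of Proposition~\ref{th: existence Markov}.
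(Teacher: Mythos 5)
Your proposal is correct and follows essentially the same route as the paper: rewrite $e^{-\phi(t,u)-\langle\psi(t,u),x\rangle}$ as a non-central Wishart Laplace transform covered by Proposition~\ref{lem:noncentral}~\ref{item:noncentral1} (the paper matches it to $\mathcal{L}_{1/2}^{\delta,\sigma_t^B(\alpha),e^{Bt}x}$, which is the same object as your $\mathcal{L}_t^{\delta,\widetilde\alpha_t,\widetilde x_t}$ since only the product of the time and scale parameters matters), and then deduce Chapman--Kolmogorov from the flow property of $(\phi,\psi)$. Your additional checks that $\sigma_t^B(\alpha)$ and $e^{Bt}x$ lie in $K$ are details the paper leaves implicit, not a different argument.
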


\begin{proof}
Consider $\phi(t,u)$ and $\psi(t,u)$ as given in Lemma~\ref{lem:solRiccatiWish}. Then
\begin{align}\label{eq:WishLaplace}
e^{-\phi(t,u)-\langle\psi(t,u),x\rangle}=\mathcal{L}_{\frac{1}{2}}^{\frac{\delta}{2}, \sigma_t^B(\alpha), e^{Bt}x},
\end{align}
where $\mathcal{L}_t^{\frac{\delta}{2},\alpha, x}$ is specified in Proposition~\ref{lem:noncentral}. Statement~\ref{item:noncentral1} of this proposition thus implies
for every $(t,x) \in \re_+ \times K$ the existence of a probability measure $W_{\frac{1}{2}}^{\frac{\delta}{2}, \sigma_t^B(\alpha), e^{Bt}x}$ on $K$, whose Laplace transform is given by the above expression.
The Chapman-Kolmogorov equations hold again due to the flow property of $\phi$ and $\psi$,
whence the assertion follows.
\end{proof}

Let us finally characterize the existence of a density for the affine process corresponding to the parameter set
$(\alpha, \delta \alpha, B, 0, 0, 0, 0)$ with $\alpha \in K$, $\delta > d(r-1)$ and $B \in \mathfrak{g}(K)$.

\begin{proposition}
Let $X$ be an affine process corresponding to the parameter set
$(\alpha, \delta \alpha, B, 0, 0, 0, 0)$ with $\alpha \in K$, $\delta > d(r-1)$ and $B \in \mathfrak{g}(K)$. Then, $X_t$ has a density
$W_{\frac{1}{2}}^{\frac{\delta}{2}, \sigma_t^B(\alpha), e^{Bt}x}$ for one (and hence all) $t > 0$ if and only if
\begin{align}\label{eq:condfullrank}
\rk(L(\alpha),L(B(\alpha)),\ldots, L(B^{n-1}(\alpha)))=n,
\end{align}
where $L$ is the left product operator defined in~\eqref{eq:leftprod}.
\end{proposition}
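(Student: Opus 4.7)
The plan is to reduce the density question to whether $\sigma_t^B(\alpha)\in\mathring K$, and then translate that interior-membership into the algebraic condition \eqref{eq:condfullrank}. By equation \eqref{eq:WishLaplace} from the preceding corollary, the transition kernel of $X_t$ starting from $x$ is $W_{1/2}^{\delta/2,\sigma_t^B(\alpha),e^{Bt}x}$. Combining parts \ref{item:noncentral2} and \ref{item:noncentral3} of Proposition~\ref{lem:noncentral} with the standing assumption $\delta>d(r-1)$, this measure admits a Lebesgue density if and only if $\sigma_t^B(\alpha)\in\mathring K$. Since the condition \eqref{eq:condfullrank} is manifestly independent of $t$, once we establish the equivalence with $\sigma_t^B(\alpha)\in\mathring K$, the ``for one (hence all) $t>0$'' clause follows for free.

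Next I would dualize both conditions. By self-duality $K^\ast=K$, we have $\sigma_t^B(\alpha)\in\mathring K$ iff $\langle\sigma_t^B(\alpha),x\rangle>0$ for every $0\neq x\in K$. Because $e^{Bs}\alpha\in K$ for every $s\in\mathbb R$, the integrand $s\mapsto\langle e^{Bs}\alpha,x\rangle$ is nonnegative on $[0,t]$, so this positivity can fail only if the integrand vanishes identically there, which by real-analyticity of $s\mapsto e^{Bs}\alpha$ is equivalent to $\langle B^k(\alpha),x\rangle=0$ for all $k\ge 0$ (equivalently, for $k=0,\dots,n-1$ by Cayley--Hamilton). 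On the algebraic side, \eqref{eq:condfullrank} amounts to surjectivity of the concatenated map $V^n\to V$, $(v_0,\dots,v_{n-1})\mapsto\sum_k L(B^k(\alpha))v_k$, whose image is $\sum_k B^k(\alpha)\circ V$. By self-adjointness $L(\cdot)=L(\cdot)^\top$, its orthogonal complement equals $\{y\in V : B^k(\alpha)\circ y=0 \text{ for all }k\}$, so \eqref{eq:condfullrank} holds iff this kernel-type set is $\{0\}$.

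Finally I would match the two dual characterizations via the squaring map $y\mapsto y^2$. If \eqref{eq:condfullrank} fails, pick $y\neq 0$ with $B^k(\alpha)\circ y=0$ for all $k$; then $x:=y^2\in K\setminus\{0\}$ and, using the compatibility $\langle a,b\circ c\rangle=\langle a\circ b,c\rangle$ of the Jordan product with the inner product,
\[
\langle B^k(\alpha),y^2\rangle=\langle B^k(\alpha)\circ y,y\rangle=0,\qquad k\ge 0,
\]
so $\sigma_t^B(\alpha)\notin\mathring K$. Conversely, if some $0\neq x\in K$ satisfies $\langle B^k(\alpha),x\rangle=0$ for all $k$, then $\langle e^{Bs}\alpha,x\rangle=0$ for every $s$; since both $e^{Bs}\alpha$ and $x$ lie in the self-dual cone $K$, Lemma~\ref{Lem:orthogonality} upgrades inner-product orthogonality to Jordan orthogonality, giving $e^{Bs}\alpha\circ x=0$ for all $s$, and differentiating $k$ times at $s=0$ yields $B^k(\alpha)\circ x=0$ for every $k$, witnessing the failure of \eqref{eq:condfullrank}.

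The only delicate ingredient is the step in the last direction invoking Lemma~\ref{Lem:orthogonality} (that orthogonal elements of a symmetric cone are Jordan-orthogonal); the rest is linear algebra in $V$, analyticity of the one-parameter group $e^{Bs}$, and Cayley--Hamilton.
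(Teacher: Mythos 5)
Your proposal is correct. The first reduction (density of $W_{1/2}^{\delta/2,\sigma_t^B(\alpha),e^{Bt}x}$ if and only if $\sigma_t^B(\alpha)\in\mathring K$, via \eqref{eq:WishLaplace} and parts (ii)--(iii) of Proposition~\ref{lem:noncentral}) is exactly the paper's argument, and so is the use of Cayley--Hamilton and the analyticity of $s\mapsto e^{Bs}\alpha$ to cut the condition down to $k=0,\dots,n-1$. Where you genuinely diverge is in the proof of the key equivalence $\sigma_t^B(\alpha)\in\mathring K \Leftrightarrow$ \eqref{eq:condfullrank} (the content of the paper's Lemma~\ref{lem:condfullrank}). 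The paper works at the operator level: it invokes \citet[Theorem III.2.1]{Faraut1994} to identify $\sigma_t^B(\alpha)\in\partial K$ with $\int_0^t L(e^{Bs}\alpha)\,ds\in\partial S_+(V)$, and since this is an integral of positive semidefinite operators, singularity is equivalent to the existence of a common kernel vector $u\neq 0$ with $L(e^{Bs}\alpha)u=0$ for all $s$; the rank condition then falls out directly. You instead dualize: using self-duality you test $\sigma_t^B(\alpha)$ against nonzero $x\in K$, identify failure of \eqref{eq:condfullrank} with a nonzero common kernel of the self-adjoint operators $L(B^k(\alpha))$, and bridge the two via the squaring map $y\mapsto y^2$ in one direction and Lemma~\ref{Lem:orthogonality}(ii) in the other. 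Both are sound; the paper's route is shorter because the single citation of Theorem III.2.1 does the work of your two-directional matching, while your route buys independence from that theorem, relying only on self-duality, the compatibility axiom $\langle a\circ b,c\rangle=\langle b,a\circ c\rangle$, and the orthogonality lemma already recorded in the appendix (and indeed the direction ``rank fails $\Rightarrow$ not interior'' needs only the compatibility axiom, not the lemma).
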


\begin{proof}
Under the assumption $\delta > d(r-1)$, Proposition~\ref{lem:noncentral} implies that the probability measure $W_{\frac{1}{2}}^{\frac{\delta}{2}, \alpha,x}$ admits a density if and only if $\alpha \in \mathring{K}$. Moreover, according to Lemma~\ref{lem:condfullrank} below, $\sigma_t^B(\alpha)\in \mathring{K}$ is equivalent to Condition~\eqref{eq:condfullrank}. Due to relation~\eqref{eq:WishLaplace}, Proposition~\ref{lem:noncentral} thus yields the existence of a density given by $W_{\frac{1}{2}}^{\frac{\delta}{2}, \sigma_t^B(\alpha), e^{Bt}x}$ if and only if~\eqref{eq:condfullrank} is satisfied.
\end{proof}

The following lemma states the above used equivalence between Condition~\eqref{eq:condfullrank} and the fact that $\sigma_t^B(\alpha) \in \mathring{K}$.

\begin{lemma}\label{lem:condfullrank}
Let $\alpha \in K$ and let $B$ be a linear map satisfying $\langle B(x),u \rangle\geq 0$ for all $u,x \in K$ with $\langle u,x \rangle =0$.
Then
\[
\sigma_t^B(\alpha)=2\int_0^te^{Bs}\alpha \in \mathring{K},
\]
 for one (and hence for all) $t > 0$, if and only if~\eqref{eq:condfullrank} holds true.
\end{lemma}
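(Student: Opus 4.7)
The plan is to reduce both the geometric condition $\sigma_t^B(\alpha)\in\mathring K$ and the algebraic rank condition to a single statement about $\bigcap_{i}\ker L(B^i\alpha)$. The inward-pointing hypothesis on $B$ is equivalent (cf.~\cite{walter}) to $e^{Bs}K\subseteq K$ for all $s\geq 0$, so that $\sigma_t^B(\alpha)=2\int_0^t e^{Bs}\alpha\,ds$ automatically lies in $K$ for every $t\geq 0$.

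First I reduce the geometric side. Let $W=\mathrm{span}(\alpha,B\alpha,\dotsc,B^{n-1}\alpha)$. By self-duality of $K$, $\sigma_t^B(\alpha)\in\mathring K$ iff $\langle\sigma_t^B(\alpha),u\rangle>0$ for every $u\in K\setminus\{0\}$. Writing $\langle\sigma_t^B(\alpha),u\rangle=2\int_0^t\langle e^{Bs}\alpha,u\rangle\,ds$, the integrand is non-negative (again by $K=K^{\ast}$) and real-analytic in $s$, so the integral vanishes iff $\langle e^{Bs}\alpha,u\rangle\equiv 0$ on $[0,t]$, iff $\langle B^k\alpha,u\rangle=0$ for all $k\geq 0$, iff (by Cayley--Hamilton) $u\in W^\perp$. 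Hence $\sigma_t^B(\alpha)\in\mathring K$ iff $W^\perp\cap K=\{0\}$, a condition independent of $t>0$; this already yields the ``one iff all'' part of the statement.

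Next I identify the rank condition with $W^\perp\cap K=\{0\}$. Interpreting $\rk(L(\alpha),L(B\alpha),\dotsc,L(B^{n-1}\alpha))=n$ as surjectivity of the block map $V^n\to V$, $(x_0,\dotsc,x_{n-1})\mapsto\sum_i L(B^i\alpha)x_i$, and using that each $L(B^i\alpha)$ is self-adjoint with respect to $\langle\cdot,\cdot\rangle$, this is equivalent to $\bigcap_{i=0}^{n-1}\ker L(B^i\alpha)=\{0\}$. Its equivalence with $W^\perp\cap K=\{0\}$ is shown as follows. Given a non-zero $v\in\bigcap_i\ker L(B^i\alpha)$, we have $\langle B^i\alpha,v^2\rangle=\langle L(B^i\alpha)v,v\rangle=0$ for every $i$, so $v^2\in W^\perp$; and $v^2\neq 0$ since $\|v\|^2=\tr(v^2)$, so $v^2\in W^\perp\cap K\setminus\{0\}$. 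Conversely, given $u\in W^\perp\cap K\setminus\{0\}$, one has $\langle e^{Bs}\alpha,u\rangle=0$ for all $s\geq 0$, and since both $e^{Bs}\alpha$ and $u$ lie in $K$, Lemma~\ref{Lem:orthogonality} upgrades this to $L(e^{Bs}\alpha)u=e^{Bs}\alpha\circ u=0$; Taylor expansion at $s=0$ then gives $L(B^i\alpha)u=0$ for every $i\geq 0$.

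The main obstacle, as anticipated, lies in the converse direction of the last step: having only the quadratic relation $\langle L(B^i\alpha)u,u\rangle=0$ would not suffice to conclude $L(B^i\alpha)u=0$, because $L(B^i\alpha)$ is in general not positive semidefinite when $B^i\alpha\notin K$ (which is typically the case for $i\geq 1$). The remedy is to work at the level of $e^{Bs}\alpha\in K$, where the Jordan-algebraic orthogonality lemma applies and produces the stronger identity $e^{Bs}\alpha\circ u=0$; differentiating in $s$ then transfers this information to all powers $B^i\alpha$ and yields the kernel condition needed to close the equivalence.
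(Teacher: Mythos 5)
Your proof is correct, but it takes a genuinely different route from the paper's. The paper's argument pivots on \citet[Theorem III.2.1]{Faraut1994}, which identifies $K$ with $\{x\in V: L(x)\in S_+(V)\}$: membership of $\sigma_t^B(\alpha)$ in $\partial K$ becomes singularity of the positive semidefinite operator $\int_0^t L(e^{Bs}\alpha)\,ds$, and since each $L(e^{Bs}\alpha)$ is itself positive semidefinite, a null vector $u$ of the integral must satisfy $L(e^{Bs}\alpha)u=0$ for all $s$; Taylor expansion and Cayley--Hamilton then finish exactly as in your argument. You instead use the dual-cone characterization of $\mathring{K}$ together with Lemma~\ref{Lem:orthogonality}~(ii) to upgrade $\langle e^{Bs}\alpha,u\rangle=0$ to $e^{Bs}\alpha\circ u=0$, and you need the additional squaring trick $v\mapsto v^2$ (with $\tr(v^2)=\|v\|^2$ guaranteeing $v^2\neq 0$) to pass from a common null vector of the operators $L(B^i\alpha)$ back to a nonzero element of $W^\perp\cap K$. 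The two pivots are close cousins --- both encode the same positivity structure of the Jordan algebra --- but your version relies only on the orthogonality lemma already recorded in the paper's appendix, at the cost of one extra reduction step, whereas the paper's is marginally shorter because Theorem III.2.1 delivers the kernel statement in one stroke. Your remark that the criterion $W^\perp\cap K=\{0\}$ is manifestly independent of $t$ also gives a cleaner account of the ``for one, hence for all'' clause in the statement, which the paper leaves implicit.
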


\begin{proof}
Note that the condition on $B$ implies $e^{Bt}\alpha \in K$ for all $t\geq 0$. Let us therefore prove the equivalent statement, that is,
$\sigma_t^B(\alpha) \in \partial K $ if and only if $\rk(L(\alpha),L(B(\alpha)),\ldots, L(B^{n-1}(\alpha)))<n$.
By~\citet[Theorem III.2.1]{Faraut1994}, $\sigma_t^B(\alpha) \in \partial K $  is equivalent to
\[
\int_0^t L(e^{Bs}\alpha)ds \in \partial S_+(V),
\]
which in turn holds true if and only if
\[
L(e^{Bt}\alpha)u =0,
\]
for some $u \neq 0 \in V$ and all $t > 0$. By the definition of the exponential function this is however equivalent to
\begin{align}\label{eq:eqiv}
L(B^k(\alpha))u=0, \quad k=0,1,2, \ldots.
\end{align}
Since $B^k, \, k \geq n$ can be expressed as a linear combination of $I, B, \ldots, B^{n-1}$, which is a consequence of the Cayley-Hamilton Theorem,~\eqref{eq:eqiv}
is equivalent to
\[
(L(\alpha),L(B(\alpha)),\ldots, L(B^{n-1}(\alpha)))^{\top}u=0.
\]
This proves the assertion.
\end{proof}

\subsection{Existence of Affine Processes on Symmetric Cones}\label{sec:existencesymcone}

In Proposition~\ref{th: existence Markov} and Proposition~\ref{th: existence Markov2} we have proved existence of affine diffusion processes with a particular constant drift parameter and
existence of pure affine jump processes.
In order to establish existence of affine processes on symmetric cones
for any admissible parameter set, we now combine the respective Riccati equations to show that
\[
e^{-\phi(t,u)-\langle \psi(t,u),x \rangle}=\lim_{N \rightarrow \infty}\left[P^2_{\frac{t}{N}}P^1_{\frac{t}{N}}\right]^Ne^{-\langle u,x \rangle},
\]
is the Laplace transform of a probability distribution on $K$ for any admissible parameter set. Here, $P^i, i=1,2$, denote the respective semigroups of the diffusion process and the pure jump process.

Given an admissible parameter set $(\alpha, b, B, c, \gamma, m, \mu)$, let us therefore consider the following two systems of Riccati ODEs:
\begin{equation}\label{eq:phipsi_i}
\begin{split}
\partial_t \psi_1(t,u)&=R_1(\psi_1(t,u))=-2P(\psi_1(t,u))\alpha, \\
\partial_t \phi_1(t,u)&=F_1(\psi_1(t,u))=\langle \delta \alpha, \psi_1(t,u)\rangle, \\
\partial_t \psi_2(t,u)&=R_2(\psi_2(t,u))=B^{\top}(\psi_2(t,u))+\gamma \\
&\qquad \qquad \qquad \quad-\int_{K} \left(e^{-\langle\xi, \psi_2(t,u)\rangle}-1+ \langle \chi(\xi),\psi_2(t,u) \rangle \right) \mu(d\xi), \\
\partial_t \phi_2(t,u)&=F_2(\psi_2(t,u))=\langle b-\delta \alpha,\psi_2(t,u) \rangle + c -\int_{K} \left(e^{-\langle\xi, \psi_2(t,u)\rangle}-1\right) m(d\xi),
\end{split}
\end{equation}
where we set $\delta=d(r-1)$. The original Riccati equations corresponding to the parameter set $(\alpha, b, B, c, \gamma, m, \mu)$ are then given by
\begin{align}
\partial_t \psi(t,u)&=R(\psi(t,u))=R_1(\psi(t,u))+R_2(\psi(t,u)), &\quad& \psi(0,u)=u, \label{eq:Riccatisplit1}\\
\partial_t \phi(t,u)&=F(\psi(t,u))=F_1(\psi(t,u))+F_2(\psi(t,u)), &\quad& \phi(0,u)=0.\label{eq:Riccatisplit2}
\end{align}

Let us remark that, due to Proposition~\ref{prop_ricc_sol}, there exists a global unique solution to~\eqref{eq:Riccatisplit1}-\eqref{eq:Riccatisplit2} for every $u \in \mathring{K}$, which remains in $\mathring{K}$ for all $t \in \re_+$.

\begin{lemma}\label{lem: splitting}
Let $\phi_i, \psi_i$, $i=1,2$, be defined by~\eqref{eq:phipsi_i} and let
$u \in \mathring{K}$ and $t \geq 0$ be fixed. Define recursively for each $N \in \mathbb{N}$ and $n \in \{0, \ldots,N\}$
\begin{align*}
y_0(u)&:=u, &\quad w_0(u)&:=0,\\
y_n(u)&:=\psi_2(\tau, \psi_1(\tau, y_{n-1})), &\quad w_n(u)&:=\phi_1(\tau, y_{n-1})+\phi_2(\tau, \psi_1(\tau, y_{n-1})+w_{n-1},
\end{align*}
where $\tau=\frac{t}{N}$.
Then
\[
\psi(t,u)=\lim_{N \rightarrow \infty} y_N(u) \quad \textrm{and} \quad \phi(t,u)=\lim_{N \rightarrow \infty}w_N(u),
\]
where $\phi$ and $\psi$ are given by~\eqref{eq:Riccatisplit1}-\eqref{eq:Riccatisplit2}.
\end{lemma}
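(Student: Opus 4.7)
The plan is to recognise this iteration as a Lie--Trotter operator-splitting scheme for the ODE~\eqref{eq:Riccatisplit1}, to show that its one-step error is of order $\tau^{2}$ uniformly along the solution trajectory, and then to sum over the $N$ steps via a discrete Gr\"onwall-type telescoping.

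Fix $u \in \mathring{K}$ and $t > 0$. Write $\Phi_{\tau}(v) := \psi(\tau, v)$ for the exact flow of~\eqref{eq:Riccatisplit1}, and set $\Psi_{\tau}(v) := \psi_{2}(\tau, \psi_{1}(\tau, v))$, so that $y_{N}(u) = \Psi_{\tau}^{N}(u)$ with $\tau = t/N$. By Proposition~\ref{prop_ricc_sol}, the trajectory $\{\psi(s,u) : s \in [0,t]\}$ lies in a compact subset $C \subset \mathring{K}$, on a neighbourhood of which $R = R_{1} + R_{2}$ is real-analytic, hence Lipschitz with some constant $L$. A second-order Taylor expansion in $\tau$ gives
\[
\Psi_{\tau}(v) \;=\; v + \tau R_{1}(v) + \tau R_{2}\bigl(v + \tau R_{1}(v)\bigr) + O(\tau^{2}) \;=\; v + \tau R(v) + O(\tau^{2}),
\]
while $\Phi_{\tau}(v) = v + \tau R(v) + O(\tau^{2})$, both uniformly for $v$ in a neighbourhood of $C$; in particular $\|\Psi_{\tau}(v) - \Phi_{\tau}(v)\| = O(\tau^{2})$. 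Telescoping,
\[
\Psi_{\tau}^{N}(u) - \Phi_{\tau}^{N}(u) \;=\; \sum_{n=0}^{N-1} \Phi_{\tau}^{N-1-n}\bigl(\Psi_{\tau}(y_{n})\bigr) - \Phi_{\tau}^{N-1-n}\bigl(\Phi_{\tau}(y_{n})\bigr),
\]
each summand bounded by $(1+L\tau)^{N-1-n} O(\tau^{2})$, which sums to $O(t\, e^{Lt}/N) \to 0$. This gives $y_{N}(u) \to \psi(t,u)$. The convergence $w_{N} \to \phi(t,u)$ then follows by unfolding the recursion to
\[
w_{N} \;=\; \sum_{n=1}^{N} \bigl[\phi_{1}(\tau, y_{n-1}) + \phi_{2}(\tau, \psi_{1}(\tau, y_{n-1}))\bigr] \;=\; \tau \sum_{n=1}^{N} F(y_{n-1}) + O(1/N),
\]
and recognising the right-hand side as a Riemann sum for $\int_{0}^{t} F(\psi(s,u))\,ds = \phi(t,u)$, which converges by uniform continuity of $F$ on $C$ combined with the already established convergence $y_{n}(u) \to \psi(n\tau, u)$ (uniform in $n \leq N$).

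The main obstacle is justifying that the Taylor and Lipschitz estimates may be applied \emph{uniformly} to all iterates: one must show that for sufficiently large $N$ the intermediate points $y_{n}(u)$ and $\psi_{1}(\tau, y_{n-1})$ remain in a fixed compact enlargement of $C$ inside $\mathring{K}$. This is settled by a bootstrap argument. The subflow $\psi_{1}$ preserves $\mathring{K}$ by the explicit formula~\eqref{eq_simplestpsisol} of Lemma~\ref{lem:solRiccati}, and $\psi_{2}$ preserves $K^{\ast} = K$ by Proposition~\ref{prop_ricc_sol} applied to the admissible subset of parameters defining $R_{2}$; hence each $y_{n}(u)$ lies in $K$. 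An inductive Gr\"onwall estimate on $\|y_{n}(u) - \psi(n\tau, u)\|$ then confines the iterates to a prescribed tubular neighbourhood of $C$ for all $n \leq N$ once $N$ is large, closing the loop and validating the one-step error bound uniformly along the scheme.
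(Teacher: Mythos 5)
Your proof is correct and follows essentially the same route as the paper: both recognise the recursion as a Lie--Trotter splitting of~\eqref{eq:Riccatisplit1}--\eqref{eq:Riccatisplit2}, establish an $O(\tau^2)$ local error from Taylor expansion and real-analyticity of $R_i, F_i$ on $\mathring{K}$, and propagate it to a global $O(\tau)$ bound (the paper by citing the convergence theorem for one-step methods in Hairer et al., you by telescoping through the exact flow and a discrete Gr\"onwall estimate). Your explicit bootstrap keeping the iterates in a compact neighbourhood of the trajectory makes precise a point the paper's proof leaves implicit, but it is the same argument in substance.
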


\begin{proof}
Let us first remark that the limits are well-defined, since we have existence of global solutions of~\eqref{eq:Riccatisplit1}-\eqref{eq:Riccatisplit2} by Proposition~\ref{prop_ricc_sol}. In order to prove convergence of this splitting scheme, let us first calculate the local errors of the approximations for $\phi$ and $\psi$ for a given step size $\tau=\frac{t}{N}$ with $N \in \mathbb{N}$ fixed. An estimate of the global error is then obtained by transporting the local errors to the final point $t$ and adding them up, as it is done in~\citet[Theorem 3.6]{hairer}. Following~\citet[Chapter II.3]{hairer}, let us define the increment functions $\Phi_{\psi}$ and $\Phi_{\phi}$ by
\begin{align*}
y_n(u)&=y_{n-1}(u)+\tau \Phi_{\psi}(y_{n-1}(u),\tau),\\
w_n(u)&=w_{n-1}(u)+\tau \Phi_{\psi}(y_{n-1}(u),\tau).
\end{align*}
Using Taylor expansions at $\tau=0$, we obtain due to the
real-analyticity of $R_1, R_2$ and $F_1, F_2$ on $\mathring{K}$ for
$y \in \mathring{K}$
\begin{align*}
\Phi_{\psi}(y,\tau)&=R_2(y)+R_1(y)\\
&\quad+\frac{1}{2}\tau (DR_2(y)R_2(y) + 2DR_2(y)R_1(y)+DR_1(y)R_1(y))\\
&\quad+\mathcal{O}(\tau^2),\\
\Phi_{\phi}(y,\tau)&=F_1(y)+F_2(y)\\
&\quad+\frac{1}{2}\tau (\langle DF_2(y),R_2(y)\rangle+ 2\langle DF_2(y),R_1(y)\rangle+\langle DF_1(y),R_1(y)\rangle)\\
&\quad+\mathcal{O}(\tau^2).
\end{align*}
Hence the local errors satisfy by another Taylor expansion of $\psi$ and $\phi$
\begin{align*}
&\|\psi(t+\tau,u)-\psi(t,u)-\tau\Phi_{\psi}(\psi(t,u), \tau)\|\\
&\quad = \frac{1}{2}\tau^2\|DR_1(\psi(t,u))R_2(\psi(t,u))-DR_2(\psi(t,u))R_1(\psi(t,u))\|+\mathcal{O}(\tau^3)\\
&\quad\leq C_{\psi}\tau^2,\\
&|\phi(t+\tau,u)-\phi(t,u)-\tau\Phi_{\phi}(\psi(t,u), \tau)|\\
&\quad=\frac{1}{2}\tau^2|\langle DF_1(\psi(t,u)),R_2(\psi(t,u))\rangle-\langle DF_2(\psi(t,u)),R_1(\psi(t,u))\rangle|+\mathcal{O}(\tau^3)\\
&\quad \leq C_{\phi}\tau^2.
\end{align*}
Since $R_1, R_2$ and $F_1, F_2$ are real-analytic on $\mathring{K}$,
the following Lipschitz conditions for some constants $\Lambda_{\psi}, \Lambda_{\phi}$ are satisfied in a neighborhood of the solution
\[
\| \Phi_{\psi}(z,\tau)-\Phi_{\psi}(y,\tau)\|\leq \Lambda_{\psi}\| z-y\|, \quad | \Phi_{\phi}(z,\tau)-\Phi_{\phi}(y,\tau)|\leq \Lambda_{\phi}\|z-y\|.
\]
Moreover, by Proposition~\ref{prop_ricc_sol}, $\psi(t,u) \in \mathring{K}$ for all $(t,u)\in \re_+ \times \mathring{K}$
such that we have by~\citet[Theorem II.3.6]{hairer}
\begin{align*}
\|\psi(t,u)-y_N(u)\|&\leq \tau\frac{C_{\psi}}{\Lambda_{\psi}} e^{\Lambda_{\psi}t-1}, \\
\|\phi(t,u)-w_N(u)\|&\leq \tau\frac{C_{\phi}}{\Lambda_{\phi}}
e^{\Lambda_{\phi}t-1}.
\end{align*}
Since $\tau = \frac{t}{N}$ both terms converge to
$0$ as $N \to \infty$.
\end{proof}

We are now prepared to prove the main result of this section, which establishes existence of affine processes on irreducible symmetric cones for any given admissible
parameter set.

\begin{proposition}\label{th:existence}
Let $(\alpha, b, B, c, \gamma, m, \mu)$ be an admissible
parameter set. Then there exists a unique affine process on $K$,
such that~\eqref{eq:affineprocessK} holds for all $(t,u) \in \re_+ \times K$, where
$\phi(t,u)$ and $\psi(t,u)$ are given by~\eqref{eq:RiccatiF} and~\eqref{eq:RiccatiR}.
\end{proposition}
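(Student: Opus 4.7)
The plan is to realize the affine process associated to the admissible parameter set $(\alpha, b, B, c, \gamma, m, \mu)$ as the limit of an alternating composition of the semigroup of a Bru process and the semigroup of a pure jump affine process, following the splitting scheme of Lemma~\ref{lem: splitting}. Uniqueness is automatic once existence is established: by Theorem~\ref{th: main theorem} the Laplace transform \eqref{eq:affineprocessK} characterises the finite-dimensional distributions of any affine process, and by Proposition~\ref{prop_ricc_sol} the Riccati system~\eqref{eq:RiccatiF}--\eqref{eq:RiccatiR} has a unique global solution on $\re_+\times \mathring{K}$ (extending continuously to $K$).

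First, I would split the parameter set. Set $\delta:=d(r-1)$ and consider the two parameter sets
\[
(\alpha,\delta\alpha,0,0,0,0,0)\quad\text{and}\quad (0,b-\delta\alpha,B,c,\gamma,m,\mu).
\]
The drift admissibility condition~\eqref{eq: b} gives $b-\delta\alpha\in K$, so the first set is admissible in the sense of Proposition~\ref{th: existence Markov} (yielding a Bru process with transition semigroup $P^1_t$), while the second is an admissible pure jump parameter set in the sense of Proposition~\ref{th: existence Markov2} (yielding a pure jump semigroup $P^2_t$). The associated Riccati functions $(F_i,R_i)$ from~\eqref{eq:phipsi_i} then satisfy $F=F_1+F_2$ and $R=R_1+R_2$, so the two splittings are compatible with the original Riccati system~\eqref{eq:Riccatisplit1}--\eqref{eq:Riccatisplit2}.

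Next, for each $N\in\mathbb{N}$ and $u\in\mathring{K}$, $x\in K$, I would define the approximating quantities via the recursion of Lemma~\ref{lem: splitting} and observe that, by the very definition of the semigroups $P^1_t$ and $P^2_t$,
\[
\bigl[P^2_{t/N}P^1_{t/N}\bigr]^N e^{-\langle u,x\rangle}=e^{-w_N(u)-\langle y_N(u),x\rangle}.
\]
Because each $P^i_{t/N}$ sends $e^{-\langle v,\cdot\rangle}$ to the Laplace transform of a sub-probability measure on $K$, the left-hand side is itself the Laplace transform of a sub-probability measure $q^{N}_t(x,d\xi)$ on $K$ (evaluated at $u$). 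By Lemma~\ref{lem: splitting}, as $N\to\infty$ the right-hand side converges pointwise on $\mathring{K}$ to $e^{-\phi(t,u)-\langle\psi(t,u),x\rangle}$. Since $\phi$ and $\psi$ extend continuously to $K$ with $\phi(t,0)=0$ and $\psi(t,0)$ equal to the solution of~\eqref{eq:RiccatiR} started at $0$ (which lies in $K$), the limit is continuous at $u=0$. An application of Lemma~\ref{lem: FourierLaplace} (or, equivalently, L\'evy's continuity theorem) then yields the weak convergence of $q^N_t(x,\cdot)$ to a sub-probability measure $p_t(x,d\xi)$ on $K$ whose Laplace transform is exactly $e^{-\phi(t,u)-\langle\psi(t,u),x\rangle}$.

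Finally, I would check that $(p_t(x,\cdot))_{t\geq 0,\, x\in K}$ defines a sub-Markov transition kernel. The semi-flow property~\eqref{eq:flowprop} of $(\phi,\psi)$, which follows directly from~\eqref{eq:RiccatiF}--\eqref{eq:RiccatiR}, translates into the Chapman--Kolmogorov relation for $p_t(x,d\xi)$ by uniqueness of Laplace transforms on the generating cone $K^{\ast}$; measurability in $x$ follows from the exponential-affine form. Stochastic continuity follows from the continuity in $t$ of $\phi$ and $\psi$ together with another application of Lemma~\ref{lem: FourierLaplace}. This gives an affine Markov process on $K$ realising the prescribed Laplace transform. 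The main technical obstacle is justifying the passage to the limit: one must ensure that the sequence $q^N_t(x,\cdot)$ stays tight and that the limiting Laplace transform is genuinely the transform of a measure on $K$ rather than only on some enlargement; tightness is secured by the uniform sub-probability bound $q^N_t(x,K)\leq 1$ (since each $P^i$ is sub-Markov) and the continuity of the limit at $0$, so Lemma~\ref{lem: FourierLaplace} provides precisely what is needed.
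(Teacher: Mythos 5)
Your proposal is correct and follows essentially the same route as the paper: split the admissible parameters into the Bru part $(\alpha,\delta\alpha,0,0,0,0,0)$ with $\delta=d(r-1)$ and the pure jump part, apply the Lie--Trotter splitting of Lemma~\ref{lem: splitting}, verify that each approximant $e^{-w_N(u)-\langle y_N(u),x\rangle}$ is the Laplace transform of a (sub-)probability measure on $K$ via Propositions~\ref{lem:noncentral} and~\ref{prop: C CS semiflow}, and pass to the limit with L\'evy's continuity theorem before invoking the semi-flow property for Chapman--Kolmogorov. Your explicit remark that the drift condition~\eqref{eq: b} guarantees $b-\delta\alpha\in K$, and your use of sub-probability measures to accommodate killing, are points the paper leaves implicit but do not constitute a different argument.
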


\begin{proof}
By Lemma~\ref{lem: splitting}, we have for each fixed $t$
\[
e^{-\phi(t,u)-\langle \psi(t,u),x \rangle}=\lim_{N \rightarrow \infty}e^{-w_N(u) -\langle y_N(u),x \rangle}, \quad u \in \mathring{K}.
\]
For each $N \in \mathbb{N}$, $n \in \{0, \ldots,N\}$ and $x \in K$, $u \mapsto e^{-w_n(u) -\langle y_n(u),x \rangle}$
is the Laplace transform of a probability distribution on $K$. Indeed, let us proceed by induction.
For $n=0$, $e^{-\langle u, x\rangle}$ is the Laplace transform of $\delta_x(d\xi)$. We now suppose that for every
$x\in K$, $e^{-w_{n-1} -\langle y_{n-1},x \rangle}$ is the Laplace transform of a probability distribution $\mu_{n-1}(x, \cdot)$ on $K$.
Due to Proposition~\ref{lem:noncentral} and Proposition~\ref{prop: C CS semiflow},
$u \mapsto e^{-\phi_i(\tau,u)-\langle \psi_i(\tau,u),x \rangle},$ $i=1,2$, are Laplace transforms of probability measures
supported on $K$, which we denote by $p_{\tau}^i(x, d\xi), \, i=1,2$. Since we have
\begin{align*}
e^{-w_n -\langle y_n,x \rangle}&=e^{-w_{n-1}}e^{-\phi_1(\tau, y_{n-1})-\phi_2(\tau, \psi_1(\tau, y_{n-1}))-\langle \psi_2(\tau, \psi_1(\tau, y_{n-1})) ,x \rangle}\\
&=\int_K \int_K e^{-w_{n-1}-\langle y_{n-1}, \widetilde{\xi}\rangle}p_{\tau}^1(\xi, d\widetilde{\xi})p_{\tau}^2(x, d\xi),\\
&=\int_K \int_K \int_K e^{-\langle u,z\rangle}\mu_{n-1}(\widetilde{\xi},dz) p_{\tau}^1(\xi, d\widetilde{\xi})p_{\tau}^2(x, d\xi),\\
\end{align*}
$e^{-w_n -\langle y_n,x \rangle}$ is the Laplace transform of the probability distribution given by
\[
\mu_n(x,\cdot)=\int_K\int_K \mu(\widetilde{\xi},\cdot) p_{\tau}^1(\xi, d\widetilde{\xi})p_{\tau}^2(x, d\xi).
\]
As $u \mapsto e^{-\phi(t,u)-\langle \psi(t,u),x \rangle}$ is continuous at $0$
and the limit of a sequence of Laplace transforms of probability distributions supported on $K$,
L\'evy's continuity theorem implies that $u \mapsto e^{-\phi(t,u)-\langle \psi(t,u),x \rangle}$
is also the Laplace transform of a probability distribution on $K$.

Moreover, the Chapman-Kolmogorov equation holds in view of the semi-flow property of $\phi$ and $\psi$, which implies the assertion.
\end{proof}

\section{Boundary Non-Attainment on Symmetric Cones}\label{sec:nonboundary}

In this section, $X$ is a conservative affine process with admissible parameters $(\alpha,b,B,m,\mu,c,\gamma)$
relative to a truncation function $\chi$ such that
\begin{equation}\label{eq: intcond}
 \int_K (\|\xi\|\wedge 1)\langle \mu(d\xi),x \rangle <\infty.
\end{equation}
for all $x \in K$. The conservativeness implies that $c=0,\gamma=0$, and for each $x$, $X_t\circ \mathbb P_x$ allows a modification
to a a c\`adl\`ag semimartingale with decomposition
\[
X_t=X_0+X^c+\sum_{s\leq t}\Delta X_s,
\]
with $X^c=M^c+B^c$, where $M^c$ is the continuous martingale part of $X$ and $B^c=
\int_0^{\cdot} (b+B_0(X_s))ds$. Here $B_0$ denotes the modified linear drift
$B_0(\cdot):=B(\cdot)-\int_K \chi(\xi) \langle \mu(d\xi), \cdot\rangle$. The admissibility condition~\eqref{eq: betaij}
implies that $B_0$ is inward pointing, that is,
\begin{equation}\label{eq: inwpointing0}
\langle B_0(x),u\rangle\geq 0\textrm{  for all  } u,x\textrm{  with  } \langle u, x\rangle=0.
\end{equation}

\begin{proposition}\label{prop:boundary}
Let $X$ be a conservative affine process on an irreducible symmetric cone $K$ such that~\eqref{eq: intcond} is satisfied.
If $b\succeq(d(r-1)+2)\alpha$, then for each $x\in \mathring{K}$, we have $\mathbb{P}_x$-a.s.
\[
T_x:=\inf\{t>0\mid X_{t-}\notin
\mathring{K}\}=\infty.
\]
\end{proposition}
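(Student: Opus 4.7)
I would use a Lyapunov-function argument based on $f(x) = -\log\det(x)$, which is smooth on $\mathring K$ and diverges to $+\infty$ at the boundary. The plan is to show that, under the strengthened drift condition $b\succeq (d(r-1)+2)\alpha$, the extended generator satisfies $\mathcal{A} f(x)\leq C$ uniformly on $\mathring K$; a standard localization then rules out $X$ reaching $\partial K$ in finite time.

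\textbf{Computing $\mathcal{A} f$.} Using $\nabla f(x) = -x^{-1}$ and $\nabla^{2} f(x) = P(x^{-1})$ (from Proposition~\ref{prop:EJA}~\ref{item:EJA2}) together with Lemma~\ref{lem:Trace}, the same manipulations as in Lemma~\ref{lemprop: characteristicsdet} and Proposition~\ref{Prop:drift} yield, for $x\in\mathring K$,
\[
\mathcal{A} f(x) \;=\; -\langle x^{-1},\, b - (d(r-1)+2)\alpha\rangle \;-\; \langle x^{-1},\, B_0(x)\rangle \;+\; I(x),
\]
where $I(x) = \int_K [f(x+\xi)-f(x)]\bigl(m(d\xi)+\langle x,\mu(d\xi)\rangle\bigr)$ collects the jump integrals (the truncation term having been absorbed into $B_0$). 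For $\xi\in K$ and $x\in\mathring K$ one has $x+\xi\succeq x$, hence $\det(x+\xi)\geq \det(x)$ by monotonicity of the Jordan determinant, so the integrand of $I$ is non-positive and $I(x)\leq 0$. The assumption $b-(d(r-1)+2)\alpha\in K$ together with self-duality $K^{\ast}=K$ makes $\langle x^{-1},\, b-(d(r-1)+2)\alpha\rangle\geq 0$.

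\textbf{The critical bound.} It remains to control $\langle x^{-1}, B_0(x)\rangle$ from below. Taking a spectral decomposition $x=\sum_{i=1}^r \lambda_i p_i$ with respect to some Jordan frame $\{p_i\}$,
\[
\langle x^{-1},\, B_0(x)\rangle \;=\; \sum_i\langle p_i,\, B_0(p_i)\rangle \;+\; \sum_{i\neq j}\frac{\lambda_j}{\lambda_i}\,\langle p_i,\, B_0(p_j)\rangle.
\]
For $i\neq j$, the elements $p_i,p_j\in K$ satisfy $\langle p_i,p_j\rangle=0$, so the inward-pointing property~\eqref{eq: inwpointing0} (applied with $x=p_j$, $u=p_i$) gives $\langle p_i, B_0(p_j)\rangle\geq 0$; hence the off-diagonal sum is non-negative. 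Since $\|p_i\|=1$, the diagonal sum is bounded below by $-r\|B_0\|$. Combining everything, $\mathcal{A} f(x)\leq C:=r\|B_0\|$ uniformly on $\mathring K$.

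\textbf{Localization and conclusion.} Define $\sigma_N=\inf\{t:\|X_t\|\geq N\}$ (with $\sigma_N\to\infty$ by conservativeness) and $\tau_M=\inf\{t:f(X_t)\geq M\}$. On $[0,\tau_M\wedge\sigma_N]$ the process $f(X_t)$ lies in $[-r\log N,\,M]$ and is bounded, so the local martingale part of It\^o's formula for $f(X)$ is a true martingale and $\mathbb{E}_x[f(X_{t\wedge\tau_M\wedge\sigma_N})]\leq f(x)+Ct$. Because jumps of $X$ lie in $K$ and therefore only \emph{decrease} $f$ pathwise, the threshold $M$ is crossed continuously, so $f(X_{\tau_M})=M$ on $\{\tau_M<\infty\}$. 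Splitting the expectation, dividing by $M$ and letting $M\to\infty$ shows $\mathbb{P}_x[\tau_M\leq t\wedge\sigma_N]\to 0$; since $f(X_{T_x-})=+\infty$ on $\{T_x<\infty\}$ forces $\tau_M\uparrow T_x$, letting $N\to\infty$ and then $t\to\infty$ yields $\mathbb{P}_x[T_x<\infty]=0$. The main obstacle is the uniform lower bound in Step~3: without exploiting the Jordan-frame decomposition it is not obvious that $\langle x^{-1},B_0(x)\rangle$ stays bounded below as $x\to\partial K$, since $x^{-1}$ develops arbitrarily large components in that limit.
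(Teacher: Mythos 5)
Your proof is correct, and it is built on the same Lyapunov function and the same Jordan--algebraic ingredients as the paper's argument (It\^o's formula for $\log\det$, Proposition~\ref{prop:EJA}, Lemma~\ref{lem:Trace} with $2n/r=d(r-1)+2$, and the observation that jumps into $K$ can only increase the determinant), but it diverges in two places that are worth recording. First, the concluding mechanism: the paper writes $\ln\det(X_t)=\ln\det(x)+\widetilde M^c_t+\widetilde P_t$ with $\widetilde P$ bounded below pathwise and invokes McKean's argument that a continuous local martingale cannot tend to $-\infty$, whereas you run a Dynkin-type estimate $\E_x[f(X_{t\wedge\tau_M\wedge\sigma_N})]\le f(x)+Ct$ and let $M\to\infty$; both are standard, though in your version you should not assert that the local martingale part is a true martingale (the jump integral $I(x)$ is only bounded \emph{above}, and may a priori be $-\infty$) but rather either leave the jump sum uncompensated, as the paper does, or use that a local martingale bounded from below on the stochastic interval is a supermartingale, which gives exactly the inequality you need. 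Second, and more substantially, your uniform bound $\langle x^{-1},B_0(x)\rangle\ge -r\|B_0\|$, obtained by expanding in a Jordan frame and applying the inward-pointing property \eqref{eq: inwpointing0} to each pair of orthogonal primitive idempotents, is a genuine sharpening of the corresponding step in the paper: there it is only argued that $\langle B_0(X_t),X_t^{-1}\rangle=\langle B_0(X_t),\nabla\det(X_t)\rangle/\det(X_t)$ ``must be bounded from below along any path,'' which requires care when the numerator and the denominator vanish simultaneously at the boundary. Your spectral estimate is uniform on $\mathring K$, quantitative, and makes that step airtight; it would be a worthwhile addition even to the paper's own proof.
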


\begin{proof}
Since $\Delta X_t\succeq 0$ for $t<T_x$, it follows that $T_x=\inf\{t>0\mid \det(X^c_{t_-})=0\}$. In other words,
if $X_t$ touches the boundary in finite time, it must diffuse thereto.

For $x\in \mathring{K}$, we have by Proposition~\ref{prop:EJA} \ref{item:EJA3} and \ref{item:EJA2}
\begin{align}
D_u \ln(\det(x))&=\frac{1}{\det(x)}\langle \det(x) x^{-1},u\rangle=\langle x^{-1},u\rangle,\\
D_u D_v \ln(\det(x))&=-\langle  P(x^{-1})(v),u\rangle.
\end{align}
Hence, by an application of It\^o's formula, we have for $t<T_x$
\begin{align*}
 d\ln(\det(X_t))&=\left\langle X_t^{-1},(b+B_0(X_t))dt+dM_t^c)\right\rangle-\frac{1}{2}\Tr\left(A(X_t)P(X_t^{-1})\right)dt\\
 &\quad +\ln\left(\frac{\det (X_t)}{\det (X_{t-})}\right).
\end{align*}
By Lemma \ref{lem:Trace}, we further obtain
\[
\left\langle X_t^{-1},(b+B_0(X_t))\right\rangle  +\frac{1}{2}\Tr\left(A(X_t)
 P(X_t^{-1})\right)=\left\langle b-2\frac{n}{r}\alpha, X_t^{-1}\right\rangle+\left\langle B_0(X_t), X_t^{-1}\right\rangle.
\]
Now $\widetilde M^c_t:=\int_0^t\langle X_s^{-1}, dM_s^c\rangle$ is a one dimensional continuous local martingale on the
stochastic interval $[0,T_x)$. Also, by~\eqref{eq: inwpointing0} and since $\langle \nabla \det(x), x\rangle=0$ at
$x\in\partial K$,
\[
\left\langle B_0(X_t),X_t^{-1}\right\rangle=\frac{\langle B_0(X_t),\nabla\det(X_t)\rangle}{\det(X_t)}
\]
must be bounded from below, along any path on the closed interval $[0, T_x]$. On the other hand,
since
\[
\frac{\det(X_t)}{\det(X_{t-})}=\det\left(e+P(\sqrt{\Delta X_t})X_{t-}^{-1}\right) \geq 1
\]
and by the assumption of the proposition (recall that $n=r+\frac{d}{2}r(r-1)$), we have that
\[
\left\langle b-2\frac{n}{r}\alpha, X_t^{-1}\right\rangle+\ln\left(\frac{\det (X_t)}{\det (X_{t-})}\right)\geq 0
\]
for $t<T_x$. All in all, we have that
\[
\widetilde P_t:=\int_0^t \left(\left\langle b-2\frac{n}{r}\alpha, X_s^{-1}\right\rangle+\left\langle B_0(X_s),
X_s^{-1}\right\rangle\right)ds+\sum_{s\leq t}\ln\left(\frac{\det(X_s)}{\det(X_{s-})}\right)
\]
is (pathwise) bounded from below on $[0, T_x]$ and
\[
\ln(\det(X_t))=\ln(\det(x))+\widetilde M^c_t+\widetilde P_t.
\]
But $\ln(\det(X_t))\rightarrow-\infty$ for $t\rightarrow T_x$. By McKean's argument (see~\citet[Section 4.1]{mps}) we therefore
must have $\mathbb{P}_x$-almost surely $T_x=\infty$.
\end{proof}

\appendix

\section{An Introduction to Euclidean Jordan Algebras}\label{appendix:EJA}

In this section $(V, \langle \cdot, \cdot \rangle)$ denotes an Euclidean Jordan algebra and $K$
the corresponding symmetric cone of squares, as introduced in
Definition~\ref{def:EJA}, Definition~\ref{def:symcone} and Theorem~\ref{th:symconeEJA}.
The aim of this appendix is to review some important notions and results related to Euclidean Jordan algebras.

\subsection{Determinant, Trace and Inverse}\label{sec:det}

We denote by $\mathbb{R}[\lambda]$ the polynomial ring over $\mathbb{R}$ in a single variable $\lambda$ and, for $x \in V$,
we define $\re[x] := \{p(x)\, |\, p \in \re[\lambda]\}$. Then we have
\[
\re[x] = \re[\lambda]/\mathcal{J} (x)
\]
with the ideal $\mathcal{J} (x) := \{p \in \re[\lambda] \,|\, p(x) = 0\}$. Since $\re[\lambda]$ is a principal ring, $\mathcal{J}(x)$ is
generated by a polynomial, which is referred to as the \emph{minimal polynomial} if the leading coefficient is $1$.
Its degree is denoted by $m(x)$. We have
\[
m(x) = \min\left\{k > 0 \,|\, e, x, x^2, \ldots, x^k \textrm{ are linearly dependent} \right\}.
\]
Furthermore, the \emph{rank} of $V$ is the number
\begin{equation}\label{Eq:rank}
r := \max_{x\in V} m(x),
\end{equation}
which is bounded by $n = \dim(V)$. An element $x$ is said to be regular if $m(x)=r$.
By~\citet[Proposition II.2.1]{Faraut1994}, there exist unique polynomials $a_1, \ldots, a_r$ on $V$ such that the minimal polynomial of every regular element $x$ is given by
\[
f(\lambda; x) = \lambda^r - a_1(x)\lambda ^{r-1} + a_2(x)\lambda^{r-2} + \cdots + (-1)^r a_r(x).
\]
Using this fact, one introduces, for any $x \in V$, the \emph{determinant} $\det(x)$ and \emph{trace} $\tr(x)$ as
\[
\det(x) := a_r(x) \quad \textrm{and} \quad  \tr(x) := a_1(x).
\]

\begin{remark}\label{rem:dettrace}
In order to distinguish between elements of $V$ and linear maps on $V$,
we use the notations $\Tr(A)$ and $\Det(A)$ for $A \in \mathcal{L}(V)$.
\end{remark}

An element $x$ is said to be \emph{invertible} if there exists an element $u \in \re[x]$ such that $x \circ y=e$. Since $\re[x]$
is associative, $y$ is unique. It is called the \emph{inverse} of $x$ and is denoted by $y=x^{-1}$.

\begin{remark}
We remark that the notions ``rank'', ``trace'' and ``determinant'' are motivated by the fact that
for the Euclidean Jordan algebra of real
$r \times r$ symmetric matrices, the rank is equal to $r$, and $a_r(x)$ and $a_1(x)$
are the usual determinant and trace, respectively.
The inverse is also the usual one.
\end{remark}

Note that the determinant is not multiplicative in general, but we have the following.

\begin{proposition}\label{prop:detmult}
For all $z \in V$ and $x,y \in \re[z]$,
\[
 \det(x \circ y)=\det(x)\det(y).
\]
Moreover, $\det(e)=1$ and $\tr(e)=r$. In particular,
\[
 \det(x)\det(x^{-1})=\det(x\circ x^{-1})=1.
\]
\end{proposition}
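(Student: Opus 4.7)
The plan is to exploit the commutativity and associativity of $\re[z]$, which hold by power-associativity of the Jordan algebra $V$. Under this structure, for any $x,y \in \re[z]$, the Jordan product $x \circ y$ coincides with ordinary polynomial multiplication in $\re[z]$. I would first treat the case where $z$ is regular, so that its spectral decomposition $z = \sum_{i=1}^r \lambda_i p_i$ features pairwise distinct eigenvalues $\lambda_1,\dots,\lambda_r$ and a Jordan frame $p_1,\dots,p_r$ (cf.~the spectral theorem in the Jordan-algebraic sense, available from the results collected in the appendix). The regular elements form a dense open subset of $V$, so the general case in $z$ will follow by continuity, using that $\det$ is a polynomial function on $V$ by its very definition via the coefficients $a_i$.

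For regular $z$ and polynomials $p,q \in \re[\lambda]$, set $x := p(z)$ and $y := q(z)$. These admit the spectral expansions
\[
x = \sum_{i=1}^r p(\lambda_i)\, p_i, \qquad y = \sum_{i=1}^r q(\lambda_i)\, p_i,
\]
and the orthogonality relations $p_i \circ p_j = \delta_{ij} p_i$ of the Jordan frame yield $x \circ y = \sum_{i=1}^r p(\lambda_i)\, q(\lambda_i)\, p_i$. The crucial identification I then need is: for any element $w = \sum_{i=1}^r w_i\, p_i$ one has $\det(w) = \prod_{i=1}^r w_i$. When the $w_i$ are pairwise distinct, $w$ is regular of degree $r$ and its minimal polynomial equals $\prod_i (\lambda - w_i)$; matching the constant term with the defining formula $f(\lambda;w) = \lambda^r - a_1(w)\lambda^{r-1} + \dots + (-1)^r a_r(w)$ gives $a_r(w) = \prod_i w_i$ and hence the claim. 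Applying this identification to $x$, to $y$, and to $x \circ y$ yields $\det(x \circ y) = \prod_i p(\lambda_i)q(\lambda_i) = \det(x)\det(y)$ for regular $z$, and the density argument finishes the first assertion.

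The statements about $e$ come from the same identification applied to $e = \sum_{i=1}^r p_i$: the characteristic polynomial of $e$ is $(\lambda-1)^r$, giving $\det(e) = a_r(e) = 1$ and $\tr(e) = a_1(e) = r$. Since $x^{-1}$ lies in $\re[x]$ by the very definition of the inverse, the multiplicativity established above applies with the choice $z = x$ and delivers $\det(x)\det(x^{-1}) = \det(x \circ x^{-1}) = \det(e) = 1$. The subtlety I expect to be the most delicate is the extension of the identity $\det(w) = \prod_i w_i$ from regular $w$ to non-regular elements of $\re[z]$, which occurs precisely when two values $p(\lambda_i)$ and $p(\lambda_j)$ collide; this is handled by observing that both sides are polynomial in the coefficients $(w_1,\dots,w_r)$ and agree on the Zariski-dense subset where the $w_i$ are pairwise distinct.
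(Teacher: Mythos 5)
Your proof is correct. Note that the paper itself offers no argument for this proposition: it appears in the appendix, which explicitly states that the assertions are cited without proof from Faraut--Kor\'anyi (there it is Proposition II.2.2, proved by a purely algebraic argument in the associative commutative subalgebra $\re[z]$ that works over an arbitrary base field). Your route instead leans on the spectral theorem for Euclidean Jordan algebras: for regular $z$ the eigenvalues are pairwise distinct, $\re[z]$ is the span of the Jordan frame, every $p(z)$ is $\sum_i p(\lambda_i)p_i$, and the identification $\det\bigl(\sum_i w_ip_i\bigr)=\prod_i w_i$ (obtained by matching $a_r$ against the minimal polynomial $\prod_i(\lambda-w_i)$ on the locus of distinct $w_i$ and extending by Zariski density) immediately gives multiplicativity; density of regular elements and polynomiality of $\det$ handle general $z$. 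All the ingredients you use --- density and openness of the regular set, the spectral decomposition with a full Jordan frame, $p_i\circ p_j=\delta_{ij}p_i$ --- are available in the Euclidean setting, and your treatment of the collision of eigenvalues is exactly the right fix. Two small cosmetic points: what you call the ``characteristic polynomial'' of $e$ is the generic minimal polynomial $f(\lambda;e)=(\lambda-1)^r$, not the minimal polynomial of $e$ (which is $\lambda-1$ for $r>1$); and your Zariski-density identification of $a_r$ and $a_1$ on diagonal elements already covers non-regular $z$ directly, so the separate continuity-in-$z$ argument is redundant, though harmless. The trade-off versus the reference is that your argument is more transparent but confined to the Euclidean case, which is all the paper needs.
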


\subsection{Idempotents, Spectral-- and Peirce Decomposition}\label{sec:Peirce}

An element $p$ of $V$ is called \emph{idempotent} if $p^2 = p$, and two idempotents $p,q$ are called orthogonal if $p \circ q = 0$.
Note that for idempotents this notion of orthogonality coincides with the notion of orthogonality
with respect to the scalar product $\langle \cdot, \cdot \rangle$ on $V$ (see Lemma~\ref{Lem:orthogonality}). Finally, a non-zero idempotent element is called \emph{primitive} if it cannot be expressed as a sum of non-zero orthogonal idempotents.

The following results are cornerstones of the theory of Jordan algebras:

\begin{description}
\item[Spectral Decomposition]
A set of mutually orthogonal primitive idempotents $p_1, \ldots, p_r$ such that $p_1 + \dotsm + p_r = e$
is called a \emph{Jordan frame} of rank $r$ corresponding to the rank of
the Euclidean Jordan algebra, as defined in~\eqref{Eq:rank}.
The spectral decomposition theorem (see~\citet[Theorem III.1.2]{Faraut1994})
states that, for every $x \in V$, there exists a Jordan frame $p_1, \ldots, p_r$ and real numbers $\lambda_1,\ldots, \lambda_r$,
such that
\begin{align*}
x = \sum_{k=1}^r{\lambda_k p_k},
\end{align*}
where the numbers $\lambda_k$ are uniquely determined by $x$.
Moreover, $x$ is an element of the symmetric cone $K$ if and only if $\lambda_k\geq 0$ for all $k\in \{1, \ldots, r\}$.
Again motivated by symmetric matrices, $\lambda_1, \ldots, \lambda_r$ are referred to as \emph{eigenvalues}.

\item[Peirce decomposition 1]
Let $c$ be an idempotent in $V$. Define, for $k = 0,1/2,1$ the subspaces $V(c,k) := \{x \in V: c \circ x = kx\}$.
Then, by~\citet[Proposition IV.1.1]{Faraut1994}, $V$ can be written as the direct orthogonal sum
\[
V = V(c,1) \oplus V(c,1/2) \oplus V(c,0).
\]

Moreover, any $x \in V$ decomposes with respect to this decomposition into
\begin{align}\label{eq:Peircex}
 x=x_1+x_{\frac{1}{2}}+x_{0},
\end{align}
where $x_k \in V(c,k)$ for  $k = 0,1/2,1$.

By~\citet[Proposition IV.1.1]{Faraut1994}, the Peirce spaces $V(c,k)$ satisfy certain so called \emph{Peirce multiplication} rules:
\begin{align}
 V(c,1) \circ V(c,0)&=\{0\},\\
(V(c,1) +V(c,0)) \circ V(c,1/2) &\subset V\left(c,1/2\right),\\
 V\left(c,1/2\right) \circ V(c,1/2) &\subset V(c,1)+V(c,0),\\
P\left(V\left(c, 1/2\right)\right) V(c,1) &\subset V(c,0). \label{eq:Peircemult}
\end{align}

\item[Peirce decomposition 2]
Let  $p_1, \ldots, p_r$ be a Jordan frame. Then $V$ can be written as the direct orthogonal sum
\begin{equation}\label{Peirce decomposition 2}
V = \bigoplus_{i \le j} V_{ij},
\end{equation}
where $V_{ii} = V(p_i,1)=\re p_i$ and $V_{ij} = V(p_i,1/2) \cap V(p_j,1/2)$ (see~\citet[Theorem IV.2.1]{Faraut1994}).

Moreover, the projection onto $V_{ii}$ is given by the quadratic representation
$P(p_i)$, and the projection onto $V_{ij}$ by
$4 L(p_i) L(p_j)$.

If $V$ is simple, then the dimension of $V_{ij}, \, i<j$, denoted by
\begin{align}\label{eq:Peirce}
d= \dim V_{ij}
\end{align}
is independent of $i,j$ and the Jordan frame. It is called \emph{Peirce invariant}.
If $V$ is simple of dimension $n$ and rank $r$, then we have by~\citet[Corollary IV.2.6]{Faraut1994}
\[
 n=r+\frac{d}{2}r(r-1).
\]

Corresponding to the decomposition~\eqref{Peirce decomposition 2}, we can write for all $x \in V$
\[
 x=\sum_{i=1}^r x_i + \sum_{i<j} x_{ij},
\]
with $x_i \in \re p_i$ and $x_{ij} \in V_{ij}$. One can think of $x$ as a symmetric $r \times r$  matrix, whose
diagonal elements are the $x_i$ and whose off-diagonal elements are the $x_{ij}$.
\end{description}

\begin{example}

\begin{enumerate}
\item Consider $V =\re^n$ equipped with the Euclidean scalar\\ product. Together with the Jordan product
\[
x \circ y = (x_1 y_1, \dotsc, x_n y_n)
\]
(element-wise multiplication in $\re$), $V$ is a Jordan algebra and the corresponding reducible
cone is $\re_+^n$.
The idempotents are vectors consisting only of zeros and ones.
The non-zero primitive idempotents are the unit vectors.
The spectral decomposition $(\lambda_1, \dots, \lambda_n)$ are simply the coordinates of $x$ in the Cartesian coordinate system.

\item Consider $V = S_r$, the space of real symmetric $r \times r$-matrices.
Here, the idempotents correspond to the orthogonal projections and the non-zero primitive idempotents
are the orthogonal projections on one-di\-mens\-ional subspaces.
In the spectral decomposition,  $\lambda_1, \ldots, \lambda_r$ are the usual eigenvalues of $x$ and $p_1, \ldots, p_r$ are the orthogonal projections on the corresponding eigenvectors.

Concerning Peirce decomposition 1, the matrix of block form
\[
\begin{pmatrix} I_k & 0 \\ 0 & 0 \end{pmatrix}
\]
is an idempotent of $V$. The associated Peirce decomposition of a symmetric matrix is then given by
\[
\begin{pmatrix}
x_1 & x_{12}^\top \\ x_{12} & x_0 \end{pmatrix} =
\begin{pmatrix} x_1 & 0 \\ 0 & 0 \end{pmatrix}
+ \begin{pmatrix} 0 & x_{12}^\top \\ x_{12} & 0 \end{pmatrix} + \begin{pmatrix} 0 & 0 \\ 0 & x_0 \end{pmatrix} .
\]

As already established above, Peirce decomposition 2 corresponds to
\[
 x=\sum_{i=1}^r \begin{pmatrix}
\ddots&&&\\
&\ddots &&&\\
 && x_i && \\
&& &\ddots&\\
&&&&\ddots
\end{pmatrix} + \sum_{i<j} \begin{pmatrix}
&&&&  \text{\reflectbox{$\ddots$}}\\
 & & & x_{ij} &\\
 & &\text{\reflectbox{$\ddots$}}& & \\
 &x_{ij}&&& \\
\text{\reflectbox{$\ddots$}} &&&
\end{pmatrix}.
\]
Note that the dimension $d$ of $V_{ij}, \, i<j$, is $1$ here.
\end{enumerate}
\end{example}

\subsection{Classification of Simple Euclidean Jordan Algebras}

We here state the classification of all simple Euclidean Jordan algebras and their corresponding irreducible cones.
This classification is summarized in the following table.
Indeed, every simple Euclidean Jordan algebra is isomorph to one of these cases.

Here, $\mathbb{H}$ and $\mathbb{O}$ denote the algebra of quaternions and octonions, respectively (see~\citet[page 84]{Faraut1994}).
We further denote by $\Herm(r,\mathbb{A})$ the real vector space of Hermitian matrices with entries in $\mathbb{A}$, where $\mathbb{A}$
corresponds either to $\mathbb{C}, \mathbb{H}$ or $\mathbb{O}$.

\begin{center}
\begin{tabular}{ccccc}
\hline
$K$ & $V$ & $\dim V$ & $\rk V$ & $d$ \\
\hline
\\
 $S_r^+$ &  $S_r$ & $\frac{1}{2}r(r+1)$ & $r$  & $1$  \\
\\
$ \Herm_+(r,\mathbb{C})$ & $\Herm(r,\mathbb{C})$ & $r^2$ & $r$  & $2$  \\
\\
$ \Herm_+(r,\mathbb{H})$ & $\Herm(r,\mathbb{H})$  & $r(2r-1)$ & $r$  & $4$  \\
\\
Lorentz cone & $\mathbb{R}$ $\times$ $\mathbb{R}^{n-1}$  & $n$ & $2$  & $n-2$  \\
\\
Exceptional cone & $\Herm(3,\mathbb{O})$ & $27$ & $3$  & $8$  \\
\hline
\end{tabular}
\end{center}

\subsection{{Additional Results}}

In this section we collect a number of lemmas and propositions which are used in the proofs of Section~\ref{sec:symcone}.
In most cases, we only cite the assertions without proofs, as they can be found in~\citet{Faraut1994}.
For the sake of notational convenience we always assume that $V$ is a simple Euclidean Jordan algebra of dimension $n$ and rank $r$,
equipped with the natural scalar product
\[
\langle \cdot, \cdot \rangle : V \times V \to \re, \quad \langle x, y\rangle := \tr(x \circ y).
\]
However, the particular form of the scalar product and the assumption that $V$ is simple is not always needed.

\begin{lemma}\label{Lem:orthogonality}
\begin{enumerate}
\item Let $a,b$ be idempotents in $V$. Then $\langle a,b\rangle \geq 0$. Moreover, $\langle a, b\rangle = 0$ if and only if $a\circ b = 0$.
\item Let $a,b \in K$ and suppose that $\langle a, b\rangle = 0$. Then $a \circ b = 0$.
\end{enumerate}
\end{lemma}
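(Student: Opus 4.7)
My plan is to prove (i) first by exploiting the fact that for an idempotent $b$, the left--multiplication operator $L(b)$ is a self--adjoint positive semidefinite operator, and then to reduce (ii) to (i) by spectral decomposition.

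For part (i), I would begin by recalling that $L(b)$ is self--adjoint with respect to $\langle\cdot,\cdot\rangle$, since the compatibility axiom of an Euclidean Jordan algebra says $\langle x \circ y, z\rangle=\langle y,x\circ z\rangle$. When $b$ is idempotent, Peirce decomposition 1 (applied to $c=b$) decomposes $V$ as an orthogonal direct sum of eigenspaces of $L(b)$ corresponding to eigenvalues $0,\tfrac12,1$. Hence $L(b)$ is positive semidefinite. Using $a^2=a$ and self--adjointness,
\[
\langle a,b\rangle=\langle a^{2},b\rangle=\langle a,a\circ b\rangle=\langle a,L(b)a\rangle\geq 0,
\]
which proves the inequality. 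For the characterization of equality, $\langle a,L(b)a\rangle=0$ together with the positive semidefiniteness of $L(b)$ forces $L(b)a=0$, i.e.\ $a\circ b=0$; conversely, $a\circ b=0$ immediately gives $\langle a,b\rangle=\langle a,a\circ b\rangle=0$.

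For part (ii), I would use the spectral decomposition theorem: write $a=\sum_i\lambda_i p_i$ and $b=\sum_j\mu_j q_j$ with Jordan frames $\{p_i\}$, $\{q_j\}$ and eigenvalues $\lambda_i,\mu_j\geq 0$ (the nonnegativity because $a,b\in K$). By bilinearity,
\[
\langle a,b\rangle=\sum_{i,j}\lambda_i\mu_j\langle p_i,q_j\rangle.
\]
Since the $p_i$ and $q_j$ are idempotents, part (i) already gives $\langle p_i,q_j\rangle\geq 0$, so the double sum consists of nonnegative terms. The hypothesis $\langle a,b\rangle=0$ then forces $\lambda_i\mu_j\langle p_i,q_j\rangle=0$ for every pair $(i,j)$, and invoking the equality case of (i) whenever $\lambda_i\mu_j>0$ yields $p_i\circ q_j=0$ in that case. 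Consequently $a\circ b=\sum_{i,j}\lambda_i\mu_j(p_i\circ q_j)=0$.

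I do not expect any real obstacle: the only subtle point is identifying that $L(b)$ is positive semidefinite for an idempotent $b$, but this is essentially Peirce decomposition~1 combined with the self--adjointness of $L(b)$, both of which are standard facts already invoked in the excerpt. Everything else is bookkeeping via the spectral theorem, which is likewise stated in the appendix.
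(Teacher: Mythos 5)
Your proof is correct. Note that the paper does not actually prove this lemma at all: it simply cites Nomura (1993) for part (i) and Hertneck (1962) for part (ii), so any self-contained argument is by construction a different route. Your argument is a clean and elementary one. The key observation, that for an idempotent $b$ the operator $L(b)$ is self-adjoint (by the compatibility axiom) with spectrum contained in $\{0,\tfrac12,1\}$ (by Peirce decomposition~1) and hence positive semidefinite, immediately yields $\langle a,b\rangle=\langle a, L(b)a\rangle\geq 0$ for idempotent $a$, and the equality case $L(b)a=0$ follows from the standard fact that a positive semidefinite self-adjoint operator annihilates any vector on which its quadratic form vanishes. The reduction of (ii) to (i) via the spectral decomposition $a=\sum_i\lambda_i p_i$, $b=\sum_j\mu_j q_j$ with nonnegative eigenvalues is also sound: every term $\lambda_i\mu_j\langle p_i,q_j\rangle$ is nonnegative, so all must vanish, and the equality case of (i) kills exactly those products $p_i\circ q_j$ that carry a nonzero coefficient. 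Both ingredients (Peirce decomposition~1 and the spectral theorem) are stated in the appendix of the paper, so your proof has the advantage of making the lemma self-contained rather than deferring to two external references; what the citations buy the paper is only brevity.
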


\begin{proof}
For (i) see~\citet{Nomura1993} and for (ii)~\citet{Hertneck1962}.
\end{proof}

\begin{proposition}\label{prop:EJA}
The following assertions hold true:
\begin{enumerate}
\item\label{item:EJA1} An element $x$ is invertible if and only if  $P(x)$ is invertible. Then we have
\begin{align*}
P(x)x^{-1}&=x,\\
P(x)^{-1}&=P(x^{-1}).
\end{align*}
\item\label{item:EJA5} If $x$ and $y$ are invertible, then $P(x)y$ is invertible and
\[
 (P(x)y)^{-1}=P(x^{-1})y^{-1}.
\]
\item\label{item:EJA2} The differential map $x \mapsto x^{-1}$ is $-P(x)^{-1}$, that is,
\begin{align*}
\frac{d}{dt}(x+tu)^{-1}|_{t=0}=-P(x^{-1})u.
\end{align*}
\item\label{item:EJA4}
$\det(P(x)y)=(\det x)^2\det y$.
\item\label{item:EJA3}
$\nabla \ln \det x=x^{-1}$.
\end{enumerate}
\end{proposition}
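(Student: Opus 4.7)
The five identities are foundational results in the theory of Euclidean Jordan algebras, all proved in Chapter~II of~\citet{Faraut1994}; my plan is to sketch their derivation in the stated order, relying throughout on power associativity (axiom~(b) of Definition~\ref{def:EJA}, which makes $\re[x]$ an associative subalgebra of $V$) and on the classical \emph{fundamental formula} $P(P(x)y)=P(x)P(y)P(x)$ from~\citet[Theorem~II.3.2]{Faraut1994}, which I would import rather than reprove.

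For (i), the identity $P(x)x^{-1}=x$ is one line from $P(x)=2L(x)^2-L(x^2)$: $P(x)x^{-1} = 2x\circ(x\circ x^{-1}) - x^2\circ x^{-1} = 2x-x = x$, where the reductions use only associativity inside $\re[x]$. The same calculation with $x^{-2}\in\re[x]$ yields $P(x)x^{-2}=e$, so by the fundamental formula $I = P(e) = P(P(x)x^{-2}) = P(x)P(x^{-2})P(x)$, which in finite dimensions forces $P(x)$ to be invertible. Applying the fundamental formula to $y=x^{-1}$ now gives $P(x) = P(P(x)x^{-1}) = P(x)P(x^{-1})P(x)$, and cancelling the now invertible $P(x)$ on either side delivers $P(x^{-1}) = P(x)^{-1}$. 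The converse direction of the invertibility equivalence will be visible from (iv) below, since $\Det(P(x))=\det(x)^{2n/r}$.

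Assertion (ii) is then immediate from the fundamental formula, which rewrites $P(P(x)y)=P(x)P(y)P(x)$ as a composition of invertible operators, so by (i) the element $P(x)y$ is invertible; applying (i) to $P(x)y$ itself gives
\[
(P(x)y)^{-1} = P(P(x)y)^{-1}(P(x)y) = P(x^{-1})P(y^{-1})P(x^{-1})P(x)y = P(x^{-1})y^{-1},
\]
using $P(x^{-1})P(x)=I$ and $P(y^{-1})y = y^{-1}$, both from (i). For (iii), I would differentiate the identity $P(x+tu)(x+tu)^{-1}=x+tu$ of (i) at $t=0$, which gives $(DP(x)u)x^{-1} + P(x)\,\partial_t(x+tu)^{-1}|_{t=0} = u$. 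With $DP(x)u = 2L(u)L(x)+2L(x)L(u) - 2L(x\circ u)$, the first summand expands to $2u + 2\bigl(x\circ(u\circ x^{-1}) - (x\circ u)\circ x^{-1}\bigr)$; the bracketed term vanishes because $L(x)$ and $L(x^{-1})$ commute, both lying in the abelian subalgebra $L(\re[x])$ of~\citet[Proposition~II.1.1]{Faraut1994}. Inverting $P(x)$ via (i) then yields $\partial_t(x+tu)^{-1}|_{t=0} = -P(x^{-1})u$.

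For (iv) I would pass to the dense open set of invertible $x$, where $\sqrt{x}\in\mathring{K}$ exists and $P(x)=P(\sqrt x)^2$. The factorisation $P(x)y = P(\sqrt x)(P(\sqrt x)y)$ reduces the claim to the single scaling $\det(P(\sqrt x)z) = \det(x)\det(z)$ applied twice, and this intermediate identity is~\citet[Proposition~III.4.2]{Faraut1994}; the general case follows by polynomial continuation since both sides are polynomial in the entries of $x$ and $y$. Finally, for (v) I compute, for invertible $x$,
\[
\det(x+tu) = \det\bigl(P(\sqrt x)(e+tP(\sqrt{x^{-1}})u)\bigr) = \det(x)\det\bigl(e+tP(\sqrt{x^{-1}})u\bigr)
\]
via (iv), expand $\det(e+sw) = 1+s\tr(w)+O(s^2)$ from the characteristic polynomial, and note that $\tr(P(\sqrt{x^{-1}})u) = \langle u,P(\sqrt{x^{-1}})e\rangle = \langle u,x^{-1}\rangle$ by self-adjointness of $P$ together with the relation $P(y)e=y^2$. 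Reading off the $t$-coefficient gives $\nabla\det(x)=\det(x)\,x^{-1}$, hence $\nabla\ln\det(x)=x^{-1}$. I expect the main technical obstacle to be that the fundamental formula underlying (i)--(iii) and the determinantal scaling underlying (iv) are themselves non-trivial Jordan-algebraic results; I would import them from Chapters~II--III of~\citet{Faraut1994} rather than re-derive them from scratch.
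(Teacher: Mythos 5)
Your sketch is mathematically sound, but note that the paper does not prove this proposition at all: its ``proof'' is a bare citation to \citet[Propositions II.3.1, II.3.3 and III.4.2]{Faraut1994}, which is exactly where your two imported inputs (the fundamental formula $P(P(x)y)=P(x)P(y)P(x)$ and the determinant scaling) live. So you are reconstructing the standard Faraut--Kor\'anyi arguments rather than citing them wholesale; the computations you give for (i), (ii), (iii) and (v) are correct (the cancellation $[L(x),L(x^{-1})]=0$ in (iii) and the expansion $\det(e+sw)=1+s\tr(w)+O(s^2)$ in (v) are the right ingredients, and $\tr(P(\sqrt{x^{-1}})u)=\langle u,x^{-1}\rangle$ is legitimate by $P=P^{\top}$ and $P(y)e=y^2$). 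Two small caveats: your treatment of (iv) is essentially circular as written, since the ``intermediate identity'' $\det(P(\sqrt{x})z)=\det(x)\det(z)$ \emph{is} statement (iv) applied to $\sqrt{x}$, so there you are in effect doing what the paper does and simply citing \citet[Proposition III.4.2]{Faraut1994}; and the converse direction of (i) (that invertibility of $P(x)$ forces invertibility of $x$), which you route through $\Det(P(x))=\det(x)^{2n/r}$, additionally needs the equivalence ``$x$ invertible $\iff\det(x)\neq 0$'' (\citet[Proposition II.2.4]{Faraut1994}) --- worth naming explicitly, though it is of the same off-the-shelf character as everything else here. In short: your version buys a self-contained derivation modulo two named black boxes, at the cost of reproving material the paper deliberately outsources.
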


\begin{proof}
See~\citet[Proposition II.3.1, II.3.3 (i), II.3.3 (ii), III.4.2 (i), III.4.2 (ii)]{Faraut1994}
\end{proof}

\backmatter

\begin{thebibliography}{29}
\providecommand{\natexlab}[1]{#1}
\providecommand{\url}[1]{\texttt{#1}}
\expandafter\ifx\csname urlstyle\endcsname\relax
  \providecommand{\doi}[1]{doi: #1}\else
  \providecommand{\doi}{doi: \begingroup \urlstyle{rm}\Url}\fi

\bibitem[Aliprantis and Tourky(2007)]{aliprantis_07}
C.~D. Aliprantis and R.~Tourky.
\newblock \emph{Cones and duality}, volume~84 of \emph{Graduate Studies in
  Mathematics}.
\newblock American Mathematical Society, Providence, RI, 2007.

\bibitem[Bru(1991)]{bru}
M.-F. Bru.
\newblock Wishart processes.
\newblock \emph{J. Theoret. Probab.}, 4\penalty0 (4):\penalty0 725--751, 1991.

\bibitem[Cuchiero and Teichmann(2011)]{cuchteich}
C.~Cuchiero and J.~Teichmann.
\newblock Path properties and regularity of affine processes on general state
  spaces.
\newblock Preprint, 2011.

\bibitem[Cuchiero et~al.(2011)Cuchiero, Filipovi{\'c}, Mayerhofer, and
  Teichmann]{cfmt}
C.~Cuchiero, D.~Filipovi{\'c}, E.~Mayerhofer, and J.~Teichmann.
\newblock Affine processes on positive semidefinite matrices.
\newblock \emph{Ann. Appl. Probab., Forthcoming}, 2011.

\bibitem[Dieudonn{\'e}(1969)]{dieu_69}
J.~Dieudonn{\'e}.
\newblock \emph{Foundations of modern analysis}.
\newblock Academic Press, New York, 1969.
\newblock Enlarged and corrected printing, Pure and Applied Mathematics, Vol.
  10-I.

\bibitem[Duffie et~al.(2003)Duffie, Filipovi{\'c}, and Schachermayer]{dfs}
D.~Duffie, D.~Filipovi{\'c}, and W.~Schachermayer.
\newblock Affine processes and applications in finance.
\newblock \emph{Ann. Appl. Probab.}, 13\penalty0 (3):\penalty0 984--1053, 2003.

\bibitem[Faraut and Kor{\'a}nyi(1994)]{Faraut1994}
J.~Faraut and A.~Kor{\'a}nyi.
\newblock \emph{Analysis on symmetric cones}.
\newblock Oxford Mathematical Monographs. The Clarendon Press Oxford University
  Press, New York, 1994.
\newblock Oxford Science Publications.

\bibitem[Grasselli and Tebaldi(2008)]{grasselli}
M.~Grasselli and C.~Tebaldi.
\newblock Solvable affine term structure models.
\newblock \emph{Math. Finance}, 18\penalty0 (1):\penalty0 135--153, 2008.

\bibitem[Hairer et~al.(1993)Hairer, N{\o}rsett, and Wanner]{hairer}
E.~Hairer, S.~P. N{\o}rsett, and G.~Wanner.
\newblock \emph{Solving ordinary differential equations. {I}}, volume~8 of
  \emph{Springer Series in Computational Mathematics}.
\newblock Springer-Verlag, Berlin, second edition, 1993.
\newblock Nonstiff problems.

\bibitem[Hertneck(1962)]{Hertneck1962}
C.~Hertneck.
\newblock Positivit{\"a}tsbereiche und {J}ordan-{S}trukturen.
\newblock \emph{Math. Ann.}, 146:\penalty0 433--455, 1962.

\bibitem[Hiriart-Urruty and Lemar{\'e}chal(1993)]{hiriart}
J.-B. Hiriart-Urruty and C.~Lemar{\'e}chal.
\newblock \emph{Convex analysis and minimization algorithms. {I}}, volume 305
  of \emph{Grundlehren der Mathematischen Wissenschaften [Fundamental
  Principles of Mathematical Sciences]}.
\newblock Springer-Verlag, Berlin, 1993.
\newblock Fundamentals.

\bibitem[Ishi(2005)]{Ishi_gradient}
H.~Ishi.
\newblock The gradient maps associated to certain non-homogeneous cones.
\newblock \emph{Proc. Japan Acad. Ser. A Math. Sci.}, 81\penalty0 (3):\penalty0
  44--46, 2005.

\bibitem[Keller-Ressel(2009)]{keller}
M.~Keller-Ressel.
\newblock Affine processes - theory and applications in mathematical finance.
\newblock PhD thesis, {V}ienna { U}niversity of {T}echnology, 2009.

\bibitem[Keller-Ressel et~al.(2010)Keller-Ressel, Schachermayer, and
  Teichmann]{kst}
M.~Keller-Ressel, W.~Schachermayer, and J.~Teichmann.
\newblock Affine processes are regular.
\newblock \emph{Probab. Theory Related Fields, Forthcoming}, 2010.

\bibitem[Keller-Ressel et~al.(2011)Keller-Ressel, Schachermayer, and
  Teichmann]{kst1}
M.~Keller-Ressel, W.~Schachermayer, and J.~Teichmann.
\newblock Regularity of affine processes on general state spaces.
\newblock Working paper, 2011.

\bibitem[Letac and Massam(2004)]{letac}
G.~Letac and H.~Massam.
\newblock A tutorial on non-central {W}ishart distributions.
\newblock 2004.

\bibitem[L{\'e}vy(1948)]{levy}
P.~L{\'e}vy.
\newblock The arithmetic character of the {W}ishart distribution.
\newblock \emph{Proc. Cambridge Philos. Soc.}, 44:\penalty0 295--297, 1948.

\bibitem[Massam and Neher(1997)]{massam}
H.~Massam and E.~Neher.
\newblock On transformations and determinants of {W}ishart variables on
  symmetric cones.
\newblock \emph{J. Theoret. Probab.}, 10\penalty0 (4):\penalty0 867--902, 1997.

\bibitem[Mayerhofer(2011)]{mayerhoferpers}
E.~Mayerhofer.
\newblock Positive semidefinite affine processes have jumps of finite
  variation.
\newblock Preprint, 2011.

\bibitem[Mayerhofer et~al.(2011{\natexlab{a}})Mayerhofer, Muhle-Karbe, and
  Smirnov]{mayerhofer}
E.~Mayerhofer, J.~Muhle-Karbe, and A.~G. Smirnov.
\newblock A characterization of the martingale property of exponentially affine
  processes.
\newblock \emph{Stochastic Process. Appl.}, 121\penalty0 (3):\penalty0
  568--582, 2011{\natexlab{a}}.

\bibitem[Mayerhofer et~al.(2011{\natexlab{b}})Mayerhofer, Pfaffel, and
  Stelzer]{mps}
E.~Mayerhofer, O.~Pfaffel, and R.~Stelzer.
\newblock On strong solutions for positive definite jump-diffusions.
\newblock \emph{Stochastic Process. Appl.}, 121\penalty0 (9):\penalty0
  2072--2086, 2011{\natexlab{b}}.

\bibitem[Nomura(1993)]{Nomura1993}
T.~Nomura.
\newblock Manifold of primitive idempotents in a {J}ordan-{H}ilbert algebra.
\newblock \emph{J. Math. Soc. Japan}, 45\penalty0 (1):\penalty0 37--58, 1993.

\bibitem[Sato(1999)]{sato}
K.~Sato.
\newblock \emph{L{\'e}vy processes and infinitely divisible distributions},
  volume~68 of \emph{Cambridge Studies in Advanced Mathematics}.
\newblock Cambridge University Press, Cambridge, 1999.
\newblock Translated from the 1990 Japanese original, Revised by the author.

\bibitem[Skorohod(1991)]{skorohod}
A.~V. Skorohod.
\newblock \emph{Random processes with independent increments}, volume~47 of
  \emph{Mathematics and its Applications (Soviet Series)}.
\newblock Kluwer Academic Publishers Group, Dordrecht, 1991.
\newblock Translated from the second Russian edition by P. V. Malyshev.

\bibitem[Spreij and Veerman(2010)]{spreij1}
P.~Spreij and E.~Veerman.
\newblock Affine diffusions with non-canonical state space.
\newblock Preprint, 2010.

\bibitem[Veerman(2011)]{veerman}
E.~Veerman.
\newblock Affine {M}arkov processes on a general {E}uclidean state space.
\newblock PhD Thesis, 2011.

\bibitem[Vinberg(1960)]{vinberg}
{\`E}.~B. Vinberg.
\newblock Homogeneous cones.
\newblock \emph{Soviet Math. Dokl.}, 1:\penalty0 787--790, 1960.

\bibitem[Volkmann(1973)]{Volkmann1973}
P.~Volkmann.
\newblock {\"U}ber die {I}nvarianz konvexer {M}engen und
  {D}ifferentialungleichungen in einem normierten {R}aume.
\newblock \emph{Math. Ann.}, 203:\penalty0 201--210, 1973.

\bibitem[Walter(1993)]{walter}
W.~Walter.
\newblock \emph{Gew{\"o}hnliche {D}ifferentialgleichungen}.
\newblock Springer-Lehrbuch. [Springer Textbook]. Springer-Verlag, Berlin,
  fifth edition, 1993.
\newblock Eine Einf{\"u}hrung. [An introduction].

\end{thebibliography}

\end{document}